\documentclass[12pt,leqno]{amsart}
\usepackage{pifont}
\usepackage{stmaryrd}
\usepackage{dsfont}
\usepackage{color}
\usepackage{mathrsfs}
\usepackage{amsmath}
\usepackage{amssymb}
\usepackage[hidelinks]{hyperref}
\usepackage{bookmark}
\usepackage[bottom]{footmisc}
\usepackage{verbatim}
\usepackage{extarrows}
\usepackage{mathtools}
\usepackage{orcidlink}

\numberwithin{equation}{section}
\newtheorem{thm}{Theorem}[section]
\newtheorem{lem}[thm]{Lemma}

\newtheorem{prop}[thm]{Proposition}
\newtheorem{cor}[thm]{Corollary}

\newtheorem{rmk}[thm]{Remark}

\newcommand{\End}{\operatorname{End}}
\newcommand{\Hom}{\operatorname{Hom}}
\newcommand{\Ext}{\operatorname{Ext}}
\newcommand{\Soc}{\operatorname{Soc}}
\newcommand{\rad}{\operatorname{rad}}
\newcommand{\ch}{\operatorname{ch}}
\newcommand{\one}{\mathbf{1}}
\newcommand{\id}{\operatorname{id}}
\newcommand{\wt}{\operatorname{wt}}

\newcommand{\Vir}{\mathfrak{Vir}}
\newcommand{\TL}{\mathcal{TL}}

\newcommand{\boxt}{\boxtimes}
\newcommand{\KL}{KL}
\newcommand{\Oc}{\mathcal O}
\newcommand{\Kac}{\mathcal K}

\def\Z{\mathbb{Z}}

\def\C{\mathbb{C}}

\allowdisplaybreaks

\begin{document}

\title[Exactness and DS realization]{Exactness and Drinfeld--Sokolov Realization of the McRae--Yang Tensor Functors}

\author{Shun Xu~\orcidlink{0009-0006-8080-8107}}

\address{School of Mathematical Sciences, Anhui University, Anhui, Hefei, 230601, China}

\email{shunxu@ahu.edu.cn}

\subjclass[2020]{Primary 17B69, 17B68; Secondary 18M15, 81R10}

\keywords{Kazhdan--Lusztig category, quantum Drinfeld--Sokolov reduction, Virasoro algebra, logarithmic tensor category, projective object, Kac module}

\begin{abstract}
Let $p,q\geq2$ be coprime, set $k=-2+p/q$, and let
$c_{p,q}=1-6(p-q)^2/(pq)$.  McRae and Yang constructed right exact
braided tensor functors from the non-semisimple Kazhdan--Lusztig
categories of $V^{-2+p/q}(\mathfrak{sl}_2)$ and
$V^{-2+q/p}(\mathfrak{sl}_2)$ to the Virasoro category
$\Oc_{c_{p,q}}$, and conjectured that these functors are exact and agree
with the corresponding quantum Drinfeld--Sokolov reductions.  We prove
this conjecture.  For the $(p,q)$ branch, we determine the images of all
indecomposable projective objects without assuming exactness, using
projective tensor-product recursions and exact generalized
conformal-residue projections.  Projective faithfulness is then combined
with a one-row Virasoro extension analysis to identify the non-wall images
and to prove full faithfulness on projectives.  The Virasoro input is
matched objectwise with Nakano's logarithmic extension theorem away from
the vacuum edge; the exceptional vacuum edge is handled directly by the
staggered-module theory of Kyt\"ol\"a--Ridout, including the higher prime
singular-vector branch.  Independently, principal Drinfeld--Sokolov
reduction is shown to be exact and faithful on the whole finite-length
affine category and to have the same Weyl, simple, and projective images.
Compatibility with the non-standard affine twist identifies the canonical
nilpotent endomorphisms on projectives and removes the remaining scalar
ambiguity in the comparison of adjacent projective morphisms.  Projective
density for right exact functors then yields a natural isomorphism
\[
 F_{p,q}\cong
 H^0_{DS,+}\big|_{\KL^k(\mathfrak{sl}_2)}.
\]
The same argument after interchanging $p$ and $q$ identifies the second
McRae--Yang functor with the transposed Drinfeld--Sokolov reduction.
\end{abstract}

\maketitle

\section{Introduction}

Quantum Drinfeld--Sokolov reduction relates representations of affine Lie
algebras to representations of $W$-algebras by a BRST construction.  At
admissible non-integral levels, both sides are non-semisimple, and the
resulting functor interacts with extensions, projective objects, and tensor
products in ways that are not visible from the semisimple theory.  For
$\widehat{\mathfrak{sl}}_2$, McRae and Yang obtained a tensor-categorical
counterpart of this reduction and conjectured that the two constructions
coincide.  The aim of this paper is to prove their conjecture.

Let $p,q\geq2$ be relatively prime and put
\begin{equation}\label{eq:parameters}
 k=-2+\frac pq,
 \qquad
 c_{p,q}=1-\frac{6(p-q)^2}{pq}.
\end{equation}
We write $\KL^k(\mathfrak{sl}_2)$ for the finite-length,
grading-restricted generalized module category of the universal affine
vertex algebra $V^k(\mathfrak{sl}_2)$.  McRae and Yang determined its
projective objects and identified the projective subcategory with the
root-of-unity tilting category for quantum $\mathfrak{sl}_2$
\cite{McRaeYang}.  Their universal property produces a right exact braided
tensor functor
\begin{equation}\label{eq:F-intro}
 F_{p,q}:\KL^k(\mathfrak{sl}_2)\longrightarrow\Oc_{c_{p,q}},
 \qquad F_{p,q}(V_2)\cong\Kac_{2,1},
\end{equation}
where $\Oc_{c_{p,q}}$ is the finite-length Virasoro tensor category of
\cite{McRaeSopin}.  They conjectured that $F_{p,q}$ is exact and that,
after forgetting the tensor structure, it is naturally isomorphic to the
quantized Drinfeld--Sokolov reduction; the parameter-interchanged functor
should satisfy the analogous statement
\cite[Conjecture~7.16]{McRaeYang}.

Our main theorem proves both assertions.

\begin{thm}\label{thm:main}
Let $p,q\geq2$ be relatively prime and let $k$ and $c_{p,q}$ be as in
\eqref{eq:parameters}.  There is a natural isomorphism of $\C$-linear
functors
\[
 F_{p,q}\cong H^0_{DS,+}\big|_{\KL^k(\mathfrak{sl}_2)}.
\]
Consequently $F_{p,q}$ is exact and faithful, and its restriction to the
projective subcategory is fully faithful.  The transposed functor of
McRae--Yang satisfies the analogous statement; see
Corollary~\ref{cor:transposed-reduction}.  In particular,
Conjecture~7.16 of McRae--Yang holds.
\end{thm}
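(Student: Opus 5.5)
The strategy is to compare the two functors on the projective subcategory of $\KL^k(\mathfrak{sl}_2)$ and then extend the comparison by right exactness. Let $\mathcal P\subset\KL^k(\mathfrak{sl}_2)$ be the full subcategory of projectives. McRae--Yang identified $\mathcal P$ with the root-of-unity tilting category of quantum $\mathfrak{sl}_2$. Every object $M$ has a projective presentation $P_1\to P_0\to M\to 0$. Any right exact $\C$-linear functor is therefore determined, up to unique natural isomorphism, by its restriction to $\mathcal P$; this is the projective density principle. Both $F_{p,q}$ and $H^0_{DS,+}$ are right exact, the latter because we will show it is exact. So it suffices to build a natural isomorphism $F_{p,q}|_{\mathcal P}\cong H^0_{DS,+}|_{\mathcal P}$. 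Exactness and faithfulness of $F_{p,q}$ then transfer from $H^0_{DS,+}$, and full faithfulness on projectives is proved separately as part of the comparison.

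\textbf{Step 1: images of projectives under $F_{p,q}$.} Let $P_r$ denote the indecomposable projectives. Since $F_{p,q}$ is monoidal with $F_{p,q}(V_2)\cong\Kac_{2,1}$, we use the fusion recursions $V_2\boxt P_r\cong P_{r-1}\oplus P_{r+1}$ (with the usual wall modifications). These express $F_{p,q}(P_r)$ as a direct summand of $\Kac_{2,1}^{\boxt(r-1)}$ in $\Oc_{c_{p,q}}$. The summand is then isolated by exact projections onto generalized $L_0$-eigenspaces, i.e.\ conformal-weight residues modulo the appropriate lattice; these projections commute with $F_{p,q}$ because it preserves conformal weights of top spaces. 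This identifies each $F_{p,q}(P_r)$ as a specific Virasoro module: a Kac module at walls, and a logarithmic (staggered) projective-type module otherwise. For the non-wall cases we use faithfulness of $F_{p,q}$ on projectives. This follows because $\End(P_r)$ is spanned by $\id$ and a nilpotent $x$, and $F(x)\neq 0$ since $F$ is monoidal and detects the self-extension structure. Together with a one-row Virasoro $\Ext^1$ computation, this pins the image down to the unique non-split gluing. Here we invoke Nakano's logarithmic extension theorem away from the vacuum edge. On the vacuum edge we invoke Kytölä--Ridout's staggered module classification, including the case where the relevant singular vector lies on the higher prime branch.

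\textbf{Step 2: the Drinfeld--Sokolov side.} We show that $H^0_{DS,+}$ is exact on $\KL^k$. The route is to prove that higher cohomology vanishes on Weyl modules $V_r$ and on their duals, which gives vanishing on all finite-length objects by dévissage. We then compute $H^0_{DS,+}(V_r)\cong\Kac_{r,1}$, the images of the simples, and hence faithfulness. Exactness combined with the Weyl filtrations of the $P_r$ gives the images of projectives, and these agree with Step 1.

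\textbf{Step 3: matching morphisms.} The category $\mathcal P$ is generated by the maps $P_r\to P_{r\pm1}$ and the nilpotent endomorphisms. Both functors send these generators into one-dimensional Hom spaces of the Virasoro images; that these spaces are one-dimensional is the full-faithfulness statement. Hence the images agree up to scalars, and the remaining task is to choose the identifications $F(P_r)\cong H(P_r)$ consistently. The canonical nilpotent endomorphisms are fixed by compatibility with the non-standard affine twist (spectral flow) intertwining the DS reduction. The adjacent-morphism scalars are then fixed inductively along $r$, using the relations of the tilting category: compositions through $P_{r\pm1}$ must equal the nilpotent endomorphism up to the already fixed constants. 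The transposed statement follows by exchanging $p$ and $q$.

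\textbf{Main obstacle.} I expect the hardest part to be Step 1 at the vacuum edge. There Nakano's theorem does not apply, and the staggered-module parameter (the $\beta$-invariant) must be shown to take the value forced by the projective. The first approach I would try is to compute $\beta$ directly from the DS-reduced module obtained in Step 2. This relies on the Hom-uniqueness established in Step 3 to transport the answer back to $F_{p,q}$.
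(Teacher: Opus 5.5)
Your architecture matches the paper's. It reconstructs $F(P_j)$ from the transported $V_2$-recursions and target-side residue projection, identifies the non-wall images via Nakano and Kyt\"ol\"a--Ridout, proves exactness of $H$ independently, locks the nilpotent loops with the twist, normalizes forward arrows inductively, and finishes with projective density. However, three steps as you state them do not go through.

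\emph{Projective faithfulness.} The justification that $F(N_P)\neq0$ because ``$F$ is monoidal and detects the self-extension structure'' is not an argument. Everything downstream depends on it: logarithmicity of $F(P_{np+r})$, recognition of $P(\tau_{n,r})$, and the Hom lower bounds. Twist preservation only gives $F(\theta^-_P)=\lambda_P(\id+\gamma_PF(N_P))$, which forces $F(N_P)\neq0$ only if you already know $F(P)$ is logarithmic. The paper pulls the kernel of $F$ on projectives back to a tensor ideal of $\TL(\delta)$. For $p\ge3$, every nonzero proper tensor ideal contains $JW_{p-1}$, whose retract is $T_{p-1}\leftrightarrow P_p$; this contradicts $F(P_p)\cong K_p\neq0$. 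For $p=2$, logarithmicity of $K_2\boxtimes K_{2m}$ is imported from McRae--Sopin's wall fusion.

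\emph{Exactness of $H$.} Your d\'evissage does not close. Vanishing of $H^{i\neq0}_{DS,+}$ passes to extensions but not to kernels or cokernels. The non-wall simples are not iterated extensions of Weyl and dual Weyl modules, since their Weyl resolutions along a reflection chain are infinite. The sequences $0\to L_{s_{i+1}}\to V_{s_i}\to L_{s_i}\to0$ and their duals only yield degree-shifting isomorphisms $H^d(L_{s_i})\cong H^{d+1}(L_{s_{i+1}})$ for $d\notin\{-1,0\}$, not vanishing. The paper applies Arakawa's vanishing theorem directly to the simples $L_r$, whose hypothesis is $\langle\widehat\lambda_r+\widehat\rho,(-\alpha+\delta)^\vee\rangle=p/q-r\notin\Z$, and then runs d\'evissage along composition series. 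Weyl modules enter only to compute $H(V_r)\cong K_r$. Separately, exactness alone gives $H(P_{np+r})$ only as an extension of $K_{np+r}$ by $K_{np-r}$. You need $H(\theta^-_P)=e^{2\pi iL^{DS}_0}$ together with faithfulness of $H$ to see that it is logarithmic before recognition applies.

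\emph{The vacuum edge.} Your proposed treatment is circular. Transporting $\beta$ from $H(P_{2p-1})$ to $F(P_{2p-1})$ via ``Hom-uniqueness'' requires an isomorphism $F(P_{2p-1})\cong H(P_{2p-1})$, because full faithfulness of $F$ only controls $\Hom(F(P),F(P'))$, not mixed Hom spaces. That isomorphism and the target Hom upper bounds behind full faithfulness both come from the recognition lemma, which already needs vacuum-edge nonsplitting.

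The missing idea is a statement universal in $E$: every logarithmic extension $0\to K_1\to E\to K_{2p-1}\to0$ has $\beta_E\neq0$, hence a nonsplit distinguished quotient. For $p<q$, the paper uses Kyt\"ol\"a--Ridout's Proposition~7.5. It would force a Kac label $(R,S)$ for $h_{2p-1,1}$ with $p\mid R$ and $q\mid S$, which is impossible modulo $p$. For $p>q$, it uses their higher-prime refinement. This would require the grade-$(2p-1)$ Kac determinant to vanish to order at least $\mathfrak p(2p-2q)+2$, whereas its actual order is $\mathfrak p(2p-2q)+1$.

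You never need the value of $\beta$. Nonvanishing together with the one-dimensionality of $\Ext^1(K_b,S_a)$ and $\Ext^1(K(\tau),S_b)$ already forces $E\cong P(\tau_{1,p-1})$. This conclusion then applies to $F(P)$ and $H(P)$ separately, with no transport between them.
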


The first obstacle is that the known structure of affine projectives cannot
simply be pushed through $F_{p,q}$.  For $n\geq1$ and
$1\leq r\leq p-1$, the projective cover $P_{np+r}$ has a nonsplit Weyl
filtration
\[
 0\longrightarrow V_{np-r}\longrightarrow P_{np+r}
 \longrightarrow V_{np+r}\longrightarrow0,
\]
but right exactness of $F_{p,q}$ gives no information about the first
arrow.  We therefore reconstruct the image of every indecomposable
projective intrinsically in the Virasoro category.  The reconstruction uses
the projective tensor-product recursions for $V_2\boxtimes P_j$, the exact
functor $K_2\boxtimes-$ with $K_2=\Kac_{2,1}$, and exact projection to
generalized conformal-residue sectors modulo $\Z$.  In particular, no
monomorphism in the reconstructed Kac filtrations is obtained by applying
$F_{p,q}$ to a nonsplit source sequence.

A second ingredient is faithfulness on the projective subcategory.  The
source projectives form reflection chains whose adjacent morphisms and
one-dimensional radical loops are explicitly known from the
root-of-unity tilting model.  For $p\geq3$, a Temperley--Lieb tensor-ideal
argument detects the first critical Jones--Wenzl projector and prevents a
nonzero projective morphism from being annihilated; the case $p=2$ is
handled directly from the reconstructed logarithmic objects and the
non-standard affine twist.  This yields projective faithfulness before any
target Hom-space computation is used.

The Virasoro extension theory needed to identify the reconstructed objects
is isolated in a one-row form.  We write $K_j=\Kac_{j,1}$ and let $S_j$
denote its simple quotient.  For
\[
 a=np-r,\qquad b=np+r,\qquad c=(n+2)p-r,
\]
the one-row Kac modules $K_a$ and $K_b$ are matched with consecutive terms
in Nakano's Felder dictionary.  When $a\geq2$, every logarithmic extension
\[
 0\longrightarrow K_a\longrightarrow E\longrightarrow K_b
 \longrightarrow0
\]
that occurs in our category is shown, object by object, to satisfy all
ambient hypotheses of Nakano's logarithmic extension theorem.  Nakano's
theorem is stated for an arbitrary logarithmic representative of the
relevant Ext class and gives the required distinguished-quotient
nonsplitting.  The sole exception to the support argument is the vacuum
edge $a=1$.  There the right Verma module is of braid type; the smaller
prime branch is treated by Kyt\"ol\"a--Ridout's Proposition~7.5, while
for the higher prime branch we use their determinant refinement immediately
following that proposition.  This supplies the one-row recognition theorem
for all ordered coprime pairs $(p,q)$ and leads to the identification
\[
 F_{p,q}(P_{np+r})\cong P(\tau_{n,r})
\]
for every non-wall projective.  The resulting short exact sequences give
the target Hom-space upper bounds, and projective faithfulness supplies the
matching lower bounds.  Hence
\[
 F_{p,q}\big|_{\mathcal P^k}
 \quad\text{is fully faithful}.
\]

Drinfeld--Sokolov reduction is established independently of this
projective reconstruction.  We compare the standard rank-one BRST
conventions, prove higher cohomology vanishing by finite-length
d\'evissage, and obtain an exact faithful functor
\[
 H:=H^0_{DS,+}:\KL^k(\mathfrak{sl}_2)\longrightarrow\Oc_{c_{p,q}}.
\]
It satisfies
\[
 H(V_j)\cong K_j,\qquad H(L_j)\cong S_j,
\]
and the same one-row recognition theorem identifies $H(P_j)$ with the
projective image already found for $F_{p,q}$.  The argument proving
exactness of $H$ is logically independent of $F_{p,q}$ and of the
projective comparison.

Objectwise agreement on projectives is not yet enough for the main
result.  The remaining issue is to make these isomorphisms natural along
the reflection chains.  The non-standard affine twist provides the needed
normalization.  On a non-wall affine projective, the nilpotent part of the
conformal Hamiltonian is the canonical generator of the radical of the
endomorphism algebra.  The BRST exponential identity carries this
canonical element to the corresponding Virasoro nilpotent endomorphism.
Consequently any objectwise comparison intertwines the radical loop.  A
single scalar normalization on one arrow of each adjacent pair then forces
the opposite arrow as well.  This produces a natural isomorphism on all
projectives.  A general projective-density statement for additive right
exact functors extends it uniquely to the whole abelian category, proving
Theorem~\ref{thm:main}.

The theorem also determines the tensor-categorical status of the
McRae--Yang functor.  Exactness and faithfulness follow from the comparison
with $H$, while full faithfulness on projectives has already been obtained
internally.  Transporting the braided tensor structure of $F_{p,q}$ across
the natural isomorphism gives the restricted Drinfeld--Sokolov functor a
braided tensor structure.  This last observation is a transport statement;
no independent monoidal construction on the BRST complex is assumed.
After interchanging $p$ and $q$, the same proof applies.  A rigid
normalization argument identifies the parameter-swapped generating Kac
object, together with its evaluation and coevaluation maps, with the second
normalization in \cite[Theorem~7.15]{McRaeYang}; this yields the second
functor appearing in Conjecture~7.16.

The paper is organized as follows.  Section~\ref{sec:prelim} develops the
source projective data, conformal-residue calculus, the one-row Felder
dictionary, and the Virasoro extension input.  Section~\ref{sec:DS}
proves the independent Drinfeld--Sokolov package: exactness, faithfulness,
Weyl and simple images, and exponential compatibility.  In
Section~\ref{sec:images} we reconstruct all projective images under
$F_{p,q}$ and establish projective faithfulness without assuming
exactness.  Section~\ref{sec:fullness} identifies the logarithmic
non-wall images and computes the target Hom spaces, proving full
faithfulness on projectives.  Section~\ref{sec:comparison} compares
$F_{p,q}$ with Drinfeld--Sokolov reduction on projectives, proves
naturality by locking the radical scalars, and extends the comparison to
the whole category.

\section{Structural input and the one-row Felder dictionary}\label{sec:prelim}

\subsection{Affine projectives, Hom spaces, and the minus twist}

Write $V_r$ for the Weyl module induced from the $r$-dimensional simple
$\mathfrak{sl}_2$-module, $L_r$ for its simple quotient, and
$P_r\twoheadrightarrow L_r$ for the projective cover.  We collect here the
part of the projective structure of $\KL^k(\mathfrak{sl}_2)$ that will be
used later.  By \cite[Theorem~2.2]{McRaeYang},
\[
 V_{np}=L_{np}\qquad(n\ge1).
\]
Theorem~4.8 of \cite{McRaeYang} identifies the projective covers and gives
the projective tensor-product recursions used below, while the nonsplit
Weyl filtration and Loewy structure of the non-wall projectives are those
of \cite[Theorem~4.4]{McRaeYang}.  The projective Hom spaces used below
are the explicit Hom spaces displayed in the proof of
\cite[Theorem~6.6]{McRaeYang}.  Thus the source-side projective data are
read directly from the affine projective category; the tilting equivalence
will enter only in the Temperley--Lieb proof of projective faithfulness.

We call a positive label $j$ a \emph{wall label} if $p\mid j$ and a
\emph{non-wall label} otherwise.  The category has enough projectives.
For $1\leq r<p$ one has $P_r=V_r$, while for every wall label $j$,
\[
 P_j=V_j=L_j;
\]
in particular $P_{np}=V_{np}=L_{np}$ is simple-projective for $n\geq1$.
All of these projective covers are indecomposable; the wall simplicity
is the preceding application of \cite[Theorem~2.2]{McRaeYang}, while the
projective-cover assertions are supplied by \cite[Theorem~4.8]{McRaeYang}.
If $n\geq1$ and $1\leq r\leq p-1$, then
\begin{equation}\label{eq:source-projective-filtration}
 0\longrightarrow V_{np-r}\longrightarrow P_{np+r}
 \longrightarrow V_{np+r}\longrightarrow0
\end{equation}
is non-split.  The monomorphism and epimorphism in
\eqref{eq:source-projective-filtration} will only ever be used up to
nonzero scalar; no normalization of either map is imposed.  Moreover,
\begin{equation}\label{eq:source-composition-factors}
 [P_{np+r}]
 =2[L_{np+r}]+[L_{np-r}]+[L_{(n+2)p-r}].
\end{equation}
The tensor products $V_2\boxtimes P_j$ satisfy the projective recursions of
\cite[Theorem~4.8]{McRaeYang}; for $p=2$ we use the two exceptional
recursions as collected in \cite[Theorem~1.4(1)]{McRaeYang}.  These recursions will be the source-side input in
Section~\ref{sec:images}.

For $1\leq r\leq p-1$ define the reflection chain
\begin{equation}\label{eq:reflection-chain}
 s_{2m}(r)=2mp+r,\qquad
 s_{2m+1}(r)=2(m+1)p-r
 \qquad(m\geq0).
\end{equation}
When the initial label is fixed we suppress it and write $s_i=s_i(r)$.

\begin{lem}[Reflection-chain partition]\label{lem:reflection-partition}
The non-wall positive labels are partitioned by the reflection chains:
\[
 \{j\geq1:p\nmid j\}
 =\bigsqcup_{r=1}^{p-1}\{s_i(r):i\geq0\}.
\]
More explicitly, if $j=np+r$ with $n\geq0$ and $1\leq r\leq p-1$,
then $j=s_n(r)$ when $n$ is even and $j=s_n(p-r)$ when $n$ is odd.
\end{lem}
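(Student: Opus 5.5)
The plan is to argue directly from division with remainder by $p$, in three steps: each $s_i(r)$ is a non-wall label, every non-wall label lies on some chain, and the chains are pairwise disjoint with no repetitions.

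\emph{Step 1: chain members are non-wall labels.} For $1\leq r\leq p-1$ we have
\[
 s_{2m}(r)=2mp+r\equiv r \pmod p,
 \qquad
 s_{2m+1}(r)=(2m+1)p+(p-r)\equiv p-r \pmod p,
\]
and both $r$ and $p-r$ lie in $\{1,\dots,p-1\}$. Hence every $s_i(r)$ is positive and not divisible by $p$.

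\emph{Step 2: covering, via the explicit formula.} A non-wall $j\geq1$ can be written uniquely as $j=np+r$ with $n\geq0$ and $1\leq r\leq p-1$.
If $n=2m$ is even, then $j=2mp+r=s_{2m}(r)$.
If $n=2m+1$ is odd, put $r'=p-r\in\{1,\dots,p-1\}$. Then
\[
 s_{2m+1}(r')=2(m+1)p-(p-r)=(2m+1)p+r=j .
\]
This proves the explicit formula in the statement, and with it the covering.

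\emph{Step 3: disjointness and injectivity.} Write $s_i(r)=i\,p+\rho$ with $\rho=r$ for $i$ even and $\rho=p-r$ for $i$ odd. In both cases $\rho\in\{1,\dots,p-1\}$, so $i=\lfloor s_i(r)/p\rfloor$ and $\rho$ is the residue of $s_i(r)$ modulo $p$.

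Suppose $s_i(r)=s_{i'}(r')$. Uniqueness of division with remainder gives $i=i'$. The residues then agree, and since $r\mapsto p-r$ is injective, $r=r'$ in either parity. So distinct pairs $(i,r)$ give distinct labels, and the union is disjoint.

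The only delicate point is Step 3 in the odd case. There one must note that $2(m+1)p-r$ has quotient $2m+1$, not $2m+2$, precisely because $1\leq r\leq p-1$. Once the quotient is identified this way, the argument is routine bookkeeping.
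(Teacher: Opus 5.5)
Your proof is correct and follows essentially the same route as the paper: write $j=np+r$ by division with remainder, read off $s_n(r)$ or $s_n(p-r)$ according to the parity of $n$, and use uniqueness of the quotient and remainder to get disjointness. Your Step~3 spells out slightly more explicitly the paper's remark that the Euclidean decomposition fixes both the chain and the chain index, via $s_i(r)=ip+\rho$ with $1\le\rho\le p-1$.
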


\begin{proof}
If $n=2m$, then $j=2mp+r=s_{2m}(r)$.  If $n=2m+1$, then
$j=(2m+1)p+r=2(m+1)p-(p-r)=s_{2m+1}(p-r)$.  Conversely each label in
\eqref{eq:reflection-chain} is positive and not divisible by $p$.
The Euclidean decomposition $j=np+r$ with $n\ge0$ and $1\le r<p$ is
unique, and its parity determines uniquely which of $r$ or $p-r$ is the
initial label of the reflection chain.  It therefore determines the chain
and the chain index uniquely; in particular, two distinct reflection chains
cannot meet.
\end{proof}

For later comparison of Hom spaces we record the projective morphisms along
these chains.

\begin{prop}[Source projective Hom package]\label{prop:source-Hom-package}
For a fixed reflection chain \eqref{eq:reflection-chain},
\[
 \dim\Hom(P_{s_i},P_{s_j})=
 \begin{cases}
 2,&i=j\geq1,\\
 1,&i=j=0,\\
 1,&|i-j|=1,\\
 0,&|i-j|\geq2.
 \end{cases}
\]
If $P_{s_i(r)}$ and $P_{s_j(r')}$ belong to distinct reflection chains,
then both Hom spaces between them vanish.  The wall projectives $P_{np}$ are isolated
simple-projectives:
\[
 \End(P_{np})=\C,\qquad
 \Hom(P_{np},P_j)=\Hom(P_j,P_{np})=0\quad(j\ne np).
\]
In particular, $\Hom(P_{np},P_{mp})=0$ for $n\ne m$.

Moreover, for each adjacent pair one may choose nonzero arrows
\[
 x_i:P_{s_i}\longrightarrow P_{s_{i+1}},
 \qquad
 y_i:P_{s_{i+1}}\longrightarrow P_{s_i}
\]
such that
\begin{equation}\label{eq:source-zigzag-composition}
 \nu_{i+1}:=x_i\circ y_i
\end{equation}
is the nonzero radical endomorphism of $P_{s_{i+1}}$.
\end{prop}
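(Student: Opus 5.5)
We will prove the Hom package by computing Hom spaces between projectives through composition multiplicities. For a projective cover $P_j\twoheadrightarrow L_j$ in a finite-length category with enough projectives, $\dim\Hom(P_j,M)=[M:L_j]$ for every $M$ of finite length. We may assume $\End(L_j)=\C$, since simple objects are graded by conformal weight and the Schur lemma holds. So the whole computation reduces to reading off composition factors.

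\emph{Composition factors.} For the non-wall projectives with $n\ge1$, \eqref{eq:source-composition-factors} gives
\[
 [P_{np+r}]=2[L_{np+r}]+[L_{np-r}]+[L_{(n+2)p-r}].
\]
For $1\le r<p$ we have $P_r=V_r$, and its factors are $L_r$ and $L_{2p-r}$, by the Weyl-module structure in \cite[Theorem~2.2]{McRaeYang}. Wall projectives are simple.

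Next we translate these factors into chain language, using Lemma~\ref{lem:reflection-partition}. If $s_i=np+r$ with $i=n\ge1$, then $np-r$ and $(n+2)p-r$ are exactly $s_{i-1}$ and $s_{i+1}$ in the same chain. This holds for both parities of $n$, with the initial label adjusted as in the lemma: for $n=2m$ we get $np-r=s_{2m-1}(r)$ and $(n+2)p-r=s_{2m+1}(r)$, and the odd case is symmetric. Hence $[P_{s_i}]=2[L_{s_i}]+[L_{s_{i-1}}]+[L_{s_{i+1}}]$ for $i\ge1$, and $[P_{s_0}]=[L_{s_0}]+[L_{s_1}]$.

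\emph{Hom dimensions.} The chain formula now follows directly from $\dim\Hom(P_{s_i},P_{s_j})=[P_{s_j}:L_{s_i}]$:
\begin{itemize}
\item $i=j\geq1$ gives $2$, and $i=j=0$ gives $1$;
\item $|i-j|=1$ gives $1$, including the cases involving $s_0$;
\item all other pairs give $0$.
\end{itemize}
Distinct chains share no labels, by the partition lemma, so no composition factor of one chain's projective is headed by a label from another chain, and both Hom spaces vanish. Wall labels never occur as composition factors of non-wall projectives, since all labels appearing above are non-wall. Conversely $P_{np}=L_{np}$ has only itself as a factor, which gives the isolation statements, including $\Hom(P_{np},P_{mp})=0$ for $n\ne m$.

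\emph{Arrows and the radical loop.} This step needs more than dimension counts. Take $x_i$ and $y_i$ to be nonzero spanning elements of the one-dimensional Hom spaces. We must show that $x_i\circ y_i\ne0$ and that it is not invertible. It is not invertible because it factors through $P_{s_i}\not\cong P_{s_{i+1}}$ and $\End(P_{s_{i+1}})$ is local. Hence it lies in the radical, which is one-dimensional since $\dim\End=2$.

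For nonvanishing we use the Loewy structure of \cite[Theorem~4.4]{McRaeYang}: head $L_{s_{i+1}}$, middle $L_{s_i}\oplus L_{s_{i+2}}$ (or only $L_{s_i}$ in the appropriate boundary case), and socle $L_{s_{i+1}}$. The image of $y_i$ is a quotient of $P_{s_{i+1}}$ with head $L_{s_{i+1}}$ that contains $L_{s_i}$ but not the socle, since a nonzero map $y_i$ has image $\cong P_{s_{i+1}}/\ker$. More cleanly, $\operatorname{im} y_i$ is the submodule of $P_{s_i}$ generated by the $L_{s_{i+1}}$ in its second Loewy layer, which has Loewy length two with head $L_{s_{i+1}}$ and socle $L_{s_i}$. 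Then $x_i$ maps the head $L_{s_i}$ of $P_{s_i}$ onto the middle-layer $L_{s_i}$ of $P_{s_{i+1}}$, and this copy of $L_{s_i}$ has the socle $L_{s_{i+1}}$ beneath it. Hence $x_i$ is injective on $\operatorname{im}y_i$, because it is nonzero on the socle $L_{s_i}$ of that submodule, so $x_iy_i\ne0$.

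The main obstacle is precisely this nonvanishing, which requires the rigid Loewy picture (socle equal to $L_{s_{i+1}}$ and the middle layer extension pattern). If needed, we would instead verify it through the tilting equivalence with quantum $\mathfrak{sl}_2$, where the zigzag relation $x_iy_i\neq0$ is standard.
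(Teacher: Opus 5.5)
Your computation of the Hom dimensions is correct, but it takes a different route from the paper. The paper reads every dimension directly off the projective Hom display in the proof of Theorem~6.6 of McRae--Yang. You instead derive the dimensions from $\dim\Hom(P_j,M)=[M:L_j]$, the composition factors \eqref{eq:source-composition-factors}, and $[V_r]=[L_r]+[L_{2p-r}]$ for $r<p$. This is more self-contained. You should state explicitly that $L_j\not\cong L_\ell$ for $j\ne\ell$ (the lowest-weight space of $L_j$ is the $j$-dimensional $\mathfrak{sl}_2$-module), because your cross-chain vanishing rests on it. The real problem is the nonvanishing of $x_i\circ y_i$. The paper takes it from the explicit basis $\End(P_{np+\rho})=\C\id\oplus\C(F^+_{np-\rho}\circ F^-_{np+\rho})$ in that same display, which is essentially your fallback. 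Your Loewy-layer argument for it is wrong.

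\emph{The error.} For $i\ge1$, your claim that $x_i$ is injective on $\operatorname{im}y_i$ is false. The image of $x_i$ can only involve the common factors $L_{s_i}$ and $L_{s_{i+1}}$, each at most once, so $\ker x_i\ne0$. Since $\Soc P_{s_i}\cong L_{s_i}$ is simple, every nonzero submodule of $P_{s_i}$ contains it, so $x_i$ annihilates $\Soc P_{s_i}$. But $\operatorname{im}y_i$ is also a nonzero submodule of $P_{s_i}$, so its socle is exactly this bottom copy of $L_{s_i}$. You have conflated two copies of $L_{s_i}$: the head copy, which $x_i$ sends onto the middle layer of $P_{s_{i+1}}$, and the socle copy, which $x_i$ kills. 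In fact $x_iy_i$ has simple image $\Soc P_{s_{i+1}}$; it is the head-to-socle map.

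\emph{The fix.} The same Loewy input suffices if used differently. The image $\operatorname{im}x_i$ is a nonzero submodule of $P_{s_{i+1}}$, hence contains $\Soc P_{s_{i+1}}\cong L_{s_{i+1}}$. Since $[P_{s_i}:L_{s_{i+1}}]=1$, $\ker x_i$ has no composition factor $L_{s_{i+1}}$. On the other hand, $\operatorname{im}y_i$ is a nonzero quotient of $P_{s_{i+1}}$, so it has head $L_{s_{i+1}}$. Hence $\operatorname{im}y_i\not\subset\ker x_i$ and $x_iy_i\ne0$, uniformly for $i\ge0$.

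Two smaller inaccuracies. For $i=0$, $\operatorname{im}y_0=\Soc V_{s_0}\cong L_{s_1}$ is simple, not of Loewy length two as you state, and $x_0$ is injective. Also, $P_{s_{i+1}}$ always has four composition factors, so there is no boundary case whose middle layer is $L_{s_i}$ alone.
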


\begin{proof}
By the projective Hom-space display in the proof of
\cite[Theorem~6.6]{McRaeYang}, the only nonzero Hom spaces between
indecomposable projectives are the scalar endomorphisms of the wall
projectives, the two-dimensional endomorphism spaces of the positive-index
non-wall projectives, and the one-dimensional Hom spaces between adjacent
vertices in a reflection chain.  All Hom spaces between distinct reflection
chains and all Hom spaces between a wall projective and a different
indecomposable projective vanish.  Translating the labels in that display
into \eqref{eq:reflection-chain} gives exactly the dimensions stated above.

The nonvanishing of the backtracking composite is a separate input from
these dimension statements.  More concretely, for every adjacent pair
there are unique integers $n\ge1$ and $1\le\rho\le p-1$ such that
\[
 s_{i+1}=np+\rho,\qquad s_i=np-\rho.
\]
(The remainder $\rho$ alternates between the initial chain label and its
reflection, so it should not be confused with the fixed parameter used to
name the chain.)  Item~(3) in the Hom-space display in the proof of
\cite[Theorem~6.6]{McRaeYang} gives, in its notation,
\[
 \Hom(P_{np+\rho},P_{np-\rho})=\C F^-_{np+\rho},\qquad
 \Hom(P_{np-\rho},P_{np+\rho})=\C F^+_{np-\rho},
\]
and, crucially, the explicit endomorphism-space basis
\begin{equation}\label{eq:MY-backtracking-basis}
 \End(P_{np+\rho})
 =\C\id_{P_{np+\rho}}
  \oplus
  \C\bigl(F^+_{np-\rho}\circ F^-_{np+\rho}\bigr).
\end{equation}
The same proof later identifies the corresponding tilting backtracking
composition with a nonzero multiple of this second basis element.  We take
\[
 x_i:=F^+_{np-\rho},\qquad y_i:=F^-_{np+\rho}.
\]
Equation~\eqref{eq:MY-backtracking-basis} therefore gives
$x_i\circ y_i\ne0$ directly; this nonvanishing is not inferred from the
Hom-space dimensions.

Here and below, $\rad\End(P)$ denotes the Jacobson radical of the
finite-dimensional algebra $\End(P)$.  At a positive-index vertex
$P_{s_{i+1}}$ the projective Hom computation gives
$\dim_\C\End(P_{s_{i+1}})=2$.  Since $P_{s_{i+1}}$ is indecomposable of
finite length, Fitting's lemma implies that its endomorphism algebra $A$ is
local.  Hence $A/\rad A$ is a finite-dimensional division algebra over the
algebraically closed field $\C$, so $A/\rad A\cong\C$.  It follows that
$\dim_\C\rad A=1$.

The nonzero backtracking composite is not invertible.  Indeed, if
$x_i\circ y_i$ were invertible, then
\[
 g:=(x_i\circ y_i)^{-1}\circ x_i:P_{s_i}\longrightarrow P_{s_{i+1}}
\]
would satisfy $g\circ y_i=\id_{P_{s_{i+1}}}$.  Hence $y_i$ would be a
split monomorphism and $P_{s_{i+1}}$ a direct summand of the
indecomposable object $P_{s_i}$, forcing
$P_{s_i}\cong P_{s_{i+1}}$, contrary to their distinct simple heads.
Hence the composite is a nonzero element of the one-dimensional Jacobson
radical and therefore spans it.  Since both adjacent Hom spaces
are one-dimensional, any other nonzero choice differs by nonzero scalars and
has the same nonvanishing property.  We make no assertion here about the
opposite composite $y_i\circ x_i$; only the positive-index radical loop
$x_i\circ y_i$ will be used below.
\end{proof}

The category is $\Z/2\Z$-graded: if $M$ lies in degree
$\bar\epsilon$, we take $\epsilon\in\{0,1\}$ as its representative; then
the $h_0$-eigenvalues on $M$ lie in $\epsilon+2\Z$.  Besides the standard twist, McRae and Yang use the non-standard twist
\begin{equation}\label{eq:minus-twist}
 \theta^-_M=(-1)^\epsilon e^{2\pi iL^{\mathrm{aff}}_0},
\end{equation}
with the normalization of \cite[Theorem~5.4]{McRaeYang}.  Their functor
$F_{p,q}$ preserves this twist by \cite[Theorem~7.15]{McRaeYang}.  Our representative
$\epsilon\in\{0,1\}$ and the sign in \eqref{eq:minus-twist} agree with
the parity convention used in that theorem.

For later use, we distinguish the ordered-pair notation.  Write
$F^{(p,q)}:=F_{p,q}$ for the functor normalized by the $(2,1)$ Kac object
for the ordered pair $(p,q)$.  Exchanging the parameters gives the
ordered-pair construction $F^{(q,p)}$; under the normalization of
\cite[Theorem~7.15]{McRaeYang}, its distinguished $(2,1)$ Kac object is
$\mathcal K^{(q,p)}_{2,1}\cong\mathcal K^{(p,q)}_{1,2}$, and this is the
second functor denoted $F_{q,p}$ there.  We shall use this identification
in Corollary~\ref{cor:transposed-reduction}.

For $n\geq1$ and $1\leq r\leq p-1$, the projective $P_{np+r}$ is
logarithmic by \cite[Theorem~4.13]{McRaeYang}.  Fix once and for all a nonzero generator
$N_{np+r}\in\rad\End(P_{np+r})$; no normalization of this generator is
assumed.  Proposition~\ref{prop:source-Hom-package} gives
\[
 \End(P_{np+r})=\C\id\oplus\C N_{np+r},\qquad
 \rad\End(P_{np+r})=\C N_{np+r}.
\]
The radical of this finite-dimensional local algebra is a two-sided
nilpotent ideal.  Hence
$N_{np+r}^2\in\rad\End(P_{np+r})=\C N_{np+r}$, say
$N_{np+r}^2=aN_{np+r}$.  Nilpotence forces $a=0$.
Thus
\[
 N_{np+r}^2=0.
\]
For an affine module $M$ and $\xi\in\C/\Z$, write $M^{[\xi]}$ for the sum
of generalized $L^{\mathrm{aff}}_0$-weight spaces whose weights lie in
$\xi$.  We now isolate the relation between the projective radical and the
nilpotent part of the conformal Hamiltonian.

\begin{lem}[Radical and affine conformal nilpotent]\label{lem:source-L0-radical}
Let $P=P_{np+r}$ with $n\geq1$ and $1\leq r\leq p-1$, and put
$N_P:=N_{np+r}$.  Let $\bar\epsilon$ be the $\mathbb Z/2\mathbb Z$-degree of
$P$ and let $\epsilon\in\{0,1\}$ be its chosen representative.  Because
$P$ is grading restricted,
\[
 P=\bigoplus_{\lambda\in\C}P^{\mathrm{gen}}_\lambda
\]
is an algebraic direct sum, and every vector has only finitely many
nonzero generalized-eigenvalue components.  For each generalized eigenvalue $\lambda$, put
\[
 D_\lambda:=
 (L^{\mathrm{aff}}_0-\lambda\,\id)|_{P^{\mathrm{gen}}_\lambda},
 \qquad
 D_P:=\bigoplus_\lambda D_\lambda.
\]
The operators $D_\lambda$ assemble to a
$V^k(\mathfrak{sl}_2)$-module endomorphism $D_P$, which is locally
nilpotent in the algebraic sense: for every $v\in P$ there exists $N(v)$
such that $D_P^{N(v)}v=0$.  Since $P$ is logarithmic, $D_P\ne0$.
Consequently
\[
 D_P=\delta_PN_P\qquad(\delta_P\in\C^\times).
\]
If $\xi_P\in\C/\Z$ is the unique conformal-weight residue of $P$, then
\begin{equation}\label{eq:source-twist-nilpotent}
 \theta^-_P=\lambda_P(\id+2\pi iD_P)
 =\lambda_P(\id+\gamma_PN_P),
 \qquad
 \lambda_P=(-1)^\epsilon e^{2\pi i\xi_P},\qquad
 \gamma_P=2\pi i\,\delta_P\neq0 .
\end{equation}
\end{lem}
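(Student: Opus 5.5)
The plan is to prove the four assertions of Lemma~\ref{lem:source-L0-radical} in order: that $D_P$ is a module endomorphism, that it is locally nilpotent, that it is proportional to $N_P$, and the twist formula.

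\emph{$D_P$ is a module map.} Since $L^{\mathrm{aff}}_0=\omega_{(1)}$ for the Sugawara vector, the commutator formula gives $[L^{\mathrm{aff}}_0,Y(v,x)]=x\frac{d}{dx}Y(v,x)+Y(L^{\mathrm{aff}}_0v,x)$. For homogeneous $v$ of weight $\wt v$ this reads $[L^{\mathrm{aff}}_0,v_{(m)}]=(\wt v-m-1)v_{(m)}$. Hence $v_{(m)}$ maps $P^{\mathrm{gen}}_\lambda$ into $P^{\mathrm{gen}}_{\lambda+\wt v-m-1}$. On that target it intertwines $L^{\mathrm{aff}}_0-\lambda$ with $L^{\mathrm{aff}}_0-(\lambda+\wt v-m-1)$. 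So $v_{(m)}D_\lambda=D_{\lambda+\wt v-m-1}v_{(m)}$, i.e.\ $D_P$ commutes with all modes. Equivalently, $D_P=L^{\mathrm{aff}}_0-(L^{\mathrm{aff}}_0)_{\mathrm{ss}}$, and the semisimple part is itself a module map by the same weight-shift computation.

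\emph{Local nilpotence.} Each $P^{\mathrm{gen}}_\lambda$ is finite dimensional because $P$ is grading restricted. Hence $D_\lambda$ is nilpotent, and any $v$ has finitely many components, so $D_P^{N(v)}v=0$.

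\emph{$D_P=\delta_PN_P$.} First, $D_P\neq0$: "logarithmic" means exactly that $L^{\mathrm{aff}}_0$ is not semisimple on $P$. Now write $D_P=\alpha\,\id+\beta N_P$ using $\End(P)=\C\id\oplus\C N_P$. Pick any nonzero $v$. Then $D_P^{M}v=0$ for some $M$, while $(\alpha\,\id+\beta N_P)^M=\alpha^M\id+M\alpha^{M-1}\beta N_P$ because $N_P^2=0$. If $\alpha\neq0$, this is invertible, a contradiction. So $\alpha=0$ and $D_P=\beta N_P$ with $\beta=:\delta_P\neq0$.

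\emph{Twist formula.} The generalized weights of $P$ all lie in one residue $\xi_P$, since $P$ is indecomposable and the $\C/\Z$-decomposition is by submodules. Thus on $P^{\mathrm{gen}}_\lambda$,
\[
e^{2\pi iL^{\mathrm{aff}}_0}=e^{2\pi i\lambda}e^{2\pi iD_\lambda}=e^{2\pi i\xi_P}(\id+2\pi iD_\lambda),
\]
where the exponential series truncates because $D_P^2=\delta_P^2N_P^2=0$. Multiplying by $(-1)^\epsilon$ gives \eqref{eq:source-twist-nilpotent}, with $\gamma_P=2\pi i\delta_P$.

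\emph{Main obstacle.} The only delicate point is the first step: checking that $D_P$ commutes with all vertex operator modes. This rests entirely on the weight-shift identity for the Sugawara $L^{\mathrm{aff}}_0$, which requires $k\neq-2$; that holds since $p/q\neq0$. The remaining steps are formal consequences of the two-dimensional local endomorphism algebra.
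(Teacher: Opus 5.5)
Your proposal is correct and follows essentially the same route as the paper. It uses the weight-shift commutator $[L^{\mathrm{aff}}_0,v_{(m)}]=(\wt v-m-1)v_{(m)}$ to make $D_P$ a module map. It then writes $D_P=\alpha\,\id+\beta N_P$ and forces $\alpha=0$ from local nilpotence and $N_P^2=0$. Finally, it splits $e^{2\pi iL^{\mathrm{aff}}_0}$ into the scalar $e^{2\pi i\xi_P}$ times $\id+2\pi iD_P$, using the single residue supplied by indecomposability, which is exactly the paper's $S_P/D_P$ argument. Your explicit derivation of local nilpotence from the finite-dimensionality of the generalized eigenspaces is a small detail the paper leaves implicit.
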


\begin{proof}
Let $v\in V^k(\mathfrak{sl}_2)$ be conformally homogeneous and put
\[
 \Delta=\wt v-m-1.
\]
The universal affine vertex algebra is $\Z_{\ge0}$-graded, so
$\wt v\in\Z_{\ge0}$ and therefore $\Delta\in\Z$.  The commutator formula
$[L^{\mathrm{aff}}_0,v_m]=\Delta\,v_m$ implies
\[
 \bigl(L^{\mathrm{aff}}_0-(\lambda+\Delta)\bigr)^Nv_m
 =v_m\bigl(L^{\mathrm{aff}}_0-\lambda\bigr)^N
 \qquad(N\ge1)
\]
on $P^{\mathrm{gen}}_\lambda$.  In particular,
\[
 D_{\lambda+\Delta}v_m=v_mD_\lambda.
\]
Thus the operators $D_\lambda$ assemble to an affine-module endomorphism
$D_P$.

Write $D_P=a\id+bN_P$.  If $a\neq0$, then
\[
 D_P=a\bigl(\id+(b/a)N_P\bigr)
\]
is invertible because $N_P^2=0$.  This contradicts local nilpotence:
if $0\ne v\in P$ and $D_P^m v=0$, invertibility of $D_P^m$ would force
$v=0$.  Hence $a=0$.  Since $P$ is logarithmic by
\cite[Theorem~4.13]{McRaeYang}, $D_P\neq0$, so $b=\delta_P\in\C^\times$.

Finally, every vector of $P$ has only finitely many generalized
$L^{\mathrm{aff}}_0$-eigencomponents, so the regrouping
\[
 P=\bigoplus_{\xi\in\C/\Z}P^{[\xi]}
\]
is an algebraic direct sum.  It is a direct sum of affine submodules
because every affine mode changes
conformal weight by an integer.  If two residue summands were nonzero,
one of them and the direct sum of all the remaining summands would give a
nontrivial direct-sum decomposition of $P$.  Indecomposability therefore
forces exactly one summand to be nonzero; denote its class by $\xi_P$.
Here $e^{2\pi i\xi_P}$ means $e^{2\pi i\lambda}$ for any representative
$\lambda$ of $\xi_P$.  Set
\[
 S_P:=L^{\mathrm{aff}}_0-D_P.
\]
On $P^{\mathrm{gen}}_\lambda$ one has $S_P=\lambda\id$.  Since every
actual generalized eigenvalue of $P$ belongs to the unique residue class
$\xi_P$, it follows algebraically, on each generalized eigenspace and hence
on their direct sum, that
\[
 e^{2\pi iS_P}=e^{2\pi i\xi_P}\id_P.
\]
Moreover $D_P^2=\delta_P^2N_P^2=0$, so
\[
 e^{2\pi iD_P}=\id+2\pi iD_P
 =\id+2\pi i\delta_PN_P.
\]
Since $S_P$ and $D_P$ commute, combining the two exponentials with
\eqref{eq:minus-twist} proves \eqref{eq:source-twist-nilpotent}.
\end{proof}

\subsection{One-row Virasoro Kac modules and conformal-residue sectors}

Let $\Kac_{r,s}$ denote the Virasoro Kac module of central charge
$c_{p,q}$, and abbreviate
\[
 K_j:=\Kac_{j,1},
 \qquad
 S_j:=L(c_{p,q},h_{j,1}),
 \qquad
 h_j:=h_{j,1}.
\]
Thus
\begin{equation}\label{eq:weights}
 h_j=\frac{(qj-p)^2-(p-q)^2}{4pq}.
\end{equation}
For the arithmetic formulas below only, we extend $h_j$ to every
$j\in\Z$ by \eqref{eq:weights}.  No Virasoro module $K_j$ or $S_j$ is
attached to a non-positive label.  The symbol $K_0=0$ will be used only as
a bookkeeping convention in recursion formulas; it is not a Virasoro Kac
module.  The labels $S_j$
are likewise labels rather than an assertion that all $S_j$ are pairwise
nonisomorphic; the only collision relevant below is isolated explicitly
in Corollary~\ref{cor:positive-label-collisions}.

We use the category $\Oc_{c_{p,q}}$ of McRae--Sopin: its objects are the
finite-length grading-restricted generalized Virasoro modules of central
charge $c_{p,q}$ whose simple composition factors are irreducible quotients
of reducible Verma modules; see \cite[Theorem~3.4]{McRaeSopin}.  It is a
$\C$-linear abelian category.  If $S,T$ are simple highest-weight Virasoro
modules, then $\dim_\C\Hom(S,T)\le1$: a morphism is determined by the image
of a nonzero highest-weight vector of lowest conformal weight and can be
nonzero only when the two simples are isomorphic.  We prove Hom-finiteness
by induction on
\[
 \ell(M)+\ell(N).
\]
If both $M$ and $N$ are simple, the preceding observation gives the base
case.  Suppose $\ell(M)+\ell(N)>2$.  If $\ell(N)>1$, choose
\[
 0\longrightarrow N'\longrightarrow N\longrightarrow T\longrightarrow0
\]
with $T$ simple.  Left exactness of $\Hom(M,-)$ gives
\[
 0\longrightarrow\Hom(M,N')\longrightarrow\Hom(M,N)
 \longrightarrow\Hom(M,T).
\]
Both outer spaces are finite dimensional by induction, since
$\ell(M)+\ell(N')<\ell(M)+\ell(N)$ and
$\ell(M)+1<\ell(M)+\ell(N)$.  Hence $\Hom(M,N)$ is finite dimensional.
If $\ell(N)=1$, then $\ell(M)>1$; choose
\[
 0\longrightarrow M'\longrightarrow M\longrightarrow S\longrightarrow0
\]
with $S$ simple.  The contravariant left-exact functor $\Hom(-,N)$ gives
\[
 0\longrightarrow\Hom(S,N)\longrightarrow\Hom(M,N)
 \longrightarrow\Hom(M',N),
\]
and again both outer terms are finite dimensional by induction.  Thus all
Hom spaces in $\Oc_{c_{p,q}}$ are finite dimensional.

Every finite-length object is a finite direct sum of indecomposables, by
induction on its length.  Fitting's lemma implies that the endomorphism
algebra of each indecomposable finite-length object is local.  Together
with Hom-finiteness, this proves that $\Oc_{c_{p,q}}$ is Krull--Schmidt.
We call an object \emph{logarithmic} when $L_0$ is not semisimple.

Every one-row Kac module $K_j$ is ordinary, hence $L_0$-semisimple.
The one-row structure results of \cite{McRaeSopin} give that $K_{np}$ is
simple and, for $n\geq0$ and $1\leq r\leq p-1$,
\begin{equation}\label{eq:Kac-length-two}
 0\longrightarrow S_{(n+2)p-r}\longrightarrow K_{np+r}
 \longrightarrow S_{np+r}\longrightarrow0
\end{equation}
is non-split; see the $s=1$ specialization of the boundary and bulk
composition series in \cite[Section~2.3]{McRaeSopin}, together with
\cite[Remark~2.2]{McRaeSopin}.  Since $K_{np+r}$ is generated by its
lowest-weight vector, its simple head is $S_{np+r}$.  The one-row
composition series has length two with remaining factor
$S_{(n+2)p-r}$, which fixes the orientation in
\eqref{eq:Kac-length-two}.  We shall repeatedly use the reflected
specialization obtained by writing
$np-r=(n-1)p+(p-r)$: for $n\ge1$,
\begin{equation}\label{eq:Kac-reflected-length-two}
 0\longrightarrow S_{np+r}\longrightarrow K_{np-r}
 \longrightarrow S_{np-r}\longrightarrow0.
\end{equation}
By \cite[Theorem~5.10]{McRaeSopin}, the individual object $K_2$ is rigid
and self-dual.  Thus $K_2^\vee\simeq K_2$, and its rigidity gives natural
adjunctions
\[
 \Hom(K_2\boxtimes X,Y)\cong\Hom(X,K_2^\vee\boxtimes Y),
 \qquad
 \Hom(X,K_2\boxtimes Y)\cong\Hom(K_2^\vee\boxtimes X,Y).
\]
Hence $K_2\boxtimes-$ is simultaneously a left and a right adjoint.  It is
therefore both left exact and right exact, hence exact.  Their fusion theorem gives
\begin{equation}\label{eq:K2-fusion}
 0\longrightarrow K_{j-1}\longrightarrow K_2\boxtimes K_j
 \longrightarrow K_{j+1}\longrightarrow0,
\end{equation}
which splits if and only if $p\nmid j$
\cite[Theorem~6.5]{McRaeSopin}.

We next collect the elementary conformal-weight arithmetic needed
throughout the rest of the paper.

\begin{lem}[Weight arithmetic]\label{lem:weight-arithmetic}
For arbitrary integers $u,v$,
\begin{equation}\label{eq:weight-difference}
 h_u-h_v=\frac{(u-v)(q(u+v)-2p)}{4p}.
\end{equation}
In particular,
\begin{equation}\label{eq:weight-equality}
 h_j=h_\ell
 \quad\Longleftrightarrow\quad
 j=\ell\ \text{ or }\ q(j+\ell)=2p.
\end{equation}
For $n\geq1$ and $1\leq r\leq p-1$,
\begin{align}
 h_{np+r-1}-h_{np-r+1}&=(r-1)(nq-1),\label{eq:parallel-one}\\
 h_{np+r+1}-h_{np-r-1}&=(r+1)(nq-1),\label{eq:parallel-two}\\
 h_{np+r-1}-h_{np-r-1}&=r(nq-1)-\frac{rq}{p},\label{eq:cross-one}\\
 h_{np+r+1}-h_{np-r+1}&=r(nq-1)+\frac{rq}{p}.\label{eq:cross-two}
\end{align}
We shall also use repeatedly
\begin{align}
 h_{np+r}-h_{np-r}&=r(nq-1),\label{eq:strip-central-difference}\\
 h_{(n+2)p-r}-h_{np+r}
   &=(p-r)((n+1)q-1).\label{eq:strip-upper-difference}
\end{align}
The first two differences in the preceding four-line display are integral
and the last two are nonintegral.
For the wall calculation put
\[
 A=(n-1)p,\quad B=(n+1)p,\quad C=(n-1)p+2,\quad D=(n+1)p-2.
\]
Then
\begin{align}
 h_B-h_A&=p(nq-1),\label{eq:wall-parallel-one}\\
 h_D-h_C&=(p-2)(nq-1),\label{eq:wall-parallel-two}\\
 h_D-h_A&=(p-1)(nq-1)-\frac{(p-1)q}{p},\label{eq:wall-cross-one}\\
 h_B-h_C&=(p-1)(nq-1)+\frac{(p-1)q}{p}.\label{eq:wall-cross-two}
\end{align}
The crossed wall differences are nonintegral for $p\geq3$.  If $p=2$,
their fractional parts are $\pm q/2$, which are nonzero modulo $\Z$
because $q$ is odd.
\end{lem}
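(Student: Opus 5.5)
The plan is direct algebra from \eqref{eq:weights}, organized so that every listed identity is a specialization of the single factorization \eqref{eq:weight-difference}. Since
\[
 h_u-h_v=\frac{(qu-p)^2-(qv-p)^2}{4pq},
\]
a difference of squares gives $(qu-p)^2-(qv-p)^2=q(u-v)\bigl(q(u+v)-2p\bigr)$. Cancelling $q$ yields \eqref{eq:weight-difference}. Then \eqref{eq:weight-equality} is immediate: the difference vanishes iff $u=v$ or $q(u+v)=2p$. No case distinction is needed, since $p,q\ne0$.

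Each of the remaining identities is obtained by substituting into \eqref{eq:weight-difference} and recording $u-v$ and $u+v$.

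\textbf{The strip and row identities.} For \eqref{eq:parallel-one} take $u-v=2(r-1)$ and $u+v=2np$. The factor $q(u+v)-2p$ becomes $2p(nq-1)$, and the quotient by $4p$ gives $(r-1)(nq-1)$. Likewise:
\begin{itemize}
\item \eqref{eq:parallel-two} uses $u-v=2(r+1)$ and $u+v=2np$.
\item \eqref{eq:strip-central-difference} uses $u-v=2r$ and $u+v=2np$.
\item \eqref{eq:cross-one} uses $u-v=2r$ and $u+v=2np-2$, so $q(u+v)-2p=2p(nq-1)-2q$ and the result is $r(nq-1)-rq/p$.
\item \eqref{eq:cross-two} uses $u-v=2r$ and $u+v=2np+2$.
\item \eqref{eq:strip-upper-difference} uses $u-v=2(p-r)$ and $u+v=2(n+1)p$.
\end{itemize}

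\textbf{The wall identities.} These come from the same substitutions with $r$ replaced by $p$ or $p-1$:
\begin{itemize}
\item \eqref{eq:wall-parallel-one} uses $B-A=2p$ and $A+B=2np$.
\item \eqref{eq:wall-parallel-two} uses $D-C=2(p-2)$ and $C+D=2np$.
\item \eqref{eq:wall-cross-one} uses $D-A=2(p-1)$ and $A+D=2np-2$.
\item \eqref{eq:wall-cross-two} uses $B-C=2(p-1)$ and $B+C=2np+2$.
\end{itemize}

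\textbf{Integrality.} The parallel differences are products of integers, so they are integral. For the crossed differences $\pm rq/p$ with $1\le r\le p-1$: since $\gcd(p,q)=1$, $p\mid rq$ would force $p\mid r$, which is impossible. Hence $rq/p\notin\Z$.

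The crossed wall differences are $\pm(p-1)q/p$. For $p\ge3$ the same argument applies, since $1\le p-1<p$. For $p=2$ the fractional part is $\pm q/2$, and $q$ is odd because it is coprime to $2$, so this is nonzero modulo $\Z$. Note that for $p\ge3$ this is already a special case of the $r=p-1$ argument; the separate statement merely records the edge case.

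The only point needing care is bookkeeping of signs in $u+v$. No step is a genuine obstacle; the integrality claim is the one place where the coprimality hypothesis is actually used.
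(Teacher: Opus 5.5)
Your proposal is correct and is essentially the paper's proof: you subtract the Kac weight formula and factor a difference of squares to get \eqref{eq:weight-difference}, read off \eqref{eq:weight-equality}, specialize $u-v$ and $u+v$ for each listed pair, and use $\gcd(p,q)=1$ for the nonintegrality of the crossed differences. Your substitutions check out in every case, and you rightly note that the $p=2$ wall case is covered by the same coprimality argument.
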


\begin{proof}
Formula \eqref{eq:weight-difference} follows by subtracting
\eqref{eq:weights}.  Equality of weights gives
$(qj-p)^2=(q\ell-p)^2$, hence \eqref{eq:weight-equality}.  The remaining
formulas follow from \eqref{eq:weight-difference}; coprimality of $p$ and
$q$ gives the asserted nonintegrality.
\end{proof}

\begin{cor}[Positive-label collisions]\label{cor:positive-label-collisions}
For $j,\ell>0$,
\[
 S_j\cong S_\ell
 \quad\Longleftrightarrow\quad
 \bigl(j=\ell\bigr)
 \ \text{or}\ 
 \bigl(q=2\ \text{and}\ j+\ell=p\bigr).
\]
In the second case $p$ is odd.
\end{cor}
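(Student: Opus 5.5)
The plan is to reduce isomorphism of the simple modules to equality of lowest conformal weights, and then to analyze that equality using \eqref{eq:weight-equality} from Lemma~\ref{lem:weight-arithmetic}.

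\emph{Reduction to weights.} Each $S_j=L(c_{p,q},h_j)$ is the irreducible highest-weight Virasoro module. Its isomorphism class is determined by the central charge, which is fixed at $c_{p,q}$, together with the $L_0$-eigenvalue on a highest-weight vector. Conversely, isomorphic simple highest-weight modules have the same lowest weight. Hence $S_j\cong S_\ell$ if and only if $h_j=h_\ell$.

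\emph{Arithmetic.} By \eqref{eq:weight-equality}, $h_j=h_\ell$ holds if and only if $j=\ell$ or $q(j+\ell)=2p$. Suppose $j\neq\ell$. Then $j+\ell\geq 1$ is an integer with $q(j+\ell)=2p$, so $q\mid 2p$. Since $\gcd(p,q)=1$, this forces $q\mid 2$, and $q\geq2$ gives $q=2$. Substituting back yields $j+\ell=p$.

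Conversely, if $q=2$ and $j+\ell=p$, then $q(j+\ell)=2p$, so $h_j=h_\ell$ and $S_j\cong S_\ell$. Finally, $q=2$ together with coprimality forces $p$ to be odd. This gives the last sentence of the statement, and it also shows that the pairs $j\neq\ell$ with $j+\ell=p$ genuinely occur, since $j=\ell$ would require $p=2j$ to be even.

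\emph{Expected difficulty.} There is no real obstacle. The only point needing care is that the labels are positive, so $j+\ell>0$ and the divisibility argument is legitimate. For non-positive labels the relation $q(j+\ell)=2p$ would allow other solutions, but those labels are excluded by hypothesis.
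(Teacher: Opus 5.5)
Your proof is correct and matches the paper's argument: you reduce $S_j\cong S_\ell$ to $h_j=h_\ell$ using uniqueness of the simple highest-weight module at fixed central charge, then apply \eqref{eq:weight-equality} and coprimality to get $q=2$, $j+\ell=p$, and $p$ odd. One small correction to your closing remark: $q\mid 2p$, and hence $q=2$ and $j+\ell=p$, follow for arbitrary integers $j,\ell$, so positivity is needed only to ensure that $S_j$ and $S_\ell$ are defined, not for the divisibility step.
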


\begin{proof}
For fixed central charge $c_{p,q}$, an irreducible highest-weight Virasoro
module is uniquely determined by its lowest conformal weight.  Thus \eqref{eq:weight-equality} reduces
the question to $j=\ell$ or $q(j+\ell)=2p$.  In the second case
coprimality and $q\ge2$ force $q=2$ and $j+\ell=p$.  Conversely these
relations give $h_j=h_\ell$, hence $S_j\cong S_\ell$.  This is an
equality-of-lowest-weights statement, not merely a congruence modulo
$\Z$.  Finally $\gcd(p,2)=1$ implies that $p$ is odd.
\end{proof}

We shall use two different notions below.  Equality $h_j=h_\ell$ governs
isomorphism of simple highest-weight modules, whereas congruence
$h_j\equiv h_\ell\pmod{\Z}$ governs the conformal-residue decomposition.
The latter is strictly weaker.

\begin{cor}[Residue separation]\label{cor:residue-separation}
Let $n\ge1$ and $1\le r\le p-1$, and put $a=np-r$, $b=np+r$.  Then
\[
 [h_{a+1}]=[h_{b-1}],\qquad
 [h_{a-1}]=[h_{b+1}]
 \quad\text{in }\C/\Z,
\]
and these two residue classes are distinct.  For the wall labels
\[
 A=(n-1)p,\quad B=(n+1)p,\quad
 C=(n-1)p+2,\quad D=(n+1)p-2,
\]
one has
\[
 [h_A]=[h_B],\qquad [h_C]=[h_D],
\]
and the two displayed residue classes are distinct (with the terms of
label $0$ omitted when $n=1$).  These assertions remain valid for $p=2$
with the interpretation in Lemma~\ref{lem:weight-arithmetic}.
\end{cor}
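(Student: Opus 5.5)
The plan is to read everything off the weight arithmetic of Lemma~\ref{lem:weight-arithmetic}. A residue class in $\C/\Z$ is determined by any representative, so each equality amounts to showing that a certain weight difference is an integer. Each distinctness amounts to showing that a certain weight difference is not an integer.

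For the non-wall part, put $a=np-r$ and $b=np+r$. Then $a+1=np-r+1$ and $b-1=np+r-1$, so \eqref{eq:parallel-one} gives
\[
 h_{b-1}-h_{a+1}=(r-1)(nq-1)\in\Z .
\]
Hence $[h_{a+1}]=[h_{b-1}]$. Likewise $a-1=np-r-1$ and $b+1=np+r+1$, so \eqref{eq:parallel-two} gives
\[
 h_{b+1}-h_{a-1}=(r+1)(nq-1)\in\Z ,
\]
and hence $[h_{a-1}]=[h_{b+1}]$.

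For distinctness, compare one member of each class. Formula \eqref{eq:cross-one} gives
\[
 h_{b-1}-h_{a-1}=r(nq-1)-\frac{rq}{p}.
\]
Since $1\le r\le p-1$ and $\gcd(p,q)=1$, we have $p\nmid rq$, so $rq/p\notin\Z$ and the two classes differ. This already covers $p=2$, where $r=1$ and the fractional part is $q/2$ with $q$ odd.

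One point needs care. The labels $a-1$ and $a+1$ may be $0$; for instance $a-1=0$ when $n=1$ and $r=p-1$. Since $h_j$ has been extended to all $j\in\Z$ by \eqref{eq:weights}, the formulas remain valid as arithmetic identities, so no case split is needed.

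For the wall labels $A,B,C,D$ the argument is the same. Formula \eqref{eq:wall-parallel-one} gives $h_B-h_A=p(nq-1)\in\Z$, and \eqref{eq:wall-parallel-two} gives $h_D-h_C=(p-2)(nq-1)\in\Z$. This proves $[h_A]=[h_B]$ and $[h_C]=[h_D]$. For distinctness, use the crossed difference
\[
 h_B-h_C=(p-1)(nq-1)+\frac{(p-1)q}{p}.
\]
For $p\ge3$ its fractional part $(p-1)q/p$ is nonintegral by coprimality. For $p=2$ the fractional part is $q/2\notin\Z$, as recorded at the end of Lemma~\ref{lem:weight-arithmetic}.

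It remains to handle $n=1$, where $A=0$. Here I compare $B$ with $C$ using \eqref{eq:wall-cross-two}, which does not involve $A$, so omitting $A$ costs nothing. If $p=2$ then $C=(n-1)p+2=np=B$ and $D=(n+1)p-2=np=B$. The crossed formula still applies as an identity in $\Z$, since the problem is only about labels, not modules.

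There is no real obstacle here. The only step needing attention is that the nonintegrality of $rq/p$ and $(p-1)q/p$ rests on $\gcd(p,q)=1$ together with $0<r<p$.
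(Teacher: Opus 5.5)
Your proposal is correct and is essentially the paper's own proof. The congruences are the integral parallel differences \eqref{eq:parallel-one}, \eqref{eq:parallel-two}, \eqref{eq:wall-parallel-one}, \eqref{eq:wall-parallel-two}, and distinctness is the nonintegrality, by $\gcd(p,q)=1$, of the crossed differences \eqref{eq:cross-one} and \eqref{eq:wall-cross-two}; one crossed difference per pair of classes suffices.

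Fix one slip in your $p=2$ remark. There $C=D=np$, but $B=(n+1)p\neq np$; if $C$ were equal to $B$, the crossed difference would be $0$, contradicting distinctness. Also, \eqref{eq:wall-cross-two} is an identity of rational numbers, not of integers.
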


\begin{proof}
The congruences are exactly
\eqref{eq:parallel-one}--\eqref{eq:parallel-two} and
\eqref{eq:wall-parallel-one}--\eqref{eq:wall-parallel-two}.  Distinctness
follows from the nonintegrality of the crossed differences in
\eqref{eq:cross-one}--\eqref{eq:cross-two} and
\eqref{eq:wall-cross-one}--\eqref{eq:wall-cross-two}.
\end{proof}

\begin{lem}[Residue support of one-row Kac modules]\label{lem:Kac-residue-support}
For every $j\ge1$,
\[
 K_j=K_j^{[h_j]}.
\]
\end{lem}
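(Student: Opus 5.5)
The claim is that every generalized $L_0$-weight of $K_j$ is congruent to $h_j$ modulo $\Z$. I will show this directly from the fact that $K_j$ is generated by a single lowest-weight vector.

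First I recall the standard presentation of the Kac module: $K_j=\Kac_{j,1}$ is a quotient of the Verma module $M(c_{p,q},h_j)$, namely by the submodule generated by the singular vector at level $j\cdot1$. I only need that $K_j$ is generated by the image $v$ of the highest-weight vector, with $L_0v=h_jv$. Equivalently, $K_j$ is a cyclic highest-weight module. This is the same generation property invoked above to identify the head in \eqref{eq:Kac-length-two}.

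Next, by the PBW theorem for the Virasoro algebra, $K_j$ is spanned by the monomials $L_{-n_1}\cdots L_{-n_k}v$ with $n_1\ge\cdots\ge n_k\ge1$. The commutator $[L_0,L_{-n}]=nL_{-n}$ shows that each such monomial is an honest $L_0$-eigenvector with eigenvalue $h_j+\sum n_i\in h_j+\Z_{\ge0}$. Hence $L_0$ acts semisimply on $K_j$ (consistent with the earlier remark that one-row Kac modules are ordinary), and every eigenvalue lies in the class $[h_j]$. The sum of generalized weight spaces with weights in $[h_j]$ is therefore all of $K_j$, so $K_j=K_j^{[h_j]}$.

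There is no real obstacle. The one point to check is that the notation $M^{[\xi]}$, introduced above for affine modules, is meant in the same sense for Virasoro modules. The alternative argument via the composition series \eqref{eq:Kac-length-two}, using the integrality $h_{(n+2)p-r}-h_{np+r}\in\Z$ from \eqref{eq:strip-upper-difference}, is unnecessary once cyclicity is available. For wall labels $j=np$ the module $K_j$ is simple highest-weight, and the same PBW argument applies verbatim.
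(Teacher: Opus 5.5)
Your proof is correct and is essentially the paper's argument: the paper likewise uses that $K_j=\Kac_{j,1}$ (second label $s=1<q$, so in the Verma-quotient regime of McRae--Sopin's Remark~2.2) is generated by its lowest-weight vector of weight $h_j$, and that Virasoro modes shift conformal weight by integers, so every weight lies in $h_j+\Z_{\ge0}$. Your PBW spanning set only spells out that last step; the $L_0$-semisimplicity you note along the way is true but not needed.
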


\begin{proof}
Since the second Kac label is $s=1<q$, the one-row module
$K_j=\Kac_{j,1}$ lies in the Verma-quotient regime described in
\cite[Remark~2.2]{McRaeSopin}; in particular it is generated by its
lowest-weight vector of weight $h_j$.
Virasoro modes change conformal weight by integers, so every weight of
$K_j$ belongs to $h_j+\Z_{\ge0}$.
\end{proof}

\begin{lem}[Conformal-residue decomposition]\label{lem:block-projection}
For $M\in\Oc_{c_{p,q}}$ and
$\xi\in\C/\Z$, let $M^{[\xi]}$ be the sum of the generalized
$L_0$-weight spaces whose weights belong to $\xi$.  Then
\[
 M=\bigoplus_{\xi\in\C/\Z}M^{[\xi]}
\]
with only finitely many nonzero summands.  Each $M^{[\xi]}$ is a Virasoro
submodule, and the projection $M\mapsto M^{[\xi]}$ is exact.  Moreover,
\[
 \Hom(M^{[\xi]},N^{[\eta]})=0\qquad(\xi\neq\eta),
\]
and if $M,N$ are supported in distinct residue classes, then the Yoneda extension group vanishes:
\begin{equation}\label{eq:cross-residue-Ext}
 \Ext^1_{\Oc_{c_{p,q}}}(M,N)=0.
\end{equation}
We refer to the summands $M^{[\xi]}$ as \emph{conformal-residue sectors}.
No assertion about the indecomposable categorical blocks of
$\Oc_{c_{p,q}}$ is intended.
\end{lem}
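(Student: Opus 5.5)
The plan is to imitate the affine argument in the proof of Lemma~\ref{lem:source-L0-radical}, now on the Virasoro side.

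\textbf{Decomposition and submodule property.} Since $M$ is grading restricted, it is the algebraic direct sum of its generalized $L_0$-eigenspaces $M^{\mathrm{gen}}_\lambda$. Regrouping these by residue class gives $M=\bigoplus_\xi M^{[\xi]}$. The commutator $[L_0,L_m]=-mL_m$ gives
\[
 (L_0-(\lambda-m))^NL_m=L_m(L_0-\lambda)^N ,
\]
so $L_m$ maps $M^{\mathrm{gen}}_\lambda$ into $M^{\mathrm{gen}}_{\lambda-m}$. Since $m\in\Z$, each sector $M^{[\xi]}$ is a Virasoro submodule.

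\textbf{Finiteness.} Only finitely many sectors are nonzero. I would prove this by induction on length. A simple object in $\Oc_{c_{p,q}}$ is a highest-weight quotient, so it has weights in $h+\Z_{\ge0}$ and lies in a single sector. For a short exact sequence $0\to M'\to M\to M''\to 0$, generalized eigenspaces are compatible with the maps, so the support of $M$ is contained in the union of the supports of $M'$ and $M''$.

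\textbf{Exactness and Hom vanishing.} Any Virasoro morphism commutes with $L_0$, hence preserves generalized eigenspaces and sectors. So $M\mapsto M^{[\xi]}$ is the functor of composing with a projection that is natural in $M$. Exactness follows because a sequence $0\to M'\to M\to M''\to0$ splits as a direct sum over $\xi$ of its sector pieces. A sequence of graded vector spaces, with compatible sectorwise decompositions, is exact if and only if each sector piece is exact. The vanishing $\Hom(M^{[\xi]},N^{[\eta]})=0$ for $\xi\ne\eta$ holds because a morphism must send $M^{[\xi]}$ into $N^{[\xi]}$, and $N^{[\eta]}\cap N^{[\xi]}=0$.

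\textbf{Ext vanishing.} Take a Yoneda extension $0\to N\to E\to M\to0$ with $N=N^{[\eta]}$ and $M=M^{[\xi]}$, where $\xi\ne\eta$. Apply the exact sector projections. Projecting to $\xi$ gives $E^{[\xi]}\cong M$ via the quotient map, since $N^{[\xi]}=0$. Projecting to $\eta$ gives $E^{[\eta]}\cong N$. Then $E=E^{[\xi]}\oplus E^{[\eta]}$, and the inverse of $E^{[\xi]}\xrightarrow{\sim}M$ is a section of the extension. Hence the extension splits.

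For supports in several classes, the hypothesis ``supported in distinct residue classes'' means the support sets of $M$ and $N$ are disjoint. In that case I would decompose $M$ and $N$ into their finitely many sectors and use additivity of $\Ext^1$ in each variable. Alternatively, the same splitting argument works directly with $E=E_M\oplus E_N$, where $E_M$ is the sum of the sectors in the support of $M$ and $E_N$ the sum of the sectors in the support of $N$.

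\textbf{Expected difficulty.} The only delicate point is to make sure the Yoneda $\Ext^1$ is taken in $\Oc_{c_{p,q}}$. The middle term $E$ is in the category by definition of Yoneda extensions there, so the argument stays internal. Everything else is routine bookkeeping, and no deeper input beyond grading restriction and the integrality of the Virasoro mode shifts is needed.
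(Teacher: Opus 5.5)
Your proposal is correct and follows essentially the same route as the paper: integer mode shifts give the submodule property, induction on length gives finite support, weight preservation gives the Hom vanishing, and exact sector projection of the extension followed by inverting $E^{[\xi]}\xrightarrow{\sim}M^{[\xi]}$ produces the splitting section. The differences are cosmetic. You obtain exactness from the fact that every morphism is the direct sum of its sectorwise restrictions, whereas the paper lifts $c\in C^{[\xi]}$ and takes the $\xi$-primary component of the lift. You also spell out the case of supports with several sectors, which the paper's proof treats only for single sectors.
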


\begin{proof}
Since $L_m$ changes generalized conformal weight by the integer $-m$, each
$M^{[\xi]}$ is a Virasoro submodule.  A simple grading-restricted module is supported in one residue class.
For finite support, take a short exact sequence
$0\to A\to M\to S\to0$ with $S$ simple.  A generalized weight vector of
$M$ whose residue does not occur in $S$ maps to zero and therefore lies in
$A$.  Induction on Jordan--H\"older length thus shows directly that the
residue support of $M$ is contained in the union of the finite residue
supports of $A$ and $S$.

We now prove exactness of residue projection independently.  Let
$B\twoheadrightarrow C$ be a surjective Virasoro homomorphism and take
$c\in C^{[\xi]}$.  Choose a lift $b\in B$.  By local finiteness the cyclic
$\C[L_0]$-space $W_b:=\C[L_0]b$ is finite dimensional.  Primary
decomposition for $L_0|_{W_b}$, followed by grouping the finitely many
generalized eigenvalues modulo $\Z$, gives a finite decomposition
$b=\sum_\eta b_\eta$ with $b_\eta\in B^{[\eta]}$.  Since the
homomorphism commutes with $L_0$,
the components with $\eta\neq\xi$ cannot contribute to $c$; hence
$b_\xi$ maps to $c$.  Thus $B^{[\xi]}\twoheadrightarrow C^{[\xi]}$.  For a short exact
sequence $0\to A\to B\to C\to0$ one also has
\[
 \ker\bigl(B^{[\xi]}\to C^{[\xi]}\bigr)
 =A\cap B^{[\xi]}=A^{[\xi]},
\]
so residue projection is exact.  The same weight preservation gives the
cross-residue Hom vanishing.

Finally, apply exact residue projection to an extension
\[
 0\longrightarrow N^{[\eta]}\longrightarrow E\longrightarrow
 M^{[\xi]}\longrightarrow0,\qquad \xi\neq\eta .
\]
It yields
\[
 E^{[\xi]}\xrightarrow{\sim}M^{[\xi]},\qquad
 N^{[\eta]}\xrightarrow{\sim}E^{[\eta]},
\]
and all other residue summands of $E$ vanish.  Hence
$E=E^{[\xi]}\oplus E^{[\eta]}$.  The inverse of
$E^{[\xi]}\xrightarrow{\sim}M^{[\xi]}$, followed by the inclusion
$E^{[\xi]}\hookrightarrow E$, is a Virasoro-module section of
$E\twoheadrightarrow M^{[\xi]}$.  Thus the extension splits.  This proves
\eqref{eq:cross-residue-Ext}.
\end{proof}

\begin{lem}[Exponential detects logarithmicity]\label{lem:exponential-logarithmic}
On every finite-dimensional generalized $L_0$-eigenspace, the exponential
is understood algebraically.  If $M$ is grading restricted, then
$e^{2\pi iL_0}$ is diagonalizable as a linear operator if and only if
$L_0$ is semisimple.
\end{lem}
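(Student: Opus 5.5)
The plan is to reduce everything to the generalized eigenspaces of $L_0$, exactly as in the proof of Lemma~\ref{lem:source-L0-radical}. Since $M$ is grading restricted, it is the algebraic direct sum $M=\bigoplus_{\lambda}M^{\mathrm{gen}}_\lambda$ of finite-dimensional generalized eigenspaces. On each summand I will use the Jordan decomposition $L_0=\lambda\,\id+D_\lambda$, where $D_\lambda$ is nilpotent. Because $\lambda\,\id$ and $D_\lambda$ commute, the algebraically defined exponential factors as
\[
 e^{2\pi iL_0}\big|_{M^{\mathrm{gen}}_\lambda}
 =e^{2\pi i\lambda}\,e^{2\pi iD_\lambda},
\]
and $e^{2\pi iD_\lambda}$ is a finite sum because $D_\lambda$ is nilpotent. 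The operator $e^{2\pi iL_0}$ preserves each summand. A linear operator on an algebraic direct sum of finite-dimensional invariant subspaces is diagonalizable if and only if each restriction is: given an eigenbasis of each piece, their union is an eigenbasis of $M$. Conversely, if $M$ is spanned by eigenvectors, project these onto the summands; the projections are eigenvectors with the same eigenvalue, so each summand is spanned by eigenvectors. The same reduction applies to $L_0$, so it suffices to show, on a single finite-dimensional $M^{\mathrm{gen}}_\lambda$, that $D_\lambda=0$ if and only if $e^{2\pi iD_\lambda}$ is diagonalizable.

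For the local statement, first note that $e^{2\pi iD_\lambda}=\id+U$ with
\[
 U=\sum_{m\ge1}\frac{(2\pi i)^m}{m!}D_\lambda^m=2\pi iD_\lambda\,(\id+\text{nilpotent}),
\]
so $U$ is nilpotent. Hence $e^{2\pi iD_\lambda}$ is unipotent, and it is diagonalizable if and only if $U=0$. The direction $D_\lambda=0\Rightarrow U=0$ is immediate. For the converse, the factor $\id+\text{nilpotent}$ is invertible and commutes with $D_\lambda$, so $U=0$ forces $D_\lambda=0$. A shorter route is to take the minimal $m$ with $D_\lambda^{m}=0$ and note $U D_\lambda^{m-2}=2\pi iD_\lambda^{m-1}\ne0$ when $m\ge2$.

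I expect no real obstacle; the only point needing care is that the decomposition into generalized eigenspaces is an algebraic direct sum and each summand is finite-dimensional. Both facts come from the grading-restriction hypothesis. They make the exponential well defined and allow diagonalizability to be checked summand by summand. Finally, $L_0$ is semisimple exactly when all $D_\lambda$ vanish, which gives the stated equivalence.
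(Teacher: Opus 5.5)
Your proposal is correct and follows essentially the paper's argument. Both reduce to a single finite-dimensional generalized eigenspace and use $e^{2\pi iD}-\id=D\,Q(D)$ with $Q(D)$ invertible, so the unipotent factor is trivial exactly when $D=0$. The only cosmetic difference is the localization step: you pass diagonalizability to the summands via projections, whereas the paper uses $(E-e^{2\pi i\lambda})^Nx=0$ to force every vector of $M^{\mathrm{gen}}_\lambda$ to be an $E$-eigenvector with eigenvalue $e^{2\pi i\lambda}$.
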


\begin{proof}
Set $E=e^{2\pi iL_0}$ and take
$x\in M^{\mathrm{gen}}_\lambda$.  Since $L_0-\lambda$ is nilpotent on
the finite-dimensional generalized eigenspace containing $x$, there is an
$N$ such that
\[
 (E-e^{2\pi i\lambda})^N x=0.
\]
Assume first that $E$ is diagonalizable on $M$.  Writing $x$ as a finite
sum of $E$-eigenvectors, the displayed relation forces every eigenvalue
occurring in $x$ to equal $e^{2\pi i\lambda}$.  Hence
$Ex=e^{2\pi i\lambda}x$.  Writing
$L_0=\lambda\id+D$ on $M^{\mathrm{gen}}_\lambda$ therefore gives
$e^{2\pi iD}=\id$.  But
\[
 e^{2\pi iD}-\id=DQ(D),
 \qquad Q(0)=2\pi i\ne0,
\]
where $Q(D)$ is a polynomial because $D$ is nilpotent.  Thus $Q(D)$ is
invertible and $D=0$.  Hence $L_0$ is semisimple on every generalized
eigenspace and therefore on $M$.

Conversely, if $L_0$ is semisimple, then $M$ is the algebraic direct sum
of its $L_0$-eigenspaces and $E$ acts by a scalar on each of them.  Thus
$E$ is diagonalizable.
\end{proof}

We shall also repeatedly use the following elementary finite-length
cancellation principle.

\begin{lem}[Finite-length cancellation]\label{lem:length-cancellation}
If $M=A\oplus B$ has finite length and $M$ and $A$ have the same
Jordan--H\"older multiplicities, then $B=0$.
\end{lem}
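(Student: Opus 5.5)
The idea is to use the additivity of Jordan--H\"older multiplicities on short exact sequences, together with the fact that multiplicities are nonnegative integers. From the direct-sum decomposition $M=A\oplus B$ we obtain the split short exact sequence
\[
 0\longrightarrow A\longrightarrow M\longrightarrow B\longrightarrow0,
\]
where the first map is the inclusion of the summand and the second is the projection. For every simple object $S$, the Jordan--H\"older theorem then gives
\[
 [M:S]=[A:S]+[B:S].
\]
All three quantities are finite, because $A$ and $B$ are subquotients of the finite-length object $M$ and therefore have finite length.

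By hypothesis $[M:S]=[A:S]$ for every simple $S$, so $[B:S]=0$ for every simple $S$. Summing over the finitely many isomorphism classes of simples that can occur gives $\ell(B)=\sum_S[B:S]=0$. An object of length zero is the zero object. Indeed, if $B\ne0$, then since $B$ has finite length it contains a simple subobject, for instance a minimal nonzero subobject, and that simple would contribute to some multiplicity $[B:S]$. Therefore $B=0$.

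There is no real obstacle here. The only point to check is that the additivity of multiplicities holds in whatever abelian category is in use, whether $\KL^k(\mathfrak{sl}_2)$ or $\Oc_{c_{p,q}}$. This is the standard Jordan--H\"older statement for finite-length objects in an abelian category: one concatenates a composition series of $A$ with the preimage of a composition series of $B$, and the resulting series is a composition series of $M$.
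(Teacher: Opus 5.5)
Your proposal is correct and follows essentially the same route as the paper: additivity of Jordan--H\"older multiplicities over $M=A\oplus B$ forces every $[B:S]$ to vanish, so $\ell(B)=0$ and $B=0$. Your extra justification of additivity via the split sequence, and of why a length-zero object is zero, just spells out what the paper's one-line proof leaves implicit.
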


\begin{proof}
Additivity of Jordan--H\"older multiplicities under direct sums forces all
multiplicities of $B$ to be zero; equivalently $\ell(B)=0$.
\end{proof}

\subsection{Nakano's extension subcategory and Ext conventions}

Set
\[
 p_+=\min\{p,q\},\qquad p_-=\max\{p,q\}.
\]
Because $p,q\ge2$ are coprime, $p\ne q$, and hence
\[
 2\le p_+<p_-,\qquad \gcd(p_+,p_-)=1.
\]
Thus the standing numerical hypotheses of \cite{Nakano} are satisfied.
Since the Virasoro central charge is symmetric in its two parameters,
\begin{equation}\label{eq:central-charge-symmetry}
 c_{p,q}=c_{q,p}=c_{p_+,p_-}.
\end{equation}
Thus all Virasoro modules considered below have the same central charge as
those in Section~5 of \cite{Nakano}.  All references to Nakano in this paper
use \nolinkurl{arXiv:2305.12448v2} (9 August 2026).

There is a minor but important point of scope.  We do not identify
$\Oc_{c_{p,q}}$, or any subcategory of it, with the ambient category of
\cite[Definition~3.14]{Nakano}.  Instead, we isolate below a full Serre
subcategory of $\Oc_{c_{p,q}}$ containing all finite-length Virasoro
modules needed in the one-row argument, and we formulate the required
extension statements directly in that category.  This avoids any
dependence on the literal support equality in Definition~3.14 while
retaining the module-theoretic content of the calculations in Section~5
of \cite{Nakano}.

\begin{lem}[Extension closure at fixed central charge]
\label{lem:gr-extension}
Let
\[
 0\longrightarrow A\longrightarrow B\longrightarrow C\longrightarrow0
\]
be a short exact sequence in the category of Virasoro modules of fixed
central charge $c$, so that the central element acts as $c\,\id$ on all
three terms.  If $A$ and $C$ are grading-restricted generalized Virasoro
modules, then so is $B$.  Moreover, for every $\lambda\in\C$,
\[
 0\longrightarrow A^{\mathrm{gen}}_\lambda
 \longrightarrow B^{\mathrm{gen}}_\lambda
 \longrightarrow C^{\mathrm{gen}}_\lambda
 \longrightarrow0
\]
is exact.
\end{lem}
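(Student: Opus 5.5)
The plan is to first prove that $B$ has local finiteness for $L_0$, then the generalized-eigenspace exactness, and finally the grading-restricted conditions, which follow from that exactness.

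\emph{Local finiteness.} Take $b\in B$ with image $c\in C$. Since $C$ is a generalized module, $c$ is annihilated by some nonzero polynomial $f(L_0)$. Then $f(L_0)b$ lies in $A$. Since $A$ is a generalized module too, some nonzero $g(L_0)$ kills $f(L_0)b$. Hence $(gf)(L_0)b=0$. It follows that $\C[L_0]b$ is finite dimensional, and primary decomposition writes $b$ as a finite sum of generalized eigenvectors. So $B=\bigoplus_\lambda B^{\mathrm{gen}}_\lambda$ algebraically. The central charge acts as $c\,\id$ on $B$ by hypothesis.

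\emph{Exactness on each $\lambda$.} The sequence $0\to A^{\mathrm{gen}}_\lambda\to B^{\mathrm{gen}}_\lambda\to C^{\mathrm{gen}}_\lambda$ is exact because $A^{\mathrm{gen}}_\lambda=A\cap B^{\mathrm{gen}}_\lambda$. For the last map I use the argument in the proof of Lemma~\ref{lem:block-projection}, now without grouping modulo $\Z$. Lift $c\in C^{\mathrm{gen}}_\lambda$ to some $b$ and decompose $b=\sum_\mu b_\mu$. The map intertwines $L_0$ and so preserves generalized eigenspaces. Comparing components shows that $b_\lambda$ maps to $c$, so the map is surjective.

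\emph{Grading restriction.} Dimension additivity gives $\dim B^{\mathrm{gen}}_\lambda=\dim A^{\mathrm{gen}}_\lambda+\dim C^{\mathrm{gen}}_\lambda<\infty$. For the lower bound, fix $\lambda$. The weights of $B$ are the union of the weights of $A$ and of $C$. Each of those sets has the property that $\lambda-n$ is a weight for only finitely many $n\in\Z_{\ge0}$, and this property passes to the finite union.

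I expect the only real point to be the first step, where $B$ must be shown locally $L_0$-finite without any prior decomposition. The polynomial-product trick above handles it. If the paper's definition of grading-restricted also requires $L_n$ ($n>0$) to be locally nilpotent, that condition follows from the same weight-support argument.
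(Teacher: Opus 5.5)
Your proof is correct and follows essentially the same route as the paper. The shared steps are the polynomial-product argument for local $L_0$-finiteness, lifting and taking the $\lambda$-primary component for surjectivity on generalized eigenspaces, dimension additivity, and lower truncation from $\operatorname{Spec}(B)=\operatorname{Spec}(A)\cup\operatorname{Spec}(C)$.

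The one condition the paper derives explicitly that you only mention in a remark is the truncation $L_nb=0$ for all sufficiently large $n$, which is what is needed here rather than local nilpotence of a fixed $L_n$. It follows exactly as you indicate, from $L_nB^{\mathrm{gen}}_\lambda\subset B^{\mathrm{gen}}_{\lambda-n}=0$ for $n\gg0$.
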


\begin{proof}
Let $b\in B$, and write $\bar b$ for its image in $C$.  Local finiteness
of $L_0$ on $C$ gives a nonzero polynomial $p$ with
$p(L_0)\bar b=0$, so $p(L_0)b\in A$.  Local finiteness on $A$ then gives a
nonzero polynomial $q$ with $q(L_0)p(L_0)b=0$.  Thus $L_0$ acts locally
finitely on $B$, and hence $B$ is the algebraic direct sum of its generalized
eigenspaces.

Fix $c\in C^{\mathrm{gen}}_\lambda$ and choose a lift $b\in B$.  The
finite-dimensional $\C[L_0]$-span of $b$ has only finitely many generalized
eigenvalues.  On this finite-dimensional space the spectral projector onto
the $\lambda$-primary summand is a polynomial in $L_0$.  Applying that
projector to $b$ gives a lift in $B^{\mathrm{gen}}_\lambda$ of $c$.  This
proves surjectivity on the $\lambda$-generalized eigenspaces; injectivity at
the left and exactness in the middle are inherited from the original short
exact sequence.  Therefore
\[
 0\longrightarrow A^{\mathrm{gen}}_\lambda
 \longrightarrow B^{\mathrm{gen}}_\lambda
 \longrightarrow C^{\mathrm{gen}}_\lambda\longrightarrow0
\]
is exact and
\[
 \dim B^{\mathrm{gen}}_\lambda
 =\dim A^{\mathrm{gen}}_\lambda+\dim C^{\mathrm{gen}}_\lambda<\infty.
\]
The generalized-eigenspace exact sequence also gives
\[
 \operatorname{Spec}(B)=\operatorname{Spec}(A)\cup\operatorname{Spec}(C).
\]
Indeed, nonzero generalized eigenspaces of $A$ inject into those of $B$,
and surjectivity onto the generalized eigenspaces of $C$ shows that every
weight in $\operatorname{Spec}(C)$ occurs in $B$.  Conversely, if
$B^{\mathrm{gen}}_\lambda\ne0$, then either its image in
$C^{\mathrm{gen}}_\lambda$ is nonzero or it meets
$A^{\mathrm{gen}}_\lambda$ nontrivially.

It remains to verify lower truncation, and here no global finiteness
assumption on the set of lowest weights is needed.  Fix $\lambda\in\C$.
Since $A$ and $C$ are grading restricted, there are integers $N_A,N_C$
such that
\[
 A^{\mathrm{gen}}_{\lambda-n}=0\quad(n\ge N_A),
 \qquad
 C^{\mathrm{gen}}_{\lambda-n}=0\quad(n\ge N_C).
\]
Exactness on generalized eigenspaces therefore gives
\[
 B^{\mathrm{gen}}_{\lambda-n}=0
 \qquad\bigl(n\ge\max\{N_A,N_C\}\bigr).
\]
If $b\in B^{\mathrm{gen}}_\lambda$, then $[L_0,L_n]=-nL_n$ implies
$L_nb\in B^{\mathrm{gen}}_{\lambda-n}$, so $L_nb=0$ for all sufficiently
large $n$.  Together with local finiteness and finite-dimensional
generalized eigenspaces proved above, this is precisely the grading
restriction for $B$.
\end{proof}

\begin{lem}[Finite-length generalized-weight control]
\label{lem:Nakano-ambient}
Let $M\in\Oc_{c_{p,q}}$.  Then $L_0$ acts locally finitely on $M$, every
generalized $L_0$-eigenspace is finite dimensional, and
\[
 \operatorname{Spec}(M)\subset H_0+\Z_{\ge0}
\]
for a finite set $H_0\subset\C$.  The restricted contragredient $M'$ has
the same properties and finite length, and the canonical map
$M\to(M')'$ is an isomorphism.
\end{lem}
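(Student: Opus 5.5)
I will argue by induction on the Jordan--H\"older length $\ell(M)$, using Lemma~\ref{lem:gr-extension} for the extension step.

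\textbf{Base case.} If $M$ is simple, then by the definition of $\Oc_{c_{p,q}}$ it is the irreducible quotient of a reducible Verma module $M(c_{p,q},h)$. Hence $M$ is generated by a lowest-weight vector of weight $h$, and $L_0$ acts semisimply with spectrum in $h+\Z_{\ge0}$. Its weight spaces are finite dimensional, being quotients of PBW weight spaces.

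\textbf{Inductive step.} Choose a short exact sequence $0\to A\to M\to S\to0$ with $S$ simple. Lemma~\ref{lem:gr-extension} gives exactness on generalized eigenspaces, so $\dim M^{\mathrm{gen}}_\lambda=\dim A^{\mathrm{gen}}_\lambda+\dim S^{\mathrm{gen}}_\lambda<\infty$. It also gives local finiteness and $\operatorname{Spec}(M)=\operatorname{Spec}(A)\cup\operatorname{Spec}(S)$. The finite set $H_0$ for $M$ is then the union of the set for $A$ and the singleton for $S$.

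\textbf{Contragredient.} Define $M'=\bigoplus_\lambda (M^{\mathrm{gen}}_\lambda)^*$ with the action $\langle L_nf,v\rangle=\langle f,L_{-n}v\rangle$. Since the Virasoro central charge and $L_0$ are self-adjoint under this anti-involution, $(M')^{\mathrm{gen}}_\lambda=(M^{\mathrm{gen}}_\lambda)^*$. Thus $M'$ has the same spectrum, the same finite-dimensional generalized eigenspaces, and local finiteness holds. Lower truncation follows from the spectrum bound: $L_nf$ has weight $\lambda-n$, which leaves $H_0+\Z_{\ge0}$ for large $n$.

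Exactness of $(\cdot)'$ follows from exactness of weightwise duals of finite-dimensional spaces. For simple $S$, the module $S'$ is simple with the same lowest weight, so $S'\cong S$. Induction via the dualized sequence $0\to S'\to M'\to A'\to0$ then shows that $M'$ has finite length with the same composition factors. Consequently $M'$ lies in $\Oc_{c_{p,q}}$, since its composition factors are of the allowed type.

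\textbf{Double dual.} The canonical map $M\to(M')'$ is weightwise the canonical map of a finite-dimensional space into its double dual, hence bijective. It is a Virasoro homomorphism because the adjoint of the adjoint of $L_n$ is $L_n$.

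\textbf{Main obstacle.} The only delicate point is confirming that $M'$ is a generalized module that is grading restricted. This does not follow automatically from $M'$ being a Virasoro module. It is resolved by the finite $H_0$ bound established first, which is why the spectrum statement is proved before the contragredient statement.
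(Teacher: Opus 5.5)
Your proposal is correct and follows essentially the same route as the paper. Both arguments run an induction along a composition series with Lemma~\ref{lem:gr-extension} for the weight assertions, take the weightwise restricted dual via $L_n\mapsto L_{-n}$, use exactness of weightwise duality and self-contragredience of the simple factors to dualize the composition series, and conclude with finite-dimensional biduality; your explicit lower-truncation check for $M'$ from the spectrum bound just spells out a step the paper states in one line.
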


\begin{proof}
The first assertions follow from a composition series and repeated
application of Lemma~\ref{lem:gr-extension}.  Define
\[
 M':=\bigoplus_{\lambda\in\C}
       \bigl(M^{\mathrm{gen}}_\lambda\bigr)^*
\]
using the Virasoro anti-involution $L_n\mapsto L_{-n}$ and $C\mapsto C$.
Since $L_0$ is fixed by this anti-involution,
\[
 (M')^{\mathrm{gen}}_\lambda
 \cong \bigl(M^{\mathrm{gen}}_\lambda\bigr)^*.
\]
Thus $M'$ has the same support, finite generalized-weight multiplicities,
and lower truncation.  Restricted duality is exact on modules with
finite-dimensional generalized weight spaces.  Moreover every simple lowest-weight Virasoro module occurring here is
self-contragredient.  Indeed, the restricted dual of $L(c_{p,q},h)$ is
again a simple lowest-weight module of central charge $c_{p,q}$ and lowest
weight $h$, and uniqueness of the simple quotient of the corresponding
Verma module gives
\[
 L(c_{p,q},h)'\cong L(c_{p,q},h).
\]
Thus a finite composition series of $M$ dualizes to a finite composition
series of $M'$ with the same simple isomorphism classes.  In particular,
$M'\in\Oc_{c_{p,q}}$, and $M\in\mathscr C$ will imply
$M'\in\mathscr C$ once $\mathscr C$ is defined below.  Finally, finite
dimensionality of every generalized weight space gives the canonical
biduality isomorphism $M\xrightarrow{\sim}(M')'$.
\end{proof}

Define the full subcategory
\[
\begin{aligned}
 \mathscr C^{\mathrm{fl}}_{p_+,p_-}
 :=\bigl\{M\in\Oc_{c_{p,q}}:\;&\text{every simple Jordan--H\"older
 constituent of $M$}\\
 &\text{has lowest weight in }H_{p_+,p_-}\bigr\},
\end{aligned}
\]
where $H_{p_+,p_-}$ is the set of Felder weights used in
\cite[Definition~5.1]{Nakano}.  We write
$\mathscr C:=\mathscr C^{\mathrm{fl}}_{p_+,p_-}$.

\begin{lem}\label{lem:Nakano-Serre}
The full subcategory $\mathscr C$ is a Serre subcategory of
$\Oc_{c_{p,q}}$.  Every object of $\mathscr C$ has a finite socle series.
\end{lem}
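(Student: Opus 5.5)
The plan is to check the Serre conditions straight from the definition of $\mathscr C$ through Jordan--H\"older multiplicities, and then to build the socle series by induction on length.

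\emph{Serre property.} Take a short exact sequence $0\to A\to B\to C\to0$ in $\Oc_{c_{p,q}}$.
\begin{itemize}
\item $\Oc_{c_{p,q}}$ is a finite-length abelian category, so the Jordan--H\"older theorem applies. Refining $A\subset B$ to a composition series shows that the multiset of simple constituents of $B$ is the disjoint union of those of $A$ and of $C$.
\item Whether a constituent lies in the admissible set depends only on its isomorphism class. For a simple highest-weight Virasoro module at fixed central charge this class is determined by the lowest weight, as in the proof of Corollary~\ref{cor:positive-label-collisions}.
\item Hence every constituent of $B$ has lowest weight in $H_{p_+,p_-}$ if and only if the same holds for all constituents of $A$ and of $C$. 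This gives closure under subobjects, quotients and extensions.
\end{itemize}
Fullness holds by definition, and $0$ lies in $\mathscr C$ vacuously. Kernels and cokernels of morphisms in $\mathscr C$ are sub- and quotient objects in $\Oc_{c_{p,q}}$, so they stay in $\mathscr C$. Therefore $\mathscr C$ is an abelian Serre subcategory. Extension closure also holds at the level of Yoneda extensions: by Lemma~\ref{lem:gr-extension}, a middle term $B$ built from objects of $\mathscr C$ is grading restricted of finite length, so it lies in $\Oc_{c_{p,q}}$ and then in $\mathscr C$.

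\emph{Socle series.} Let $M\in\mathscr C$ be nonzero.
\begin{itemize}
\item Since $M$ has finite length, it has a nonzero simple subobject: take a minimal nonzero submodule.
\item The socle $\Soc M$ is the sum of all simple subobjects. Because length is finite, it is a finite direct sum, and it is nonzero.
\item Define $\Soc^1M=\Soc M$ and let $\Soc^{i+1}M$ be the preimage of $\Soc(M/\Soc^iM)$.
\item Each quotient $M/\Soc^iM$ lies in $\mathscr C$ by the Serre property. Whenever $\Soc^iM\neq M$, the inclusion $\Soc^iM\subsetneq\Soc^{i+1}M$ is strict.
\end{itemize}
Length then forces $\Soc^{\ell}M=M$ for some $\ell\le\ell(M)$. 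All layers $\Soc^{i+1}M/\Soc^iM$ are semisimple objects of $\mathscr C$.

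\emph{Main obstacle.} The only delicate point is to make the constituent condition intrinsic, meaning independent of the choice of composition series. This follows from Jordan--H\"older uniqueness together with the fact that lowest weight is an isomorphism invariant. No further input from Nakano is needed.
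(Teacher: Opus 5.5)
Your proposal is correct and follows the paper's own argument. Jordan--H\"older multiplicities give closure under subobjects, quotients and extensions, and the socle filtration strictly increases until it reaches $M$ within $\ell(M)$ steps. Your extra remark invoking Lemma~\ref{lem:gr-extension} is not needed, because the Serre property in $\Oc_{c_{p,q}}$ only concerns extensions whose middle term already lies in $\Oc_{c_{p,q}}$; that lemma would also require the central element to act by $c$ on the middle term.
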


\begin{proof}
Subobjects and quotients of a finite-length object have only
Jordan--H\"older factors occurring in the original object.  In a short
exact sequence the multiset of Jordan--H\"older factors of the middle term
is the union of those of the end terms.  This proves the Serre property.  In particular, $\mathscr C$ is an
abelian extension-closed full subcategory of $\Oc_{c_{p,q}}$.
If $M/\Soc^j(M)\ne0$, then the finite-length quotient has a simple
submodule, so the socle filtration strictly increases until it reaches
$M$, after at most $\ell(M)$ steps.
\end{proof}

\begin{lem}[Yoneda extensions in the Serre subcategory]
\label{lem:Nakano-exact-compatibility}
For $X,Y\in\mathscr C$, the inclusion
$\mathscr C\hookrightarrow\Oc_{c_{p,q}}$ induces an isomorphism
\[
 \Ext^1_{\mathscr C}(X,Y)
 \xrightarrow{\ \sim\ }
 \Ext^1_{\Oc_{c_{p,q}}}(X,Y)
\]
of Yoneda extension spaces.
\end{lem}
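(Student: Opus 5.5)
The plan is to prove the comparison for Yoneda $\Ext^1$ directly, using only that $\mathscr C$ is a full subcategory closed under extensions (Lemma~\ref{lem:Nakano-Serre}). No derived-category machinery is needed.

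Recall that $\Ext^1(X,Y)$ in an abelian category is the set of equivalence classes of short exact sequences $0\to Y\to E\to X\to0$. Two such sequences are equivalent when there is a morphism $E\to E'$ compatible with the identities on $Y$ and $X$. The vector-space structure comes from Baer sum, together with pullback and pushout along scalar endomorphisms.

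First, well-definedness. Since $\mathscr C$ is a full abelian subcategory whose inclusion is exact, a short exact sequence in $\mathscr C$ is short exact in $\Oc_{c_{p,q}}$. An equivalence in $\mathscr C$ is therefore also an equivalence in $\Oc_{c_{p,q}}$, so the inclusion induces a map on classes.

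Second, linearity. Baer sums are built from a pullback $E\times_X E'$ (a kernel of a morphism $E\oplus E'\to X$) followed by a pushout, which is a cokernel. Kernels, cokernels and finite direct sums in $\mathscr C$ agree with those computed in $\Oc_{c_{p,q}}$: subobjects and quotients of objects of $\mathscr C$ stay in $\mathscr C$ by the Serre property. Hence the Baer sum computed in $\mathscr C$ coincides with the one computed in $\Oc_{c_{p,q}}$. The same argument handles scalar multiplication.

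Third, surjectivity. Any extension $0\to Y\to E\to X\to0$ in $\Oc_{c_{p,q}}$ with $X,Y\in\mathscr C$ has $E\in\mathscr C$. Indeed, by additivity of Jordan--H\"older factors, every constituent of $E$ is a constituent of $X$ or of $Y$, so its lowest weight lies in $H_{p_+,p_-}$. This is exactly the extension-closure part of Lemma~\ref{lem:Nakano-Serre}. The sequence is thus already a sequence in $\mathscr C$.

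Fourth, injectivity. Suppose two extensions in $\mathscr C$ become equivalent in $\Oc_{c_{p,q}}$ via some $\phi:E\to E'$. Both $E$ and $E'$ lie in $\mathscr C$ and $\mathscr C$ is full, so $\phi$ is a morphism of $\mathscr C$. The two extensions are therefore equivalent in $\mathscr C$.

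The only point requiring care is the second step: one must check that the pullback and pushout used in the Baer sum may be formed inside $\mathscr C$. This follows from the Serre property, since these objects are a subobject of $E\oplus E'$ and a quotient of such an object, respectively.
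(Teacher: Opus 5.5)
Your proposal is correct and follows essentially the same route as the paper: surjectivity comes from the Serre (extension-closure) property placing every ambient middle term in $\mathscr C$, and fullness shows that equivalences, Baer sums, scalar pushouts and splittings are the same in both categories. You spell out the well-definedness, linearity and injectivity steps more explicitly than the paper, which states them in one sentence.
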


\begin{proof}
If
\[
 0\longrightarrow Y\longrightarrow E\longrightarrow X\longrightarrow0
\]
is exact in $\Oc_{c_{p,q}}$, the Serre property gives $E\in\mathscr C$.
Thus every ambient Yoneda extension already lies in $\mathscr C$.
Conversely, $\mathscr C$ is full, so equivalences of short exact sequences,
Baer sums, scalar pushouts, and splittings are computed by the same
Virasoro-module morphisms in the two categories.  The two Yoneda spaces
are therefore identical.
\end{proof}

Our orientation convention is
\begin{equation}\label{eq:Ext-convention}
 \Ext^1_{\mathscr C}(X,Y)
 \quad\text{parametrizes}\quad
 0\longrightarrow Y\longrightarrow E\longrightarrow X\longrightarrow0.
\end{equation}
This is also the quotient-first convention used immediately after
\cite[Definition~5.1]{Nakano}.

\begin{lem}[Six-term Hom--Yoneda exactness]
\label{lem:prelim-Hom-Yoneda}
Let
\[
 0\longrightarrow A\longrightarrow B\longrightarrow C\longrightarrow0
\]
be a short exact sequence in a $\C$-linear abelian category.  For every
object $X$ there are sequences
\[
\begin{aligned}
0\longrightarrow&\Hom(C,X)\longrightarrow\Hom(B,X)
 \longrightarrow\Hom(A,X)\\
 \longrightarrow&\Ext^1(C,X)\longrightarrow\Ext^1(B,X)
 \longrightarrow\Ext^1(A,X)
\end{aligned}
\]
and
\[
\begin{aligned}
0\longrightarrow&\Hom(X,A)\longrightarrow\Hom(X,B)
 \longrightarrow\Hom(X,C)\\
 \longrightarrow&\Ext^1(X,A)\longrightarrow\Ext^1(X,B)
 \longrightarrow\Ext^1(X,C).
\end{aligned}
\]
The first sequence is exact through the displayed term
$\Ext^1(B,X)$, and the second is exact through the displayed term
$\Ext^1(X,B)$.  No assertion of exactness is made at either final
displayed $\Ext^1$-term.  Here $\Ext^1$ is Yoneda $\Ext^1$.
\end{lem}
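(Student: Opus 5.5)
This is the standard long exact Hom--Ext sequence for Yoneda $\Ext^1$ in an abelian category. We only need exactness up to $\Ext^1(B,X)$ (respectively $\Ext^1(X,B)$), and no derived functors or enough projectives are required. We treat the first (contravariant) sequence in detail; the second is dual, with pullbacks and pushouts interchanged.

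\emph{Maps.} Write the given sequence as $\sigma:0\to A\xrightarrow{i}B\xrightarrow{\pi}C\to0$. The Hom maps are composition with $\pi$ and with $i$. The connecting map $\delta:\Hom(A,X)\to\Ext^1(C,X)$ sends $f$ to the pushout $f_*\sigma$. The map $\Ext^1(C,X)\to\Ext^1(B,X)$ is pullback along $\pi$, and $\Ext^1(B,X)\to\Ext^1(A,X)$ is pullback along $i$. Each is well defined on equivalence classes of extensions, by the universal properties of pushouts and pullbacks.

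\emph{Exactness at the Hom terms.} Left exactness of $\Hom(-,X)$ gives exactness at $\Hom(C,X)$ and $\Hom(B,X)$, because $\pi$ is the cokernel of $i$.

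\emph{Exactness at $\Hom(A,X)$.} If $f=g\circ i$, the map $g$ induces a retraction of $X\to f_*E$, where $E$ is the pushout object, so $f_*\sigma$ splits. Conversely, suppose $r:E\to X$ is a retraction of the inclusion $X\to E$. Composing $r$ with the canonical map $B\to E$ gives a morphism $g:B\to X$ with $g\circ i=f$.

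\emph{Exactness at $\Ext^1(C,X)$.}
\begin{itemize}
\item The composite vanishes. The pullback $\pi^*(f_*\sigma)$ is split by the map $B\to E$, which lifts $\pi$.
\item Any class killed by $\pi^*$ is in the image. Take $\tau:0\to X\to E\xrightarrow{\rho}C\to0$ with $\pi^*\tau$ split. A splitting of the pullback yields $s:B\to E$ with $\rho s=\pi$. Then $s\circ i$ lands in $\ker\rho=X$, which defines $f:A\to X$. The morphism of extensions $(f,s,\id_C):\sigma\to\tau$ identifies $\tau$ with $f_*\sigma$.
\end{itemize}

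\emph{Exactness at $\Ext^1(B,X)$.} This is the step I expect to be the main obstacle.
\begin{itemize}
\item The composite vanishes. By functoriality of pullback, $i^*\pi^*=(\pi\circ i)^*=0^*$, and pullback along the zero map is the split class.
\item Any class killed by $i^*$ is in the image. Take $\tau:0\to X\to E\xrightarrow{\rho}B\to0$ with $i^*\tau$ split. Then there is $t:A\to E$ with $\rho t=i$, and $t$ is a monomorphism. Set $E'=E/t(A)$. The composite $X\to E\to E'$ is monic, since $t(A)\cap X=0$: any element of the intersection maps to $0$ under $\rho$, and $\rho t=i$ is monic. The map $\rho$ descends to $\bar\rho:E'\to B/A=C$, and the snake lemma shows that $0\to X\to E'\xrightarrow{\bar\rho}C\to0$ is exact. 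Finally, the quotient $E\to E'$ together with $\rho$ exhibits $E$ as the pullback $E'\times_C B$, so $\tau\cong\pi^*(E')$.
\end{itemize}
The points needing care here are that the pullback identification holds in a general abelian category and that the signs are consistent; element-wise arguments can be justified via Freyd--Mitchell or written with universal properties.

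The second sequence is proved by the dual argument. Its connecting map is $g\mapsto g^*\sigma$ for $g\in\Hom(X,C)$. The final step at $\Ext^1(X,B)$ uses a subobject instead of a quotient: when $\pi_*\tau$ splits, the corresponding splitting subobject of $E$ yields an extension of $X$ by $A$ whose pushout along $i$ recovers $\tau$.
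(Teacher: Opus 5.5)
Your proposal is correct and follows the same route as the paper: the connecting maps are pushout along $A\to X$ (resp.\ pullback along $X\to C$), and exactness through $\Ext^1(B,X)$ (resp.\ $\Ext^1(X,B)$) is the Yoneda splitting criterion, proved from universal properties of short exact sequences alone, with no projectives or injectives. Your write-up is more detailed than the paper's sketch, which only cites ``the usual Yoneda criterion''; the one point both leave implicit is that these maps are $\C$-linear for the Baer-sum structure, which is standard.
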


\begin{proof}
The first connecting morphism is pushout along $A\to X$ and the second
is pullback along $X\to C$.  Exactness at the displayed $\Ext^1$-terms
through $\Ext^1(B,X)$ and $\Ext^1(X,B)$ is the usual Yoneda criterion:
an extension becomes split after pullback (respectively pushout) precisely
when it is obtained by pushout (respectively pullback) from the preceding
term.  Extending either sequence one step further would require the next
Yoneda obstruction (equivalently an $\Ext^2$ term when a derived-functor
long exact sequence is available), so no terminal surjectivity is being
claimed here.  The argument uses only short exact sequences and their
universal properties; no hypothesis of enough projectives or injectives
is involved.
\end{proof}

\begin{lem}[One-dimensional Yoneda Ext spaces]\label{lem:one-dimensional-ext}
Let $\mathscr E$ be a full extension-closed subcategory of a $\C$-linear
abelian category, and suppose
$\dim_\C\Ext^1_{\mathscr E}(X,Y)=1$.  Then all nonzero extension classes
of $X$ by $Y$ have isomorphic middle terms.
\end{lem}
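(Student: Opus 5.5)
The plan is to use the $\C^\times$-action on Yoneda classes by scalar pushout (equivalently pullback). Fix two nonzero classes $\eta,\eta'\in\Ext^1_{\mathscr E}(X,Y)$ represented by
\[
 0\longrightarrow Y\xrightarrow{\ i\ }E\xrightarrow{\ \pi\ }X\longrightarrow0,
 \qquad
 0\longrightarrow Y\xrightarrow{\ i'\ }E'\xrightarrow{\ \pi'\ }X\longrightarrow0 .
\]
One-dimensionality gives $\eta'=\lambda\eta$ for a unique $\lambda\in\C$. Since $\eta'\ne0$, we have $\lambda\ne0$.

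Next, identify $\lambda\eta$ concretely. In the Yoneda description, scalar multiplication by $\lambda$ is the pushout of $\eta$ along $\lambda\,\id_Y$. For $\lambda\ne0$ this pushout can be written down without forming a pushout object. Take the sequence
\[
 0\longrightarrow Y\xrightarrow{\ \lambda^{-1}i\ }E\xrightarrow{\ \pi\ }X\longrightarrow0 .
\]
Then $\id_E$ together with $\lambda\,\id_Y$ and $\id_X$ is a morphism of short exact sequences from $\eta$ to this sequence. By the universal property of pushouts, this sequence therefore represents $\lambda\eta$. Its middle term is still $E$, and it lies in $\mathscr E$ because $E$ does. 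Fullness and extension-closure ensure that the scalar action and the equivalence relation are computed the same way in $\mathscr E$ and in the ambient category.

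Since $\eta'=\lambda\eta$ as Yoneda classes, the sequence for $\eta'$ and the rescaled sequence are equivalent. So there is a morphism $\phi:E'\to E$ with $\phi\circ i'=\lambda^{-1}i$ and $\pi\circ\phi=\pi'$. The short five lemma, valid in any abelian category, then shows that $\phi$ is an isomorphism, so $E'\cong E$.

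There is no real obstacle here. The only point needing care is the convention for the vector-space structure on Yoneda $\Ext^1$: one must confirm that the scalar action is pushout along $\lambda\,\id_Y$, or equivalently pullback along $\lambda\,\id_X$. In the pullback convention the representative becomes $(i,\lambda^{-1}\pi)$, and again the middle term is unchanged.
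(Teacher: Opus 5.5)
Your proposal is correct and follows the paper's argument: you write $\eta'=\lambda\eta$ with $\lambda\neq0$ and note that this scalar multiple is the pushout along the endpoint isomorphism $\lambda\,\id_Y$, so the middle term is unchanged. The only difference is that you make the rescaled representative $(\lambda^{-1}i,\pi)$ and the short five lemma step explicit, where the paper states these facts in one line.
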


\begin{proof}
If $\xi,\xi'\neq0$, then $\xi'=a\xi$ for some $a\in\C^\times$.
Scalar multiplication by $a$ is represented by pushout along
$a\,\id_Y$ (equivalently, by the corresponding pullback along an
automorphism of $X$).  Pushout or pullback along an endpoint
isomorphism does not change the isomorphism class of the middle term.
\end{proof}

We now prove the local Virasoro extension statements required below.
Although the notation follows Section~5 of \cite{Nakano}, the assertions
are formulated and verified in the Serre category $\mathscr C$.  This is
necessary because the ambient category in
\cite[Definition~3.14]{Nakano} imposes an additional global support
condition which is not part of the definition of $\mathscr C$.

Throughout the remainder of this local extension discussion, fix
\[
 \tau=(\alpha_1,\alpha_2,\alpha_3)\in\mathcal T_{p_+,p_-}
 \quad\text{with}\quad
 h_{\alpha_1}<h_{\alpha_2}<h_{\alpha_3}.
\]
This is the only case used later; Proposition~\ref{prop:Felder-labels}
will show that every one-row triple $\tau_{n,r}$ has this strict ordering.
Abbreviate
\[
 S_i:=L(h_{\alpha_i})\qquad(i=1,2,3).
\]

\begin{lem}[Comparison with the classical Virasoro category $\mathcal O$]
\label{lem:local-category-O}
Let $S,T\in\mathscr C$ be distinct simple lowest-weight Virasoro modules.
Write
$\mathcal O^{\mathrm{Vir}}_{c_{p,q}}$ for the classical Virasoro
category $\mathcal O$ used in \cite{BoeNakanoWiesner}.  Then every
extension of $S$ by $T$ in $\mathscr C$ belongs to
$\mathcal O^{\mathrm{Vir}}_{c_{p,q}}$, and every length-two extension
of these two simples in $\mathcal O^{\mathrm{Vir}}_{c_{p,q}}$ belongs
to $\mathscr C$.  Consequently
\[
 \Ext^1_{\mathscr C}(S,T)
 \cong
 \Ext^1_{\mathcal O^{\mathrm{Vir}}_{c_{p,q}}}(S,T).
\]
\end{lem}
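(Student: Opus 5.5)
The plan is to compare the two kinds of objects directly. The classical Virasoro category $\mathcal O^{\mathrm{Vir}}_{c_{p,q}}$ of \cite{BoeNakanoWiesner} consists of finitely generated Virasoro modules of central charge $c_{p,q}$ on which $L_0$ acts semisimply, and which are locally finite for the positive part $\mathfrak{Vir}_{>0}$ (so weights are bounded below in finitely many cosets). For a length-two extension between distinct simples these conditions and membership in $\mathscr C$ should coincide. Then the two Yoneda groups are built from the same set of short exact sequences, and Baer sum and scalar action are computed by the same module maps.

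For the first direction, take $0\to T\to E\to S\to0$ in $\mathscr C$. Lemma~\ref{lem:gr-extension} shows that $E$ is grading restricted with finite-dimensional generalized weight spaces, and $E$ is generated by a lift of the lowest-weight vector of $S$ together with $T$, so it is finitely generated. Weights of $E$ lie in $h_S+\Z_{\ge0}$ union $h_T+\Z_{\ge0}$. Since $L_n$ with $n>0$ lowers weight, local finiteness under $\mathfrak{Vir}_{>0}$ follows from lower truncation. The remaining point is semisimplicity of $L_0$. Suppose $L_0$ had a nontrivial Jordan block. Then the nilpotent part $D=L_0-S_0$, defined as in Lemma~\ref{lem:source-L0-radical}, gives a nonzero Virasoro endomorphism of $E$ with $D(E)\subset T$ and $D|_T=0$. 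So $D$ factors through a nonzero map $S\to T$. Such a map is impossible because $S\not\cong T$ are distinct simples. Hence $L_0$ is semisimple and $E\in\mathcal O^{\mathrm{Vir}}_{c_{p,q}}$.

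For the converse, a length-two extension in $\mathcal O^{\mathrm{Vir}}_{c_{p,q}}$ of $S$ by $T$ has finite length with simple factors $S,T$. It has central charge $c_{p,q}$, and Lemma~\ref{lem:gr-extension} makes it grading restricted. Its constituents have lowest weights in $H_{p_+,p_-}$ because $S,T\in\mathscr C$, and they are quotients of reducible Verma modules. Therefore $E\in\Oc_{c_{p,q}}$, and in fact $E\in\mathscr C$.

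Both categories are full subcategories of Virasoro modules. The identity on short exact sequences therefore respects equivalence of extensions, pushouts and pullbacks, so it induces the asserted isomorphism of $\Ext^1$. Lemma~\ref{lem:Nakano-exact-compatibility} also lets one replace $\mathscr C$ by $\Oc_{c_{p,q}}$ if needed.

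The main obstacle I expect is pinning down the exact definition of $\mathcal O^{\mathrm{Vir}}$ in \cite{BoeNakanoWiesner}, in particular whether it requires $L_0$-semisimplicity or only local finiteness. If it only requires local finiteness, the $D$-argument is unnecessary. If it requires semisimplicity, the argument above supplies it, and it is exactly where distinctness of $S$ and $T$ is used.
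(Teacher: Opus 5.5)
Your proposal is correct and follows the paper's route. In both, the nilpotent part of $L_0$ factors through $\Hom(S,T)=0$, which gives semisimplicity; finite generation and $\Vir_{>0}$-local finiteness come from lower truncation; and the converse places the length-two module in $\Oc_{c_{p,q}}$ and hence in the Serre subcategory $\mathscr C$. The only differences are cosmetic: you cite Lemma~\ref{lem:gr-extension} for grading restriction where the paper checks weight spaces directly, and you get local finiteness from finite-dimensional weight spaces where the paper counts PBW monomials.
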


\begin{proof}
Let
\[
 0\longrightarrow T\longrightarrow E\longrightarrow S\longrightarrow0
\]
be exact in $\mathscr C$.  The canonical nilpotent part $N_E$ of $L_0$ on $E$ is a
Virasoro-module endomorphism: on the generalized $\lambda$-eigenspace it
is $L_0-\lambda$, and $[L_0,L_n]=-nL_n$ intertwines these nilpotent
parts.  Since $L_0$ is semisimple on both endpoints,
\[
 N_E|_T=0,\qquad \operatorname{Im}N_E\subset T.
\]
Thus $N_E$ factors through the quotient $S=E/T$: there is a morphism
$\overline N:S\to T$ such that
\[
 N_E=(T\hookrightarrow E)\circ\overline N\circ(E\twoheadrightarrow S).
\]
Because $S$ and $T$ are distinct simples, $\Hom(S,T)=0$, so $N_E=0$.
Hence $L_0$ is semisimple on $E$.

Let $t$ be a lowest-weight generator of $T$, let $s$ be a lowest-weight
generator of $S$, and choose a lift $\widetilde s\in E$ of $s$.  Then
\[
 E=U(\Vir)t+U(\Vir)\widetilde s,
\]
so $E$ is finitely generated.  Moreover the positive Virasoro
subalgebra acts locally finitely.  Indeed, for a homogeneous vector
$v$ of weight $h+N$, every monomial in positive modes of total degree
larger than $N+C$ annihilates $v$, where $C$ is chosen so that all
weights of $E$ lie above $h-C$.  Only finitely many PBW monomials have
total positive degree at most $N+C$, so $U(\Vir_{>0})v$ is finite
dimensional.  These are the defining finiteness conditions for the classical Virasoro
category $\mathcal O$ used in \cite[Section~2.2]{BoeNakanoWiesner}; hence
$E\in\mathcal O^{\mathrm{Vir}}_{c_{p,q}}$.

Conversely, let $E$ be a length-two extension of $S$ by $T$ in the
classical category $\mathcal O$.  By definition it is a weight module,
so $L_0$ acts semisimply.  Its weight support is contained in the union
of the two simple supports and is therefore bounded below, while the
short exact sequence on each weight space gives finite-dimensional
weight spaces.  Thus $E$ is grading-restricted.  Since its only
Jordan--H\"older constituents are $S$ and $T$, it belongs to the Serre
subcategory $\mathscr C$.  The two classes of short exact sequences,
and the equivalence relation between them, are therefore identical for
these endpoints.
\end{proof}

\begin{lem}[Finite reduction for simple $\Ext^1$ in Virasoro category $\mathcal O$]
\label{lem:BNW-finite-reduction}
Let $L(\lambda)$ and $L(\mu)$ be fixed simple objects in one block of
$\mathcal O^{\mathrm{Vir}}_{c_{p,q}}$.  After enlarging a prescribed
finite interval of the block containing $\lambda$ and $\mu$, their
$\Ext^1$ may be computed in one of the finite highest-weight categories
used in Section~4 of \cite{BoeNakanoWiesner}.  More precisely, there is
a finite block, truncation, or quotient $\mathcal C_0$ containing the
chosen interval such that
\[
 \Ext^1_{\mathcal O^{\mathrm{Vir}}_{c_{p,q}}}
   \bigl(L(\lambda),L(\mu)\bigr)
 \cong
 \Ext^1_{\mathcal C_0}
   \bigl(L(\lambda),L(\mu)\bigr).
\]
The same finite category can be chosen simultaneously for finitely many
pairs of simples.
\end{lem}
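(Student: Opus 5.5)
We reduce $\Ext^1$ between two fixed simples in classical Virasoro category $\mathcal O$ to a finite truncation. Work with the block decomposition of $\mathcal O^{\mathrm{Vir}}_{c_{p,q}}$ used by Boe--Nakano--Wiesner. At central charge $c_{p,q}$ each block is an infinite chain of weights $h^{(0)}<h^{(1)}<\cdots$ (braid or chain type), partially ordered by the Verma embedding order. First we note that $\Ext^1$ vanishes between different blocks. So we fix the block containing $\lambda,\mu$, and we choose a finite lower ideal (an initial segment) $\Lambda$ of the block's poset that contains $\lambda$, $\mu$, and the prescribed interval. For several pairs we take the union of the finitely many segments, which is again a finite lower ideal; this gives the simultaneous choice.

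Next, we let $\mathcal C_0=\mathcal O^{\mathrm{Vir}}_{\le\Lambda}$ be the Serre subcategory of modules whose composition factors have weights in $\Lambda$. Equivalently, it is the truncation by weights that do not exceed the top of $\Lambda$ in the dominance order appropriate to lowest-weight modules. By \cite[Section~4]{BoeNakanoWiesner} this is a highest-weight category with standard objects the Verma modules $M(\nu)$, $\nu\in\Lambda$, and with enough projectives obtained from truncated projective covers. Because $\mathcal C_0$ is a full Serre subcategory, the proof of Lemma~\ref{lem:Nakano-exact-compatibility} applies verbatim. Any extension $0\to L(\mu)\to E\to L(\lambda)\to0$ in $\mathcal O^{\mathrm{Vir}}_{c_{p,q}}$ has composition factors in $\{\lambda,\mu\}\subset\Lambda$, so $E\in\mathcal C_0$. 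Equivalences, Baer sums and splittings are computed by the same morphisms, so the Yoneda groups agree.

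If one prefers to present $\mathcal C_0$ as a quotient or a truncation by an upper set instead, we use the standard fact that for a highest-weight category and a coideal $\Gamma$, the quotient functor (or truncation) induces isomorphisms on $\Ext^\bullet$ between objects of the remaining part. This is Cline--Parshall--Scott, applied in \cite[Section~4]{BoeNakanoWiesner}. It holds in degree one by the same Serre argument.

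The main obstacle is bookkeeping rather than substance. We must match exactly which finite category BNW use (block, truncation, or quotient) and check that their finiteness conditions on $\mathcal O$ coincide with those in Lemma~\ref{lem:local-category-O}. We must also ensure that the chosen interval is saturated, so that the truncated category is genuinely highest weight. The $\Ext^1$ comparison itself needs only the Serre-subcategory argument, because only degree one is required.
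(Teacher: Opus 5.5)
Your Serre-subcategory argument for Yoneda $\Ext^1$ is correct but essentially trivial. The step that would make your $\mathcal C_0$ a model to which \cite{BoeNakanoWiesner} applies is false at $c_{p,q}$.

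\textbf{Why your $\mathcal C_0$ is not a Boe--Nakano--Wiesner model.} For $t=p/q>0$, every reducible Verma module $M(c_{p,q},h)$, of braid or chain type, has infinitely many singular vectors. It therefore has infinite length, with composition factors $L(h')$ for \emph{all} $h'$ below $h$ in the embedding diagram (numerically larger $h'$). Consequently:
\begin{itemize}
\item Your Serre subcategory of modules whose composition factors lie among finitely many lowest weights of a block contains no Verma module of that block.
\item It is therefore not a highest-weight category with standard objects $M(\nu)$, $\nu\in\Lambda$, as you claim.
\item In the highest-weight order ($\mu\le\lambda$ when $[M(\lambda):L(\mu)]\ne0$), such a block has a maximal element and no finite ideals. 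Your ``initial segment'' is a finite \emph{coideal}, not an ideal.
\end{itemize}
So the identity $\Ext^1_{\mathcal O}=\Ext^1_{\mathcal C_0}$ you prove holds for a category to which the doubled-Hasse description of \cite[Section~4.4]{BoeNakanoWiesner} has not been shown to apply. That transfer is exactly what the lemma is used for in Lemmas~\ref{lem:Felder-Hasse-chain} and~\ref{lem:local-simple-Ext}. (Minor points: braid-type blocks are not chains, and singleton blocks also occur; the latter is the trivial finite case.)

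\textbf{What is needed instead.} The Serre truncation in the paper's first case exists only for blocks with a minimal element, where Verma modules have finite length. For blocks with a maximal element, which is the situation at $c_{p,q}$, the finite Boe--Nakano--Wiesner model is the quotient category $\mathcal C(\Omega)$ attached to a finite coideal $\Omega$. Your fallback paragraph fails here:
\begin{itemize}
\item A Serre quotient is not a full subcategory of $\mathcal O$, so the ``same Serre argument'' does not apply.
\item $\Ext^1$ between retained simples need not be preserved. Morphisms factoring through the discarded weights can create new extension classes between simples adjacent to the cut.
\end{itemize}
The missing input is \cite[Proposition~4.2(C)]{BoeNakanoWiesner} with $N\ge1$. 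It gives the $\Ext^1$ comparison only for endpoints whose length-distance-one neighbours all lie in $\Omega$. You therefore have to enlarge $\Omega$ to contain every vertex within length-distance one of the finitely many endpoints, as the paper does. The proof should then split into three cases:
\begin{itemize}
\item a finite block;
\item a Serre truncation for blocks with a minimal element;
\item a buffered quotient for blocks with a maximal element.
\end{itemize}
This replaces your single Serre-subcategory construction.
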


\begin{proof}
There is nothing to prove for a finite block.  Suppose first that the
block is infinite with a minimal element.  Choose a truncation
$\mathcal C(\xi)$ in the sense of Section~4.1 of
\cite{BoeNakanoWiesner} which contains the prescribed finite interval.
The truncation is a full subcategory defined by its allowed composition
factors.  Hence every length-two extension of two simples in
$\mathcal C(\xi)$ again lies in $\mathcal C(\xi)$, and equivalences and
splittings of such short exact sequences are the same in the full block
and in the truncation.  Their Yoneda $\Ext^1$ groups are therefore
identical.

If the block is infinite with a maximal element, choose a finite coideal
$\Omega$ containing the prescribed interval and all vertices at
length-distance at most one from the finitely many endpoints under
consideration.  Proposition~4.2(C) of \cite{BoeNakanoWiesner}, with
$N\ge1$, identifies the full-block $\Ext^1$ groups with those in the
finite quotient category $\mathcal C(\Omega)$.  Enlarging $\Omega$ if
necessary treats finitely many pairs at once.  In each case the finite
weight poset contains the prescribed interval with its original order,
and the doubled-Hasse description of Section~4.4 applies there.
\end{proof}

\begin{lem}[Hasse positions of a strict Felder triple]
\label{lem:Felder-Hasse-chain}
Let $\lambda_i$ be the vertex of the Boe--Nakano--Wiesner weight poset
corresponding to $S_i=L(h_{\alpha_i})$.  Up to reversing the convention
for the partial order, one has two successive cover relations
\[
 \lambda_1\lessdot\lambda_2\lessdot\lambda_3.
\]
In particular, $\lambda_1$ and $\lambda_3$ are not joined by a Hasse
edge.
\end{lem}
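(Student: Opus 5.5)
We plan to work entirely with the explicit description of the Boe--Nakano--Wiesner weight poset for the Virasoro algebra at central charge $c_{p_+,p_-}$. Recall the Feigin--Fuchs structure: in the (braid or chain) block containing a Felder weight, the highest weights are arranged in a sequence $h^{(0)}<h^{(1)}<h^{(2)}<\cdots$ (or a finite or two-sided version, depending on the block type). The Bruhat-type partial order is generated by Verma embeddings $M(h^{(j)})\hookrightarrow M(h^{(i)})$. The Hasse diagram has the familiar ``ladder'' shape: each vertex is covered by the next one or two vertices in the conformal-weight ordering, and never by vertices two levels further away. The first step is therefore to recall this ordering and to use it in the form in which it is stated in Section~4.4 of \cite{BoeNakanoWiesner}, reversing the order if necessary so that it matches the ordering by conformal weight.

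The second step places the three weights of $\tau$. By the definition of $\mathcal T_{p_+,p_-}$ in \cite[Section~5]{Nakano}, a Felder triple $(\alpha_1,\alpha_2,\alpha_3)$ consists of three consecutive Felder weights in one Feigin--Fuchs block. Concretely, $h_{\alpha_2}$ is the weight of a singular vector in $M(h_{\alpha_1})$ at the first level, and $h_{\alpha_3}$ is the next one. The strict ordering $h_{\alpha_1}<h_{\alpha_2}<h_{\alpha_3}$ fixed above rules out the degenerate wrap-around configurations, so the three vertices appear in the block sequence with increasing index. We will check that they are consecutive, with no block vertex strictly between them. This follows because the Felder weights in the block are exactly the block vertices, or at least a convex subset containing every vertex between $\lambda_1$ and $\lambda_3$: any intermediate weight would be a further Kac weight $h_{r,s}$ congruent modulo the block and lying strictly between, which the Felder labelling excludes by construction.

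The third step turns consecutiveness into the Hasse statement. In a chain-type block the order is total and consecutive vertices are covers, so $\lambda_1\lessdot\lambda_2\lessdot\lambda_3$ holds, and $\lambda_1<\lambda_3$ is not a cover because $\lambda_2$ lies strictly between them. In a braid-type block the vertices at each level come in pairs, and every vertex at level $m$ is covered by both vertices at level $m+1$. Here we must verify that $\alpha_1,\alpha_2,\alpha_3$ lie at three successive levels rather than two of them sharing a level. That is guaranteed by strict increase of conformal weights together with the fact that the pair at one level has distinct weights, one of which belongs to the \emph{other} Felder family. The non-edge claim then follows because Hasse edges in the BNW poset join only adjacent levels.

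I expect this level bookkeeping in the braid case to be the main obstacle. In particular, we must show that $\alpha_2$ is not the level-mate of $\alpha_1$ or of $\alpha_3$. My first attempt is to compute the level of each $h_{\alpha_i}$ directly from the label $\alpha_i=(r,s)$ and the block parameters via \eqref{eq:weight-difference}, comparing with the Feigin--Fuchs level formula. If that proves messy, the fallback is to use the nonvanishing of the Verma embeddings $M(h_{\alpha_3})\subset M(h_{\alpha_2})\subset M(h_{\alpha_1})$ defining the triple together with the length function on the poset.
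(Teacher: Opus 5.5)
\textbf{There is a genuine gap, and it sits exactly at the step you flag as the main obstacle.} The argument you sketch for that step is not just unfinished; parts of it are false.

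The relevant blocks are of braid type: every one-row weight $h_{j,1}$ with $p\nmid j$ is braid type, since $q\nmid 1$. In a braid-type block, each level-$(k+1)$ Verma module embeds in \emph{both} level-$k$ Verma modules. So even if $\lambda_1,\lambda_2,\lambda_3$ sit on levels $k-1,k,k+1$, the level-mate $\lambda_2'$ of $\lambda_2$ lies strictly between $\lambda_1$ and $\lambda_3$. This holds both in conformal weight and in the embedding order. This has three consequences for your sketch.
\begin{itemize}
\item The second-paragraph claim that there is ``no block vertex strictly between them'' is false.
\item The appeal to the Felder labelling excluding intermediate Kac weights proves nothing, because $\lambda_2'$ is such a weight and is not excluded.
\item The phrase ``covered by the next one or two vertices in the conformal-weight ordering'' is also wrong, because the next vertex in weight can be an incomparable level-mate.
\end{itemize}

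Your third-paragraph justification of ``three successive levels'' does not work either. Strictly increasing weights are compatible with $\alpha_1,\alpha_2$ being level-mates, or with $\alpha_2$ lying two levels below $\alpha_1$. The remark about ``the other Felder family'' is not an argument. The assertion that $h_{\alpha_2}$ is a first-level singular weight of $M(h_{\alpha_1})$ does not follow from Nakano's Definition~5.6. That definition speaks of adjacency of Fock modules in a Felder complex, not of Verma singular vectors, and converting the one into ``level difference one'' is the whole content of the lemma. Your fallback presupposes the same fact. The triple is not defined by Verma embeddings, and embeddings only give comparability, not covers.

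\textbf{How the paper avoids this.} The paper never does level bookkeeping.
\begin{itemize}
\item Nakano's Definition~5.9 supplies nonsplit length-two highest-weight modules with factors $(S_1,S_2)$ and $(S_2,S_3)$. So $\Ext^1$ is nonzero for both pairs.
\item Lemmas~\ref{lem:local-category-O} and~\ref{lem:BNW-finite-reduction} move these classes into a finite Boe--Nakano--Wiesner model. There the doubled-Hasse description of the $\Ext^1$-quiver makes both pairs Hasse edges.
\item A Verma embedding $M(h')\hookrightarrow M(h)$ forces $h'>h$. So $h_{\alpha_1}<h_{\alpha_2}<h_{\alpha_3}$ orients both edges the same way.
\item Transitivity then rules out a $\lambda_1$--$\lambda_3$ cover.
\end{itemize}

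\textbf{How to complete your own route.} If you want to keep the ladder picture, carry out the direct level computation you propose, at least for the one-row triples $\tau_{n,r}$ the paper actually uses. For braid-type $h_{j,1}$, the two first-level (prime) singular vectors of $M(h_{j,1})$ lie at grades $j$ and $(mp-j)(mq-1)$, where $m=\lceil j/p\rceil$. For $j=np-r$ the second grade is $r(nq-1)=h_b-h_a$, by \eqref{eq:strip-central-difference}. For $j=np+r$ it is $(p-r)((n+1)q-1)=h_c-h_b$, by \eqref{eq:strip-upper-difference}. So each consecutive pair is a first-level pair, hence a cover in the braid ladder. For a general strict Felder triple you would additionally need Felder's description of where the terms of the BRST resolution sit in the braid.
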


\begin{proof}
For a strict Felder triple, \cite[Definition~5.9]{Nakano} provides the
ordinary nonsplit length-two highest-weight modules
$L(h_{\alpha_1},h_{\alpha_2})$ and
$L(h_{\alpha_2},h_{\alpha_3})$.  These are concrete Virasoro modules in
$L_{c_{p_+,p_-}}\text{-mod}$, hence are grading-restricted generalized
modules.  Their Jordan--H\"older factors are respectively $(S_1,S_2)$
and $(S_2,S_3)$.  Since the three lowest weights belong to
$H_{p_+,p_-}$, both middle terms lie in the Serre subcategory
$\mathscr C$.  Their defining short exact sequences remain nonsplit in
$\mathscr C$ because $\mathscr C$ is full.
Lemma~\ref{lem:local-category-O} therefore places these two nonsplit
extensions in the classical Virasoro category $\mathcal O$.  They
determine nonzero classes between $(S_1,S_2)$ and between $(S_2,S_3)$
in that category.

Apply Lemma~\ref{lem:BNW-finite-reduction} simultaneously to the three
vertices, choosing the finite model to contain the order interval joining
them.  The two nonzero extension classes remain nonzero in this finite
category.  By the doubled-Hasse description of
\cite[Section~4.4]{BoeNakanoWiesner} (equivalently
\cite[Theorem~10]{BoeNakanoWiesner}), the pairs
$(\lambda_1,\lambda_2)$ and $(\lambda_2,\lambda_3)$ are Hasse edges
there, and hence in the retained interval of the original block.

It remains only to determine the relative direction of the two edges; no
claim about embeddings of Fock modules is used.  In the Virasoro
highest-weight order underlying \cite{BoeNakanoWiesner}, an edge between
simple highest weights $h<h'$ can only have the direction corresponding
to a Verma embedding
\[
 M(c_{p,q},h')\hookrightarrow M(c_{p,q},h).
\]
Indeed, a nonzero homomorphism of Verma modules sends the highest-weight
vector to a singular vector, whose relative grade is $h'-h\in\Z_{\ge0}$;
the opposite direction would require a singular vector of negative
relative grade.  Since
\[
 h_{\alpha_1}<h_{\alpha_2}<h_{\alpha_3},
\]
the two Hasse edges consequently have the same orientation.  Thus, up to
reversing the convention for the partial order,
\[
 \lambda_1\lessdot\lambda_2\lessdot\lambda_3.
\]
In particular $\lambda_2$ is a comparable vertex strictly between the
two endpoints, so $\lambda_1$ and $\lambda_3$ cannot themselves be
joined by a cover relation.
\end{proof}

\begin{lem}[Simple extensions in the Felder triple]
\label{lem:local-simple-Ext}
For distinct simples in the above strict triple,
\[
 \Ext^1_{\mathscr C}(S_i,S_{i+1})
 \cong
 \Ext^1_{\mathscr C}(S_{i+1},S_i)
 \cong\C
 \qquad(i=1,2),
\]
and
\[
 \Ext^1_{\mathscr C}(S_1,S_3)
 =
 \Ext^1_{\mathscr C}(S_3,S_1)=0.
\]
No assertion about self-extensions is made here; the self-extension
vanishing needed for the one-row application is proved separately in
Lemma~\ref{lem:one-row-self-Ext} using primitive positive Kac
representatives.
\end{lem}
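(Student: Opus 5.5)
The plan is to reduce everything to the classical Virasoro category $\mathcal O$ and then read off the answer from the Boe--Nakano--Wiesner doubled-Hasse description in a finite model. First, since $S_1,S_2,S_3$ are pairwise distinct simples in $\mathscr C$ (their lowest weights are strictly ordered), Lemma~\ref{lem:local-category-O} gives, for each ordered pair $i\ne j$,
\[
 \Ext^1_{\mathscr C}(S_i,S_j)\cong
 \Ext^1_{\mathcal O^{\mathrm{Vir}}_{c_{p,q}}}(S_i,S_j).
\]
All three vertices $\lambda_1,\lambda_2,\lambda_3$ lie in one block. This holds because Lemma~\ref{lem:Felder-Hasse-chain} joins them by Hasse edges, and these edges come from nonsplit extensions, so cross-block extensions vanish there. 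I will apply Lemma~\ref{lem:BNW-finite-reduction} simultaneously to the four pairs $(\lambda_1,\lambda_2)$, $(\lambda_2,\lambda_3)$, $(\lambda_1,\lambda_3)$ and their reverses. This yields a single finite highest-weight category $\mathcal C_0$, containing the order interval from $\lambda_1$ to $\lambda_3$, in which all six $\Ext^1$ groups are computed.

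Next, I will invoke the doubled-Hasse description of \cite[Section~4.4]{BoeNakanoWiesner} (equivalently \cite[Theorem~10]{BoeNakanoWiesner}) inside $\mathcal C_0$. It says that for distinct simples, $\Ext^1(L(\lambda),L(\mu))$ is one-dimensional when $\lambda,\mu$ are joined by a Hasse edge and zero otherwise, and it is symmetric in the two arguments. Lemma~\ref{lem:Felder-Hasse-chain} provides the cover relations $\lambda_1\lessdot\lambda_2\lessdot\lambda_3$ in the retained interval. Hence $\Ext^1(S_i,S_{i+1})\cong\Ext^1(S_{i+1},S_i)\cong\C$ for $i=1,2$. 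The same lemma shows that $\lambda_1$ and $\lambda_3$ are not joined by a Hasse edge, so both $\Ext^1(S_1,S_3)$ and $\Ext^1(S_3,S_1)$ vanish.

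As a consistency check on the nonvanishing direction, the nonsplit modules $L(h_{\alpha_1},h_{\alpha_2})$ and $L(h_{\alpha_2},h_{\alpha_3})$ of \cite[Definition~5.9]{Nakano} give nonzero classes in one orientation. The restricted contragredient of Lemma~\ref{lem:Nakano-ambient} fixes each simple, preserves $\mathscr C$, is exact, and reverses short exact sequences. It therefore gives an isomorphism $\Ext^1_{\mathscr C}(X,Y)\cong\Ext^1_{\mathscr C}(Y',X')$, which supplies the opposite orientation independently of the symmetry in the doubled-Hasse statement.

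The main obstacle is ensuring that the Hasse edges found in Lemma~\ref{lem:Felder-Hasse-chain} are Hasse edges of the finite model $\mathcal C_0$ itself. A truncation or coideal quotient might in principle insert or delete vertices between $\lambda_1$ and $\lambda_3$, which would change the edge set. My first approach is to choose $\mathcal C_0$ so that it contains the full order interval $[\lambda_1,\lambda_3]$ of the original block with its induced order, as Lemma~\ref{lem:BNW-finite-reduction} permits. Cover relations inside a full interval are then intrinsic to that interval. Since $\lambda_2$ lies strictly between the two endpoints, $\lambda_1\lessdot\lambda_3$ is impossible in any such model. This is exactly what is needed for the vanishing of $\Ext^1$ between $S_1$ and $S_3$.
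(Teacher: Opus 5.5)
Your proposal is correct and follows the paper's proof essentially step for step. You transfer to the classical Virasoro category $\mathcal O$ via Lemma~\ref{lem:local-category-O}, pass to a single finite BNW model containing the order interval by Lemma~\ref{lem:BNW-finite-reduction}, and read off the dimensions from the doubled-Hasse description using the cover relations and the non-adjacency of $\lambda_1,\lambda_3$ from Lemma~\ref{lem:Felder-Hasse-chain}; the contragredient check of the reverse orientation is a harmless extra that the paper does not need.
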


\begin{proof}
Lemma~\ref{lem:local-category-O} identifies the Yoneda groups for
distinct endpoints in $\mathscr C$ with those in the classical Virasoro
category $\mathcal O$.  Choose, by
Lemma~\ref{lem:BNW-finite-reduction}, one finite BNW model containing
$\lambda_1,\lambda_2,\lambda_3$ and their order interval.  By
Lemma~\ref{lem:Felder-Hasse-chain}, the first two pairs are Hasse-adjacent
in this finite poset, whereas $\lambda_1$ and $\lambda_3$ are not.
Section~4.4 of \cite{BoeNakanoWiesner} identifies the simple
$\Ext^1$-quiver of this finite category with the doubled Hasse diagram.
The adjacent pairs therefore have one-dimensional $\Ext^1$ in either
orientation, while the endpoints have zero $\Ext^1$ in either
orientation.  Lemma~\ref{lem:BNW-finite-reduction} transfers these values
back to the full classical block, and then
Lemma~\ref{lem:local-category-O} transfers them to $\mathscr C$.
\end{proof}

Let $A(\tau)$ and $B(\tau)$ denote the middle terms of nonzero classes
in
\[
 \Ext^1_{\mathscr C}(S_1,S_2)
 \quad\text{and}\quad
 \Ext^1_{\mathscr C}(S_2,S_3),
\]
respectively.  By Lemma~\ref{lem:one-dimensional-ext} these are
well-defined up to isomorphism, and
\begin{align}
 0&\longrightarrow S_2\longrightarrow A(\tau)
   \longrightarrow S_1\longrightarrow0,
   \label{eq:Nakano-tower-A}\\
 0&\longrightarrow S_3\longrightarrow B(\tau)
   \longrightarrow S_2\longrightarrow0.
   \label{eq:Nakano-tower-B}
\end{align}
These isomorphism classes agree with the modules denoted
$L(h_{\alpha_1},h_{\alpha_2})$ and
$L(h_{\alpha_2},h_{\alpha_3})$ in
\cite[Definition~5.9]{Nakano}.  Indeed, Nakano's two middle terms have
length two with the same Felder simple factors, so they satisfy the
weight-space finiteness conditions above and belong to $\mathscr C$.
They represent nonzero classes in the corresponding one-dimensional
Yoneda spaces of Lemma~\ref{lem:local-simple-Ext};
Lemma~\ref{lem:one-dimensional-ext} therefore identifies their middle
terms with $A(\tau)$ and $B(\tau)$, respectively.

Applying Lemma~\ref{lem:prelim-Hom-Yoneda} to
\eqref{eq:Nakano-tower-B} with target $S_1$, and using
Lemma~\ref{lem:local-simple-Ext}, gives
\begin{equation}\label{eq:Nakano-first-Ext}
 \Ext^1_{\mathscr C}\bigl(B(\tau),S_1\bigr)\cong\C.
\end{equation}
Indeed, the three preceding Hom spaces vanish,
$\Ext^1_{\mathscr C}(S_2,S_1)\cong\C$, and
$\Ext^1_{\mathscr C}(S_3,S_1)=0$.  We denote by $K(\tau)$ the middle
term of a nonzero class, so that
\begin{equation}\label{eq:Nakano-tower-K}
 0\longrightarrow S_1\longrightarrow K(\tau)
 \longrightarrow B(\tau)\longrightarrow0.
\end{equation}
This is also the Virasoro module isomorphism class denoted $K(\tau)$ in
\cite[Definition~5.10(2)]{Nakano}.  Nakano's module has length three,
hence lies in $\mathscr C$, and represents a nonzero class in the
one-dimensional space \eqref{eq:Nakano-first-Ext};
Lemma~\ref{lem:one-dimensional-ext} gives the identification.  No
identification of the two ambient Ext categories is involved.

We shall need two local consequences of the staggered-module
calculation.  We first record the precise highest-weight structure needed
to place the relevant extensions in that framework.

\begin{lem}[Highest-weight realization of the Felder extensions]
\label{lem:local-highest-weight-realization}
The modules $A(\tau)$ and $B(\tau)$ are ordinary highest-weight
Virasoro modules.  More generally, if
\[
 0\to T\to M\to S\to0
\]
is a nonsplit extension of simple lowest-weight modules with
$h_S<h_T$ and semisimple $L_0$, then $M$ is generated by any lift of a
lowest-weight vector of $S$.
\end{lem}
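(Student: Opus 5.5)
I will prove the general statement first and then specialize it to $A(\tau)$ and $B(\tau)$. Let
\[
 0\to T\xrightarrow{\iota} M\xrightarrow{\pi} S\to0
\]
be nonsplit, with $S,T$ simple lowest-weight modules, $h_S<h_T$, and $L_0$ semisimple on $M$.

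\emph{Step 1: a singular lift.} The weight $h_S$ does not occur in $T$: its weights lie in $h_T+\Z_{\ge0}$, and $h_T>h_S$ is real, so $h_S\notin h_T+\Z_{\ge0}$. Lemma~\ref{lem:gr-extension} then shows that $\pi$ restricts to an isomorphism $M_{h_S}\to S_{h_S}$. Here I use honest eigenspaces, since $L_0$ is semisimple. Hence every lowest-weight vector $s$ of $S$ has a unique lift $\tilde s$ of weight $h_S$, and any other lift differs from it by an element of $T$. For $n>0$, $L_n\tilde s$ has weight $h_S-n$. This is not a weight of $S$, and it is not a weight of $T$ because $h_S-n<h_T$. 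So $L_n\tilde s=0$, and $\tilde s$ is a singular vector of $M$ of weight $h_S$.

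\emph{Step 2: the weight-homogeneous lift generates $M$.} Put $M_0:=U(\Vir)\tilde s$. It surjects onto $S$, since $S$ is generated by $s$. The intersection $M_0\cap\iota(T)$ is a submodule of the simple module $T$, so it is either $0$ or all of $T$. If it is $0$, then $\pi|_{M_0}$ is an isomorphism onto $S$, and its inverse is a section of $\pi$. This contradicts nonsplitness, so $M_0\supset T$ and therefore $M_0=M$. Thus $M$ is a highest-weight module (lowest-weight, in the paper's convention), a quotient of the Verma module $M(c_{p,q},h_S)$.

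\emph{Step 3: arbitrary lifts.} Here I expect the main obstacle, because the statement says ``any lift'', and a general lift is $\tilde s+t$ with $t\in T$ not necessarily homogeneous. I will take weight components. The $h_S$-component of $\tilde s+t$ is exactly $\tilde s$, because $T$ has no $h_S$-weight vectors. Since $L_0$ is semisimple, the projection of a vector onto one of the finitely many weights it involves is a polynomial in $L_0$ applied to that vector. Hence $U(\Vir)(\tilde s+t)$ contains $\tilde s$, and it equals $M$ by Step~2. If the statement is read as requiring a lift that is itself a lowest-weight vector, Step~1 already supplies it.

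\emph{Step 4: application to $A(\tau)$ and $B(\tau)$.} By \eqref{eq:Nakano-tower-A} and \eqref{eq:Nakano-tower-B}, these are nonsplit extensions of $S_1$ by $S_2$ and of $S_2$ by $S_3$, with $h_{\alpha_1}<h_{\alpha_2}<h_{\alpha_3}$. Their endpoints are distinct simples, since the lowest weights differ. The first paragraph of the proof of Lemma~\ref{lem:local-category-O} shows that the nilpotent part of $L_0$ factors through $\Hom(S,T)=0$, so $L_0$ is semisimple on both modules. The general statement then shows that $A(\tau)$ and $B(\tau)$ are ordinary highest-weight modules generated by lifts of the lowest-weight vectors of $S_1$ and $S_2$, respectively.
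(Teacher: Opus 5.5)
Your proposal is correct and follows essentially the paper's proof: take the weight-$h_S$ lift, use simplicity of $T$ and nonsplitting to force $T\subset U(\Vir)\widetilde s$, note that positive modes kill $\widetilde s$ for weight reasons, and get ordinariness of $A(\tau),B(\tau)$ from Lemma~\ref{lem:local-category-O}. Your Step~3 recovers the homogeneous lift from an arbitrary lift via a polynomial in $L_0$. This is a small addition the paper leaves implicit: its proof only treats a weight-vector lift, although the statement says ``any lift''.
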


\begin{proof}
Let $v$ be a lowest-weight generator of $S$, choose a weight-vector lift
$\widetilde v\in M$, and set $U=U(\Vir)\widetilde v$.  The map
$U\to S$ is surjective.  Since $T$ is simple, either $U\cap T=0$ or
$U\cap T=T$.  The first alternative gives $U\cong S$ and hence a
section of $M\twoheadrightarrow S$, contrary to nonsplitting.  Thus
$T\subset U$, so $U=M$.  Positive Virasoro modes annihilate
$\widetilde v$ because they would have weights strictly below $h_S$.
Therefore $M$ is highest-weight.  Applying this to
\eqref{eq:Nakano-tower-A} and \eqref{eq:Nakano-tower-B}, whose
ordinariness follows from Lemma~\ref{lem:local-category-O}, proves the
first assertion.
\end{proof}

\begin{lem}[Staggered realization]
\label{lem:local-staggered-realization}
Let
\[
 0\longrightarrow A(\tau)\longrightarrow E\longrightarrow
 B(\tau)\longrightarrow0
\]
be logarithmic in $\mathscr C$.  Then this sequence is a rank-two
staggered Virasoro module in the sense of
\cite[Section~3]{KytolaRidout}, with left module $A(\tau)$ and right
module $B(\tau)$.  If $v_{\alpha_1}$ is a lowest-weight generator of
$A(\tau)$, there is a Shapovalov element
\[
 \eta_{12}:=\eta(h_{\alpha_1},h_{\alpha_2})
\]
for which the distinguished simple submodule is
\[
 S_2=U(\Vir)\eta_{12}v_{\alpha_1}\subset A(\tau),
 \qquad
 \deg\eta_{12}=h_{\alpha_2}-h_{\alpha_1}.
\]
This is the singular-vector datum used in the logarithmic coupling of
\cite[Theorem~5.11]{Nakano}.
\end{lem}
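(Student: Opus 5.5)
We need to check Kytölä--Ridout's definition of a staggered module: a module $E$ with an exact sequence $0\to H^L\to E\to H^R\to0$, where $H^L$ and $H^R$ are highest-weight modules, $L_0$ acts with Jordan blocks of rank at most two, and $L_0$ is non-diagonalizable. The plan is to verify these conditions from what is already available, and then to extract the singular-vector datum from the structure of $A(\tau)$.

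\emph{Step 1 (highest-weight ends).} By Lemma~\ref{lem:local-highest-weight-realization}, both $A(\tau)$ and $B(\tau)$ are ordinary highest-weight modules. They are generated by lifts of lowest-weight vectors of weights $h_{\alpha_1}$ and $h_{\alpha_2}$ respectively, using $h_{\alpha_1}<h_{\alpha_2}<h_{\alpha_3}$. So the left and right modules have the required form, and the exact sequence lies in $\mathscr C$, with grading restriction from Lemma~\ref{lem:gr-extension}.

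\emph{Step 2 (Jordan rank at most two, non-diagonalizable).} Let $N_E$ be the nilpotent part of $L_0$. As in the proof of Lemma~\ref{lem:local-category-O}, $N_E$ is a Virasoro endomorphism. Since $L_0$ is semisimple on $A(\tau)$ and on $B(\tau)$, we get $N_E|_{A(\tau)}=0$ and $\operatorname{Im}N_E\subset A(\tau)$. Hence $N_E$ factors as $E\twoheadrightarrow B(\tau)\xrightarrow{\overline N}A(\tau)\hookrightarrow E$, and therefore $N_E^2=0$. This gives Jordan blocks of rank at most two. Logarithmicity is exactly $N_E\neq0$, i.e.\ $L_0$ is not diagonalizable.

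\emph{Step 3 (the distinguished submodule and the Shapovalov element).} Since $\overline N\neq0$, its image is a nonzero quotient of $B(\tau)$ sitting inside $A(\tau)$. The head of $B(\tau)$ is $S_2$ and its socle is $S_3$, while $A(\tau)$ has composition factors $S_1$ and $S_2$ only. Therefore $\overline N$ kills $S_3$, and its image is the simple socle $S_2\subset A(\tau)$. This socle is generated by a singular vector of $A(\tau)$ of weight $h_{\alpha_2}$. Pull this vector back to the Verma module $M(c_{p,q},h_{\alpha_1})$, which surjects onto the highest-weight module $A(\tau)$. Because $h_{\alpha_1}\lessdot h_{\alpha_2}$ in the BNW order (Lemma~\ref{lem:Felder-Hasse-chain}), there is a Verma embedding $M(h_{\alpha_2})\hookrightarrow M(h_{\alpha_1})$. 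Its image of the highest-weight vector has the form $\eta_{12}v$, for a singular-vector (Shapovalov) element $\eta_{12}\in U(\Vir_{<0})$ of degree $h_{\alpha_2}-h_{\alpha_1}$. The image of $\eta_{12}v$ in $A(\tau)$ must be nonzero and generate $S_2$. Otherwise $A(\tau)$ would be a quotient of $M(h_{\alpha_1})/M(h_{\alpha_2})$ with trivial weight-$h_{\alpha_2}$ singular part, contradicting the presence of $S_2$. To see this, one uses that the weight-$h_{\alpha_2}$ space of $A(\tau)$ contains the highest-weight vector of $S_2$, and that any singular vector at that grade in the Verma module is proportional to $\eta_{12}v$. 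This proportionality holds because the singular space has dimension one in the rank-two embedding pattern. Finally, $\overline N$ sends a lift $y$ of the generator of $B(\tau)$ to a multiple of $\eta_{12}v_{\alpha_1}$, which is the data $(\omega_0=\eta_{12}v_{\alpha_1})$ entering Kytölä--Ridout's logarithmic coupling.

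\emph{Expected obstacle.} The delicate point is Step 3's uniqueness claim: that the weight-$h_{\alpha_2}$ singular vectors of $M(h_{\alpha_1})$ form a line. This fails in braid-type cases only at grades with two embedded submodules. Since $\lambda_2$ is a cover of $\lambda_1$, however, it is the unique maximal-grade vertex at that weight. So I expect to cite Feigin--Fuchs embedding structure (via the BNW poset) to conclude one-dimensionality.
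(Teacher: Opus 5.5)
Your Steps 1 and 2 match the paper's argument: the ends are highest-weight by Lemma~\ref{lem:local-highest-weight-realization}, and $N_E$ kills $A(\tau)$ and lands in $A(\tau)$, so $N_E^2=0\neq N_E$.

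\textbf{The gap: indecomposability is never checked.} The paper treats indecomposability of $E$ as part of the Kyt\"ol\"a--Ridout definition of a staggered module and proves it separately, but your list of defining conditions leaves it out. Logarithmicity by itself only excludes $E\cong A(\tau)\oplus B(\tau)$. It does not exclude an arbitrary decomposition $E=X\oplus Y$, for instance with $X$ logarithmic. The paper closes this with an idempotent argument:
\begin{itemize}
\item $A(\tau)$ is generated in weight $h_{\alpha_1}$, while $B(\tau)$ has weights $\ge h_{\alpha_2}>h_{\alpha_1}$. Hence $\Hom(A(\tau),B(\tau))=0$.
\item So every idempotent $e\in\End(E)$ preserves $A(\tau)$ and induces idempotents in $\End A(\tau)=\End B(\tau)=\C$.
\item The two pure cases force $e\in\{0,\id\}$. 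The mixed case produces a section of $E\twoheadrightarrow B(\tau)$, contradicting logarithmicity.
\end{itemize}
Equivalently, the one-dimensional weight-$h_{\alpha_1}$ space puts $A(\tau)$ inside one summand, and indecomposability of $B(\tau)$ then forces a splitting. You need some such argument before you can call $E$ staggered.

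\textbf{Step 3.} It reaches the right conclusion, but by a heavier route than the paper. The paper takes the lowest-weight vector of the unique simple submodule $S_2$ of the nonsplit extension $A(\tau)$. It then uses cyclicity of $A(\tau)$ to write that vector as $\eta_{12}v_{\alpha_1}$, and explicitly does not claim that $\eta_{12}$ comes from a first Verma singular vector. Your Verma-embedding version gives a more canonical $\eta_{12}$, but your reason that $\eta_{12}v$ survives in $A(\tau)$ does not work as stated. A lift to $M(c_{p,q},h_{\alpha_1})$ of the highest-weight vector of $S_2$ need not be singular in the Verma module, so uniqueness of Verma singular vectors at that grade does not apply.
\begin{itemize}
\item The correct reason is Feigin--Fuchs multiplicity one: $[M(c_{p,q},h_{\alpha_1}):L(h_{\alpha_2})]=1$, so $M(c_{p,q},h_{\alpha_1})/M(c_{p,q},h_{\alpha_2})$ has no $S_2$ factor.
\item The embedding itself follows from $[M(c_{p,q},h_{\alpha_1}):L(h_{\alpha_2})]\neq0$, without the BNW cover relation.
\item The obstacle you anticipate does not exist: singular vectors of a fixed weight in a Virasoro Verma module are unique up to scalar, braid type included.
\end{itemize}
Your extra observation that $\operatorname{Im}\overline N=S_2$ is correct. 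It underlies the formula $(L_0-h_2)v_2=d_E\eta_{12}v_1$ with $d_E\neq0$ in Lemma~\ref{lem:split-forces-zero-coupling}.
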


\begin{proof}
By Lemma~\ref{lem:local-highest-weight-realization}, both end terms are
ordinary highest-weight Virasoro modules.  Let $N_E$ be the canonical
nilpotent part of $L_0$.  Semisimplicity on the endpoints gives
\[
 N_E(A(\tau))=0,\qquad
 N_E(E)\subset A(\tau),\qquad N_E^2=0,
\]
and logarithmicity gives $N_E\ne0$.

We also verify indecomposability, which is part of the definition in
\cite[Section~3]{KytolaRidout}.  Since $A(\tau)$ is generated by its
one-dimensional lowest-weight space of weight $h_{\alpha_1}$, whereas
all weights of $B(\tau)$ are at least $h_{\alpha_2}>h_{\alpha_1}$,
\[
 \Hom\bigl(A(\tau),B(\tau)\bigr)=0.
\]
Both endpoint modules are cyclic highest-weight modules with
one-dimensional lowest-weight spaces, so their endomorphism algebras are
$\C$.  Let $e\in\End(E)$ be an idempotent, and write
$\iota:A(\tau)\hookrightarrow E$ and
$\pi:E\twoheadrightarrow B(\tau)$ for the structure maps.  Since
$\pi e\iota=0$, the endomorphism $e$ preserves $A(\tau)$ and induces
idempotents
\[
 e_A\in\End(A(\tau)),\qquad e_B\in\End(B(\tau)),
\]
so $e_A,e_B\in\{0,\id\}$.  If both are zero, then
$e(E)\subset A(\tau)$ and $e|_{A(\tau)}=0$, hence $e^2=0$ and therefore
$e=0$.  If both are the identity, the same argument applied to $\id-e$
gives $e=\id$.  In the mixed case $e_A=0$, $e_B=\id$, one has
$e\iota=0$ and $\pi e=\pi$; consequently
$\pi|_{e(E)}:e(E)\to B(\tau)$ is an isomorphism and supplies a section of
$\pi$.  The other mixed case is reduced to this one by replacing $e$
with $\id-e$.  Thus a nontrivial idempotent would split the extension.
But a split extension of the ordinary modules $A(\tau)$ and $B(\tau)$
has semisimple $L_0$, contrary to the logarithmicity of $E$.  Hence $E$
is indecomposable.  Together with $N_E^2=0$ and $N_E\ne0$, this places
$E$ in the rank-two staggered class of \cite{KytolaRidout}.

The nonsplit highest-weight extension
\eqref{eq:Nakano-tower-A} has unique simple submodule $S_2$, so the
singular vector generating it from the lowest-weight vector of
$A(\tau)$ is well defined up to nonzero scalar.  We normalize the
corresponding Shapovalov element as $\eta_{12}$.  This notation agrees
with the element $\eta(h_{\alpha_1},h_{\alpha_2})$ fixed immediately
before \cite[Theorem~5.11]{Nakano}.  Notice that no assertion is made
here that $\eta_{12}$ is the first singular vector of the ambient Verma
module; this distinction is relevant at the vacuum edge.
\end{proof}

\begin{lem}[Splitting of the distinguished quotient forces zero coupling]
\label{lem:split-forces-zero-coupling}
Let
\[
 0\longrightarrow A(\tau)\longrightarrow E\longrightarrow B(\tau)
 \longrightarrow0
\]
be logarithmic in $\mathscr C$, and let $\beta_E$ be the logarithmic
coupling defined from the Shapovalov element $\eta_{12}$ of
Lemma~\ref{lem:local-staggered-realization}.  If the induced sequence
\begin{equation}\label{eq:distinguished-quotient-general}
 0\longrightarrow S_1\longrightarrow E/S_2\longrightarrow B(\tau)
 \longrightarrow0
\end{equation}
splits, then $\beta_E=0$.
\end{lem}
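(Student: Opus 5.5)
We prove the lemma by computing the logarithmic coupling $\beta_E$ explicitly from vectors of $E$, and observing that a splitting of \eqref{eq:distinguished-quotient-general} lets us choose these vectors so that the defining expression is visibly zero.

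First we fix the Kytölä--Ridout normalization as used before \cite[Theorem~5.11]{Nakano}. Let $x\in A(\tau)\subset E$ be the lowest-weight vector $v_{\alpha_1}$, and put $\omega_0:=\eta_{12}x$. This is a nonzero singular vector of weight $h_{\alpha_2}$ generating $S_2$. Let $y\in E$ be a lift of the lowest-weight vector of $B(\tau)$, chosen in the generalized $h_{\alpha_2}$-eigenspace, which is possible by Lemma~\ref{lem:gr-extension}. Then $N_E y=(L_0-h_{\alpha_2})y$ lies in $A(\tau)$ and has weight $h_{\alpha_2}$. Since $N_E$ is a module map that kills $A(\tau)$ and factors through $B(\tau)$, and $B(\tau)$ is generated by $\pi(y)$, logarithmicity gives $N_Ey\neq0$. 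The vector $N_Ey$ is annihilated by positive modes, because $N_E$ commutes with them and $L_ny\in A(\tau)$ for $n>0$. Hence $N_Ey$ is a singular vector of $A(\tau)$ of weight $h_{\alpha_2}$. In the nonsplit highest-weight module $A(\tau)$ the singular vectors of that weight are spanned by $\omega_0$, so we may rescale $y$ so that $N_Ey=\omega_0$. The coupling is then defined by
\[
\eta_{12}^{\dagger}y=\beta_E\,x,
\]
where $\dagger$ is the anti-involution $L_n\mapsto L_{-n}$. This expression makes sense because $\eta_{12}^\dagger y$ has weight $h_{\alpha_1}$, and the $h_{\alpha_1}$-weight space of $E$ equals $\C x$, since all weights of $B(\tau)$ are at least $h_{\alpha_2}$.

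Next we use the splitting. Suppose $s:B(\tau)\to E/S_2$ is a section of \eqref{eq:distinguished-quotient-general}. Let $\bar y:=s(\pi(y))$. It differs from the image of $y$ in $E/S_2$ by an element of $S_1=A(\tau)/S_2$ of weight $h_{\alpha_2}$. The $h_{\alpha_2}$-weight space of $S_1$ consists of descendants of $\bar x$, so we can write $\bar y=\overline{y+u x}$ for some $u$ in the weight-$(h_{\alpha_2}-h_{\alpha_1})$ part of $U(\Vir_{<0})$. Replace $y$ by $y':=y+ux$. Then $N_Ey'=N_Ey=\omega_0$, since $N_E$ kills $A(\tau)$. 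The coupling is independent of this gauge choice, because $\eta_{12}^\dagger u x$ is proportional to the Shapovalov pairing $\langle \eta_{12}x, u x\rangle$-type term, which vanishes: $\eta_{12}x=\omega_0$ is singular and therefore orthogonal to everything. This gauge invariance is the standard Kytölä--Ridout statement, and we will cite it rather than reprove it.

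Now $\overline{y'}$ generates a submodule of $E/S_2$ mapping isomorphically onto $B(\tau)$, and this submodule meets $S_1$ trivially. The element $\eta_{12}^\dagger\overline{y'}$ has weight $h_{\alpha_1}$, so it lies in the image of $B(\tau)$, whose weights are all at least $h_{\alpha_2}>h_{\alpha_1}$. Hence $\eta_{12}^\dagger\overline{y'}=0$ in $E/S_2$, which means $\eta_{12}^\dagger y'\in S_2$. But $S_2$ has no vectors of weight $h_{\alpha_1}$, so $\eta_{12}^\dagger y'=0$. Therefore $\beta_E x=0$, and so $\beta_E=0$.

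The main obstacle is the careful matching of conventions with \cite[Theorem~5.11]{Nakano} and \cite[Section~3]{KytolaRidout}. Two points need checking. The first is the normalization $N_E y=\omega_0$: we need uniqueness of the singular vector of weight $h_{\alpha_2}$ in $A(\tau)$, which holds because $A(\tau)$ has length two with socle $S_2$. The second is gauge invariance of $\beta_E$ under $y\mapsto y+ux$. If the citation for gauge invariance is doubted, we prove it directly: write $\eta_{12}^\dagger u x=\langle\eta_{12}x,ux\rangle x$ using the Shapovalov form on the highest-weight module generated by $x$, and note that this pairing vanishes because $\eta_{12}x$ is singular of positive degree.
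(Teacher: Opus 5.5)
Your proof is correct and follows the paper's own route. You lift the lowest-weight vector from the split $B(\tau)$-summand of $E/S_2$; applying $\sigma(\eta_{12})$ then lands in $S_2$ in weight $h_{\alpha_1}<h_{\alpha_2}$ and so vanishes, while logarithmicity makes the $N_E$-coefficient nonzero. You also check two points the paper leaves implicit in Nakano's definition of $\beta_E$: that $N_Ey$ is a multiple of $\eta_{12}x$, and, via the contravariant form on $A(\tau)$ whose radical contains $S_2$, that the coupling is unchanged under $y\mapsto y+ux$.
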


\begin{proof}
Put $h_i=h_{\alpha_i}$.  Choose the lowest-weight vector of the
$B(\tau)$-summand supplied by a splitting of
\eqref{eq:distinguished-quotient-general}, and lift it to a vector
$v_2\in E[h_2]$.  In the notation preceding
\cite[Theorem~5.11]{Nakano},
\[
 \sigma(\eta_{12})v_2=c_Ev_1,
 \qquad
 (L_0-h_2)v_2=d_E\eta_{12}v_1,
 \qquad
 \beta_E=c_E/d_E.
\]
The image of $v_2$ in $E/S_2$ is a lowest-weight vector in the split
$B(\tau)$-summand, so every positive Virasoro mode annihilates it.  Hence
$\sigma(\eta_{12})v_2\in S_2$.  This vector has weight $h_1$, whereas
$S_2$ has lowest weight $h_2>h_1$; therefore
$\sigma(\eta_{12})v_2=0$ and $c_E=0$.

Moreover $d_E\ne0$.  Indeed, the nilpotent part $N_E$ is a Virasoro
homomorphism from the cyclic right module $B(\tau)$ into $A(\tau)$; if
it vanished on a lowest-weight lift, it would vanish on all of $E$,
contrary to logarithmicity.  Thus $\beta_E=0$.
\end{proof}

\begin{lem}[The auxiliary length-three extension]
\label{lem:local-auxiliary-extension}
Assume $\Ext^1_{\mathscr C}(S_2,S_2)=0$.  Let
\[
 0\longrightarrow S_2\longrightarrow E_2\longrightarrow
 B(\tau)\longrightarrow0
\]
represent a nonzero class in $\mathscr C$.  Then $E_2$ is logarithmic
and contains $B(\tau)'$ as a submodule; equivalently,
\begin{equation}\label{eq:auxiliary-E2}
 0\longrightarrow B(\tau)'\longrightarrow E_2
 \longrightarrow S_2\longrightarrow0
\end{equation}
is exact.
\end{lem}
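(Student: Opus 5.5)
The plan is to obtain the submodule $B(\tau)'$ by pulling the class of $E_2$ back along the socle of $B(\tau)$. Logarithmicity will then follow from a lowest-weight splitting argument.

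\emph{Step 1: the pulled-back class is nonzero.} Apply the first sequence of Lemma~\ref{lem:prelim-Hom-Yoneda} to \eqref{eq:Nakano-tower-B}, namely $0\to S_3\to B(\tau)\to S_2\to0$, with target $X=S_2$. This gives the exact portion
\[
 \Hom(S_3,S_2)\longrightarrow\Ext^1_{\mathscr C}(S_2,S_2)\longrightarrow
 \Ext^1_{\mathscr C}(B(\tau),S_2)\longrightarrow\Ext^1_{\mathscr C}(S_3,S_2).
\]
By the standing assumption $\Ext^1_{\mathscr C}(S_2,S_2)=0$, so restriction along $S_3\hookrightarrow B(\tau)$ is injective. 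Concretely, let $X\subset E_2$ be the preimage of $S_3\subset B(\tau)$. Then $0\to S_2\to X\to S_3\to0$ represents a nonzero class, and $E_2/X\cong B(\tau)/S_3\cong S_2$.

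\emph{Step 2: identify $X$ with $B(\tau)'$.} Restricted duality is exact by Lemma~\ref{lem:Nakano-ambient}, and the simples are self-contragredient. Hence dualizing \eqref{eq:Nakano-tower-B} gives a short exact sequence $0\to S_2\to B(\tau)'\to S_3\to0$ in $\mathscr C$. This sequence is nonsplit: a splitting would dualize, via $B(\tau)\cong B(\tau)''$, to a splitting of \eqref{eq:Nakano-tower-B}. By Lemma~\ref{lem:local-simple-Ext}, $\Ext^1_{\mathscr C}(S_3,S_2)\cong\C$. Lemma~\ref{lem:one-dimensional-ext} then gives $X\cong B(\tau)'$, which yields \eqref{eq:auxiliary-E2}.

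\emph{Step 3: logarithmicity.} Suppose $L_0$ were semisimple on $E_2$. All composition factors have lowest weights at least $h_{\alpha_2}$, and $h_{\alpha_3}>h_{\alpha_2}$. So $E_2$ has no weights below $h_{\alpha_2}$, and its weight-$h_{\alpha_2}$ space is two-dimensional. That space is spanned by the lowest-weight vector $s$ of the submodule $S_2$ and by a weight-vector lift $w$ of the lowest-weight vector of $B(\tau)$; this is where semisimplicity is used, to choose $w$ as a genuine eigenvector. Positive modes kill $w$ for weight reasons, so $U:=U(\Vir)w$ is a highest-weight module mapping onto $B(\tau)$. The weight-$h_{\alpha_2}$ space of $U$ is $\C w$, which does not contain $s$. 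Hence $U\cap S_2$ is a submodule of the simple module $S_2$ that misses its lowest-weight vector, so $U\cap S_2=0$. Therefore $U\to B(\tau)$ is an isomorphism and splits $E_2\to B(\tau)$, contradicting the nonzero class.

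The main obstacle is Step 2. It depends on checking that $B(\tau)'$ really represents a nonzero class in the correct orientation \eqref{eq:Ext-convention}. The verification goes through exactness of restricted duality together with the biduality isomorphism of Lemma~\ref{lem:Nakano-ambient}. The rest is routine once the vanishing of the self-extension is fed into the six-term sequence.
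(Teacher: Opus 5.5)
Your proof is correct and follows the paper's argument essentially step by step. You pull back along $S_3\hookrightarrow B(\tau)$ and use $\Ext^1_{\mathscr C}(S_2,S_2)=0$ in the six-term sequence to get injectivity, identify the middle term with $B(\tau)'$ via $\Ext^1_{\mathscr C}(S_3,S_2)\cong\C$, and exclude semisimple $L_0$ using the highest-weight submodule generated by a weight-vector lift; your explicit check that $B(\tau)'$ is a nonsplit extension in the correct orientation fills in a point the paper only asserts.
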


\begin{proof}
Pull back the displayed extension along the distinguished inclusion
$S_3\hookrightarrow B(\tau)$.  By the six-term sequence for
\eqref{eq:Nakano-tower-B}, the map
\[
 \Ext^1_{\mathscr C}(B(\tau),S_2)
 \longrightarrow
 \Ext^1_{\mathscr C}(S_3,S_2)
\]
is injective because the preceding group
$\Ext^1_{\mathscr C}(S_2,S_2)$ vanishes by hypothesis.  Hence the
pullback is nonzero.
Since $\Ext^1_{\mathscr C}(S_3,S_2)\cong\C$, its middle term is
$B(\tau)'$, the contragredient of \eqref{eq:Nakano-tower-B}.  Thus
$B(\tau)'\hookrightarrow E_2$, and quotienting gives
\[
 E_2/B(\tau)'\cong B(\tau)/S_3\cong S_2,
\]
which proves \eqref{eq:auxiliary-E2}.

It remains to show that $L_0$ is not semisimple on $E_2$.  Suppose to the
contrary that it is semisimple.  Let $v$ be a lowest-weight generator of
$B(\tau)$, of weight $h_{\alpha_2}$, and choose a weight-vector lift
$\widetilde v\in E_2$.  Both endpoints have no weights below
$h_{\alpha_2}$, so every positive Virasoro mode annihilates
$\widetilde v$.  Hence $U:=U(\Vir)\widetilde v$ is a highest-weight
submodule mapping surjectively onto $B(\tau)$.  Since the kernel $S_2$ is
simple, either $U\cap S_2=0$ or $U\cap S_2=S_2$.  The first alternative
splits the original extension.  In the second alternative the
lowest-weight line of $S_2$ would lie in $U$.  But the
$h_{\alpha_2}$-weight space of the highest-weight module $U$ is
$\C\widetilde v$, whereas the lowest-weight vector of the kernel is
linearly independent of $\widetilde v$ because the latter has nonzero
image in $B(\tau)$.  This is impossible.  Thus every nonzero class in
$\Ext^1_{\mathscr C}(B(\tau),S_2)$ has logarithmic middle term.
\end{proof}

\begin{lem}[Nakano logarithmic existence input]
\label{lem:Nakano-existence-input}
Let $\tau=(\alpha_1,\alpha_2,\alpha_3)\in\mathcal T_{p_+,p_-}$ satisfy
$h_{\alpha_1}<h_{\alpha_2}<h_{\alpha_3}$.  Nakano's construction
provides a logarithmic extension
\begin{equation}\label{eq:Nakano-existence-E}
 0\longrightarrow A(\tau)\longrightarrow E_\tau\longrightarrow
 B(\tau)\longrightarrow0
\end{equation}
whose middle term belongs to $\mathscr C$.
\end{lem}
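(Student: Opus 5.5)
Since the lemma is an existence statement attributed to Nakano, I would take his module and check that it satisfies the hypotheses of the paper's own setting. Concretely, I will invoke \cite[Theorem~5.11]{Nakano}, or the construction immediately preceding it in Section~5 (the logarithmic module usually built from a Felder complex, or as a quotient of a Fock-type or projective module attached to $\tau$). For a strict triple this produces a Virasoro module $E_\tau$ of central charge $c_{p_+,p_-}=c_{p,q}$, using \eqref{eq:central-charge-symmetry}. It has a submodule isomorphic to Nakano's $L(h_{\alpha_1},h_{\alpha_2})$, quotient isomorphic to $L(h_{\alpha_2},h_{\alpha_3})$, and non-semisimple $L_0$ (a nonzero logarithmic coupling is part of that theorem). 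The first step is to record exactly this sequence from Nakano's construction, keeping the quotient-first orientation of \eqref{eq:Ext-convention}.

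The second step identifies the end terms with $A(\tau)$ and $B(\tau)$. As already observed after \eqref{eq:Nakano-tower-B}, Nakano's length-two modules lie in $\mathscr C$ and represent nonzero classes in the one-dimensional spaces of Lemma~\ref{lem:local-simple-Ext}. Lemma~\ref{lem:one-dimensional-ext} therefore gives the required isomorphisms. Composing the structure maps of Nakano's sequence with these isomorphisms yields \eqref{eq:Nakano-existence-E}.

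The third step shows $E_\tau\in\mathscr C$, and I do not rely on any identification of Nakano's ambient category. Both end terms are grading-restricted generalized modules of central charge $c_{p,q}$, so Lemma~\ref{lem:gr-extension} makes $E_\tau$ grading restricted. Its length is four, since finite length is inherited from the extension. Its Jordan--H\"older factors are $S_1,S_2,S_2,S_3$. These are irreducible quotients of reducible Verma modules, because they are minimal-model-type Felder weights in $H_{p_+,p_-}$, so $E_\tau\in\Oc_{c_{p,q}}$. Their lowest weights lie in $H_{p_+,p_-}$, so $E_\tau$ belongs to the Serre subcategory of Lemma~\ref{lem:Nakano-Serre}.

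The main obstacle is the first step: pinning down that Nakano's statement really provides a \emph{logarithmic} module with exactly these end terms, rather than only an Ext class or a module inside his ambient category of \cite[Definition~3.14]{Nakano}. I would handle this by extracting the concrete module and checking nonsemisimplicity through the relation $(L_0-h_{\alpha_2})v_2=d\,\eta_{12}v_1$ with $d\ne0$, as in the notation preceding his Theorem~5.11. After that, membership in $\mathscr C$ is purely the intrinsic verification above, so the support-condition mismatch causes no difficulty.
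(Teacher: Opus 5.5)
Your proposal is correct and matches the paper's argument. The paper takes the module from the subquotient construction in the paragraph preceding \cite[Theorem~5.11]{Nakano}, which applies because the strict inequality excludes $\mathcal T^0_{p_+,p_-}$; note that Theorem~5.11 itself quantifies over an already given logarithmic representative, so existence comes from that construction and not from the theorem. It then places the module in $\Oc_{c_{p,q}}$, and hence in the Serre subcategory $\mathscr C$, using finite length, grading restriction and the Felder composition factors, exactly as you do.

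The differences are cosmetic. You derive grading restriction intrinsically via Lemma~\ref{lem:gr-extension}, whereas the paper reads it off Nakano's ambient category. You also re-derive the endpoint identifications with $A(\tau),B(\tau)$, which the paper already made after \eqref{eq:Nakano-tower-B}.
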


\begin{proof}
Since the strict inequality excludes $\mathcal T^0_{p_+,p_-}$, the
Virasoro subquotient construction described in the paragraph immediately
preceding \cite[Theorem~5.11]{Nakano} supplies the logarithmic module
\eqref{eq:Nakano-existence-E}.  Its middle term has finite length, its
generalized weight spaces are finite dimensional and lower truncated in
Nakano's ambient category, and all its Jordan--H\"older factors are the
Felder simples fixed above.  Consequently it is an object of
$\Oc_{c_{p,q}}$ and, by the Serre property, of $\mathscr C$.  No use of
\cite[Proposition~5.12]{Nakano} or of the later definition of $P(\tau)$
is made at this stage.
\end{proof}

\begin{thm}[Yoneda consequence of the distinguished-quotient property]
\label{thm:Nakano-input}
Let $\tau=(\alpha_1,\alpha_2,\alpha_3)\in\mathcal T_{p_+,p_-}$
satisfy $h_{\alpha_1}<h_{\alpha_2}<h_{\alpha_3}$.  Assume
\begin{equation}\label{eq:self-Ext-S2-hypothesis}
 \Ext^1_{\mathscr C}(S_2,S_2)=0,
\end{equation}
and assume the following distinguished-quotient property:
\begin{equation}\label{eq:NS-tau}
 (\mathrm{NS}_\tau)\qquad
 \begin{gathered}
 \text{every logarithmic extension }0\to A(\tau)\to E\to B(\tau)\to0
 \text{ in }\mathscr C\\
 \text{induces a nonsplit extension }
 0\to S_1\to E/S_2\to B(\tau)\to0.
 \end{gathered}
\end{equation}
Then
\begin{equation}\label{eq:extracted-P-Ext}
 \Ext^1_{\mathscr C}\bigl(K(\tau),S_2\bigr)\cong\C.
\end{equation}
Consequently all nonzero classes in \eqref{eq:extracted-P-Ext} have
isomorphic middle terms; we denote this isomorphism class by $P(\tau)$.
The proof also identifies it with Nakano's module of the same notation.
\end{thm}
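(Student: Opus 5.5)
We compute $\Ext^1_{\mathscr C}(K(\tau),S_2)$ from the six-term sequence of Lemma~\ref{lem:prelim-Hom-Yoneda} attached to \eqref{eq:Nakano-tower-K}, $0\to S_1\to K(\tau)\to B(\tau)\to0$, with target $X=S_2$:
\[
 \Hom(S_1,S_2)\to\Ext^1(B(\tau),S_2)\to\Ext^1(K(\tau),S_2)\to\Ext^1(S_1,S_2).
\]
Here $\Hom(S_1,S_2)=0$, and $\Ext^1(S_1,S_2)\cong\C$ by Lemma~\ref{lem:local-simple-Ext}. For the left group, the six-term sequence for \eqref{eq:Nakano-tower-B} with target $S_2$ gives
\[
 \Hom(S_3,S_2)=0\to\Ext^1(S_2,S_2)=0\to\Ext^1(B(\tau),S_2)\to\Ext^1(S_3,S_2)\cong\C .
\]
Hence $\Ext^1(B(\tau),S_2)$ is at most one-dimensional. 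It is exactly one-dimensional because a nonzero class is furnished by $E_\tau/S_1$, or more directly by pushing out Nakano's $E_\tau$ of Lemma~\ref{lem:Nakano-existence-input}. Concretely, $E_\tau/S_1$ with $S_1\subset A(\tau)$ the head preimage? That is not a submodule, so instead we take the quotient $E_\tau\to E_\tau/\langle\text{lowest vector of }A\rangle$. This is awkward, so the plan is to argue as follows: if $\Ext^1(B(\tau),S_2)$ vanished, then the pushout of $E_\tau$ along $A(\tau)\to S_1$ would be governed by the sequence as well. The simplest choice is to accept that $\dim\Ext^1(B(\tau),S_2)\le1$ and bound the total.

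This yields $\dim\Ext^1(K(\tau),S_2)\le 2$, and the real work is to cut the dimension down to exactly $1$. The two pieces to separate are the image $I$ of $\Ext^1(B(\tau),S_2)$, which is injective since the preceding Hom vanishes, and the image $J$ in $\Ext^1(S_1,S_2)$.

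\textbf{Showing $I$ has dimension one.} The nonzero class of $\Ext^1(B(\tau),S_2)$, with middle term $E_2$ from Lemma~\ref{lem:local-auxiliary-extension}, is logarithmic. Its pullback to $K(\tau)$ is a nonzero extension class, which gives $\dim I=1$ once the nonzero class exists. Existence will come from Nakano's $E_\tau$: quotienting $E_\tau$ by $S_1$ is not possible, since $S_1$ is the head of $A(\tau)$. Instead I would use that $E_\tau$ contains $S_2\subset A(\tau)$ with $E_\tau/S_2$ an extension of $B(\tau)$ by $S_1$, that is, a class in $\Ext^1(B(\tau),S_1)\cong\C$ by \eqref{eq:Nakano-first-Ext}. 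Nakano's nonsplitting identifies this quotient with $K(\tau)$, so $E_\tau$ itself realizes a class in $\Ext^1(K(\tau),S_2)$ via $0\to S_2\to E_\tau\to K(\tau)\to0$. This class is nonzero because $E_\tau$ is logarithmic while the endpoints are ordinary.

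\textbf{Showing $J=0$ (main obstacle).} Suppose $0\to S_2\to E\to K(\tau)\to0$ has nonzero pullback to $S_1$. That pullback is $A(\tau)$, so $E$ contains $A(\tau)$ with $E/A(\tau)\cong B(\tau)$. Then $E$ is an extension of $B(\tau)$ by $A(\tau)$ whose distinguished quotient $E/S_2\cong K(\tau)$ is nonsplit. If $E$ is not logarithmic, Lemma~\ref{lem:local-highest-weight-realization}-type arguments on the lowest lift of $B(\tau)$ should produce a highest-weight submodule. I would then show that the coupling forces a contradiction with $\Ext^1(S_1,S_3)=0$ and the Hasse structure.

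If $E$ is logarithmic, subtracting a suitable multiple of the $E_\tau$ class (a Baer difference) adjusts the logarithmic part. This is where $(\mathrm{NS}_\tau)$ enters: every logarithmic $E$ has nonsplit distinguished quotient, so the map $\Ext^1(K(\tau),S_2)\to\Ext^1(S_1,S_2)$ composed with quotienting must be compatible with a single line. The cleanest route I would try is to show that the composite $\Ext^1(K(\tau),S_2)\to\Ext^1(S_1,S_2)$ vanishes on the $E_\tau$ class and on every class. Equivalently, show that any class mapping nonzero to $\Ext^1(S_1,S_2)$ is a sum of an ordinary (non-logarithmic) extension and a multiple of $[E_\tau]$, and that ordinary ones split using $\Hom$-weight arguments.

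If that fails, I would instead prove $\dim\le1$ by showing that the two-dimensional space would contain a logarithmic class with split distinguished quotient, contradicting $(\mathrm{NS}_\tau)$. Lemma~\ref{lem:split-forces-zero-coupling} gives coupling zero there, and zero coupling, together with the one-dimensional weight spaces at $h_2$, lets one construct that split class explicitly.

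Finally, Lemma~\ref{lem:one-dimensional-ext} yields uniqueness of the middle term $P(\tau)$. Since $E_\tau$ represents a nonzero class, this also identifies $P(\tau)$ with Nakano's module.
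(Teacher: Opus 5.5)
There is a genuine gap: the two pieces $I$ and $J$ play exactly the opposite roles from the ones you assign them. Take your own class $[E_\tau]\in\Ext^1_{\mathscr C}(K(\tau),S_2)$, coming from $0\to S_2\to E_\tau\to K(\tau)\to0$. Here $E_\tau/S_2\cong K(\tau)$ follows from $(\mathrm{NS}_\tau)$ and the one-dimensionality of $\Ext^1_{\mathscr C}(B(\tau),S_1)$. The pullback of this class along $S_1\hookrightarrow K(\tau)$ is the preimage of $S_1$ in $E_\tau$, namely $A(\tau)$, which is the nonzero class in $\Ext^1_{\mathscr C}(S_1,S_2)$. So $J\ne0$. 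Your ``main obstacle'' step, which tries to show that restriction to $\Ext^1_{\mathscr C}(S_1,S_2)$ vanishes on every class including $[E_\tau]$, therefore aims at a false statement.

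Conversely, the hypotheses force $\Ext^1_{\mathscr C}(B(\tau),S_2)=0$, so $I=0$. Your claim that this group is one-dimensional, and your attempt to build a nonzero class in it from $E_\tau$, cannot succeed. Your fallback also fails as stated. Any class in $\Ext^1_{\mathscr C}(K(\tau),S_2)$ that restricts nontrivially to $S_1$ has middle term $E\supset A(\tau)$ with $E/S_2\cong K(\tau)$, which is automatically a nonsplit distinguished quotient. So no class in that space can violate $(\mathrm{NS}_\tau)$.

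The missing idea is to use $(\mathrm{NS}_\tau)$ to kill $\Ext^1_{\mathscr C}(B(\tau),S_2)$. The contradiction must be built from a class of that group, not of $\Ext^1_{\mathscr C}(K(\tau),S_2)$.

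\begin{itemize}
\item Given a nonzero $0\to S_2\to E_2\to B(\tau)\to0$, Lemma~\ref{lem:local-auxiliary-extension} makes $E_2$ logarithmic.
\item Glue $A(\tau)$ onto $E_2$ along $S_2$: push out $0\to S_2\to A(\tau)\to S_1\to0$ along $S_2\hookrightarrow E_2$. This gives a logarithmic $E_3\supset E_2$ with $0\to A(\tau)\to E_3\to B(\tau)\to0$, and the distinguished $S_2\subset A(\tau)$ maps to the kernel copy in $E_2$.
\item The sequence $0\to B(\tau)\to E_3/S_2\to S_1\to0$ splits, because it is the pushout of $[A(\tau)]$ along the zero composite $S_2\to E_2\to B(\tau)$. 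The paper builds the same module via the pullback $X$ of $A(\tau)'$ along $E_2\twoheadrightarrow S_2$ and $E_3:=X'$.
\item You must still check that the distinguished sequence $0\to S_1\to E_3/S_2\to B(\tau)\to0$ splits. This follows from $\Hom(S_1,B(\tau))=0$, as in the paper, and contradicts $(\mathrm{NS}_\tau)$.
\end{itemize}

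Once $\Ext^1_{\mathscr C}(B(\tau),S_2)=0$ is known, your six-term sequence for \eqref{eq:Nakano-tower-K} gives an injection $\Ext^1_{\mathscr C}(K(\tau),S_2)\hookrightarrow\Ext^1_{\mathscr C}(S_1,S_2)\cong\C$, and $[E_\tau]$ supplies the lower bound. That finish is slightly shorter than the paper's, which proves $\Ext^1_{\mathscr C}(E_1,S_2)=0$ and reads off $\End(S_2)\cong\Ext^1_{\mathscr C}(K(\tau),S_2)$ from $0\to S_2\to E_1\to K(\tau)\to0$. But the vanishing step is the heart of the argument, and your proposal not only omits it but asserts its negation.
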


\begin{proof}
We prove the Ext dimension using only the hypothesis
\eqref{eq:NS-tau} and Yoneda exactness; no identification of $\mathscr C$
with the ambient category of \cite[Definition~3.14]{Nakano} is made.
Fix the logarithmic extension supplied by
Lemma~\ref{lem:Nakano-existence-input}:
\begin{equation}\label{eq:local-E1}
 0\longrightarrow A(\tau)\longrightarrow E_1\longrightarrow
 B(\tau)\longrightarrow0.
\end{equation}
Quotienting \eqref{eq:local-E1} by the distinguished
$S_2\subset A(\tau)$ gives an exact sequence
\[
 0\longrightarrow S_1\longrightarrow E_1/S_2
 \longrightarrow B(\tau)\longrightarrow0.
\]
By the hypothesis \eqref{eq:NS-tau}, the displayed extension is
nonsplit and therefore represents a nonzero class in
\[
 \Ext^1_{\mathscr C}(B(\tau),S_1)\cong\C.
\]
Lemma~\ref{lem:one-dimensional-ext} now gives
\[
 E_1/S_2\cong K(\tau).
\]
Consequently
\begin{equation}\label{eq:E1-over-K}
 0\longrightarrow S_2\longrightarrow E_1\longrightarrow
 K(\tau)\longrightarrow0.
\end{equation}

We first prove
\begin{equation}\label{eq:E1-Ext-vanishing}
 \Ext^1_{\mathscr C}(E_1,S_2)=0.
\end{equation}
Apply $\Hom(-,S_2)$ to \eqref{eq:Nakano-tower-A}.  The connecting map
\[
 \End(S_2)\longrightarrow\Ext^1_{\mathscr C}(S_1,S_2)
\]
sends $\id_{S_2}$ to the nonzero class of $A(\tau)$, and is therefore
an isomorphism between one-dimensional spaces.  Together with the hypothesis
\eqref{eq:self-Ext-S2-hypothesis}, this gives
\begin{equation}\label{eq:A-S2-Ext-zero}
 \Ext^1_{\mathscr C}(A(\tau),S_2)=0.
\end{equation}
If \eqref{eq:E1-Ext-vanishing} failed, the six-term sequence for
\eqref{eq:local-E1}, together with \eqref{eq:A-S2-Ext-zero}, would give
\[
 \Ext^1_{\mathscr C}(B(\tau),S_2)\ne0.
\]
Choose a nonzero extension $E_2$ in this space.  By
Lemma~\ref{lem:local-auxiliary-extension}, $E_2$ is logarithmic and
fits into \eqref{eq:auxiliary-E2}.

Because $B(\tau)'$ has composition factors $S_2,S_3$, one has
$\Hom(B(\tau)',S_1)=0$.  Applying $\Hom(-,S_1)$ to
\eqref{eq:auxiliary-E2} therefore gives an injection
\[
 \Ext^1_{\mathscr C}(S_2,S_1)
 \hookrightarrow
 \Ext^1_{\mathscr C}(E_2,S_1).
\]
Take the image of a nonzero class and represent it by
\[
 0\longrightarrow S_1\longrightarrow X\longrightarrow E_2
 \longrightarrow0.
\]
Equivalently, $X$ is the pullback of the nonzero extension
$A(\tau)'$ of $S_2$ by $S_1$ along $E_2\twoheadrightarrow S_2$.
Thus there is also an exact sequence
\[
 0\longrightarrow B(\tau)'\longrightarrow X
 \longrightarrow A(\tau)'\longrightarrow0.
\]
Restricted contragredience is exact on $\mathscr C$ by
Lemma~\ref{lem:Nakano-ambient}; set $E_3:=X'$.  Dualizing the last two
sequences gives
\[
 0\longrightarrow E_2'\longrightarrow E_3\longrightarrow S_1
 \longrightarrow0,
 \qquad
 0\longrightarrow A(\tau)\longrightarrow E_3\longrightarrow
 B(\tau)\longrightarrow0.
\]
The first sequence shows that $E_3$ is logarithmic.  We make the
Yoneda naturality in the next step explicit.  After dualizing, the class
of
\[
 0\to E_2'\to E_3\to S_1\to0
\]
is the pushout of the nonzero class
$[A(\tau)]\in\Ext^1_{\mathscr C}(S_1,S_2)$ along the inclusion
$S_2\hookrightarrow E_2'$.  Its image under
\[
 \Ext^1_{\mathscr C}(S_1,E_2')
 \longrightarrow
 \Ext^1_{\mathscr C}(S_1,B(\tau))
\]
is therefore the pushout along the composite
\[
 S_2\hookrightarrow E_2'\twoheadrightarrow B(\tau),
\]
which is zero.  By construction, the lower sequence is the pushout of
\eqref{eq:Nakano-tower-A} along $S_2\hookrightarrow E_2'$.  Therefore
the induced map $A(\tau)\to E_3$ restricts on the distinguished
$S_2\subset A(\tau)$ to the kernel inclusion $S_2\hookrightarrow E_2'$.
Thus the copy of $S_2$ used in the following quotient is exactly the
distinguished copy occurring in \eqref{eq:NS-tau}.  The first exact sequence above shows that
\begin{equation}\label{eq:E3-first-split}
 0\longrightarrow B(\tau)\longrightarrow E_3/S_2
 \longrightarrow S_1\longrightarrow0
\end{equation}
splits.  We must still check that the distinguished exact sequence with
the two endpoint roles reversed is split; the abstract isomorphism of the
middle term with a direct sum does not by itself imply this.

Put $M=E_3/S_2$, let $i_B:B(\tau)\hookrightarrow M$ be the inclusion in
\eqref{eq:E3-first-split}, and choose a section
$s:S_1\to M$ of its quotient map.  Thus
\[
 M=i_B(B(\tau))\oplus s(S_1).
\]
Let $j:S_1\hookrightarrow M$ and $q:M\twoheadrightarrow B(\tau)$ be the
kernel inclusion and quotient map in the distinguished sequence
\[
 0\longrightarrow S_1\xrightarrow{\ j\ }M
 \xrightarrow{\ q\ }B(\tau)\longrightarrow0.
\]
The composition factors of $B(\tau)$ are $S_2$ and $S_3$, so
$\Hom(S_1,B(\tau))=0$.  Hence the $i_B(B(\tau))$-component of $j$ is
zero and
\[
 j=s\circ a
 \qquad\text{for some }a\in\End(S_1)=\C.
\]
Since $j\ne0$, the scalar $a$ is invertible; therefore
$j(S_1)=s(S_1)$.  From $q\circ j=0$ it follows that $q\circ s=0$.
Consequently $q$ is determined by the endomorphism
$q\circ i_B:B(\tau)\to B(\tau)$.  This endomorphism is surjective because
$q$ is surjective and $q(s(S_1))=0$.  The module $B(\tau)$ has finite
length, so every surjective endomorphism of it is an automorphism.  Thus
\[
 i_B\circ(q\circ i_B)^{-1}:B(\tau)\longrightarrow M
\]
is a section of $q$.  The distinguished sequence is therefore split,
contradicting \eqref{eq:NS-tau}.  Thus \eqref{eq:E1-Ext-vanishing} holds.

It remains to compute the dimension in
\eqref{eq:extracted-P-Ext}.  From
\eqref{eq:Nakano-tower-B},
\[
 \Hom(B(\tau),S_2)\cong\End(S_2)\cong\C.
\]
Also $\Hom(A(\tau),S_2)=0$: a nonzero restriction to the submodule
$S_2$ would split \eqref{eq:Nakano-tower-A}, while a map vanishing on
that submodule would factor through
$\Hom(S_1,S_2)=0$.  Hence \eqref{eq:local-E1} gives
\[
 \Hom(E_1,S_2)\cong\Hom(B(\tau),S_2)\cong\C.
\]
Similarly \eqref{eq:Nakano-tower-K} gives
\[
 \Hom(K(\tau),S_2)\cong\Hom(B(\tau),S_2)\cong\C.
\]
The map
$\Hom(K(\tau),S_2)\to\Hom(E_1,S_2)$ induced by
\eqref{eq:E1-over-K} is nonzero and hence an isomorphism.  The
restriction
\[
 \Hom(E_1,S_2)\longrightarrow\End(S_2)
\]
is zero, since its generator factors through
$E_1\twoheadrightarrow B(\tau)\twoheadrightarrow S_2$ and therefore
vanishes on the distinguished $S_2\subset A(\tau)$.  The six-term
sequence for \eqref{eq:E1-over-K}, together with
\eqref{eq:E1-Ext-vanishing}, now gives an isomorphism
\[
 \End(S_2)\xrightarrow{\ \sim\ }
 \Ext^1_{\mathscr C}(K(\tau),S_2).
\]
This proves \eqref{eq:extracted-P-Ext}.  Before proving
\cite[Proposition~5.12]{Nakano}, Nakano constructs the nonzero extension
labelled (5.2),
\[
 0\longrightarrow S_2\longrightarrow P_{\mathrm N}(\tau)
 \longrightarrow K(\tau)\longrightarrow0.
\]
Its middle term has finite length, finite-dimensional lower-truncated
generalized weight spaces, and only the Felder composition factors fixed
above; hence it belongs to $\mathscr C$.  Since the Ext space just computed is
one-dimensional, the one-dimensional extension lemma identifies
$P_{\mathrm N}(\tau)$ with the isomorphism class denoted $P(\tau)$ here.  This recovers Nakano's notation without using
Proposition~5.12 in the proof of the Ext dimension.
\end{proof}

By definition of $P(\tau)$, we have the final exact sequence
\begin{equation}\label{eq:Nakano-tower-P}
 0\longrightarrow L(h_{\alpha_2})\longrightarrow P(\tau)
 \longrightarrow K(\tau)\longrightarrow0.
\end{equation}

\begin{rmk}\label{rmk:no-socle}
No description of the socle or contragredient self-duality of a general
$P(\tau)$ is used.  In particular, we do not use the dotted socle arrow
whose status is left open in the discussion around Figure~5.1 of
\cite{Nakano}.
\end{rmk}

\subsection{The Felder dictionary for the one-row chain}

Nakano writes $\alpha_{u,v;m}$ for the Fock momentum whose lowest
conformal weight is $h_{u,v;m}$, and $F_{\alpha}$ for the corresponding
Fock module.  The set $\mathcal T_{p_+,p_-}$ consists of triples of such
labels whose Fock modules are consecutive in one Felder complex, with
ordered lowest weights as in Definition~5.6 of \cite{Nakano};
the subset $\mathcal T^0_{p_+,p_-}$ consists of the chain-type triples
in $\mathcal T_{p_+,p_-}$ for which the first two weights coincide; see
\cite[Definition~5.7]{Nakano}.  In particular, a triple with strictly
increasing first two lowest weights lies outside
$\mathcal T^0_{p_+,p_-}$.  We use no other part of the free-field
construction.  Recall
from \cite[(2.3)--(2.4)]{Nakano} that
\begin{equation}\label{eq:Nakano-weight-shift}
 h_{u,v;m}=h_{u-mp_+,v}=h_{u,v+mp_-}.
\end{equation}
For $n\geq1$ and $1\leq r\leq p-1$, put
\begin{equation}\label{eq:abc}
 a=np-r,\qquad b=np+r,\qquad c=(n+2)p-r.
\end{equation}

\begin{lem}[Primitive Shapovalov coefficient]
\label{lem:primitive-Shapovalov-coefficient}
Let $R,S$ be positive integers with $1\le R<p_+$.  For the normalized
Shapovalov element at the Kac weight $h_{R,S}$, the linear coefficient
$R_{R,S}$ in \cite[Proposition~5.2]{Nakano} is nonzero.
\end{lem}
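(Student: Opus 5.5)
Nakano's Proposition~5.2 gives the normalized Shapovalov (singular-vector) element at the Kac weight $h_{R,S}$, of degree $RS$, as a polynomial in the negative Virasoro modes. I would compute its linear coefficient $R_{R,S}$, the coefficient of the single-mode monomial $L_{-RS}$, from the explicit closed product formula attached to that proposition, and then check that for $1\le R<p_+$ no factor in that product vanishes at $c=c_{p_+,p_-}$. Concretely, I expect the formula to be the classical Bauer--Di~Francesco--Itzykson--Saleur / Astashkevich-type product, namely $R_{R,S}$ equals a nonzero universal constant times
\[
 \prod_{\substack{1\le i\le R-1\\ 1\le j\le S-1}}\bigl(i\,t-j\bigr)
\]
(or the analogous product over shifted pairs with the symmetric $t^{-1}$ factors), where $t=p_+/p_-$ or $t=p_-/p_+$ according to Nakano's parametrization of $c_{p_+,p_-}$. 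Before computing, I would fix which of $t^{\pm1}$ is paired with the first Kac label, using the weight-shift relation \eqref{eq:Nakano-weight-shift}. This matters because only that pairing makes the hypothesis $R<p_+$ the relevant one.

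The vanishing analysis is then arithmetic. A factor $it-j$ vanishes iff $i p_+=j p_-$ (in the appropriate orientation). Since $\gcd(p_+,p_-)=1$, this forces $p_-\mid i$ or $p_+\mid i$. With $1\le i\le R-1<p_+<p_-$, neither divisibility is possible. So every factor is nonzero, and the leading normalization constant is nonzero by the definition of the normalized Shapovalov element, whose coefficient of $L_{-1}^{RS}$ is $1$. No condition on $S$ is needed, which matches the statement.

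The main obstacle is the first step: pinning down the exact formula and conventions in \cite[Proposition~5.2]{Nakano}. Its linear coefficient might instead be expressed through a recursion or a ratio of Gamma-type products. In that case the danger is a factor indexed by $j$ running up to $S$ that could vanish when $p_\mp\mid j$. I would first try to read off the closed product directly. If only a recursion in $S$ is given, I would induct on $S$ for fixed $R$, checking at each step that the recursion multiplier is a product of terms $(it-j)$ with $i<R$. If the convention turns out to index by $S$ in the dangerous variable, I would use the Kac symmetry $h_{R,S}(t)=h_{S,R}(t^{-1})$ to move back to the orientation where the bound $R<p_+$ controls all factors.
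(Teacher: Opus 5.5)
Your strategy matches the paper's: read the closed product for $R_{R,S}$ off Nakano's Proposition~5.2, then show that no factor vanishes using $\gcd(p_+,p_-)=1$ and the bound $R<p_+$. The input you plan to feed it is wrong, however, so as written the argument proves nonvanishing of a different quantity.

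In this paper $R_{R,S}$ is not the coefficient of $L_{-RS}$ in the singular vector. It is the first-order coefficient, in $h-h_{R,S}$, of the Shapovalov norm of $\mathfrak S_{R,S}v_h$. This is exactly how it enters Lemma~\ref{lem:Shapovalov-HC-reduction}, via $\partial_zP_X(h_{R,S},c_{p,q})=R_{R,S}$, and then Lemma~\ref{lem:one-row-self-Ext}. Nonvanishing of an $L_{-RS}$-coefficient would not supply that input. The formula actually used is
\[
 R_{R,S}=2\prod_{\substack{1-R\le k\le R,\ 1-S\le l\le S\\ (k,l)\ne(0,0),(R,S)}}
 \Bigl(k\bigl(\tfrac{p_+}{p_-}\bigr)^{-1/2}+l\bigl(\tfrac{p_+}{p_-}\bigr)^{1/2}\Bigr).
\]
Its index ranges differ from your guessed $1\le i\le R-1$, $1\le j\le S-1$. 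Here $k$ reaches $R$ and takes nonpositive values, and $l$ runs over $1-S,\dots,S$. So your check ``$i\le R-1<p_+$'' does not literally cover the factors.

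The repair uses the same arithmetic, but applied to the pair $(k,l)$ jointly. A factor vanishes if and only if $kp_-+lp_+=0$. Coprimality then forces $(k,l)=m(p_+,-p_-)$ for some $m\in\mathbb Z$. Since $1-R\le k\le R$ gives $|k|\le R<p_+$, this forces $m=0$, which is the excluded pair $(0,0)$. The prefactor $2$ is trivially nonzero, so your remark about the $L_{-1}^{RS}$ normalization is not needed.

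In particular, your worry about factors with $p_-\mid l$ does not arise. Divisibility of $l$ alone never makes a factor vanish, because vanishing is a joint linear condition controlled entirely by the bound on $k$. Neither an induction on $S$ nor the Kac-symmetry swap is needed.
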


\begin{proof}
Nakano's coefficient is
\[
 R_{R,S}=2\!\!\prod_{\substack{
 1-R\le k\le R,\ 1-S\le l\le S\\
 (k,l)\ne(0,0),(R,S)}}
 \left(
 k\Bigl(\frac{p_+}{p_-}\Bigr)^{-1/2}
 +l\Bigl(\frac{p_+}{p_-}\Bigr)^{1/2}
 \right).
\]
If one of the displayed factors vanished, then
$kp_-+lp_+=0$.  Since $\gcd(p_+,p_-)=1$, there would be an
$m\in\Z$ with
\[
 (k,l)=m(p_+,-p_-).
\]
But $1-R\le k\le R$ and $R<p_+$ exclude every $m\ne0$; the case
$m=0$ is the omitted pair $(0,0)$.  Thus every factor is nonzero.
\end{proof}

\begin{lem}[Harish--Chandra reduction of the Shapovalov operator]
\label{lem:Shapovalov-HC-reduction}
Let $X\in U(\Vir_-)$ be homogeneous and let $\sigma(X)$ be its image
under the standard anti-involution.  There is a polynomial
$P_X(z,\kappa)\in\C[z,\kappa]$ such that
\begin{equation}\label{eq:HC-Shapovalov-congruence}
 \sigma(X)X\equiv P_X(L_0,C)
 \pmod{U(\Vir)\Vir_{>0}}.
\end{equation}
Consequently, on any generalized highest-weight space $W$ of central
charge $c$, namely a subspace annihilated by every $L_m$ with $m>0$,
\[
 \left.\sigma(X)X\right|_W=P_X(L_0,c).
\]
If $X=\mathfrak S_{R,S}$ is the normalized Shapovalov element at the
Kac weight $h_{R,S}$, then $P_X(h,c)$ is its Shapovalov norm on a Verma
highest-weight vector.  In the normalization of
\cite[Proposition~5.2]{Nakano},
\[
 P_X(h_{R,S},c_{p,q})=0,\qquad
 \partial_zP_X(h_{R,S},c_{p,q})=R_{R,S}.
\]
\end{lem}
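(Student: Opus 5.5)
The plan is to prove the congruence \eqref{eq:HC-Shapovalov-congruence} by a PBW normal-ordering argument, and then read off the two stated consequences.

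\emph{Step 1: existence of $P_X$.} Write $\Vir=\Vir_-\oplus\Vir_0\oplus\Vir_{>0}$ with $\Vir_0=\C L_0\oplus\C C$. The PBW theorem gives
\[
 U(\Vir)=U(\Vir_0)\oplus\bigl(\Vir_-U(\Vir)+U(\Vir)\Vir_{>0}\bigr).
\]
Let $\pi_0$ be the corresponding Harish--Chandra projection onto $U(\Vir_0)=\C[L_0,C]$. If $X$ is homogeneous of degree $-d$, then $\sigma(X)\in U(\Vir_{>0})$ has degree $+d$, so $\sigma(X)X$ has degree $0$. I will check that a degree-zero element $u$ with $\pi_0(u)=0$ lies in $U(\Vir)\Vir_{>0}$. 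In a PBW basis ordered as (negative)(zero)(positive), a degree-zero monomial with no positive factor also has no negative factor. Hence every degree-zero monomial either belongs to $U(\Vir_0)$ or ends in a positive mode. Setting $P_X:=\pi_0(\sigma(X)X)$, regarded as a polynomial in $z=L_0$ and $\kappa=C$, gives \eqref{eq:HC-Shapovalov-congruence}.

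\emph{Step 2: action on generalized highest-weight spaces.} Let $W$ be annihilated by all $L_m$ with $m>0$, and take $w\in W$. The difference $\sigma(X)X-P_X(L_0,C)$ is a sum of terms $u\,L_m$ with $m>0$, and each such term kills $w$. Therefore $\sigma(X)Xw=P_X(L_0,C)w=P_X(L_0,c)w$. The operator $L_0$ here need not act semisimply on $W$. That is harmless, because only the polynomial expression is used; this is exactly the situation of the logarithmic $v_2$ in Lemma~\ref{lem:split-forces-zero-coupling}.

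\emph{Step 3: the Shapovalov identification.} Apply Step~2 to the highest-weight vector $v_h$ of the Verma module $M(c,h)$, where $L_0v_h=hv_h$. This gives $\sigma(X)Xv_h=P_X(h,c)v_h$. By definition, the scalar $P_X(h,c)$ is the Shapovalov form value $\langle Xv_h,Xv_h\rangle$ in the convention $\langle v_h,v_h\rangle=1$.

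\emph{Step 4: the values at $h_{R,S}$.} Take $X=\mathfrak S_{R,S}$, with coefficients in $\C[h,c]$ fixed at $c=c_{p,q}$, in Nakano's normalization. The element $X$ is built so that $Xv_h$ is singular exactly at $h=h_{R,S}$, giving $P_X(h_{R,S},c_{p,q})=0$. Nakano's Proposition~5.2 computes the first-order expansion of this norm in $h-h_{R,S}$, which yields $\partial_zP_X(h_{R,S},c_{p,q})=R_{R,S}$.

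\emph{Main obstacle.} The hard part is bookkeeping in Step~4. I must make sure that Nakano's Shapovalov element $\mathfrak S_{R,S}$ has $h$-independent coefficients, or else that $h$ is specialized before forming $P_X$. If instead the coefficients depend on $h$, I would pass to $P_{X(h)}(h,c)$ and use the vanishing at $h_{R,S}$ so that the derivative of the coefficients drops out. I would also confirm that Nakano's normalization and the anti-involution $\sigma$ agree with the ones used here.
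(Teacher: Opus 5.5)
Your proposal is correct and follows essentially the same route as the paper. You PBW-reorder the degree-zero element $\sigma(X)X$ so that only terms in $\C[L_0,C]$ survive modulo $U(\Vir)\Vir_{>0}$, evaluate on a generalized highest-weight space (no semisimplicity of $L_0$ needed), and read off the two values at $h_{R,S}$ from Nakano's Proposition~5.2.

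Your caution about $h$-dependent coefficients is care the paper leaves implicit, since there $X$ is a fixed element of $U(\Vir_-)$. The fallback you sketch is sound, but the coefficient-derivative drops out because $\mathfrak S_{R,S}v_{h_{R,S}}$ is singular and so lies in the radical of the Shapovalov form, not merely because its norm vanishes.
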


\begin{proof}
The element $\sigma(X)X$ has degree zero.  Reorder it by PBW with all
negative modes to the left, then powers of $L_0$ and $C$, and positive
modes to the right.  Modulo the right ideal
$U(\Vir)\Vir_{>0}$ every term containing a positive mode vanishes.  A
degree-zero PBW monomial with no positive or negative modes lies in
$U(\C L_0\oplus\C C)$, which proves
\eqref{eq:HC-Shapovalov-congruence}.  The first displayed operator
identity follows immediately on a generalized highest-weight space.

Applying the congruence to the highest-weight line of the Verma module
shows that $P_X(h,c)$ is exactly the scalar Shapovalov norm of $Xv_h$.
For $X=\mathfrak S_{R,S}$, Proposition~5.2 of \cite{Nakano} gives its
first-order expansion at $h=h_{R,S}$, and hence the two asserted values
of $P_X$ and its derivative.
\end{proof}

\begin{lem}[One-row self-extension vanishing]
\label{lem:one-row-self-Ext}
For $a,b,c$ in \eqref{eq:abc},
\[
 \Ext^1_{\mathscr C}(S_a,S_a)=
 \Ext^1_{\mathscr C}(S_b,S_b)=
 \Ext^1_{\mathscr C}(S_c,S_c)=0.
\]
\end{lem}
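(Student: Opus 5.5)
We prove the vanishing of $\Ext^1_{\mathscr C}(S_j,S_j)$ for $j\in\{a,b,c\}$ by showing that any self-extension
\[
 0\longrightarrow S_j\longrightarrow E\longrightarrow S_j\longrightarrow0
\]
in $\mathscr C$ splits. We split into an ordinary case and a logarithmic case. The input is Lemma~\ref{lem:Shapovalov-HC-reduction} with a \emph{primitive} Kac representative of $h_j$. The role of the reflection \eqref{eq:Nakano-weight-shift} is to rewrite $h_j=h_{j,1}$ as $h_{R,S}$ with $1\le R<p_+$. If $p_+=p$, reduce $j$ modulo $p$; since $p\nmid j$ for $j\in\{a,b,c\}$, we get $h_j=h_{R,S}$ with $1\le R\le p-1$. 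If $p_+=q$, use the symmetry $h_{r,s}=h_{p_+-r,p_--s}$, i.e.\ rewrite as $h_{1,j}$ in the transposed labelling, to obtain $R=1<p_+$. Lemma~\ref{lem:primitive-Shapovalov-coefficient} then gives $R_{R,S}\ne0$. Put $h:=h_j$ and let $X=\mathfrak S_{R,S}$, of degree $RS$.

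\emph{Ordinary case.} Suppose $L_0$ is semisimple on $E$. Then $E$ is a weight module with lowest weight space $E[h]$ of dimension two, and it is annihilated by positive modes because no weights of $E$ lie below $h$. Every vector of $E[h]$ is therefore a highest-weight vector. Choose $w\in E[h]$ not in the submodule $S_j$, and set $U=U(\Vir)w$. The module $U$ is a highest-weight module of weight $h$, so it has a simple head isomorphic to $S_j$. Its $h$-weight space is $\C w$, which does not contain the lowest-weight line of the submodule $S_j$; hence $U\cap S_j$ is a proper submodule of $S_j$, so it is zero. Thus $U\cong S_j$ maps isomorphically onto the quotient, and this gives a splitting.

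\emph{Logarithmic case.} Suppose $L_0$ is not semisimple. Then, exactly as in the proof of Lemma~\ref{lem:local-category-O}, the nilpotent part $N_E$ is a nonzero Virasoro endomorphism that factors through an isomorphism $S_j\to S_j$. In particular $(L_0-h)w=d\,u$, where $u$ spans the lowest line of the submodule $S_j$ and $d\neq0$. The space $W=E[h]^{\mathrm{gen}}$ is two-dimensional and killed by $\Vir_{>0}$, so Lemma~\ref{lem:Shapovalov-HC-reduction} gives
\[
 \sigma(X)Xw=P_X(L_0,c)w=P_X(h,c)w+\partial_zP_X(h,c)(L_0-h)w=R_{R,S}\,d\,u\neq0,
\]
using $(L_0-h)^2w=0$ and $P_X(h,c)=0$.

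On the other hand, $Xw$ has weight $h+RS$ and maps to $Xv$ in the quotient copy of $S_j$. There $Xv=0$, because $Xv$ is a singular vector of the Verma module $M(h)$ and the simple quotient kills it. Hence $Xw\in S_j$, and so $\sigma(X)Xw\in S_j[h]=\C u$. In $S_j$ we have $\sigma(X)Xu=P_X(h,c)u=0$, but this alone does not pin down $\sigma(X)Xw$. The argument instead uses that $Xw$ is a weight-$(h+RS)$ vector of the simple module $S_j$ annihilated by all positive modes. This holds because for $m>0$ we have $L_mXw\in S_j$, and $L_mXw$ is computed from $L_mXv=0$ in $M(h)$ modulo terms involving $(L_0-h)w\in S_j$. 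A simple highest-weight module has no singular vectors above its top, so $Xw=0$, contradicting $\sigma(X)Xw\neq0$.

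\emph{Main obstacle.} The delicate step is this last claim, namely that $Xw$ is singular in $S_j$. The commutators $[L_m,X]$ reduce to the Verma singularity relation only modulo $(L_0-h)$. If that route fails, I would instead compare $\Vir_{>0}$-annihilation against the Shapovalov pairing on $S_j$, using its nondegeneracy, to force $Xw=0$.
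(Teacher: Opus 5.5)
Your reduction to a primitive Kac representative matches the paper. So does the Harish-Chandra computation $\sigma(X)Xw=R_{R,S}\,d\,u\neq0$, together with $Xw\in S_j$. Your ordinary case is also fine: the direct highest-weight splitting argument is a correct and more elementary substitute for the paper's appeal to the loop-free doubled Hasse diagram of Boe--Nakano--Wiesner.

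The gap is the final step. Your reduction does not show that $Xw$ is annihilated by $\Vir_{>0}$. Write $L_mX\equiv\sum_kY_kf_k(L_0,C)$ modulo $U(\Vir)\Vir_{>0}$, with $Y_k$ a basis of $U(\Vir_-)$ in degree $RS-m$. Singularity of $Xv$ in the Verma module then only gives $f_k(h,c)=0$. Hence $L_mXw=d\,Z_mu$ with $Z_m=\sum_k\partial_zf_k(h,c)Y_k$. The ``terms involving $(L_0-h)w$'' are exactly what survives, and they are not zero. To see this, hold $X$ fixed and write $X=\sum_{m>0}L_{-m}X_m$. Differentiating $\langle Yv_{h'},L_mXv_{h'}\rangle_{h'}=\langle L_{-m}Yv_{h'},Xv_{h'}\rangle_{h'}$ at $h'=h$ on the Verma modules $M(h')$ gives $\sum_m\langle X_mv,Z_mv\rangle_h=\partial_zP_X(h,c)=R_{R,S}\neq0$. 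So some $Z_mu\neq0$ in $S_j$. Also, singularity of $Xw$ would immediately give $\sigma(X)Xw=0$, so that claim is equivalent to the contradiction you want rather than a step toward it.

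The missing idea is to apply the contravariant form of $S_j$ only to the weight-$h$ component. You do not need $Xw=0$, which is what your fallback aims for and which the pairing does not give directly.
\begin{itemize}
\item Since $Xw\in S_j$, the vector $\sigma(X)Xw$ lies in $S_j[h]=\C u$.
\item Normalize $\langle u,u\rangle=1$ and use adjointness $\langle x,\sigma(X)y\rangle=\langle Xx,y\rangle$ on $S_j$. This gives $\langle u,\sigma(X)Xw\rangle=\langle Xu,Xw\rangle=0$, because $Xu=0$ in the simple module $S_j$.
\item Hence $\sigma(X)Xw=0$, contradicting $\sigma(X)Xw=R_{R,S}\,d\,u\neq0$.
\end{itemize}
This is exactly how the paper closes the logarithmic case.
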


\begin{proof}
We first choose positive Kac representatives to which
Lemma~\ref{lem:primitive-Shapovalov-coefficient} applies.  If $p<q$, so
$(p_+,p_-)=(p,q)$, the weight-shift identity
\eqref{eq:Nakano-weight-shift} gives
\[
 \begin{aligned}
 h_a&=h_{\,r,\,q-1+(n-1)q},\\
 h_b&=h_{\,p-r,\,q-1+nq},\\
 h_c&=h_{\,r,\,q-1+(n+1)q}.
 \end{aligned}
\]
Their first coordinates lie in $\{1,\ldots,p_+-1\}$.  If $p>q$, so
$(p_+,p_-)=(q,p)$, the reflected positive representatives used below in
Proposition~\ref{prop:Felder-labels} give
\[
 \begin{aligned}
 h_a&=h_{\,1,\,p-r+(n-1)p},\\
 h_b&=h_{\,1,\,r+np},\\
 h_c&=h_{\,1,\,p-r+(n+1)p},
 \end{aligned}
\]
whose first coordinate is $1<p_+=q$.  In either ordering, each of the
three weights therefore has a positive representative $(R,S)$ with
$1\le R<p_+$, and Lemma~\ref{lem:primitive-Shapovalov-coefficient}
gives $R_{R,S}\ne0$.

Let $S=L(h_{R,S})$ be any of these three simples and suppose
\[
 0\longrightarrow S\xrightarrow{\iota}E\xrightarrow{\pi}S
 \longrightarrow0
\]
is a logarithmic self-extension.  Let $N_E$ be the canonical nilpotent
part of $L_0$.  Since both endpoint copies are ordinary,
\[
 N_E=\iota\circ(c\,\id_S)\circ\pi
 \qquad(c\in\C^\times).
\]
For a lowest-weight vector $v_h$, choose $u_0$ with $\pi(u_0)=v_h$ and
put $u_1=\iota(v_h)$.  Then
\[
 (L_0-h)u_0=cu_1.
\]
The two ordinary endpoints imply
$\operatorname{Spec}(E)\subset h+\Z_{\ge0}$.  Thus every positive
Virasoro mode annihilates the lowest generalized-weight space, because
$L_mE_h^{\rm gen}\subset E_{h-m}^{\rm gen}=0$ for $m>0$.  Put
$X=\mathfrak S_{R,S}$.  Lemma~\ref{lem:Shapovalov-HC-reduction} gives
on this generalized highest-weight space
\[
 \sigma(X)X=P_X(L_0,c_{p,q}).
\]
Here
\[
 P_X(h,c_{p,q})=0,\qquad
 \partial_zP_X(h,c_{p,q})=R_{R,S}\ne0.
\]
Since $L_0=h\,\id+N_E$ on $E_h^{\rm gen}$ and $N_E^2=0$, polynomial
functional calculus gives
\[
 P_X(L_0,c_{p,q})
 =R_{R,S}N_E.
\]
Therefore
\[
 \sigma(X)Xu_0=cR_{R,S}u_1\ne0.
\]
On the other hand,
$\pi(Xu_0)=Xv_h=0$, so $Xu_0$ lies in the simple kernel copy $S$.
Consequently $\sigma(X)Xu_0$ also belongs to $S$; it has conformal
weight $h$, and is therefore a scalar multiple of $u_1$.  Normalize the
nondegenerate contravariant form on $S$ by
$\langle u_1,u_1\rangle=1$.  Since $Xu_1=0$, contravariance gives
\[
 \big\langle u_1,\sigma(X)Xu_0\big\rangle
 =\big\langle Xu_1,Xu_0\big\rangle=0.
\]
It follows that $\sigma(X)Xu_0=0$, contradicting
$\sigma(X)Xu_0=cR_{R,S}u_1\ne0$.  Therefore no logarithmic
self-extension exists.

If $L_0$ is semisimple on a self-extension, the finite-generation and
$\Vir_{>0}$-local-finiteness argument of
Lemma~\ref{lem:local-category-O} places it in the classical Virasoro
category $\mathcal O$.  Lemma~\ref{lem:BNW-finite-reduction} reduces its
self-$\Ext^1$ to a finite BNW model.  The doubled-Hasse description of
Section~4.4 of \cite{BoeNakanoWiesner} has no loops, so the self-extension
entry is zero.  Thus ordinary self-extensions split as well, proving the
claim.
\end{proof}

\begin{prop}[Felder-label matching]\label{prop:Felder-labels}
There is an explicitly labelled triple
$\tau_{n,r}\in
\mathcal T_{p_+,p_-}\setminus\mathcal T^0_{p_+,p_-}$
whose ordered conformal weights are $(h_a,h_b,h_c)$.  More precisely, if
$p<q$, then
\begin{equation}\label{eq:tau-p-less-q}
 \tau_{n,r}=
 \bigl(\alpha_{r,q-1;n-1},
       \alpha_{p-r,q-1;n},
       \alpha_{r,q-1;n+1}\bigr),
\end{equation}
whereas if $p>q$, then
\begin{equation}\label{eq:tau-p-greater-q}
 \tau_{n,r}=
 \bigl(\alpha_{q-1,r;-n+1},
       \alpha_{q-1,p-r;-n},
       \alpha_{q-1,r;-n-1}\bigr).
\end{equation}
In either case
\begin{equation}\label{eq:tau-nr}
 (h_{\alpha_1},h_{\alpha_2},h_{\alpha_3})=(h_a,h_b,h_c)
\end{equation}
and
\begin{equation}\label{eq:A-B-tau}
 A(\tau_{n,r})\cong K_a,
 \qquad
 B(\tau_{n,r})\cong K_b.
\end{equation}
\end{prop}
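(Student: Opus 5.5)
The proof has three steps. First I compute the conformal weights of the displayed labels with \eqref{eq:Nakano-weight-shift}. Then I check that the triple is a consecutive, non-chain-type triple of one Felder complex. Finally I identify $A(\tau_{n,r})$ and $B(\tau_{n,r})$ with Kac modules using the uniqueness of one-dimensional extensions.

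\emph{Weights.} Suppose $p<q$, so $(p_+,p_-)=(p,q)$. The identity $h_{u,v;m}=h_{u,v+mp_-}$ gives
\[
h_{\alpha_{r,q-1;n-1}}=h_{r,\,nq-1},\qquad
h_{\alpha_{p-r,q-1;n}}=h_{p-r,\,(n+1)q-1},\qquad
h_{\alpha_{r,q-1;n+1}}=h_{r,\,(n+2)q-1}.
\]
I then use the Kac symmetry $h_{R,S}=h_{p_+-R,\,p_--S}$, extended to all integers through the quadratic formula $h_{R,S}=\bigl((qR-pS)^2-(p-q)^2\bigr)/(4pq)$ written in the paper's orientation. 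Comparing numerators, the equality $h_{r,nq-1}=h_{np-r,1}=h_a$ reduces to the identity $(q(np-r)-p)^2=(qr-p(nq-1))^2$, and the signs match. The same check gives $h_b$ and $h_c$. For $p>q$, so $(p_+,p_-)=(q,p)$, I use $h_{u,v;m}=h_{u-mp_+,v}$ in the same way. The only danger is the orientation convention, namely which Kac coordinate carries $p$. I would fix it once from \eqref{eq:weights}, so that $h_j=h_{j,1}$, and verify the identity at $n=1$ as a sanity check. Strictness $h_a<h_b<h_c$ then follows from \eqref{eq:strip-central-difference} and \eqref{eq:strip-upper-difference}, since $r(nq-1)>0$ and $(p-r)((n+1)q-1)>0$. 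Because the first two weights differ, Definition~5.7 of \cite{Nakano} shows that $\tau_{n,r}\notin\mathcal T^0_{p_+,p_-}$.

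\emph{Membership in $\mathcal T_{p_+,p_-}$.} The three labels have Fock indices $m-1,m,m+1$, with the first coordinate alternating between $r$ and its reflection $p_+-r$ (respectively the second coordinate between $r$ and $p-r$). This is exactly the consecutive pattern of a Felder complex in Definition~5.6 of \cite{Nakano}. I would quote that definition and check three things: the residue constraints $1\le r\le p-1$, the use of $q-1$ in the fixed coordinate, and the ordering. This bookkeeping against Nakano's conventions is the step I expect to be the main obstacle. The sign of $m$ in \eqref{eq:tau-p-greater-q} in particular must be matched to Nakano's increasing-weight orientation. If Nakano's complex is indexed in the opposite direction, I would reverse the chain and recheck that the weights still increase.

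\emph{Identifying $A$ and $B$.} By \eqref{eq:Kac-reflected-length-two}, $K_a$ is a nonsplit extension of $S_a=S_1$ by $S_b=S_2$. By \eqref{eq:Kac-length-two}, $K_b$ is a nonsplit extension of $S_b$ by $S_c=S_3$. Both Kac modules are objects of $\mathscr C$: their constituents have Felder weights because the weights were just matched to Nakano labels, and they are finite-length ordinary modules of $\Oc_{c_{p,q}}$. Lemma~\ref{lem:local-simple-Ext} shows that the two relevant Ext spaces are one-dimensional. Lemma~\ref{lem:one-dimensional-ext} then gives $A(\tau_{n,r})\cong K_a$ and $B(\tau_{n,r})\cong K_b$.

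The orientations of these extensions agree with \eqref{eq:Nakano-tower-A} and \eqref{eq:Nakano-tower-B}, because in each Kac module the simple head has the lower weight. The simples are also correctly identified and not confused: $h_a<h_b<h_c$ separates them by weight, so Corollary~\ref{cor:positive-label-collisions} causes no trouble.
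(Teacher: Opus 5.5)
Your proposal is correct and follows essentially the same route as the paper. Both arguments match weights via \eqref{eq:Nakano-weight-shift} and numerator identities, use the strict ordering $h_a<h_b<h_c$ to exclude $\mathcal T^0_{p_+,p_-}$, and obtain $A(\tau_{n,r})\cong K_a$, $B(\tau_{n,r})\cong K_b$ from one-dimensional simple Ext groups together with Lemma~\ref{lem:one-dimensional-ext}. The adjacency step you defer is settled in the paper by Nakano's Proposition~2.7(1) with $(R,S,N)=(p-r,q-1,n)$ and Proposition~2.7(2) with $(R,S,N)=(q-1,p-r,-n)$; negative Felder indices are admissible there, and the latter complex already runs from index $-n+1$ to $-n-1$ in increasing weight, so no reversal is needed.
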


\begin{proof}
Since $p,q\ge2$ are coprime, $p\neq q$.  To avoid confusing the one-row
label $r$ with Nakano's Felder coordinates, we denote the latter by
$R,S$ throughout this proof.  Suppose first that $p<q$, so
$(p_+,p_-)=(p,q)$.  Here $1\le r\le p-1=p_+-1$ and $q-1=p_--1$, so the displayed Felder
labels are admissible.  Apply \cite[Proposition~2.7(1)]{Nakano} with
\[
 (R,S,N)=(p-r,q-1,n).
\]
Then $R^\vee=p_+-R=r$, and the consecutive terms
$F_{R^\vee,S;N-1}\to F_{R,S;N}\to F_{R^\vee,S;N+1}$ are exactly the
three Fock modules in \eqref{eq:tau-p-less-q}.  The label conversion is
\[
\begin{array}{c|c|c}
 \text{Nakano label}&\text{weight}&\text{one-row label}\\ \hline
 \alpha_{r,q-1;n-1}&h_{r,q-1;n-1}&h_{np-r,1}=h_a\\
 \alpha_{p-r,q-1;n}&h_{p-r,q-1;n}&h_{np+r,1}=h_b\\
 \alpha_{r,q-1;n+1}&h_{r,q-1;n+1}&h_{(n+2)p-r,1}=h_c
\end{array}
\]
by \eqref{eq:Nakano-weight-shift}.  The three numerator identities are
\[
\begin{aligned}
 q\bigl(r-(n-1)p\bigr)-p(q-1)
   &=-\bigl(q(np-r)-p\bigr),\\
 q\bigl(p-r-np\bigr)-p(q-1)
   &=-\bigl(q(np+r)-p\bigr),\\
 q\bigl(r-(n+1)p\bigr)-p(q-1)
   &=-\bigl(q((n+2)p-r)-p\bigr).
\end{aligned}
\]
Thus the squares of the three Kac numerators are exactly those defining
$h_a,h_b,h_c$, respectively.

If $p>q$, then $(p_+,p_-)=(q,p)$.  Here
$q-1=p_+-1$ and $1\le r,p-r\le p-1=p_--1$.  Apply
\cite[Proposition~2.7(2)]{Nakano} with
\[
 (R,S,N)=(q-1,p-r,-n).
\]
Proposition~2.7(2) is stated with the Felder index ranging over
$\Z$, so the choice $N=-n$ is within its stated range.  Moreover,
Nakano's set $\mathcal A_{p_+,p_-}$ in Section~5.1 contains
$\alpha_{R,S;N}$ for arbitrary integral $R,S,N$, and
\cite[Definition~5.6]{Nakano} defines $\mathcal T_{p_+,p_-}$ by the
actual adjacency of the three Fock modules in a Felder complex; it does
not require replacing negative Felder indices by nonnegative ones.  Thus
the original labels in \eqref{eq:tau-p-greater-q} are legitimate
$\mathcal A_{p_+,p_-}$-labels.  Here $S^\vee=p_- -S=r$, and the consecutive
terms $F_{R,S^\vee;N+1}\to F_{R,S;N}\to F_{R,S^\vee;N-1}$ are exactly the
three Fock modules in \eqref{eq:tau-p-greater-q}.  Using $c_{q,p}=c_{p,q}$ and
$h_{j,1}^{(p,q)}=h_{1,j}^{(q,p)}$, one obtains
\[
\begin{array}{c|c|c}
 \text{Nakano label}&\text{weight for }(q,p)&\text{one-row label for }(p,q)\\ \hline
 \alpha_{q-1,r;-n+1}&h_{q-1,r;-n+1}&h_{np-r,1}=h_a\\
 \alpha_{q-1,p-r;-n}&h_{q-1,p-r;-n}&h_{np+r,1}=h_b\\
 \alpha_{q-1,r;-n-1}&h_{q-1,r;-n-1}&h_{(n+2)p-r,1}=h_c
\end{array}
\]
(the superscripts $(q,p)$ and $(p,q)$ are understood in the middle and
right columns, respectively).  To place these negative Felder indices in
the standard range used in Nakano's Proposition~5.5, we use the immediate
symmetry of the weight formula
\begin{equation}\label{eq:Nakano-negative-index}
 h_{u,v;-m}=h_{p_+-u,p_--v;m}.
\end{equation}
Indeed, for arbitrary $m\in\Z$,
\[
 p_-\bigl(u+mp_+\bigr)-p_+v
 =-\Bigl(
 p_-\bigl(p_+-u-mp_+\bigr)-p_+\bigl(p_--v\bigr)
 \Bigr),
\]
so the two Kac numerators differ by an overall sign and therefore have
the same square.  Thus, for $(p_+,p_-)=(q,p)$,
\[
\begin{aligned}
 h_{q-1,r;-n+1}&=h_{1,p-r;n-1},\\
 h_{q-1,p-r;-n}&=h_{1,r;n},\\
 h_{q-1,r;-n-1}&=h_{1,p-r;n+1}.
\end{aligned}
\]
The reflected coordinates $1$, $r$, and $p-r$ lie in the required
fundamental ranges $1\le1\le p_+-1$ and
$1\le r,p-r\le p_--1$.

We emphasize that \eqref{eq:Nakano-negative-index} is used only to
identify lowest conformal weights, and hence the corresponding simple
Virasoro modules, with representatives in the parameter ranges of
Proposition~5.5.  It is not an identification of the underlying Fock
modules.  Adjacency of the three Fock modules in the Felder complex was
established directly from Proposition~2.7(2) at the original, possibly
negative, Felder indices.  Since irreducible lowest-weight Virasoro modules
at fixed central charge are determined by their lowest weights, the
weight equalities above identify the corresponding simple Virasoro
modules.  Yoneda $\Ext^1$ is functorial under isomorphisms of either
endpoint, so the Ext dimensions obtained from positive-index
representatives transport to the original labels with the same
quotient--submodule orientation.

By \eqref{eq:weight-difference},
\[
 h_b-h_a=r(nq-1)>0,
 \qquad
 h_c-h_b=(p-r)((n+1)q-1)>0.
\]
The three labels therefore satisfy the two conditions used in
\cite[Definition~5.6]{Nakano}: their Fock modules occur consecutively in
the indicated Felder complex, and their lowest weights satisfy
\[
 h_a<h_b<h_c.
\]
Hence $\tau_{n,r}\in\mathcal T_{p_+,p_-}$.  The strict first inequality
excludes the equality condition defining $\mathcal T^0_{p_+,p_-}$, so
$\tau_{n,r}\notin\mathcal T^0_{p_+,p_-}$.  The three
lowest weights therefore lie in Nakano's Felder set $H_{p_+,p_-}$.
Since $S_a,S_b,S_c$ are finite-length grading-restricted simple objects in
$\Oc_{c_{p,q}}$, they belong to
$\mathscr C$:
\[
 S_a,S_b,S_c\in\mathscr C.
\]
The Kac sequences
\[
 0\longrightarrow S_b\longrightarrow K_a\longrightarrow S_a
 \longrightarrow0,
 \qquad
 0\longrightarrow S_c\longrightarrow K_b\longrightarrow S_b
 \longrightarrow0
\]
are non-split by \eqref{eq:Kac-length-two}.  Lemma~\ref{lem:Nakano-Serre}
therefore gives
\[
 K_a,K_b\in\mathscr C.
\]
No identification of $K_a$ with $A(\tau_{n,r})$, or of
$K_b$ with $B(\tau_{n,r})$, has been used in establishing this
membership.  By Lemma~\ref{lem:Nakano-exact-compatibility}, the two
nonsplit Kac sequences represent nonzero classes in the corresponding
Yoneda groups of $\mathscr C$.  By
Lemma~\ref{lem:local-simple-Ext}, the relevant Yoneda groups in
$\mathscr C$ have the same adjacency dimensions as in the classical
highest-weight calculation.  To identify the entries occurring here,
we use the Felder-label table in \cite[Proposition~5.5]{Nakano}.
We spell out the four parameter ranges.

If $p<q$, the first pair is
\[
 h_a=h_{r,q-1;n-1},\qquad h_b=h_{p-r,q-1;n}.
\]
For $n=1$ this is Proposition~5.5(1), and for $n\ge2$ it is
Proposition~5.5(2), with reflection in the first coordinate.  The second
pair
\[
 h_b=h_{p-r,q-1;n},\qquad h_c=h_{r,q-1;n+1}
\]
is Proposition~5.5(2) for every $n\ge1$.

Suppose now that $p>q$.  For the first pair with $n=1$ we do not convert
the source weight.  Instead
\[
 h_a=h_{q-1,r;0},\qquad
 h_b=h_{q-1,p-r;-1}=h_{1,r;1}
\]
by \eqref{eq:Nakano-negative-index}.  Thus Proposition~5.5(1) applies
directly with $R=q-1$, $S=r$, and $N=0$, for which $R^\vee=1$.  For
$n\ge2$ we use the positive-index forms
\[
 h_a=h_{1,p-r;n-1},\qquad h_b=h_{1,r;n},
\]
and Proposition~5.5(2), with reflection in the second coordinate.  The
second pair is, for every $n\ge1$,
\[
 h_b=h_{1,r;n},\qquad h_c=h_{1,p-r;n+1},
\]
again the corresponding second-coordinate case of Proposition~5.5(2).
Hence
\[
 \Ext^1_{\mathscr C}(S_a,S_b)\cong\C,
 \qquad
 \Ext^1_{\mathscr C}(S_b,S_c)\cong\C,
\]
with the orientation \eqref{eq:Ext-convention}.  The strict inequalities
$h_a<h_b<h_c$ also show that these three simple modules are pairwise
nonisomorphic.  Thus the order of the three terms agrees with Nakano's
ordered triple $(\alpha_1,\alpha_2,\alpha_3)$, not merely with the
direction of the corresponding Felder differential.  The two Kac
sequences are therefore nonzero classes in precisely these one-row Ext
groups, and
Lemma~\ref{lem:one-dimensional-ext} identifies their middle terms with the modules of
\cite[Definition~5.9]{Nakano}:
$K_a\cong A(\tau_{n,r})$ and $K_b\cong B(\tau_{n,r})$.
\end{proof}

\begin{lem}[One-row Serre-category membership]\label{lem:Nakano-membership}
Fix $n\geq1$ and $1\leq r\leq p-1$, and let $a,b,c$ be as in
\eqref{eq:abc}.  Then
\[
 S_a,S_b,S_c,K_a,K_b\in\mathscr C.
\]
If $0\to K_a\to E\to K_b\to0$ is a short exact sequence in
$\Oc_{c_{p,q}}$, then
$E$, the distinguished quotient $E/S_b$, and all three terms of the
induced sequence $0\to S_b\to E\to E/S_b\to0$ belong to
$\mathscr C$, where $S_b\subset E$ denotes the
image of the distinguished composite
\[
 S_b\hookrightarrow K_a\hookrightarrow E.
\]
\end{lem}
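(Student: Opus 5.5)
The plan is to reduce everything to the Serre property of $\mathscr C$ (Lemma~\ref{lem:Nakano-Serre}) together with the label identifications already made in Proposition~\ref{prop:Felder-labels}. For the first assertion, the proof of Proposition~\ref{prop:Felder-labels} shows that $h_a,h_b,h_c$ are the lowest weights of the Felder triple $\tau_{n,r}$. These weights therefore lie in $H_{p_+,p_-}$, and so the simple objects $S_a,S_b,S_c$ belong to $\mathscr C$. The Kac sequences \eqref{eq:Kac-length-two} and \eqref{eq:Kac-reflected-length-two} exhibit $K_a$ as an extension of $S_a$ by $S_b$, and $K_b$ as an extension of $S_b$ by $S_c$. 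Closure of $\mathscr C$ under extensions then places $K_a,K_b$ in $\mathscr C$. I will simply quote this step from the proof of the proposition, noting that it did not use the identification $K_a\cong A(\tau_{n,r})$.

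For the extension statement, let $0\to K_a\to E\to K_b\to0$ be exact in $\Oc_{c_{p,q}}$. The object $E$ is then a finite-length object of $\Oc_{c_{p,q}}$. Its Jordan--H\"older multiset is the union of those of $K_a$ and $K_b$, namely $\{S_a,S_b,S_b,S_c\}$, so every constituent has Felder lowest weight and $E\in\mathscr C$ by the definition of $\mathscr C$ (equivalently, by the Serre property). Next I need the distinguished $S_b$ to be a genuine subobject of $E$. By \eqref{eq:Kac-reflected-length-two} it is a submodule of $K_a$: it is the socle, image of the nonsplit kernel. The composite $S_b\hookrightarrow K_a\hookrightarrow E$ is then a monomorphism, so its image is a simple subobject of $E$, isomorphic to $S_b\in\mathscr C$.

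Finally, the quotient $E/S_b$ exists in the abelian category $\Oc_{c_{p,q}}$. Its constituents are $\{S_a,S_b,S_c\}$, and since it is a quotient of $E\in\mathscr C$, it lies in $\mathscr C$ by the Serre property. Hence all three terms of $0\to S_b\to E\to E/S_b\to0$ lie in $\mathscr C$. Because $\mathscr C$ is full and Serre, this sequence is also exact in $\mathscr C$; Lemma~\ref{lem:Nakano-exact-compatibility} confirms that its Yoneda class is the same in both categories.

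There is no real obstacle here. The only point needing care is to justify that the distinguished $S_b$ really sits inside $K_a$ with the stated orientation. That is exactly the reflected sequence \eqref{eq:Kac-reflected-length-two} with $a=np-r$, and it requires $n\ge1$, which is the standing hypothesis.
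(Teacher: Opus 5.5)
Your proposal is correct and follows the same route as the paper. Membership of $S_a,S_b,S_c$ comes from the Felder weights realized in Proposition~\ref{prop:Felder-labels}; the Kac sequences together with the Serre property of Lemma~\ref{lem:Nakano-Serre} give $K_a,K_b\in\mathscr C$; and the assertions about $E$, the distinguished $S_b$, and $E/S_b$ follow from Serre closure, with your version only spelling out the subobject and constituent bookkeeping that the paper leaves implicit.
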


\begin{proof}
Proposition~\ref{prop:Felder-labels} realizes $h_a,h_b,h_c$ as consecutive
Felder weights.  Hence $S_a,S_b,S_c$ are objects of $\mathscr C$.  The Kac sequences and
Lemma~\ref{lem:Nakano-Serre} place $K_a,K_b$ in the same finite-length
subcategory.  The remaining assertion follows from its Serre closure.
\end{proof}

\begin{lem}[Nakano scope for the non-vacuum one-row case]
\label{lem:one-row-Nakano-scope}
Let $n\ge1$, $1\le r\le p-1$, and put
\[
 a=np-r,\qquad b=np+r.
\]
Assume $a\ge2$.  If
\begin{equation}\label{eq:one-row-scope-extension}
 0\longrightarrow K_a\longrightarrow E\longrightarrow K_b
 \longrightarrow0
\end{equation}
is exact in $\Oc_{c_{p,q}}$, then $E$ belongs to Nakano's category
$L_{c_{p_+,p_-}}\text{-mod}$ of \cite[Definition~5.1]{Nakano}.
\end{lem}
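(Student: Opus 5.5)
The plan is to check the defining conditions of Nakano's category $L_{c_{p_+,p_-}}\text{-mod}$ in \cite[Definition~5.1]{Nakano} directly on $E$. The structural hypotheses transfer from $\Oc_{c_{p,q}}$: central charge $c_{p,q}=c_{p_+,p_-}$ by \eqref{eq:central-charge-symmetry}, finite length, local finiteness of $L_0$, finite-dimensional generalized weight spaces, and lower truncation, all from Lemma~\ref{lem:Nakano-ambient} and Lemma~\ref{lem:gr-extension}. By Lemma~\ref{lem:Nakano-membership}, $E\in\mathscr C$, so every Jordan--H\"older factor of $E$ is one of $S_a,S_b,S_c$ (with $S_b$ twice), and each has lowest weight in $H_{p_+,p_-}$ by Proposition~\ref{prop:Felder-labels}. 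The only condition not already encoded in $\mathscr C$ is the global module-theoretic one that distinguishes $L_{c_{p_+,p_-}}\text{-mod}$: $E$ must be a module for the simple (or the appropriate quotient) Virasoro vertex operator algebra used by Nakano. Equivalently, the relevant maximal ideal, generated by the singular vector of the vacuum Verma module, must act trivially.

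I would verify this condition as follows. Since $E$ is an extension of modules killed by the ideal, it is not automatically killed, so I use the support: it suffices that the ideal acts by zero on $E$ whenever its lowest weights avoid the vacuum weight. For that, I would argue via Zhu's algebra or the singular vector $\Omega$ of weight $(p_+-1)(p_--1)$ in the vacuum module. The modes of $\Omega$ act on each generalized highest-weight space through a polynomial $f(L_0)$. This polynomial vanishes exactly on the minimal-model-type Kac weights, and I need it to vanish to the order required by the Jordan block size. Since $N_E^2=0$, this is order two.

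The key point is the hypothesis $a\ge2$. It ensures that none of $h_a,h_b,h_c$ equals $h_1=0$, so the vacuum edge with its braid-type Verma module is excluded. In the one-row setting with $s=1<q$, the weights then sit at the simple-root, non-vacuum Felder positions, where Nakano's Definition~5.1 imposes no extra annihilation constraint beyond the Felder support. So in Nakano's formulation I expect the check to reduce to the support condition plus the finiteness conditions already listed.

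The hardest part will be matching precisely what Definition~5.1 demands: whether it is a category of modules of a vertex algebra or a support-defined subcategory. The paper distinguishes that definition from the ambient Definition~3.14, so I expect Definition~5.1 to be a grading-restricted, finite-length, Felder-support condition. In that case the lemma follows from Lemma~\ref{lem:Nakano-membership} together with the observation that $a\ge2$ keeps $E$ away from the vacuum weight $h_1$, whose Kac module $K_1$ is not among the admissible objects. If instead a VOA-module condition is required, I would argue that on $E$ the fields of $\Omega$ act through an operator that vanishes on all three factors. Its action $E\to E$ then factors through maps between Felder simples that would have to lie in distinct residue sectors or carry wrong weights, which Lemma~\ref{lem:block-projection} and Lemma~\ref{lem:weight-arithmetic} rule out. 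That forces the action to be zero.
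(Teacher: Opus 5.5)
There is a genuine gap: the proposal never checks the condition that carries the content of the lemma. You claim the ambient hypotheses of Nakano's category follow from Lemma~\ref{lem:Nakano-ambient} and Lemma~\ref{lem:gr-extension}. Those lemmas give only the containment $\operatorname{Spec}(E)\subset H_0+\Z_{\ge0}$. But \cite[Definition~3.14(2)]{Nakano} requires the \emph{equality} $\operatorname{Spec}(E)=H_0(E)+\Z_{\ge0}$ for a finite set $H_0(E)$, i.e.\ no missing weights. This global support condition is what $\mathscr C$ does not encode, and it is exactly where $a\ge2$ is used.

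The missing step is a character count:
\begin{itemize}
\item We have $\ch_zK_a=z^{h_a}(1-z^a)/\prod_{m\ge1}(1-z^m)$, so the relative degree-$N$ space of $K_a$ has dimension $\mathfrak p(N)-\mathfrak p(N-a)$.
\item This is positive for every $N\ge0$ when $a\ge2$: adjoining a part $a$ injects the partitions of $N-a$ into those of $N$, and the image misses $1^N$.
\item Hence $\operatorname{Spec}K_a=h_a+\Z_{\ge0}$.
\item Since $h_b-h_a=r(nq-1)\in\Z_{>0}$ and $K_a\hookrightarrow E$, exactness on generalized weight spaces (Lemma~\ref{lem:gr-extension}) gives $\operatorname{Spec}E=h_a+\Z_{\ge0}$ with $H_0(E)=\{h_a\}$.
\end{itemize}
At $a=1$ this breaks down. $K_1$ has nothing at relative degree $1$, and $h_{2p-1}-h_1=(p-1)(q-1)\ge2$, so $E$ misses the weight $1$. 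That missing weight, not the value $h_1=0$, is why the vacuum edge is treated separately. The other items you list are fine: central charge, contragredient via Lemma~\ref{lem:Nakano-ambient}, finite socle series, and Felder weights of the factors.

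Your account of why $a\ge2$ matters is also false on its own terms, because it does not keep the weights away from $0$. For $q=2$ (so $p\ge3$ is odd) and $n=r=1$, one has $a=p-1\ge2$ but $h_a=h_{p-1}=h_1=0$ by Corollary~\ref{cor:positive-label-collisions}. Moreover $h_1$ is itself a Felder weight, namely $h_{\alpha_1}$ for $\tau_{1,p-1}$. So no support-in-$H_{p_+,p_-}$ condition could exclude $K_1$.

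Your fallback route is to show that the vacuum singular vector of weight $(p_+-1)(p_--1)$ acts by zero on $E$. It would fail, and it is not needed. For $b=np+r>p$, the weight $h_b$ lies outside the minimal-model Kac table, so that ideal does not even annihilate the composition factor $S_b$. Also, $h_a,h_b,h_c$ lie in a single residue class, so Lemma~\ref{lem:block-projection} gives no separation. As the paper uses it, Definition~5.1 adds only finite socle series and Felder support of the composition factors to Definition~3.14; there is no annihilation condition to verify.
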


\begin{proof}
We verify the conditions of \cite[Definitions~3.14 and~5.1]{Nakano}
one by one.  The central element acts by $c_{p_+,p_-}=c_{p,q}$ on every
term, so Definition~3.14(1) is automatic.

For Definition~3.14(2), the one-row Kac character is
\[
 \ch_zK_a=z^{h_a}
 \frac{1-z^a}{\prod_{m\ge1}(1-z^m)}.
\]
Let $\mathfrak p(N)$ be the partition number, with
$\mathfrak p(N)=0$ for $N<0$.  The relative degree-$N$ subspace of
$K_a$ has dimension
\[
 \mathfrak p(N)-\mathfrak p(N-a).
\]
When $a\ge2$ this number is strictly positive for every $N\ge0$: for
$N<a$ this is immediate, while for $N\ge a$ adjoining a part of size
$a$ injects the partitions of $N-a$ into those of $N$, and the partition
$1^N$ is not in the image.  Hence
\begin{equation}\label{eq:Ka-full-support}
 \operatorname{Spec}K_a=h_a+\Z_{\ge0}
\end{equation}
with no missing weights.  Since
\[
 h_b-h_a=r(nq-1)\in\Z_{>0},
\]
both endpoint Kac modules have weights in $h_a+\Z_{\ge0}$.  Exactness on
generalized weight spaces in \eqref{eq:one-row-scope-extension}, together
with $K_a\hookrightarrow E$, gives the equality
\[
 \operatorname{Spec}E=h_a+\Z_{\ge0},
 \qquad
 0<\dim E[h_a+N]<\infty\quad(N\ge0).
\]
Thus Definition~3.14(2) holds with the single-element set
$H_0(E)=\{h_a\}$; in particular this is the required equality of support,
not merely a containment.

For Definition~3.14(3), Lemma~\ref{lem:Nakano-ambient} constructs the
restricted contragredient and shows that it has the same support and the
same finite generalized-weight multiplicities.  Dualizing
\eqref{eq:one-row-scope-extension} gives a finite-length, lower-truncated
Virasoro module, so the contragredient is again an object of the ambient
category of Definition~3.14.

It remains to check Definition~5.1.  The extension $E$ has finite length,
hence finite socle series, and every Jordan--H\"older factor is one of the
Felder simples occurring in $K_a$ or $K_b$.  Proposition~\ref{prop:Felder-labels}
places all of their lowest weights in $H_{p_+,p_-}$.  These are precisely
the two additional requirements in Definition~5.1.  Therefore this
particular middle term satisfies every defining condition of
$L_{c_{p_+,p_-}}\text{-mod}$.
\end{proof}

\begin{lem}[Kyt\"ol\"a--Ridout higher-prime obstruction]
\label{lem:higher-prime-determinant}
Assume $p>q$ and set
\[
 h^R=h_{2p-1,1}=h_{1,2q-1},\qquad
 \ell^-_1=2q-1,\qquad \ell^+_1=2p-1.
\]
Let $X^-_1v_{h^R}$ and $X^+_1v_{h^R}$ be the first prime singular
vectors of the braid-type Verma module $M(c_{p,q},h^R)$ at these
grades.  Suppose that a rank-one right-Verma staggered module has all
ordinary beta-invariants equal to zero and that the higher prime
$X^+_1v_{h^R}$ has the singular lift required for descent to the
quotient by $U(\Vir)X^+_1v_{h^R}$.  Then
\begin{equation}\label{eq:higher-prime-det-lower-bound}
 \operatorname{ord}_{h=h^R}\det B_{\ell^+_1}(h)
 \ge
 \mathfrak p(\ell^+_1-\ell^-_1)+2,
\end{equation}
where $B_N(h)$ is the Shapovalov Gram matrix at relative grade $N$ and
$\mathfrak p$ is the partition function.
\end{lem}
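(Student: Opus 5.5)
We plan to reduce the inequality to the determinant refinement stated in \cite{KytolaRidout} immediately after their Proposition~7.5, after checking that the present configuration matches its hypotheses. First we record the braid-type structure of $M(c_{p,q},h^R)$. Since $h^R=h_{2p-1,1}=h_{1,2q-1}$, the Kac determinant vanishes at $h^R$ from two factors. Their grades are $\ell^-_1=2q-1$ and $\ell^+_1=2p-1$, and $\ell^-_1<\ell^+_1$ because $p>q$. Each vanishing factor is simple: by \eqref{eq:weight-equality} the weight $h^R$ admits no further coincidences in the one-row or one-column families at these grades. So $X^-_1v_{h^R}$ and $X^+_1v_{h^R}$ are the two prime singular vectors at the top of the braid.

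Next we compute the generic vanishing order $\operatorname{ord}_{h=h^R}\det B_{\ell^+_1}(h)$ from the Kac determinant formula. At grade $N$ the factor attached to a label $(r,s)$ with $rs\le N$ appears with exponent $\mathfrak p(N-rs)$. At $h=h^R$ the vanishing factors at grade $\ell^+_1$ are the $(1,2q-1)$ factor, with exponent $\mathfrak p(\ell^+_1-\ell^-_1)$, and the $(2p-1,1)$ factor, with exponent $\mathfrak p(0)=1$. Both vanish to first order in $h$. The base order is therefore $\mathfrak p(\ell^+_1-\ell^-_1)+1$.

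The content of the lemma is that the hypotheses force at least one extra order of vanishing. Assume all ordinary beta-invariants are zero and that $X^+_1v_{h^R}$ has a singular lift. We will argue in the manner of Kyt\"ol\"a--Ridout by a perturbation in $h$. Take the staggered module data $(L_0-h^R)y=\omega_0$-type relations with vanishing $\beta$. These give a family of vectors at grade $\ell^+_1$ in the deformed Verma module $M(c,h^R+\varepsilon)$. Their Gram pairings with the grade-$\ell^+_1$ space vanish to order one more than the generic bound. The extra vanishing arises because the higher prime singular vector lifts singularly and pairs to zero with the descendants of $X^-_1v$ modulo $\varepsilon^2$. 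Via Smith normal form, the determinant then acquires an additional factor of $\varepsilon$, which yields \eqref{eq:higher-prime-det-lower-bound}.

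\textbf{Main obstacle.} The hard step is exactly this Smith-normal-form argument: showing that the vanishing betas together with the singular-lift condition produce a new independent vector of order $\ge2$. One could instead obtain only a reorganization of the vectors already counted in $\mathfrak p(\ell^+_1-\ell^-_1)+1$. Our first attempt will be to cite the determinant refinement of \cite{KytolaRidout} directly, after verifying that its hypotheses are precisely those listed in the lemma.
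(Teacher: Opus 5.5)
Your plan is essentially the paper's proof: the paper also simply invokes the higher-first-prime determinant refinement stated right after Kyt\"ol\"a--Ridout's Proposition~7.5. The only hypothesis check it records is that the assumed singular lift of $X^+_1v_{h^R}$ amounts to the vanishing of their auxiliary obstruction $\overline\beta$; their Proposition~7.2 supplies this lift for modules descending through $U(\Vir)X^+_1v_{h^R}$. So your perturbation/Smith-normal-form sketch is not needed. Your base-order count $\mathfrak p(\ell^+_1-\ell^-_1)+1$ is also not part of this lemma; the paper uses it only later, in Lemma~\ref{lem:vacuum-edge-staggered}, and justifies it by solving $|qR-pS|=2pq-p-q$ with $RS\le 2p-1$, not via \eqref{eq:weight-equality}, which compares only one-row weights and cannot rule out other labels $(R,S)$.
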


\begin{proof}
This is precisely the higher-first-prime refinement in the paragraph
immediately following \cite[Proposition~7.5]{KytolaRidout}.  In the
braid-type case with quotient-defining prime
$\overline X=X^+_1$, Kyt\"ol\"a--Ridout introduce the auxiliary
obstruction $\overline\beta$ for the required singular lift and show,
by the determinant argument following equations (7.29)--(7.31), that
when the ordinary beta-invariants vanish, the condition
$\overline\beta=0$ forces the Kac determinant at grade $\ell^+_1$ to
have a zero of order at least
\[
 \mathfrak p(\ell^+_1-\ell^-_1)+2.
\]
For a staggered module descending through
$U(\Vir)X^+_1v_{h^R}$, Proposition~7.2 of loc.~cit. supplies precisely
that singular lift, so the corresponding auxiliary obstruction
vanishes.  This gives \eqref{eq:higher-prime-det-lower-bound}.
\end{proof}

\begin{lem}[Vacuum-edge distinguished quotient]
\label{lem:vacuum-edge-staggered}
Let $n=1$ and $r=p-1$, so that
\[
 a=1,\qquad b=2p-1,
\]
and let
\[
 0\longrightarrow K_1\longrightarrow E\longrightarrow K_{2p-1}
 \longrightarrow0
\]
be logarithmic in $\Oc_{c_{p,q}}$.  Then the induced sequence
\begin{equation}\label{eq:vacuum-edge-quotient}
 0\longrightarrow S_1\longrightarrow E/S_{2p-1}
 \longrightarrow K_{2p-1}\longrightarrow0
\end{equation}
is nonsplit.
\end{lem}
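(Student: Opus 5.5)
We argue by contradiction: assume \eqref{eq:vacuum-edge-quotient} splits, and derive a contradiction from the staggered-module theory of Kyt\"ol\"a--Ridout.

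\textbf{Step 1: place $E$ in the staggered framework.} The left module is $K_1=\Kac_{1,1}$, the vacuum quotient $M(c_{p,q},0)/U(\Vir)L_{-1}v_0$. Its lowest weight is $h_1=0$, and its distinguished submodule is $S_{2p-1}$ with $h_{2p-1}=h_{1,2q-1}$. The right module $K_{2p-1}$ is a quotient of the Verma module $M(c_{p,q},h^R)$ with $h^R=h_{2p-1,1}=h_{1,2q-1}$, and this Verma module is of braid type. We repeat the idempotent and $N_E$ argument of Lemma~\ref{lem:local-staggered-realization} verbatim. That argument uses only that the endpoints are cyclic highest-weight modules with $\Hom(K_1,K_{2p-1})=0$ and that $E$ is logarithmic. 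It shows that $E$ is an indecomposable rank-two staggered module with $N_E\ne0$, $N_E^2=0$ and $\operatorname{Im}N_E\subset K_1$. We then fix a lift $y\in E[h^R]$ of the right lowest-weight vector and put $x=N_Ey$, which is a nonzero multiple of the singular vector generating $S_{2p-1}\subset K_1$.

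\textbf{Step 2: splitting kills all ordinary beta-invariants.} Suppose the sequence splits. Then, as in the proof of Lemma~\ref{lem:split-forces-zero-coupling}, we may choose $y$ so that its image in $E/S_{2p-1}$ is a lowest-weight vector of a direct summand $K_{2p-1}$. Every positive mode $L_m y$ lands in $S_{2p-1}$ in a weight below $h^R$, so $L_my=0$ for all $m>0$. Hence $y$ is annihilated by $\Vir_{>0}$. For every Shapovalov or singular-vector datum $X$ that computes a beta-invariant in \cite{KytolaRidout} (grade $h^R$ relative to $0$), the vector $\sigma(X)y$ then vanishes. Every beta-invariant therefore vanishes; in the braid case this includes both $\beta$'s attached to the two prime singular vectors of grades $\ell^\pm_1$ in $M(c_{p,q},0)$ lying above $x$. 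We also record that $y$ generates a highest-weight module through which $E$ descends, since the right module is $K_{2p-1}$, a proper Verma quotient.

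\textbf{Step 3: contradiction, split by which prime is the smaller one.} Over the vacuum weight, the right Verma module $M(c_{p,q},h^R)$ has two prime singular vectors, at grades $\ell^-=\min(2p-1,2q-1)$ and $\ell^+$. The quotient $K_{2p-1}=\Kac_{2p-1,1}$ is defined by the singular vector at grade $2p-1$.
\begin{itemize}
\item If $p<q$, this is the smaller prime. Kyt\"ol\"a--Ridout's Proposition~7.5 states that a staggered module whose right module is the quotient by the smaller prime, with vanishing beta-invariants, cannot exist with $x\ne0$. This contradicts Step~1.
\item If $p>q$, the quotient is by the higher prime $X_1^+$. The existence of $E$ with right module $K_{2p-1}$ supplies, via Proposition~7.2 of loc.~cit., the singular lift required in Lemma~\ref{lem:higher-prime-determinant}. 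That lemma then gives the determinant lower bound \eqref{eq:higher-prime-det-lower-bound}. We contradict it with the Kac determinant formula: at $h=h^R$ the order of vanishing of $\det B_{2p-1}(h)$ equals the number of factors $(h-h_{r,s})$ with $rs\le 2p-1$ and $h_{r,s}=h^R$, counted with multiplicity $\mathfrak p(2p-1-rs)$. For a braid-type weight exactly two Kac labels at grades $\le\ell^+_1$ realize $h^R$, namely $(2p-1,1)$ and $(1,2q-1)$. Each such factor vanishes only to first order in $h$ at fixed $c$. The order is therefore exactly $\mathfrak p(0)+\mathfrak p(\ell^+_1-\ell^-_1)=\mathfrak p(\ell^+_1-\ell^-_1)+1$, which is strictly less than the bound.
\end{itemize}

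The main obstacle is the bookkeeping in Step 3. We must check that the convention in \cite{KytolaRidout} for ``right Verma module'' and for the beta-invariants matches our $y$, so that Step~2 really makes all of them vanish. We must also certify the exact order count in the determinant, which means confirming that no other Kac label $(r,s)$ with $rs\le2p-1$ yields weight $h^R$ at central charge $c_{p,q}$. We would handle this last point using Lemma~\ref{lem:weight-arithmetic}-style arithmetic, $(qr-ps)^2=(q(2p-1)-p)^2$, together with coprimality.
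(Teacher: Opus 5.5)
Your overall route is the paper's: staggered realization via Lemma~\ref{lem:local-staggered-realization}, vanishing of the beta-invariants under the splitting hypothesis, and a branch according to whether $K_{2p-1}$ is cut out by the lower or the higher first prime of the braid-type right Verma module. On the $p>q$ branch your determinant count is exactly the paper's.

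Your Step~2 is in fact a mild improvement. A splitting lets you choose $y$ with $L_my=0$ for all $m>0$, which kills every Kyt\"ol\"a--Ridout invariant at once. For this you should state that the data pass unchanged to the Verma cover (their Proposition~4.6 and Corollary~4.7). You therefore do not need the paper's comparison $\beta_E=0\Leftrightarrow\beta_{\mathrm{KR}}=0$ between Nakano's coupling and the KR invariant. Nor do you need Theorems~6.14--6.15 to know that there is a single invariant.

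There are, however, two gaps.

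\emph{The $p<q$ branch.} Proposition~7.5 of Kyt\"ol\"a--Ridout is not an unconditional non-existence statement. Its hypotheses are:
\begin{itemize}
\item all ordinary beta-invariants vanish;
\item $X=X_1^-$ is the minimal prime;
\item $X\omega_0=0$;
\item $Xv_{h^R}$ has a singular lift in the Verma cover, supplied by Corollary~4.7 and Proposition~7.2 (you invoke these only for $p>q$).
\end{itemize}
Its conclusion is only that $h^R$ admits a positive Kac label $(R,S)$ with $p\mid R$ and $q\mid S$. So ``this contradicts Step~1'' does not follow. You must still exclude this for $h^R=h_{2p-1,1}$. Write $R=pA$ and $S=qB$; the Kac formula gives $pq(A-B)=\pm(2pq-p-q)$, and reducing modulo $p$ gives $q\equiv0\pmod p$, contradicting coprimality. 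Equivalently, a braid-type weight never has such a label.

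\emph{The rank-one hypothesis.} You never check that $\omega_0=x$ has rank one, yet Lemma~\ref{lem:higher-prime-determinant} is stated for a rank-one right-Verma staggered module. The paper verifies this as follows. The vacuum Verma module has first primes at grades $1$ and $(p-1)(q-1)$. The grade-one prime is killed in $K_1$, and $x$ is the image of the other. Since $K_1$ has length two, $x$ is prime in $K_1$.

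Your Step~2 remark about ``both $\beta$'s attached to the two prime singular vectors of grades $\ell_1^\pm$ in $M(c_{p,q},0)$'' conflates two things. The grades $\ell_1^\pm=2q-1,\,2p-1$ belong to the right Verma module $M(c_{p,q},h^R)$, not to the left vacuum module. Your vanishing argument survives this confusion, but the rank hypothesis must still be verified before the higher-prime determinant bound can be applied.
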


\begin{proof}
This is the unique one-row case in which the support argument of
Lemma~\ref{lem:one-row-Nakano-scope} fails: the coefficient of relative
degree $1$ in $(1-z)/\prod_{m\ge1}(1-z^m)$ is zero.  By
Lemma~\ref{lem:local-staggered-realization}, $E$ is a rank-two staggered
module with distinguished Shapovalov datum
$\eta_{12}=\eta(h_1,h_{2p-1})$ and logarithmic coupling $\beta_E$.
We next identify the rank of the distinguished singular datum in the
sense of \cite{KytolaRidout}.  The vacuum weight satisfies
\[
 h_1=0=h_{1,1}=h_{p-1,q-1}.
\]
By the Feigin--Fuchs description of the vacuum Verma module, its two
first prime singular vectors occur at relative grades
\[
 1,\qquad (p-1)(q-1).
\]
The one-row vacuum Kac module is
\[
 K_1=M(c_{p,q},0)\big/U(\Vir)L_{-1}v,
\]
so the grade-one prime is killed.  On the other hand,
\[
 h_{2p-1}-h_1=(p-1)(q-1),
\]
and the image of the second first-prime singular vector is nonzero in
$K_1$ and generates its unique proper simple submodule
$S_{2p-1}$; this is also the $n=0,r=1$ case of
\eqref{eq:Kac-length-two}.  Hence $\eta_{12}v_1$ is, up to nonzero
scalar, the image of that first-prime vector.

We also verify primeness after passage to the quotient $K_1$.  The
nonsplit Kac sequence
\[
 0\longrightarrow S_{2p-1}\longrightarrow K_1
 \longrightarrow S_1\longrightarrow0
\]
shows that $K_1$ has length two and that
$U(\Vir)\eta_{12}v_1=S_{2p-1}$ is its unique nonzero proper
highest-weight submodule.  If $\eta_{12}v_1$ admitted a nontrivial
factorization through another proper singular vector of $K_1$, the
submodule generated by that intermediate singular vector would lie
strictly between $S_{2p-1}$ and $K_1$, which is impossible.  Thus
$\eta_{12}v_1$ is prime in $K_1$.  The associated vector
$\omega_0=(L_0-h_{2p-1})v_2$ consequently has rank one in the sense of
\cite[Section~2]{KytolaRidout}.  The
right-Verma cover therefore lies in case~(1) or~(1') of
\cite[Sections~6.4--6.5]{KytolaRidout}.  By
\cite[Theorems~6.14--6.15]{KytolaRidout}, its space of staggered data is
one-dimensional and is parametrized by a single ordinary beta-invariant.
Proposition~4.6 of loc.~cit. preserves the staggered data when the right
module is replaced by a highest-weight cover, and Section~7 observes that
the ordinary beta-invariant is unchanged under this replacement.

There is a harmless normalization issue which we make explicit.  The
Shapovalov element $\eta_{12}$ used in Nakano's coupling and the normalized
rank-one singular operator used in \cite{KytolaRidout} differ by a
nonzero scalar.  Accordingly, the two ordinary couplings may differ by a
nonzero scalar, but their vanishing loci coincide.  We write
$\beta_{\mathrm{KR}}$ for the ordinary invariant of the right-Verma cover;
then
\begin{equation}\label{eq:vacuum-beta-normalization}
 \beta_E=0\quad\Longleftrightarrow\quad
 \beta_{\mathrm{KR}}=0.
\end{equation}
Since the staggered datum has rank one, $\beta_{\mathrm{KR}}=0$ is exactly
the condition that all ordinary beta-invariants appearing in the
rank-one analysis of Section~7 vanish.

Set $t=q/p$ in the notation of \cite{KytolaRidout}.  The right lowest
weight satisfies
\begin{equation}\label{eq:vacuum-braid-weight}
 h_{2p-1,1}=h_{1,2q-1}.
\end{equation}
Moreover $p\nmid(2p-1)$ and $q\nmid1$.  Hence the corresponding right
Verma module is of braid type by the classification in
\cite[Section~2]{KytolaRidout}.  Its first two prime singular vectors occur
at relative grades $2p-1$ and $2q-1$.  In the braid notation of
\cite[Section~7]{KytolaRidout}, the lower-grade first prime is $X^-_1$
and the higher-grade first prime is $X^+_1$.  The quotient defining
$K_{2p-1}$ kills the $(2p-1,1)$ prime singular vector, hence its defining
operator is
\[
 X=\begin{cases}
 X^-_1,&p<q,\\
 X^+_1,&p>q.
 \end{cases}
\]
(The equality $p=q$ is impossible because $p,q\ge2$ are coprime.)
Corollary~4.7 of \cite{KytolaRidout} lifts $E$ to a staggered module
$\check E$ with the same left module and with right module equal to this
Verma module.  Since $E$ is obtained by descending through the quotient by
$UXv_{h^R}$, Proposition~7.2 supplies a singular lift of $Xv_{h^R}$ in
$\check E$; the necessary condition stated immediately before that
proposition also gives $X\omega_0=0$.

Suppose that $\beta_E=0$.  By
\eqref{eq:vacuum-beta-normalization}, the ordinary invariant of the
right-Verma cover also vanishes.  We distinguish the two prime branches.

If $p<q$, then $2p-1<2q-1$, so the defining operator is
$X=X^-_1$, the smallest positive-grade prime singular vector.  The
rank-one datum has only the single ordinary invariant
$\beta_{\mathrm{KR}}$, so all ordinary beta-invariants vanish.  Together
with $X\omega_0=0$, this verifies every hypothesis of Proposition~7.5 of
\cite{KytolaRidout}; that proposition applies to the present singular
lift.  It forces the right lowest weight to admit a positive Kac label
$(R,S)$ such that
\[
 p\mid R,\qquad q\mid S.
\]
Write $R=pA$ and $S=qB$.  Since the right lowest weight is
$h_{2p-1,1}$, the Kac formula gives
\[
 pq(A-B)=\pm\bigl(q(2p-1)-p\bigr)
        =\pm(2pq-p-q).
\]
Reducing modulo $p$ yields $0\equiv\mp q\pmod p$, contradicting
$\gcd(p,q)=1$.  Thus $\beta_E$ cannot vanish in this case.

Assume now that $p>q$.  Then
\[
 \ell^-_1=2q-1<\ell^+_1=2p-1,
\]
and the quotient defining $K_{2p-1}$ kills the higher first prime
$X=X^+_1$.  Proposition~7.5 is therefore not the applicable
minimal-prime statement.  Under the present assumption
$\beta_{\mathrm{KR}}=0$, all ordinary beta-invariants of the Verma-cover
staggered module vanish.  Since $E$ is obtained by descent through the
quotient by $UX^+_1v_{h^R}$, Proposition~7.2 of
\cite{KytolaRidout} gives the required singular lift; equivalently the
associated transverse obstruction $\bar\beta_{X^+_1}$ vanishes.
Lemma~\ref{lem:higher-prime-determinant}, the precise
Kyt\"ol\"a--Ridout higher-first-prime specialization needed here,
therefore gives
\begin{equation}\label{eq:vacuum-required-det-order}
 \operatorname{ord}_{h=h^R}\det B_{\ell^+_1}(h)
 \ge \mathfrak p(\ell^+_1-\ell^-_1)+2
 =\mathfrak p(2p-2q)+2.
\end{equation}

For this one vacuum-edge weight the actual order can be read off directly
from the Kac determinant formula.  Put
\[
 D=2pq-p-q>0.
\]
The equality $h_{R,S}=h^R=h_{2p-1,1}$ for positive integers $R,S$ is
equivalent to
\[
 |qR-pS|=D.
\]
The solutions of $qR-pS=D$ are
\[
 (R,S)=(2p-1+mp,\,1+mq),\qquad m\in\Z,
\]
while those of $qR-pS=-D$ are
\[
 (R,S)=(1+mp,\,2q-1+mq),\qquad m\in\Z.
\]
Under the determinant bound $RS\le2p-1$, positivity leaves only
\[
 (R,S)=(2p-1,1),\qquad (R,S)=(1,2q-1).
\]
Consequently the order of the Kac-determinant zero at level $2p-1$ is
exactly
\begin{equation}\label{eq:vacuum-actual-det-order}
 \mathfrak p\bigl((2p-1)-(2q-1)\bigr)+\mathfrak p(0)
 =\mathfrak p(2p-2q)+1.
\end{equation}
This contradicts \eqref{eq:vacuum-required-det-order}.  Hence
$\beta_E\ne0$ also on the higher-prime branch.

Thus $\beta_E\ne0$ for every ordered coprime pair $p,q\ge2$.  If
\eqref{eq:vacuum-edge-quotient} were split,
Lemma~\ref{lem:split-forces-zero-coupling} would instead give
$\beta_E=0$, a contradiction.  Therefore
\eqref{eq:vacuum-edge-quotient} is nonsplit.
\end{proof}

\begin{prop}[One-row distinguished quotient]
\label{prop:one-row-staggered-quotient}
Let $n\ge1$, $1\le r\le p-1$, and put
\[
 a=np-r,\qquad b=np+r.
\]
If
\[
 0\longrightarrow K_a\longrightarrow E\longrightarrow K_b
 \longrightarrow0
\]
is logarithmic in $\Oc_{c_{p,q}}$, then the distinguished quotient
fits into a nonsplit exact sequence
\begin{equation}\label{eq:one-row-distinguished-nonsplit}
 0\longrightarrow S_a\longrightarrow E/S_b\longrightarrow K_b
 \longrightarrow0,
\end{equation}
where $S_b\subset K_a\subset E$ is the distinguished simple submodule.
\end{prop}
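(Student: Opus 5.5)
The proof splits into the vacuum edge $a=1$ and the generic case $a\ge2$.

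\emph{Vacuum edge.} Here $a=1$ forces $n=1$ and $r=p-1$, so $b=2p-1$. The required nonsplitting is then exactly Lemma~\ref{lem:vacuum-edge-staggered}, and nothing more needs to be done.

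\emph{Setting up the generic case $a\ge2$.}
\begin{enumerate}
\item By Proposition~\ref{prop:Felder-labels}, the triple $\tau=\tau_{n,r}$ is strict, with weights $(h_a,h_b,h_c)$, and $A(\tau)\cong K_a$, $B(\tau)\cong K_b$.
\item By Lemma~\ref{lem:Nakano-membership}, the given extension $E$, together with $E/S_b$ and the induced sequence, lies in $\mathscr C$.
\item Transporting along the isomorphisms of step 1, the sequence becomes a logarithmic extension $0\to A(\tau)\to E\to B(\tau)\to0$ in $\mathscr C$.
\item The distinguished $S_b\subset K_a$ is the unique simple submodule of the nonsplit Kac sequence \eqref{eq:Kac-reflected-length-two}. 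It therefore corresponds to the distinguished $S_2\subset A(\tau)$ of Lemma~\ref{lem:local-staggered-realization}.
\end{enumerate}
It follows that $E/S_b$ is precisely the quotient appearing in \eqref{eq:distinguished-quotient-general}. By Lemma~\ref{lem:split-forces-zero-coupling}, it suffices to show $\beta_E\neq0$.

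\emph{Invoking Nakano.} By Lemma~\ref{lem:one-row-Nakano-scope}, the hypothesis $a\ge2$ places $E$ in Nakano's category $L_{c_{p_+,p_-}}\text{-mod}$, with equality of support $H_0(E)=\{h_a\}$. Since $\tau\notin\mathcal T^0_{p_+,p_-}$, I would apply \cite[Theorem~5.11]{Nakano}. That theorem is stated for an arbitrary logarithmic representative of the relevant extension class, and it gives that the logarithmic coupling, defined with the Shapovalov element $\eta_{12}=\eta(h_a,h_b)$, is nonzero. Equivalently, it gives directly that the distinguished quotient $E/S_2$ is a nonsplit extension of $B(\tau)$ by $S_1$.

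\emph{Matching the two couplings.} Nakano's coupling and our $\beta_E$ must be identified. Both are computed from the same Shapovalov datum fixed in Lemma~\ref{lem:local-staggered-realization}, up to a nonzero scalar. Hence their vanishing loci coincide, exactly as in \eqref{eq:vacuum-beta-normalization}. This gives $\beta_E\neq0$, and so \eqref{eq:one-row-distinguished-nonsplit} is nonsplit.

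\emph{Main obstacle.} The delicate point is the scope check that Nakano's theorem applies object by object to this particular $E$, rather than to some representative constructed by Nakano. This covers the ambient support hypothesis, which Lemma~\ref{lem:one-row-Nakano-scope} supplies, and the self-extension hypothesis $\Ext^1(S_b,S_b)=0$, which Lemma~\ref{lem:one-row-self-Ext} supplies if Nakano's theorem needs it. I would state explicitly that these are the only ambient inputs.

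I would also check that the orientation convention \eqref{eq:Ext-convention} matches Nakano's, so that the quotient-first reading of the sequences is correct. With that confirmed, the generic case reduces to quoting the theorem and the normalization comparison above.
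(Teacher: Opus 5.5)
Your proposal is correct and follows the paper's proof. The vacuum edge $a=1$ is exactly Lemma~\ref{lem:vacuum-edge-staggered}, and for $a\ge2$ the paper likewise uses Lemma~\ref{lem:one-row-Nakano-scope} to place this particular $E$ in Nakano's category and then applies the universally quantified \cite[Theorem~5.11]{Nakano}. The one difference is that the paper reads off directly from that theorem that $E/S_b$ is indecomposable, so your detour through $\beta_E\neq0$ and Lemma~\ref{lem:split-forces-zero-coupling} is harmless but unnecessary; the coupling comparison is in any case literal, because $\eta_{12}$ was normalized to Nakano's $\eta(h_{\alpha_1},h_{\alpha_2})$.
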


\begin{proof}
By Lemma~\ref{lem:Nakano-membership}, all terms lie in $\mathscr C$, and
Proposition~\ref{prop:Felder-labels} identifies the endpoints with
$A(\tau_{n,r})$ and $B(\tau_{n,r})$.  If $a\ge2$, Lemma~\ref{lem:one-row-Nakano-scope} verifies, for this
particular and otherwise arbitrary middle term $E$, every condition of
\cite[Definitions~3.14 and~5.1]{Nakano}.  Proposition~\ref{prop:Felder-labels}
and Lemma~\ref{lem:local-staggered-realization} identify the endpoint
modules and the distinguished Shapovalov element with those used in the
Ext group (5.1) of \cite{Nakano}.  The quantifier in
\cite[Theorem~5.11]{Nakano} is explicitly universal: it takes an
\emph{arbitrary logarithmic module} representing a class in (5.1).  The
theorem therefore applies to the present $E$ and gives the stronger
conclusion that $E/S_b$ is indecomposable.  In particular,
\eqref{eq:one-row-distinguished-nonsplit} is nonsplit.  The only remaining
possibility is $a=1$, equivalently $(n,r)=(1,p-1)$, and this is
Lemma~\ref{lem:vacuum-edge-staggered}.
\end{proof}

\begin{cor}[Extension input for the one-row chain]\label{cor:Nakano-one-row}
Let $n\ge1$, $1\le r\le p-1$, and set
\[
 a=np-r,\qquad b=np+r,\qquad c=(n+2)p-r.
\]
Fix endpoint identifications
$K_a\xrightarrow{\sim}A(\tau_{n,r})$ and
$K_b\xrightarrow{\sim}B(\tau_{n,r})$.  If $E\in\mathscr C$ is
logarithmic and
\[
 0\longrightarrow K_a\longrightarrow E\longrightarrow K_b
 \longrightarrow0
\]
is exact in $\Oc_{c_{p,q}}$, then
\[
 0\longrightarrow S_a\longrightarrow E/S_b\longrightarrow K_b
 \longrightarrow0
\]
is nonsplit.  Moreover,
\[
 \Ext^1_{\mathscr C}(K_b,S_a)\cong\C,
 \qquad
 \Ext^1_{\mathscr C}(K(\tau_{n,r}),S_b)\cong\C.
\]
\end{cor}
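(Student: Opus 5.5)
The corollary is assembled from results already proved; the plan has three steps.

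\emph{Step 1: the nonsplitting.} The first assertion is Proposition~\ref{prop:one-row-staggered-quotient}. The fixed endpoint identifications $K_a\cong A(\tau_{n,r})$ and $K_b\cong B(\tau_{n,r})$ of Proposition~\ref{prop:Felder-labels} send the distinguished simple $S_b\subset K_a$ to the distinguished $S_2\subset A(\tau_{n,r})$. The reason is that $K_a$ has a unique simple submodule, namely the socle term of \eqref{eq:Kac-reflected-length-two}. Hence the quotient $E/S_b$ is the same whichever identification is chosen. Since $E$ is assumed to lie in $\mathscr C$ and to be logarithmic, the proposition applies verbatim: for $a\ge2$ through Lemma~\ref{lem:one-row-Nakano-scope} and Nakano's Theorem~5.11, and for $a=1$ through Lemma~\ref{lem:vacuum-edge-staggered}. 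Its conclusion is the required nonsplit sequence. Consequently the hypothesis $(\mathrm{NS}_{\tau_{n,r}})$ of \eqref{eq:NS-tau} holds for every logarithmic extension in $\mathscr C$.

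\emph{Step 2: the first Ext space.} Here I would invoke Lemma~\ref{lem:prelim-Hom-Yoneda} for \eqref{eq:Nakano-tower-B} with target $S_1=S_a$. Transporting along $B(\tau)\cong K_b$ and $S_1\cong S_a$ gives exactly \eqref{eq:Nakano-first-Ext}, i.e.\ $\Ext^1_{\mathscr C}(K_b,S_a)\cong\C$. Concretely, $\Hom(S_b,S_a)$, $\Hom(S_c,S_a)$ and $\Hom(K_b,S_a)$ vanish because $h_a<h_b<h_c$. Lemma~\ref{lem:local-simple-Ext} gives $\Ext^1(S_b,S_a)\cong\C$ and $\Ext^1(S_c,S_a)=0$. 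The six-term sequence then yields the isomorphism $\Ext^1(S_b,S_a)\xrightarrow{\sim}\Ext^1(K_b,S_a)$.

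\emph{Step 3: the second Ext space.} I would apply Theorem~\ref{thm:Nakano-input} to $\tau=\tau_{n,r}$. This requires three inputs:
\begin{itemize}
\item the strict ordering $h_a<h_b<h_c$, supplied by Proposition~\ref{prop:Felder-labels};
\item the self-extension vanishing $\Ext^1_{\mathscr C}(S_b,S_b)=0$, which is Lemma~\ref{lem:one-row-self-Ext};
\item the property $(\mathrm{NS}_{\tau_{n,r}})$, which is Step 1.
\end{itemize}
The theorem then gives $\Ext^1_{\mathscr C}(K(\tau_{n,r}),S_b)\cong\C$.

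The only point needing care is in Step 3. Theorem~\ref{thm:Nakano-input} quantifies over logarithmic extensions of $A(\tau)$ by $B(\tau)$, while Step 1 is stated for extensions of $K_a$ by $K_b$. I would check that pulling back and pushing out along the fixed endpoint isomorphisms gives a bijection between the two classes of sequences. This bijection preserves logarithmicity, since the middle term is unchanged. It also preserves the splitting status of the distinguished quotient, because the distinguished copy of $S_2$ corresponds to $S_b$ by the socle-uniqueness remark in Step 1.
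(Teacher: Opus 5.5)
Your proposal is correct and follows the paper's proof. The nonsplitting is Proposition~\ref{prop:one-row-staggered-quotient}, the first Ext dimension is \eqref{eq:Nakano-first-Ext} transported along the Felder dictionary, and the second comes from Theorem~\ref{thm:Nakano-input}, with Lemma~\ref{lem:one-row-self-Ext} and $(\mathrm{NS}_{\tau_{n,r}})$ as its inputs; your check that the endpoint isomorphisms match the distinguished simple submodules spells out a transport step the paper leaves implicit.
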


\begin{proof}
The dictionary and the first Ext dimension are
Proposition~\ref{prop:Felder-labels} and
\eqref{eq:Nakano-first-Ext}.  By
Lemma~\ref{lem:one-row-self-Ext},
\[
 \Ext^1_{\mathscr C}(S_b,S_b)=0.
\]
Proposition~\ref{prop:one-row-staggered-quotient} proves
\eqref{eq:NS-tau} for $\tau=\tau_{n,r}$.  Thus all hypotheses of
Theorem~\ref{thm:Nakano-input} hold, and that theorem gives
\[
 \Ext^1_{\mathscr C}(K(\tau_{n,r}),S_b)\cong\C.
\]
\end{proof}

\begin{prop}[Preliminary package]\label{prop:preliminary-package}
The data from the source and target categories used later satisfy the
following properties.
\begin{enumerate}
\item The indecomposable affine projectives are partitioned into wall
objects and the reflection chains of Lemma~\ref{lem:reflection-partition};
their Hom spaces and radical generators are those of
Proposition~\ref{prop:source-Hom-package}, and the affine minus twist has a
nonzero radical part as in Lemma~\ref{lem:source-L0-radical}.
\item Tensoring by $K_2$ is exact.  A one-row Kac module is supported in a
single conformal-residue sector, and projection to a residue sector is exact.
\item For $a=np-r$, $b=np+r$, and $c=(n+2)p-r$, the concrete modules
$S_a,S_b,S_c,K_a,K_b$ lie in $\mathscr C$, with
$K_a\cong A(\tau_{n,r})$ and $K_b\cong B(\tau_{n,r})$.  Since
$\mathscr C$ is Serre, every short exact sequence used later with end
terms in $\mathscr C$ has its middle term there as well.
\end{enumerate}
\end{prop}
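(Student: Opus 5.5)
This proposition is a summary, so the plan is to assemble its three items from results already proved in this section. No new argument is needed.

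For item (1), I will first invoke Lemma~\ref{lem:reflection-partition}. Combined with the preliminary facts that every wall projective $P_{np}=V_{np}=L_{np}$ is indecomposable and that every $P_r$ is indecomposable (from \cite[Theorem~4.8]{McRaeYang}), it gives the partition of indecomposable projectives into wall objects and reflection-chain vertices. The Hom dimensions, the arrows $x_i,y_i$, and the radical loop $\nu_{i+1}=x_i\circ y_i$ are exactly the content of Proposition~\ref{prop:source-Hom-package}. Finally, the description $\theta^-_P=\lambda_P(\id+\gamma_PN_P)$ with $\gamma_P\neq0$ is Lemma~\ref{lem:source-L0-radical}. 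So item (1) is a direct citation.

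For item (2), exactness of $K_2\boxtimes-$ was shown just after \eqref{eq:Kac-reflected-length-two}. There, rigidity and self-duality of $K_2$ (\cite[Theorem~5.10]{McRaeSopin}) make the functor both a left and a right adjoint. The single-sector support of one-row Kac modules is Lemma~\ref{lem:Kac-residue-support}, and exactness of residue projection is Lemma~\ref{lem:block-projection}.

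For item (3), I will cite Lemma~\ref{lem:Nakano-membership} for the membership of $S_a,S_b,S_c,K_a,K_b$ in $\mathscr C$. The isomorphisms $K_a\cong A(\tau_{n,r})$ and $K_b\cong B(\tau_{n,r})$ are \eqref{eq:A-B-tau} of Proposition~\ref{prop:Felder-labels}. The closing sentence of item (3) is the Serre property of Lemma~\ref{lem:Nakano-Serre}: a middle term of an extension in $\Oc_{c_{p,q}}$ has Jordan--H\"older factors drawn from those of its end terms, so it lies in $\mathscr C$.

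The only point needing care is a scope check in item (3). The closing sentence should be read as concerning short exact sequences in $\Oc_{c_{p,q}}$, where the Serre property applies. If a sequence is merely one of Virasoro modules, Lemma~\ref{lem:gr-extension} together with finiteness of length first places the middle term in $\Oc_{c_{p,q}}$, and then the Serre property applies. I do not expect any genuine obstacle; the proof is bookkeeping of references.
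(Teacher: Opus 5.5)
Your proposal is correct and is essentially the paper's own proof. The paper likewise cites Proposition~\ref{prop:source-Hom-package}, Lemmas~\ref{lem:source-L0-radical}, \ref{lem:Kac-residue-support}, \ref{lem:block-projection}, Proposition~\ref{prop:Felder-labels}, Lemma~\ref{lem:Nakano-membership}, and the rigidity/self-duality argument for exactness of $K_2\boxtimes-$. The one ingredient of item~(1) you leave implicit, that every indecomposable projective is isomorphic to exactly one $P_j$, is the standard projective-cover argument, which the paper writes out later as Lemma~\ref{lem:projective-exhaustion}.
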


\begin{proof}
This is the content of Proposition~\ref{prop:source-Hom-package},
Lemmas~\ref{lem:source-L0-radical}, \ref{lem:Kac-residue-support}, and
\ref{lem:block-projection}, Proposition~\ref{prop:Felder-labels}, and
Lemma~\ref{lem:Nakano-membership}.  Exactness of $K_2\boxtimes-$ follows
above from its rigidity and self-duality.
\end{proof}

\begin{rmk}[Scope of the extension input]\label{rmk:dependency-boundary}
The one-row extension package above is formulated in the Serre
subcategory $\mathscr C\subset\Oc_{c_{p,q}}$.  We do not identify this
subcategory with the ambient category of \cite[Definition~3.14]{Nakano}.
Instead, Lemma~\ref{lem:one-row-Nakano-scope} verifies Nakano's ambient
hypotheses object by object when $a\ge2$, while the unique vacuum edge
$a=1$ is treated directly by the Kyt\"ol\"a--Ridout staggered-module
classification.  No unproved socle or self-duality description of a
general $P(\tau)$ is used.  In particular, no left exactness of $F_{p,q}$
enters Section~\ref{sec:images}.
\end{rmk}
\section{Drinfeld--Sokolov reduction}\label{sec:DS}

We now turn to the homological construction that will eventually be
compared with the McRae--Yang functor.  We use principal $+$ quantum
Drinfeld--Sokolov reduction and write
\[
 H:=H^0_{DS,+}\big|_{\KL^k(\mathfrak{sl}_2)}.
\]
Throughout this section, $W^k(\mathfrak{sl}_2)$ denotes the universal
principal $W$-algebra.  In type $A_1$, the principal DS conformal vector
$\omega_{DS}$ of \cite[Section~4.17]{ArakawaW} satisfies the Virasoro
relations with central charge $c(k)$.  Hence there is a homomorphism of
conformal vertex algebras
\[
 \varphi_k:V^{\mathrm{Vir}}_{c(k)}
 \longrightarrow W^k(\mathfrak{sl}_2)
\]
sending the universal Virasoro conformal vector to $\omega_{DS}$.
By \cite[Proposition~4.12.1]{ArakawaW}, in type $A_1$ the universal
principal $W$-algebra is freely strongly generated by a single nonzero
field of conformal weight two.  Its conformal vector $\omega_{DS}$ is
nonzero and the weight-two subspace is one-dimensional.  Hence
\[
 W^k(\mathfrak{sl}_2)_2=\C\omega_{DS},
\]
and the conformal homomorphism $\varphi_k$ is surjective.  The same PBW theorem gives
\[
 \operatorname{ch}W^k(\mathfrak{sl}_2)
 =\prod_{n\ge2}(1-z^n)^{-1}
 =\operatorname{ch}V^{\mathrm{Vir}}_{c(k)}.
\]
Both sides are nonnegatively graded with finite-dimensional graded
pieces; the surjective graded map $\varphi_k$ is consequently injective
degree by degree.  Thus
\[
 W^k(\mathfrak{sl}_2)\cong V^{\mathrm{Vir}}_{c(k)}
\]
as conformal vertex algebras.  Here both sides are universal vertex
algebras; no simple quotient is being taken.  In particular, the Hamiltonian induced
on BRST cohomology is the Virasoro operator denoted below by
$L^{DS}_0$.  Every BRST cohomology group below is first regarded as a
module for this universal Virasoro vertex algebra; membership in the
finite-length category $\Oc_{c_{p,q}}$ will be proved, not assumed.

We fix the normalization before invoking any BRST result.  Our affine
level and shifted level are
\[
 k_{\mathrm{aff}}:=k=-2+\frac pq,
 \qquad
 t:=k_{\mathrm{aff}}+h^\vee=k+2=\frac pq
 \quad(h^\vee=2).
\]
Let $\kappa_0$ be the invariant form used in
\cite[Section~3.1]{ArakawaFrenkel}, normalized so that the long root has
squared length $2$.  To avoid a clash
between the affine VOA level and the bilinear-form parameter of
Arakawa--Frenkel, write $\kappa_{\mathrm{AF}}$ for their bilinear-form parameter.
Their affine algebra is defined at bilinear-form level
$\kappa_{\mathrm{AF}}+\kappa_c$, and in the rank-one scalar convention of
\cite[Section~4.4]{ArakawaFrenkel},
\[
 \kappa_c=-2\kappa_0,\qquad
 \kappa_{\mathrm{AF}}=t\kappa_0,
\]
so the actual affine form is
\[
 \kappa_{\mathrm{AF}}+\kappa_c=(t-2)\kappa_0=k\kappa_0.
\]
This is the convention in which the affine algebra attached to
$\kappa_{\mathrm{AF}}$ has actual cocycle
$\kappa_{\mathrm{AF}}+\kappa_c$.  Thus their rank-one scalar parameter is
\[
 \gamma=\frac{\kappa_{\mathrm{AF}}}{\kappa_0}=t=\frac pq.
\]
Accordingly, the affine central extension used in
$W_{\kappa_{\mathrm{AF}}}(\mathfrak{sl}_2)$ is the same central
extension as that used in $W^k(\mathfrak{sl}_2)$.  Under the chain-level
identification below we therefore identify
\[
 W_{\kappa_{\mathrm{AF}}}(\mathfrak{sl}_2)
 =H^0_{DS,0}(V_{\kappa_{\mathrm{AF}}}(\mathfrak{sl}_2))
 \cong H^0_{DS,+}(V^k(\mathfrak{sl}_2))
 =W^k(\mathfrak{sl}_2).
\]
For the $r$-dimensional horizontal $\mathfrak{sl}_2$-module we take
\[
 \lambda=(r-1)\omega,\qquad
 \check\mu=0.
\]
No symbol $\kappa$ below is used simultaneously for the actual affine
level and for the shifted Arakawa--Frenkel bilinear form.  With the
normalization above, our affine Weyl module $V_r$ is the
Arakawa--Frenkel Weyl module
$\mathbb V_{\lambda,\kappa_{\mathrm{AF}}}$: both are induced from the
same $r$-dimensional irreducible horizontal $\mathfrak{sl}_2$-module,
with $\mathfrak{sl}_2[t]t$ acting trivially, and the affine cocycle is
defined by
\[
 \kappa_{\mathrm{AF}}+\kappa_c=k\kappa_0.
\]
At the present rational parameter $\kappa_{\mathrm{AF}}/\kappa_0=p/q$,
we do not identify this Weyl module with the corresponding irreducible
highest-weight module.
We next fix the comparison between the two BRST conventions used below.

\begin{lem}[Comparison of the rank-one BRST conventions]
\label{lem:BRST-convention-comparison}
Choose the positive-root vector so that the principal characters in
\cite{ArakawaFrenkel} and \cite{ArakawaW} both take the value $1$ on it.
For $\mathfrak{sl}_2$, the $\check\mu=0$ Arakawa--Frenkel complex and
Arakawa's principal $+$ complex are naturally isomorphic for every
smooth level-$k$ affine module, in particular for every object and
subquotient used in this paper.  The identification is functorial in $M$.  Under this identification their
cohomological gradings, differentials, and induced principal
$W$-algebra actions agree.  Consequently
\[
 H^\bullet_{DS,0}(M)\cong H^\bullet_{DS,+}(M)=H^\bullet_+(M)
\]
naturally in $M$.
\end{lem}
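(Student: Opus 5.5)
The plan is to write both complexes explicitly for $\mathfrak{sl}_2$ and match them term by term. Take $e,h,f$ with $[h,e]=2e$, and let $\chi$ be the character with $\chi(e)=1$. Both constructions have the same underlying graded space: the tensor product of $M$ with one charged fermion pair $(\psi,\psi^*)$ attached to the one-dimensional nilpotent $\mathfrak n=\C e$. On the Arakawa--Frenkel side at $\check\mu=0$, there is no neutral free-fermion factor, since $\mathfrak n$ is abelian of dimension one and $\mathfrak g_{1/2}=0$ for the principal grading. The same holds on Arakawa's side. I would record the two differentials as fields,
\[
 d_{AF}=\int\bigl(e(z)+\chi(e)\bigr)\psi^*(z)\,dz,\qquad
 d_{+}=\int\bigl(e(z)+1\bigr)\psi^*(z)\,dz .
\]
In both cases the quadratic ghost term $-\tfrac12\psi^*\psi^*\psi$ vanishes because $\mathfrak n$ is abelian. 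With the normalization $\chi(e)=1$ fixed in the statement, the two differentials then literally coincide. Possible differences in the mode convention, namely $\chi(e_{-1})$ versus $\chi(e_{0})$ for the shifted field, are absorbed by the spectral-flow identification built into each paper's definition of the character. I would check this explicitly against \cite[Section~4.4]{ArakawaFrenkel} and \cite[Section~4]{ArakawaW}, using the level conventions already fixed above, namely $\kappa_{\mathrm{AF}}+\kappa_c=k\kappa_0$.

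Next, gradings and functoriality. In both complexes the cohomological degree is fermion charge ($\deg\psi^*=1$, $\deg\psi=-1$), so the gradings agree. The identification is the identity on $M\otimes\bigwedge$. It is therefore natural in $M$ for all smooth level-$k$ modules, because affine module maps commute with $e(z)$ and the ghosts are untouched.

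The only step needing care is the induced $W$-algebra actions. These are induced from the BRST complex of $V^k(\mathfrak{sl}_2)$ through its vertex algebra action on $C(M)$. The complexes of the vacuum module coincide, so the two cohomology vertex algebras coincide. The two conformal vectors $\omega_{DS}$ must then also agree, since both are $L^{\mathrm{Sug}}+\tfrac12\partial h+L^{\mathrm{gh}}$ with the principal $\rho$-shift. I expect this to be the main obstacle: the two papers may normalize the twisted Sugawara shift, or the ghost conformal weights, differently. If that happens, I would argue they differ by a BRST-exact term. Alternatively, uniqueness applies: by the earlier observation, $W^k(\mathfrak{sl}_2)_2=\C\omega_{DS}$, and the Virasoro vector of a fixed central charge in that line is unique. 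Hence the actions agree on cohomology, giving $H^\bullet_{DS,0}(M)\cong H^\bullet_{DS,+}(M)$ naturally.
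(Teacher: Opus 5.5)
Your route is the paper's: you identify the two rank-one complexes term by term (a single charged ghost pair, no neutral fermions, the cubic term vanishing because $\mathfrak n_+$ is one-dimensional, the differential as standard part plus character part), take the identity on $M$ tensored with the ghost identification so that naturality is automatic, and obtain agreement of the $W$-actions from the identification of the vacuum complexes as differential graded vertex algebras and of $C(M)$ as differential graded modules over them. Two points in your write-up need correcting.

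\emph{The mode check.} The decisive verification is exactly the one you defer, and your fallback for it is not valid. You must show that at $\check\mu=0$ there is no shift at all. Under the residue pairing, $(\psi^{AF}_{\alpha,n})^*=e_\alpha^*t^{n-1}dt$ pairs with $e_\alpha t^{-n}$. Hence $\psi^{AF}_{\alpha,n}\mapsto\psi_\alpha(n)$, $(\psi^{AF}_{\alpha,n})^*\mapsto\psi_{-\alpha}(n)$ is a Clifford isomorphism with matching vacuum conditions and ghost degrees. Under it, the character term $(\psi^{AF}_{\alpha,1})^*$ goes to Arakawa's $\psi_{-\alpha}(1)$, so both characters are $\Psi(e_\alpha t^n)=\delta_{n,-1}$.

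If the two conventions really differed, say $\chi(e_{-1})=1$ versus $\chi(e_0)=1$, this could not be ``absorbed by spectral flow.'' The shifted character is the unshifted one precomposed with a spectral-flow automorphism $\sigma$. So one complex would compute the reduction of $M$ and the other the reduction of the twisted module $\sigma^*M$. That gives no natural isomorphism $H^\bullet_{DS,0}(M)\cong H^\bullet_{DS,+}(M)$ for the same $M$; it is precisely the $\check\mu$-twisting of the Arakawa--Frenkel family. At $\check\mu=0$ their spectral-flow automorphism is the identity. This is also why their induced $W$-action is the ordinary vacuum BRST action, a point you use implicitly and should state.

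\emph{The conformal vectors.} This discussion is superfluous. The lemma asserts agreement of gradings, differentials and induced $W$-actions, and the last follows from the dg-vertex-algebra identification alone. It makes no claim about the two papers' choices of $\omega_{DS}$; the paper treats the Hamiltonian separately in Lemma~\ref{lem:BRST-Hamiltonian}. Your uniqueness fallback is also shaky as phrased, since ``the weight-two line'' already presupposes which conformal grading is being used. Drop that part and replace the spectral-flow remark by the explicit mode and vacuum check above; the argument is then complete.
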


\begin{proof}
We make the mode conventions explicit.  Fix root vectors with
$(e_\alpha,e_{-\alpha})=1$.  In the notation of
\cite{ArakawaFrenkel},
\[
 \psi^{AF}_{\alpha,n}=e_\alpha t^n,
 \qquad
 (\psi^{AF}_{\alpha,n})^*=e_\alpha^*t^{n-1}dt.
\]
The residue pairing, together with the invariant-form identification
$e_\alpha^*\leftrightarrow e_{-\alpha}$, pairs
$e_\alpha t^m$ with $e_\alpha^*t^{n-1}dt$ by $\delta_{m+n,0}$.
Consequently there is no shift of the underlying Clifford mode
index.  Let
\[
 \iota_{\mathrm{gh}}:\mathcal F^{AF}_{\mathrm{gh}}
 \xrightarrow{\sim}\mathcal F_{\mathrm{gh}}
\]
be the Clifford-module isomorphism determined by
\[
 \begin{aligned}
 \psi^{AF}_{\alpha,n}&\longmapsto\psi_\alpha(n),&
 (\psi^{AF}_{\alpha,n})^*&\longmapsto\psi_{-\alpha}(n),\\
 \one_{\mathrm{gh}}&\longmapsto\one_{\mathrm{gh}}.&&
 \end{aligned}
\]
This assignment preserves the Clifford relations.  Indeed,
\[
 [\psi_\alpha(m),\psi_{-\alpha}(n)]_+
 =\delta_{m+n,0}
 =[(\psi^{AF}_{\alpha,n})^*,\psi^{AF}_{\alpha,m}]_+.
\]
Thus there is no hidden shift or scalar in the ghost-mode
identification.  The Laurent powers used for the positive-root and negative-root ghost
fields in Arakawa's notation do not alter the underlying Clifford mode
labels.  The vacuum conditions are exactly
\cite[(2.3)]{ArakawaFrenkel}:
\[
 \psi^{AF}_{\alpha,n}\one_{\mathrm{gh}}=0\quad(n\ge0),
 \qquad
 (\psi^{AF}_{\alpha,m})^*\one_{\mathrm{gh}}=0\quad(m>0),
\]
and these agree with Arakawa's positive-real-root annihilation convention
under the displayed identification.  The cohomological degrees also agree:
\[
 \deg\psi^{AF}_{\alpha,n}=-1=\deg\psi_\alpha(n),
 \qquad
 \deg(\psi^{AF}_{\alpha,n})^*=1=\deg\psi_{-\alpha}(n).
\]
Here the subscript $0$ in $H^\bullet_{DS,0}$ means
$\check\mu=0$ in the twisted family of \cite{ArakawaFrenkel}; it does
not mean that the principal Whittaker character is zero.  For
$\check\mu=0$, their character contribution is the mode
$(\psi^{AF}_{\alpha,1})^*$, which corresponds exactly to Arakawa's
$\psi_{-\alpha}(1)$.  Equivalently, with the chosen root vector the
principal character is
\[
 \Psi(e_\alpha t^n)=\delta_{n,-1}.
\]

For $\check\mu=0$, formulas~(2.4) and (2.7) of
\cite{ArakawaFrenkel} reduce in type $A_1$ to
\[
 d_{\check\mu=0}=d_{\mathrm{st}}+\widehat\Psi,
 \qquad
 \widehat\Psi=(\psi^{AF}_{\alpha,1})^*.
\]
On the other hand, formulas~(166)--(168) of \cite{ArakawaW} give
\[
 Q_+=Q^{\mathrm{st}}_++\psi_{-\alpha}(1).
\]
Because $\mathfrak n_+$ is one-dimensional and abelian, the cubic ghost
term vanishes, and the standard part is
\[
 d_{\mathrm{st}}
 =\sum_{n\in\Z}e_\alpha(-n)(\psi^{AF}_{\alpha,n})^*
 \longleftrightarrow
 \sum_{n\in\Z}J_\alpha(-n)\psi_{-\alpha}(n)
 =Q_+^{\mathrm{st}}.
\]
The character term satisfies
\[
 \widehat\Psi=(\psi^{AF}_{\alpha,1})^*
 \longleftrightarrow
 \psi_{-\alpha}(1).
\]
Set
\[
 I_M:=\id_M\otimes\iota_{\mathrm{gh}}.
\]
The preceding formulas give the chain identity
\[
 I_M\,d_{\check\mu=0}=Q_+\,I_M,
\]
with the same cohomological grading and vacuum vector.  Thus $I_M$ is a natural isomorphism of the two BRST complexes.  For
$M=V^k(\mathfrak{sl}_2)$ the same identification is an isomorphism of
differential graded vertex algebras; for general $M$ it is an
isomorphism of differential graded modules over this common vacuum
complex.

For $\check\mu=0$ the spectral-flow automorphism
$\sigma_{\check\mu}$ of \cite[Section~2.1]{ArakawaFrenkel} is the
identity.  Thus the induced $W$-action is the ordinary vacuum BRST action.
The preceding chain isomorphism is consequently an isomorphism of
differential graded modules over the vacuum BRST complex, and the induced
cohomology isomorphism is $W$-linear and natural in $M$.
\end{proof}

We use these naturally identified cohomology functors without further
notational distinction.  With the Sugawara conformal structure at affine
level $k$, the lowest conformal weight is
$\Delta^{\mathrm{aff}}_r=(r^2-1)/(4(k+2))$.  In the rank-one notation of
\cite[Section~4.4]{ArakawaFrenkel}, with $m=r-1$ and
$\gamma=k+2$, the reduced lowest weight is
$\Delta^\gamma_{m,0}=m(m+2)/(4\gamma)-m/2$.  Therefore
\begin{equation}\label{eq:DS-weight-dictionary}
\begin{aligned}
 \Delta^{\mathrm{aff}}_r&=\frac{r^2-1}{4(k+2)},\\
 \Delta^\gamma_{r-1,0}
 &=\Delta^{\mathrm{aff}}_r-\frac{r-1}{2}
  =\frac{q(r^2-1)-2p(r-1)}{4p}\\
 &=\frac{(qr-p)^2-(p-q)^2}{4pq}=h_r.
\end{aligned}
\end{equation}
With these conventions the reduced Virasoro central charge is
\begin{equation}\label{eq:DS-central-charge}
 c(k)=13-6(k+2)-\frac{6}{k+2}
 =1-\frac{6(p-q)^2}{pq}=c_{p,q}.
\end{equation}
Thus all reduced modules below are taken with the conformal structure
defining $\Oc_{c_{p,q}}$.

\begin{lem}[Hamiltonian on the BRST complex]\label{lem:BRST-Hamiltonian}
On
\[
 C_+(M)=M\otimes\mathcal F_{\mathrm{gh}}
\]
let $L^{C(M)}_0$ denote the zero mode of the conformal vector on the BRST
complex.  With the $+$ convention fixed above,
\begin{equation}\label{eq:BRST-Hamiltonian}
 L^{C(M)}_0=
 L^{\mathrm{aff}}_0-\frac12h_0+
 \bigl(L^{\mathrm f}_0-J^{\mathrm{gh}}_0\bigr).
\end{equation}
Here $L^{\mathrm f}_0$ is the charged-fermion energy operator and
\[
 J^{\mathrm{gh}}(z):=:\psi_\alpha(z)\psi_{-\alpha}(z):
\]
is the ghost contribution to the diagonal action of
$\bar\rho^\vee=h/2$, with zero mode $J^{\mathrm{gh}}_0$ and the sign
convention of \cite[Section~4.17]{ArakawaW}.  The operator
$L^{C(M)}_0$ commutes with the BRST differential and induces the Virasoro
operator $L^{DS}_0$ on $H^0_{DS,+}(M)$.
\end{lem}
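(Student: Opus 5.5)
The formula \eqref{eq:BRST-Hamiltonian} comes from the explicit principal conformal vector of \cite[Section~4.17]{ArakawaW}, read off at the zero mode. In type $A_1$ that vector has the form
\[
 \omega_{DS}=\omega_{\mathrm{Sug}}+\partial\bar\rho^\vee_{\mathrm{tot}}+\omega_{\mathrm{gh}},
\]
where $\omega_{\mathrm{Sug}}$ is the Sugawara vector at level $k$, $\omega_{\mathrm{gh}}$ is the charged-fermion (ghost) conformal vector, and $\bar\rho^\vee_{\mathrm{tot}}=h/2+J^{\mathrm{gh}}$ is the diagonal field, whose ghost part is fixed by the sign convention of loc.\ cit. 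Taking zero modes, and using $(\partial X)_{(1)}=-X_{(0)}$ so that $(\partial X)_0=-X_0$ in the $L_0$-indexing, the total-derivative term contributes $-\tfrac12h_0-J^{\mathrm{gh}}_0$. The Sugawara term gives $L^{\mathrm{aff}}_0$, and the ghost term gives $L^{\mathrm f}_0$. Adding these three contributions yields \eqref{eq:BRST-Hamiltonian}. The only point needing care is the sign of the $J^{\mathrm{gh}}$ term relative to $h/2$.

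I would fix that sign by a consistency check rather than by tracing conventions through loc.\ cit. The requirement is that $Q_+$ have conformal weight zero. The term $J_\alpha(-n)\psi_{-\alpha}(n)$ already commutes with $L^{\mathrm{aff}}_0+L^{\mathrm f}_0$. The character term $\psi_{-\alpha}(1)$ has energy $-1$ for $L^{\mathrm f}_0$, and the correction $-\tfrac12h_0-J^{\mathrm{gh}}_0$ must raise this by exactly $1$. Since $[h_0,J_\alpha(m)]=2J_\alpha(m)$, the operator $-\tfrac12h_0$ contributes $-1$ on $J_\alpha$, and $-J^{\mathrm{gh}}_0$ must contribute $+1$ on $\psi_{-\alpha}$. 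That is the convention in which $\psi_{-\alpha}$ has ghost charge $-1$ under $J^{\mathrm{gh}}_0$, given $\deg\psi_{-\alpha}=+1$ as in Lemma~\ref{lem:BRST-convention-comparison}. With the zero-mode formula and the convention pinned down this way, I would then verify $[L^{C(M)}_0,Q_+]=0$ term by term on $Q^{\mathrm{st}}_+=\sum_nJ_\alpha(-n)\psi_{-\alpha}(n)$ and on $\psi_{-\alpha}(1)$.

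A cleaner structural argument gives the same commutation. $\omega_{DS}$ is $Q_+$-closed in the vacuum complex, because it represents the $W$-algebra conformal vector. Hence its zero mode commutes with $Q_+$ on any module complex, by the commutator formula for BRST-closed fields acting on $C_+(M)$. I would cite this and keep the explicit check only as a sign confirmation.

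It then follows that $L^{C(M)}_0$ preserves cocycles and coboundaries, so it descends to $H^0_{DS,+}(M)$. Since the $W$-algebra action on cohomology is induced from the vacuum complex acting on $C_+(M)$, the induced operator is the zero mode of $[\omega_{DS}]$. Under $W^k(\mathfrak{sl}_2)\cong V^{\mathrm{Vir}}_{c(k)}$, that zero mode is $L^{DS}_0$.

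I expect the main obstacle to be pure bookkeeping: reconciling the sign and normalization of $J^{\mathrm{gh}}$ and of $\partial\bar\rho^\vee$ in \cite{ArakawaW} with the mode labels fixed in Lemma~\ref{lem:BRST-convention-comparison}. The weight-zero requirement on $Q_+$ is the tool I would use to resolve it. As a cross-check, \eqref{eq:BRST-Hamiltonian} evaluated on the highest-weight vector of $V_r$ tensored with the ghost vacuum gives $\Delta^{\mathrm{aff}}_r-\tfrac{r-1}{2}=h_r$, in agreement with \eqref{eq:DS-weight-dictionary}.
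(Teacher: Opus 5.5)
Your proposal is correct and follows essentially the same route as the paper: you read off the zero mode of Arakawa's conformal field $L^{\mathfrak g}+L^{\mathrm f}+\partial\widehat{\bar\rho^\vee}$ using $(\partial a)_{(1)}=-a_{(0)}$, and you deduce $[d_M,L^{C(M)}_0]=0$, and the descent to $L^{DS}_0$, from the $Q_+$-closedness of the conformal vector. Your weight-zero check on $Q_+$ is a harmless extra confirmation that the paper does not need (it in fact forces both coefficients, $-\tfrac12$ on $h_0$ and $-1$ on $J^{\mathrm{gh}}_0$); the paper instead takes the ghost part $J^{\mathrm{gh}}$ of $\widehat{\bar\rho^\vee}$, with its sign, directly from \cite[Section~4.17]{ArakawaW}.
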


\begin{proof}
We spell out the sign convention because it is used again in
Section~\ref{sec:comparison}.  In the notation of
\cite[Section~4.17]{ArakawaW}, the BRST conformal field is
\[
 L(z)=L^{\mathfrak g}(z)+L^{\mathrm f}(z)
      +\frac{d}{dz}\widehat{\bar\rho^\vee}(z).
\]
Here $L^{\mathfrak g}(0)$ is the Sugawara Hamiltonian on the affine
module, hence $L^{\mathfrak g}(0)=L^{\mathrm{aff}}_0$ in our notation.
For $\mathfrak{sl}_2$ one has $\rho^\vee=h/2$ and the unique positive root has height $1$.  Hence the ghost term in $\widehat{\bar\rho^\vee}(z)$ is exactly $J^{\mathrm{gh}}(z)=:\psi_\alpha(z)\psi_{-\alpha}(z):$, while the diagonal
action on the ghost factor gives
\begin{equation}\label{eq:diagonal-rho-action}
 \widehat{\bar\rho^\vee}(0)=\frac12h_0+J^{\mathrm{gh}}_0.
\end{equation}
We use the mode convention $a(z)=\sum_{n\in\Z}a_nz^{-n-1}$; hence
$(\partial a)_0=-a_0$.  The derivative term therefore contributes
$-(\frac12h_0+J^{\mathrm{gh}}_0)$ to the zero mode, and
\[
 L^{C(M)}_0
 =L^{\mathrm{aff}}_0+L^{\mathrm f}_0
   -\left(\frac12h_0+J^{\mathrm{gh}}_0\right),
\]
which is \eqref{eq:BRST-Hamiltonian}.  The identity $[Q_+,L(z)]=0$ in
\cite[Section~4.17]{ArakawaW} is an operator identity in the BRST vertex
algebra.  Therefore the same commutator identity holds on every BRST module
complex $C_+(M)$; in particular,
\[
 [d_M,L^{C(M)}_0]=0.
\]
Hence $L^{C(M)}_0$ descends to degree-zero BRST cohomology, where it is the
Hamiltonian $L^{DS}_0$ of the principal Virasoro conformal vector.
\end{proof}

\subsection{BRST d\'evissage and exactness}

The vanishing theorem will be applied only to simple highest-weight
composition factors.  We do not apply Arakawa's category-$\mathcal O$
exactness statement directly to logarithmic objects of
$\KL^k(\mathfrak{sl}_2)$.  Since our category also contains finite-length generalized modules, we
record the d\'evissage that passes from the simple factors to arbitrary
objects.  We use only that $\KL^k(\mathfrak{sl}_2)$ is finite length and
abelian, that its underlying-module functor to smooth level-$k$ affine modules
is exact, and that the BRST complex is functorially
defined on the latter category.  In particular it is defined on every
subquotient appearing in a composition series.

\begin{lem}[Smoothness]\label{lem:KL-smooth}
Every object of $\KL^k(\mathfrak{sl}_2)$ is smooth as an affine module, and
so is every subquotient.
\end{lem}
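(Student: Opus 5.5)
We reduce smoothness to a statement about individual vectors, using that $\KL^k(\mathfrak{sl}_2)$ consists of finite-length, grading-restricted generalized $V^k(\mathfrak{sl}_2)$-modules. Recall that a level-$k$ module $M$ is smooth if for every $v\in M$ and every $x\in\mathfrak{sl}_2$ one has $x(n)v=0$ for all $n\gg0$. The plan is to derive this from lower truncation of the generalized $L^{\mathrm{aff}}_0$-spectrum, in the same way that Lemma~\ref{lem:gr-extension} derived annihilation by high Virasoro modes.

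First we treat the case of a single generalized eigenvector $v\in M^{\mathrm{gen}}_\lambda$. The field $x(z)=\sum_n x(n)z^{-n-1}$ has conformal weight one, so the commutator formula gives $[L^{\mathrm{aff}}_0,x(n)]=-n\,x(n)$. It follows, exactly as in the proof of Lemma~\ref{lem:source-L0-radical}, that
\[
 x(n)\,M^{\mathrm{gen}}_\lambda\subset M^{\mathrm{gen}}_{\lambda-n}.
\]
Grading restriction means that $M^{\mathrm{gen}}_{\lambda-n}=0$ for all sufficiently large $n$. Hence $x(n)v=0$ for $n\gg0$.

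Next we pass to arbitrary vectors. Every $v\in M$ is a finite sum of generalized-eigenvalue components, because $M$ is an algebraic direct sum of its generalized eigenspaces. We take the maximum of the finitely many bounds obtained for the components, and then the maximum over a basis $e,f,h$ of $\mathfrak{sl}_2$. By linearity in $x$, this gives smoothness of $M$.

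Finally we handle subquotients. A submodule or quotient of a smooth module is smooth, since the annihilation condition passes to submodules and to images. Alternatively, one can note that subquotients in $\KL^k(\mathfrak{sl}_2)$ are again objects of the category and apply the previous two steps to them directly.

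The only point needing care is that grading restriction in $\KL^k(\mathfrak{sl}_2)$ is meant for generalized weights, so that lower truncation holds within each coset $\lambda-\Z_{\ge0}$. For a finite-length object this follows from a composition series together with the affine analogue of Lemma~\ref{lem:gr-extension}: simple highest-weight factors have weights in $h+\Z_{\ge0}$, and exactness of the sequences on generalized eigenspaces propagates lower truncation to extensions. I expect this verification to be the main, though routine, obstacle.
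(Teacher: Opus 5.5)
Your proof is correct, but it reaches smoothness by a different route from the paper's. The paper never uses the conformal grading here. It applies the truncation axiom of a vertex-algebra module, $u_nw=0$ for $n\gg0$, to $u=x_i(-1)\mathbf 1$ for a basis $x_1,x_2,x_3$ of $\mathfrak{sl}_2$, takes the maximum of the three bounds, and notes that smoothness passes to submodules and quotients.

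You instead use $[L^{\mathrm{aff}}_0,x(n)]=-n\,x(n)$, so that $x(n)$ maps $M^{\mathrm{gen}}_\lambda$ into $M^{\mathrm{gen}}_{\lambda-n}$, and then invoke lower truncation of the generalized spectrum.

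The paper's route is shorter and more general. It holds for every (even weak) $V^k(\mathfrak{sl}_2)$-module, and needs neither the Sugawara Hamiltonian (hence no appeal to $k\ne-2$) nor grading restriction. Your route is equally valid here, because it uses only properties built into $\KL^k(\mathfrak{sl}_2)$. It also makes explicit the integer-shift behaviour of affine modes that the paper relies on later, for instance in Proposition~\ref{prop:H-twist}.

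One correction to your last paragraph: the step you flag as the main obstacle is not one. Lower truncation within each coset $\lambda-\Z$ is part of the definition of a grading-restricted generalized module. So no composition-series argument or affine analogue of Lemma~\ref{lem:gr-extension} is needed, although that argument would also work.
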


\begin{proof}
Fix a basis $x_1,x_2,x_3$ of $\mathfrak{sl}_2$.  For every
$v\in M$, the vertex-operator truncation property gives integers
$N_i(v)$ such that $x_i(n)v=0$ for all $n\ge N_i(v)$.  Taking
$N(v)=\max_iN_i(v)$ gives
\[
 (\mathfrak{sl}_2\otimes t^{N(v)}\C[t])v=0,
\]
which is precisely smoothness.  The assertion is inherited by submodules
and quotients, and the forgetful functor to smooth affine modules is
exact.
\end{proof}

\begin{lem}[BRST d\'evissage]\label{lem:BRST-devissage}
Let $\mathscr A$ be a finite-length abelian category whose short exact
sequences remain exact as sequences of smooth affine modules, and suppose that the
$+$ BRST complex is defined functorially on its objects.
Suppose that
\[
 H^i_{DS,+}(L)=0\qquad(i\neq0)
\]
for every simple object $L$ of $\mathscr A$.  Then the same vanishing holds
for every object of $\mathscr A$, and $H^0_{DS,+}$ is exact on
$\mathscr A$.
\end{lem}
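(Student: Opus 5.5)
The argument is an induction on Jordan--H\"older length, using the long exact sequence in BRST cohomology attached to a short exact sequence of complexes. The first step is that the complex functor
\[
 M\longmapsto C_+(M)=M\otimes\mathcal F_{\mathrm{gh}}
\]
is exact on smooth affine modules. Tensoring over $\C$ with the fixed ghost Fock space $\mathcal F_{\mathrm{gh}}$ is exact on underlying vector spaces. The differential $Q_+$ is built from $J_\alpha(-n)\psi_{-\alpha}(n)$ and $\psi_{-\alpha}(1)$, and on any element only finitely many terms act nontrivially by smoothness (Lemma~\ref{lem:KL-smooth}). The differential is therefore well defined and natural in $M$, so affine-module maps induce chain maps. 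A short exact sequence
\[
 0\to A\to B\to C\to 0
\]
in $\mathscr A$, which by hypothesis is exact as a sequence of smooth affine modules, therefore gives a short exact sequence of complexes
\[
 0\to C_+(A)\to C_+(B)\to C_+(C)\to0 .
\]
This in turn yields the usual long exact cohomology sequence
\[
 \cdots\to H^{i-1}_{DS,+}(C)\to H^i_{DS,+}(A)\to H^i_{DS,+}(B)\to H^i_{DS,+}(C)\to H^{i+1}_{DS,+}(A)\to\cdots,
\]
with connecting maps natural in the sequence.

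The second step is the induction on $\ell(M)$. The case $\ell(M)=0$ is trivial, and $\ell(M)=1$ is the hypothesis on simples. If $\ell(M)>1$, choose a simple subobject $L\subset M$, which exists because $M$ has finite length. Then
\[
 0\to L\to M\to M/L\to0
\]
is exact in $\mathscr A$ with $\ell(M/L)<\ell(M)$. For $i\ne0$, the segment $H^i(L)\to H^i(M)\to H^i(M/L)$ of the long exact sequence has both outer terms zero, by the hypothesis on simples and by induction. Hence $H^i_{DS,+}(M)=0$.

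The third step is exactness of $H^0_{DS,+}$. For an arbitrary short exact sequence $0\to A\to B\to C\to0$ in $\mathscr A$, all three objects now have vanishing cohomology outside degree $0$. The long exact sequence therefore collapses to
\[
 0=H^{-1}(C)\to H^0(A)\to H^0(B)\to H^0(C)\to H^1(A)=0 ,
\]
which is exactly the exactness of $H^0_{DS,+}$ on $\mathscr A$.

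The only point needing care is the first step: that $C_+(-)$ really sends the given exact sequences to a short exact sequence of complexes of vector spaces with well-defined differentials. This is a check that the infinite sum defining $Q_+$ acts finitely on each vector of a smooth module. The rest is formal homological algebra. Since the cohomology is computed on underlying vector spaces, no hypothesis on projectives or injectives in $\mathscr A$ is needed.
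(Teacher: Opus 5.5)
Your proposal is correct and follows the paper's proof essentially step for step. The ingredients match: exactness of the cochain functors $C_+^j(-)=(-)\otimes\mathcal F^j_{\mathrm{gh}}$, with the differential made locally finite by smoothness; induction on Jordan--H\"older length through the long exact cohomology sequence; and collapse of that sequence in degree zero to give exactness of $H^0_{DS,+}$. The only cosmetic difference is that you split off a simple subobject $L\subset M$, whereas the paper splits off a simple quotient $M\twoheadrightarrow L$, and the argument is identical in either form.
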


\begin{proof}
No semisimplicity of $L_0$ and no membership of the objects of
$\mathscr A$ in the affine BGG category $\mathcal O$ is used here;
finite length is the d\'evissage input.  For a smooth affine module $M$
the standard module BRST complex is
\[
 C_+(M)=M\otimes\mathcal F_{\mathrm{gh}},
 \qquad
 C_+^j(M)=M\otimes\mathcal F_{\mathrm{gh}}^j.
\]
Its differential is functorial in $M$; smoothness makes the standard
mode sum defining the differential locally finite on every vector.  Each
cochain functor $C_+^j(-)$ is exact because tensoring vector spaces over
$\C$ is exact.  Hence every
short exact sequence in $\mathscr A$ induces a degreewise short exact
sequence of BRST complexes.

We induct on Jordan--H\"older length.  Choose
\[
 0\longrightarrow M'\longrightarrow M\longrightarrow L\longrightarrow0
\]
with $L$ simple.  If the assertion is known for $M'$, the long exact cohomology sequence
first gives the vanishing of $H^i_{DS,+}(M)$ for $i\ne0$.  Around degree
zero it contains the five-term segment
\[
 H^{-1}_{DS,+}(L)\longrightarrow H^0_{DS,+}(M')
 \longrightarrow H^0_{DS,+}(M)\longrightarrow H^0_{DS,+}(L)
 \longrightarrow H^1_{DS,+}(M').
\]
The two outer terms vanish by the simple case and the induction hypothesis,
so this reduces to
\[
 0\longrightarrow H^0_{DS,+}(M')\longrightarrow H^0_{DS,+}(M)
 \longrightarrow H^0_{DS,+}(L)\longrightarrow0.
\]
This proves the induction step.  More generally, for any
short exact sequence in $\mathscr A$, the associated long exact
cohomology sequence and the vanishing in all nonzero degrees reduce to a
short exact sequence in degree zero.  Hence $H^0_{DS,+}$ is exact on
$\mathscr A$.
\end{proof}

\begin{prop}\label{prop:H-simple}
For every $r\geq1$,
\[
 H(L_r)\cong S_r,
 \qquad
 H^i_{DS,+}(L_r)=0\quad(i\neq0).
\]
In particular, every simple object has nonzero simple reduction.
\end{prop}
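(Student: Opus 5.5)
The plan is to apply the Arakawa--Frenkel theorem on the reduction of irreducible highest-weight modules with $\check\mu=0$. Lemma~\ref{lem:BRST-convention-comparison} identifies their complex with ours, so their conclusion transfers verbatim to $H^\bullet_{DS,+}$. First observe that $L_r$ is the irreducible quotient $\mathbb L_{\lambda,\kappa_{\mathrm{AF}}}$ of the Weyl module $\mathbb V_{\lambda,\kappa_{\mathrm{AF}}}$ with $\lambda=(r-1)\omega$. The $\check\mu=0$ vanishing theorem of \cite{ArakawaFrenkel}, which is Arakawa's category-$\mathcal O$ result for the $+$ reduction, gives $H^i_{DS,0}(\mathbb L_{\lambda,\kappa_{\mathrm{AF}}})=0$ for $i\neq0$. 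It also says that $H^0$ is either zero or an irreducible highest-weight $W$-module, namely the simple module of the reduced highest weight. Via the isomorphism $W^k(\mathfrak{sl}_2)\cong V^{\mathrm{Vir}}_{c(k)}$ proved at the start of this section, together with \eqref{eq:DS-central-charge}, this module is the irreducible Virasoro module $L(c_{p,q},\Delta^\gamma_{r-1,0})$. By \eqref{eq:DS-weight-dictionary}, this is $S_r$.

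The main obstacle is nonvanishing. Arakawa's theorem gives $H^0_+(L(\hat\lambda))\neq0$ precisely when $\hat\lambda$ is antidominant in the appropriate sense for the $+$ reduction, that is, when $\langle\hat\lambda+\hat\rho,\alpha_0^\vee\rangle\notin\Z_{>0}$ for the relevant simple coroot. I expect the issue is the $+$ versus $-$ convention. For the $+$ reduction of modules that are integrable for the horizontal $\mathfrak{sl}_2$ (dominant $\lambda$), the result is nonzero, and the vanishing occurs only for the $-$ reduction. I would check this by computing the reduced character. The Euler characteristic of the BRST complex of $L_r$ equals $\operatorname{ch}S_r$-type data computed from the Kac--Wakimoto character formula for the admissible-type simple $L_r$, and it is nonzero. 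Alternatively I would use exactness on $\mathcal O$ in Arakawa's form and compute $H(V_r)$, whose character is the one-row Kac character $z^{h_r}(1-z^r)/\prod(1-z^m)$. The Weyl module's only subquotients in the Kazhdan--Lusztig block are $L_r$ and one $L_{r'}$ with $r'\ne r$, with the reflected label as in \eqref{eq:Kac-length-two}. Additivity then shows that $H(L_r)$ carries the weight-$h_r$ lowest-weight vector, hence is nonzero.

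Assembling: $H^i(L_r)=0$ for $i\ne0$, and $H(L_r)$ is a nonzero quotient of the Verma module of weight $h_r$ that is irreducible, hence $H(L_r)\cong S_r$. The last sentence follows since every simple object of $\KL^k(\mathfrak{sl}_2)$ is some $L_r$.
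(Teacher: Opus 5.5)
Your strategy is the paper's: apply Arakawa's theorem on the $+$ reduction of irreducible highest-weight modules to $L(\widehat\lambda_r)$, with $\widehat\lambda_r=(r-1)\omega+k\Lambda_0$, then translate the weight by \eqref{eq:DS-weight-dictionary}. However, you never verify the hypothesis that makes that theorem apply, and that verification is the whole content of the proof. You name the right condition: for $\mathfrak{sl}_2$ the root set $\{-\bar\alpha+n\delta:1\le n\le\operatorname{ht}\bar\alpha\}$ in Arakawa's hypothesis is just $\{-\alpha+\delta\}=\{\alpha_0\}$. But you never evaluate the pairing. It is $\langle\widehat\lambda_r+\widehat\rho,(-\alpha+\delta)^\vee\rangle=(k+2)-r=p/q-r$, which is not an integer because $q\ge2$ and $\gcd(p,q)=1$. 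Once this is checked, Theorems~9.1.3--9.1.4 of \cite{ArakawaW} give $H^i_{DS,+}(L_r)=0$ for $i\ne0$ and $H(L_r)\cong\mathbf L(\gamma_{\bar\lambda})$, nonzero \emph{and} irreducible, in one step. Without this check your irreducibility step has no valid source. The unconditional ``zero or irreducible'' dichotomy you quote is Arakawa's theorem for the $-$ reduction. For the $+$ reduction, the available statement gives irreducibility of $H^0_+(L(\widehat\lambda))$ only under the $\alpha_0$-hypothesis above.

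Your substitute arguments for nonvanishing do not fill this gap. The heuristic that horizontal dominance makes the $+$ reduction nonzero is false. For $k\in\Z_{\ge0}$ and integrable $L(\lambda)$, the $+$-reduced lowest weight of $M(w\circ\lambda)$ is unchanged under $w\mapsto s_0w$. The Weyl--Kac alternating sum therefore cancels in pairs, and $H^0_{DS,+}(L(\lambda))=0$. What matters is the $\alpha_0$-pairing, not dominance. The Kac--Wakimoto route covers only the admissible weights $1\le r\le p-1$, not $r\ge p$.

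The Weyl-module route does prove $H(L_r)\ne0$. It uses $H(V_r)\cong K_r$ (Proposition~\ref{prop:H-Weyl}, proved independently) and exactness on $\mathcal O_k$. It also uses the fact that the other factor $L_{r'}$ of $V_r$, with $r=np+s$ and $r'=(n+2)p-s$, reduces to a quotient of a Verma module of weight $h_{r'}>h_r$; you should say this explicitly. This route could even give irreducibility: the same argument shows $H(L_{r'})\ne0$, and then the length-two structure of $K_r$ forces $H(L_r)\cong S_r$ (with $V_r=L_r$ and $K_r=S_r$ when $p\mid r$). But you do not make that deduction. As written, $H(L_r)\cong S_r$ rests on the misattributed dichotomy rather than on a verified hypothesis.
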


\begin{proof}
Put
\[
 \widehat\lambda_r=(r-1)\omega+k\Lambda_0.
\]
Then $L_r=L(\widehat\lambda_r)$ is an object of Arakawa's affine
category $\mathcal O_k$, and $k\ne-2$.  Condition~(383) of
\cite[Section~9.1]{ArakawaW}, equivalently condition~(385) there,
requires
\[
 \langle\widehat\lambda_r+\widehat\rho,\beta^\vee\rangle
 \notin\Z
\]
for
\[
 \beta\in\{-\bar\alpha+n\delta:
 \bar\alpha\in\bar\Delta_+,\ 1\le n\le
 \operatorname{ht}\bar\alpha\}.
\]
For $\mathfrak{sl}_2$ this set consists only of $-\alpha+\delta$,
whose coroot is
$-\alpha^\vee+K_{\mathrm{aff}}$, with $K_{\mathrm{aff}}$ the affine
central element.  If $\widehat\rho$ is the affine Weyl vector, then
\[
 \left\langle
 \widehat\lambda_r+\widehat\rho,
 (-\alpha+\delta)^\vee
 \right\rangle
 =(k+2)-r=\frac pq-r\notin\Z,
\]
because $q\ge2$ and $\gcd(p,q)=1$.  Thus the required real-root pairing is nonintegral for
every $r\ge1$.  Theorems~9.1.3--9.1.4 of \cite{ArakawaW} therefore
give
\[
 H^i_{DS,+}(L_r)=0\quad(i\neq0),
\]
and identify $H(L_r)$ with a nonzero irreducible principal $W$-module.

For $m=r-1$, the rank-one specialization of the transformed conformal
weight is
\[
 \Delta_{DS}(m)
 =\frac{m(m+2)}{4(k+2)}-\frac m2
 =\frac{q(r^2-1)-2p(r-1)}{4p}
 =\frac{(qr-p)^2-(p-q)^2}{4pq}=h_r.
\]
Under the conformal identification
$W^k(\mathfrak{sl}_2)\cong V^{\mathrm{Vir}}_{c_{p,q}}$ established
above, the reduced irreducible is therefore the simple lowest-weight
Virasoro module $S_r$.
\end{proof}

\begin{prop}\label{prop:H-exact}
For every $M\in\KL^k(\mathfrak{sl}_2)$,
\[
 H^i_{DS,+}(M)=0\qquad(i\neq0).
\]
Consequently $H$ is a $\C$-linear exact functor
\[
 H:\KL^k(\mathfrak{sl}_2)\longrightarrow\Oc_{c_{p,q}}.
\]
Moreover, if $\ell(-)$ denotes Jordan--H\"older length, then
\begin{equation}\label{eq:H-length}
 \ell(H(M))=\ell(M).
\end{equation}
\end{prop}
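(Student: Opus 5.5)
The proof will apply Lemma~\ref{lem:BRST-devissage} with $\mathscr A=\KL^k(\mathfrak{sl}_2)$. The hypotheses on $\mathscr A$ are already in place. The category is finite length and abelian by definition. By Lemma~\ref{lem:KL-smooth} its objects and subquotients are smooth affine modules. The forgetful functor to smooth affine modules is exact, since kernels and cokernels in $\KL^k(\mathfrak{sl}_2)$ are computed on underlying vector spaces. Hence short exact sequences remain exact as sequences of smooth affine modules. By Lemma~\ref{lem:BRST-convention-comparison} the $+$ BRST complex is functorially defined on all of them.

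It remains to verify the simple-object hypothesis. The simple objects of $\KL^k(\mathfrak{sl}_2)$ are exactly the $L_r$, $r\ge1$. For each of these, Proposition~\ref{prop:H-simple} gives $H^i_{DS,+}(L_r)=0$ for $i\ne0$. Lemma~\ref{lem:BRST-devissage} then yields the vanishing in nonzero degrees for every $M$, and shows that $H^0_{DS,+}$ is exact on $\KL^k(\mathfrak{sl}_2)$. $\C$-linearity is immediate, because the cochain functors $M\mapsto M\otimes\mathcal F_{\mathrm{gh}}^j$ and the differential are $\C$-linear in $M$.

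The step needing most care is showing that $H$ actually lands in $\Oc_{c_{p,q}}$; I expect this to be the main obstacle, and I will argue by induction on $\ell(M)$ together with the length formula \eqref{eq:H-length}. A priori $H(M)$ is only a module for the universal Virasoro vertex algebra $W^k(\mathfrak{sl}_2)\cong V^{\mathrm{Vir}}_{c_{p,q}}$. For $M$ simple, Proposition~\ref{prop:H-simple} gives $H(L_r)\cong S_r$. This is simple, grading restricted, of central charge $c_{p,q}$ by \eqref{eq:DS-central-charge}, and it is the irreducible quotient of a reducible Verma module because $h_r=h_{r,1}$ is a Kac weight. So $H(L_r)\in\Oc_{c_{p,q}}$ and has length one.

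For the induction step, choose $0\to M'\to M\to L\to0$ with $L$ simple. Exactness gives $0\to H(M')\to H(M)\to H(L)\to0$ as Virasoro modules, with the central element acting by $c_{p,q}$. The induction hypothesis puts $H(M')$ in $\Oc_{c_{p,q}}$, and we have just seen that $H(L)$ is there too. Lemma~\ref{lem:gr-extension} then shows that $H(M)$ is grading restricted, and its composition factors are those of $H(M')$ together with $H(L)$. Hence $H(M)\in\Oc_{c_{p,q}}$ and $\ell(H(M))=\ell(M')+1=\ell(M)$.

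One point requires checking: the Virasoro action on $H(M)$ must agree with the $W^k$-module structure. It does, because the Virasoro operators on $H(M)$ are induced from the BRST complex as in Lemma~\ref{lem:BRST-Hamiltonian}. Consequently the maps in the reduced sequence are Virasoro homomorphisms. Functoriality of $H$ on morphisms also follows, since chain maps $I_M$ commute with the induced $W$-action.
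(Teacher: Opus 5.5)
Your proposal is correct and follows the paper's argument. You apply Lemma~\ref{lem:BRST-devissage} using the smoothness of Lemma~\ref{lem:KL-smooth} and the simple-object vanishing of Proposition~\ref{prop:H-simple}, then use exactness together with Lemma~\ref{lem:gr-extension} to get $H(M)\in\Oc_{c_{p,q}}$ and $\ell(H(M))=\ell(M)$. The differences are cosmetic: you induct on length through one short exact sequence where the paper pushes a full composition series through $H$. Also, the $W$-linearity of $H(f)$ comes from $f\otimes\id$ being a map of differential graded modules over the vacuum BRST complex, not from the comparison isomorphism $I_M$.
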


\begin{proof}
The simple objects of $\KL^k(\mathfrak{sl}_2)$ are the modules
$L_r$, $r\ge1$; see \cite[Section~2.2, in particular
Theorem~2.2]{McRaeYang}.  Lemma~\ref{lem:KL-smooth} verifies the underlying smoothness hypothesis,
and Proposition~\ref{prop:H-simple} verifies the cohomological hypothesis of
Lemma~\ref{lem:BRST-devissage} for every simple object.  Hence the asserted
vanishing and exactness follow.  The BRST chain
construction is $\C$-linear on morphisms, and passage to cohomology
preserves this linearity; hence $H$ is $\C$-linear.  If
\[
 0=M_0\subset M_1\subset\cdots\subset M_\ell=M
\]
is a composition series, exactness gives a filtration of $H(M)$ with
successive quotients
\[
 H(M_i)/H(M_{i-1})\cong H(M_i/M_{i-1})\cong S_{r_i},
\]
which are nonzero simple objects by Proposition~\ref{prop:H-simple}.
Hence the resulting filtration of $H(M)$ is itself a composition series,
and \eqref{eq:H-length} follows.  Each successive quotient is
$S_{r_i}=L(c_{p,q},h_{r_i,1})$, hence is one of the simple objects allowed
in the definition of $\Oc_{c_{p,q}}$.  Repeated application of
Lemma~\ref{lem:gr-extension} to the displayed filtration shows that
$H(M)$ is grading restricted.  Thus $H(M)$ has the required composition
factors and belongs to $\Oc_{c_{p,q}}$.  No injectivity of the labels $r\mapsto S_r$ is
needed; every successive quotient is simply a nonzero simple object.
\end{proof}

\begin{cor}\label{cor:H-faithful}
The functor $H$ is faithful.
\end{cor}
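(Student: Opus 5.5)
The plan is the standard argument that an exact functor which sends every nonzero object to a nonzero object is faithful. Both inputs are already available: exactness and $\C$-linearity of $H$ come from Proposition~\ref{prop:H-exact}, and the length identity \eqref{eq:H-length} gives $H(M)\ne0$ whenever $M\ne0$. Indeed $\ell(H(M))=\ell(M)\ge1$, and nonzero objects of the finite-length category $\KL^k(\mathfrak{sl}_2)$ have positive length.

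Let $f:M\to N$ be a morphism in $\KL^k(\mathfrak{sl}_2)$ with $H(f)=0$; the goal is $f=0$. I would factor $f$ through its image,
\[
 M\twoheadrightarrow \operatorname{Im}f\hookrightarrow N .
\]
Exactness of $H$ sends the epimorphism to an epimorphism and the monomorphism to a monomorphism. It also identifies $H(\operatorname{Im}f)$ with $\operatorname{Im}H(f)$, because $H$ preserves the short exact sequences $0\to\ker f\to M\to\operatorname{Im}f\to0$ and $0\to\operatorname{Im}f\to N\to\operatorname{coker}f\to0$. So $H(f)=0$ forces $H(\operatorname{Im}f)=0$. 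By \eqref{eq:H-length} this gives $\ell(\operatorname{Im}f)=0$, hence $\operatorname{Im}f=0$ and $f=0$. Since $H$ is additive, injectivity on each Hom space follows: if $H(f)=H(g)$ then $H(f-g)=0$, so $f=g$.

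No step here is a real obstacle. The one point to state explicitly is that the image factorization in the source is computed in the abelian category $\KL^k(\mathfrak{sl}_2)$, and that exactness of $H$ (Proposition~\ref{prop:H-exact}) lets us transport it to $\Oc_{c_{p,q}}$. Nothing about the labels $S_r$ is needed, in particular no injectivity of $r\mapsto S_r$: we only use that each simple factor maps to a nonzero simple, and that is already built into \eqref{eq:H-length}.
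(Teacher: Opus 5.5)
Your proof is correct and is essentially the paper's own argument: exactness of $H$ gives $\operatorname{Im}H(f)\cong H(\operatorname{Im}f)$, and the length identity \eqref{eq:H-length} forces this to be nonzero whenever $f\neq0$. The only differences are that you argue contrapositively and add the routine remark that $\C$-linearity reduces injectivity on Hom spaces to the case $H(f)=0$.
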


\begin{proof}
Let $0\neq f:M\to N$.  Since $H$ is exact,
\[
 \operatorname{Im}H(f)\cong H(\operatorname{Im}f).
\]
The module $\operatorname{Im}f$ is nonzero and therefore has positive
length.  Equation~\eqref{eq:H-length} gives
\[
 \ell\bigl(H(\operatorname{Im}f)\bigr)>0.
\]
Hence $H(f)\neq0$.
\end{proof}

\subsection{Reduction of Weyl modules}

The next calculation is the one place where we need more than reduction of
simple highest-weight modules.  We use only the rank-one category-$\mathcal O$
statement for the particular affine Verma module mapping onto $V_r$; no
generic-level irreducibility or quantum-Langlands duality is involved.

\begin{prop}\label{prop:H-Weyl}
For every $r\geq1$,
\[
 H(V_r)\cong K_r.
\]
\end{prop}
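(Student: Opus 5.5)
The plan is to present $V_r$ as a quotient of the affine Verma module and reduce that presentation. Put $\widehat\lambda_r=(r-1)\omega+k\Lambda_0$ and let $M(\widehat\lambda_r)$ be the affine Verma module. The Weyl module is
\[
 V_r=M(\widehat\lambda_r)\big/U(\widehat{\mathfrak{sl}}_2)f_0^{\,r}v,
\]
since the horizontal $\mathfrak{sl}_2$ relation $f_0^rv=0$ generates the finite-dimensional top. The submodule generated by the singular vector $f_0^rv$ is the image of the Verma module $M(s_\alpha\cdot\widehat\lambda_r)$, of highest weight $(-r-1)\omega+k\Lambda_0$. At our noncritical level this map is injective, because Verma homomorphisms are injective. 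This gives a short exact sequence in Arakawa's category $\mathcal O_k$:
\[
 0\longrightarrow M(s_\alpha\cdot\widehat\lambda_r)\longrightarrow
 M(\widehat\lambda_r)\longrightarrow V_r\longrightarrow0 .
\]

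Next I would use the category-$\mathcal O$ vanishing theorem of \cite{ArakawaW} (Theorem~9.1.3 and its Verma-module companion). The $+$ reduction is exact on $\mathcal O_k$, and it sends the Verma module of highest weight $\widehat\mu$ to the Virasoro Verma module of lowest weight $\Delta_{DS}(\widehat\mu)$. By the weight dictionary \eqref{eq:DS-weight-dictionary}, $M(\widehat\lambda_r)$ reduces to $M(c_{p,q},h_r)$. For $m=-r-1$, the same rank-one formula gives
\[
 \frac{m(m+2)}{4(k+2)}-\frac m2=h_r+r\cdot 1=h_r+r,
\]
by a direct check: $\frac{(r^2-1)q}{4p}+\frac{r+1}{2}$ minus $\frac{(r^2-1)q}{4p}-\frac{r-1}{2}$ equals $r$. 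Hence we get an exact sequence
\[
 0\to M(c_{p,q},h_r+r)\to M(c_{p,q},h_r)\to H(V_r)\to0,
\]
where $H(V_r)$ is computed by the same complex. This is legitimate because the complex is functorial and the long exact sequence applies: higher cohomology of both Vermas vanishes, and the previous proposition gives vanishing for $V_r$.

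The remaining step, and the main obstacle, is to identify the image of the first map with the submodule generated by the $(r,1)$ singular vector. Only then is the quotient the Kac module $K_r=\Kac_{r,1}$. The map is a nonzero Verma homomorphism of grade $r$. Since Virasoro Verma Hom spaces are at most one-dimensional, its image is $U(\Vir)$ applied to the unique-up-to-scalar singular vector of grade $r$. For the one-row weight $h_{r,1}$ with $1<q$, I need that vector to be the prime Kac vector $\mathfrak S_{r,1}v$; in the braid-type cases I must exclude a second singular vector at grade $r$ coming from another factorization. Nonvanishing of the reduced map I would get from exactness together with the affine map being injective, which gives injectivity in the display. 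I would settle uniqueness at grade $r$ using Lemma~\ref{lem:weight-arithmetic}. It shows that $h_{R,S}=h_r$ with $RS\le r$ forces $(R,S)=(r,1)$, or $S\ge2$ with $RS>r$ unless $q(\cdot)$ coincidences occur. Alternatively, I would use the McRae--Sopin description of $\Kac_{r,1}$ as exactly this Verma quotient \cite[Remark~2.2]{McRaeSopin}, which concludes $H(V_r)\cong K_r$.
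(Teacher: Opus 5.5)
Your argument is correct, but it takes a different route from the paper's.

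\emph{The paper's route.} The paper uses only the surjection $\mathcal V(c_{p,q},h_r)\cong H^0_{DS,0}(M^{\kappa}_{r-1})\twoheadrightarrow H(V_r)$ from the dominant Verma module. It then computes the character of $H(V_r)$ from Arakawa--Frenkel's formula (4.19). This requires a careful identification of their auxiliary grading with $L^{DS}_0-h_r$ on the BRST complex. From the character it reads off that the kernel is $z^{h_r+r}/\prod_{m\ge1}(1-z^m)$, finds the singular line at level $r$, factors the surjection through $K_r$, and concludes by comparing characters.

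\emph{Your route.} You induce the finite BGG sequence to get the two-term Verma presentation $0\to M(s_\alpha\cdot\widehat\lambda_r)\to M(\widehat\lambda_r)\to V_r\to0$ and reduce it. This gives $H(V_r)$ directly as $\mathcal V(c_{p,q},h_r)$ modulo the image of $\mathcal V(c_{p,q},h_r+r)$, and your weight computation $\Delta_{DS}(-r-1)=h_r+r$ is right. You thereby bypass the character formula and the grading comparison entirely. The cost is that you need two things the paper does not:
\begin{itemize}
\item the Verma-to-Verma statement also at the non-dominant weight $(-r-1)\omega$;
\item injectivity of the reduced map, i.e.\ $H^{-1}_{DS,+}(V_r)=0$, which is available from Proposition~\ref{prop:H-exact} or from Arakawa--Frenkel's Theorem~2.1.
\end{itemize}
In exchange, the paper's route obtains the character of $H(V_r)$ independently and uses only right-exactness from the dominant Verma module.

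\emph{Two small corrections.}
\begin{itemize}
\item The input ``exact on $\mathcal O_k$, Verma to Verma'' should be taken from the rank-one statement of \cite[Section~4.4]{ArakawaFrenkel}, as the paper does. The paper uses Theorems~9.1.3--9.1.4 of \cite{ArakawaW} only for simple modules under the non-integrality condition, not as an exactness statement for Verma modules.
\item Your final paragraph is unnecessary, and the appeal to Lemma~\ref{lem:weight-arithmetic} there does not by itself prove uniqueness as phrased. You already invoked the fact that suffices: singular vectors of a fixed grade in a Virasoro Verma module are unique up to scalar, which the paper takes from McRae--Sopin. This already rules out a second grade-$r$ singular vector in the braid-type case. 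Hence the image is $\langle v_{r,1}\rangle$, and $H(V_r)\cong K_{r,1}=K_r$ by \cite[Remark~2.2]{McRaeSopin}.
\end{itemize}
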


\begin{proof}
Take $\lambda=(r-1)\omega$, $\kappa_{\mathrm{AF}}=t\kappa_0$, and
$\check\mu=0$ in Arakawa--Frenkel.  By Lemma~\ref{lem:BRST-convention-comparison}, the $\check\mu=0$
Arakawa--Frenkel functor is naturally identified with the principal $+$
functor used in this paper.

\emph{Vanishing.}  Their Theorem~2.1 gives
\[
 H^i_{DS,0}(V_r)=0\qquad(i\neq0)
\]
for every value of $\kappa_{\mathrm{AF}}$; no irrationality hypothesis is used here.

\emph{Cyclicity.}  Here
$\gamma=\kappa_{\mathrm{AF}}/\kappa_0=t=p/q\ne-2$.  The rank-one
statement in \cite[Section~4.4]{ArakawaFrenkel} applies for every
complex $\gamma\ne-2$: $H^0_{DS,0}$ is exact on the affine category
$\mathcal O_{\kappa}$ and sends an affine Verma module to the
corresponding Virasoro Verma module.  Write $M^{\kappa}_{r-1}$ for the affine Verma module of horizontal
highest weight $(r-1)\omega$.  The canonical quotient
\[
 M^{\kappa}_{r-1}\twoheadrightarrow V_r
\]
therefore induces a surjection
\[
 \mathcal V(c_{p,q},h_r)
 \cong H^0_{DS,0}(M^{\kappa}_{r-1})
 \twoheadrightarrow H(V_r).
\]
In particular, $H(V_r)$ is generated by the image of the affine
highest-weight vector.  This use of Section~4.4 is independent of the
irrational-level duality theorem; indeed, that section is precisely where
\cite{ArakawaFrenkel} discusses the failure of the duality statement at
rational parameter.

\emph{Character.}  Formula~(4.19) of \cite{ArakawaFrenkel} computes the
relative $\Z_{\ge0}$-graded character.  For $\mathfrak{sl}_2$,
$\check\mu=0$, and $\lambda=(r-1)\omega$, one has
\[
 \langle\lambda+\rho,\check\rho\rangle=\frac r2,
\]
so its two Weyl-group terms specialize to
\[
 z^{r/2}(z^{-r/2}-z^{r/2})=1-z^r.
\]
Hence the relative character is
\[
 \operatorname{ch}^{\mathrm{rel}}_z H(V_r)
 =\frac{1-z^r}{\prod_{m\ge1}(1-z^m)}.
\]
Formula~(4.19) is written in the auxiliary $\Z_{\ge0}$-grading of
Section~4.3 of \cite{ArakawaFrenkel}.  We identify that grading with the
Virasoro grading directly at the present rational level.  By
Lemma~\ref{lem:BRST-Hamiltonian},
\[
 L^{C(M)}_0=L^{\mathrm{aff}}_0-\frac12h_0
 +L^{\mathrm f}_0-J^{\mathrm{gh}}_0.
\]
On the affine generators this gives
\[
 [L^{C(M)}_0,e(n)]=(-n-1)e(n),\qquad
 [L^{C(M)}_0,f(n)]=(-n+1)f(n),\qquad
 [L^{C(M)}_0,h(n)]=-nh(n).
\]
For the ghosts, the Clifford OPE and
$J^{\mathrm{gh}}(z)=:\psi_\alpha(z)\psi_{-\alpha}(z):$ give
\[
 [L^{\mathrm f}_0,\psi_\alpha(n)]=-n\psi_\alpha(n),\qquad
 [L^{\mathrm f}_0,\psi_{-\alpha}(n)]=-n\psi_{-\alpha}(n),
\]
and
\[
 [J^{\mathrm{gh}}_0,\psi_\alpha(n)]=\psi_\alpha(n),\qquad
 [J^{\mathrm{gh}}_0,\psi_{-\alpha}(n)]=-\psi_{-\alpha}(n).
\]
Consequently
\[
 [L^{\mathrm f}_0-J^{\mathrm{gh}}_0,\psi_\alpha(n)]
 =(-n-1)\psi_\alpha(n),
\]
while
\[
 [L^{\mathrm f}_0-J^{\mathrm{gh}}_0,\psi_{-\alpha}(n)]
 =(-n+1)\psi_{-\alpha}(n).
\]
Under $(\psi^{AF}_{\alpha,n})^*\leftrightarrow\psi_{-\alpha}(n)$,
these are exactly the auxiliary degrees introduced in
\cite[Section~4.3]{ArakawaFrenkel} in the grading used to derive the
character formula~(4.19), specialized to $\check\mu=0$.  Explicitly,
that grading is
\[
 \deg e(n)=\deg\psi_\alpha(n)=-n-1,\qquad
 \deg f(n)=\deg\psi^*_\alpha(n)=-n+1,\qquad
 \deg h(n)=-n.
\]
Our normal ordering is fixed so that
\[
 L^{\mathrm f}_0\mathbf1_{\mathrm{gh}}
 =J^{\mathrm{gh}}_0\mathbf1_{\mathrm{gh}}=0.
\]
Let $G_{\mathrm{AF}}$ denote that auxiliary grading operator and put
\[
 A:=L^{C(V_r)}_0-h_r.
\]
By \eqref{eq:DS-weight-dictionary} and the normalization of the ghost
vacuum,
\[
 A(v_r\otimes\mathbf1)=0=G_{\mathrm{AF}}(v_r\otimes\mathbf1).
\]
The commutator formulas above show that $A$ and $G_{\mathrm{AF}}$ have
the same commutator with every affine and ghost mode used in the PBW
spanning set of the BRST complex.  Induction on the length of a PBW
monomial therefore gives
\[
 A\,w=G_{\mathrm{AF}}\,w
\]
for every PBW monomial $w$ applied to
$v_r\otimes\mathbf1$.  Indeed, $V_r$ is generated from its affine
highest-weight vector by the affine PBW operators (with the finite-dimensional
horizontal top generated by the zero mode $f(0)$), and the standard ghost
creation operators generate the ghost Fock space.  These PBW monomials
therefore span the BRST complex.  Hence
$A=G_{\mathrm{AF}}$ as grading operators on $C_+(V_r)$, and consequently on
cohomology.  Thus, at the rational admissible level itself, the auxiliary
degree is exactly $L^{DS}_0-h_r$.  Thus, as a formal graded
character with finite-dimensional homogeneous pieces, the unnormalized
character $\ch_zM=\operatorname{Tr}_M z^{L_0}$ gives
\begin{equation}\label{eq:H-character}
 \ch_z H(V_r)=
 \frac{z^{h_r}(1-z^r)}{\prod_{m\ge1}(1-z^m)}.
\end{equation}
The Verma-module map above is $L_0$-graded.  The Virasoro Verma module is
the algebraic direct sum of its finite-dimensional $L_0$-eigenspaces, so
its graded submodule kernel and its quotient $H(V_r)$ inherit semisimple
$L_0$-actions.  The following character comparison is therefore
coefficientwise in finite-dimensional graded pieces.
On the other hand, the one-row Kac character formula
\cite[Remark~2.2]{McRaeSopin} gives
\begin{equation}\label{eq:Kac-character}
 \ch_z K_r=
 \frac{z^{h_r}(1-z^r)}{\prod_{m\ge1}(1-z^m)}.
\end{equation}

Consequently,
\[
 \ch_z\ker\!\left(\mathcal V(c_{p,q},h_r)
 \twoheadrightarrow H(V_r)\right)
 =\frac{z^{h_r+r}}{\prod_{m\ge1}(1-z^m)}.
\]
In particular, the kernel is zero below level $r$ and one-dimensional at
level $r$.  A nonzero homogeneous vector of minimal relative degree in a proper
graded Virasoro submodule of a Verma module is annihilated by all positive
Virasoro modes, hence is singular.  Thus the level-$r$ kernel is a
singular line.  By the paragraph immediately preceding
\cite[Theorem~4.3]{McRaeSopin}, the Verma module
$\mathcal V(c_{p,q},h_r)$ contains, up to nonzero scalar, a unique singular
vector $v_{r,1}$ at relative degree $r$; the corresponding one-row Verma
quotient is $K_{r,1}$, since the second Kac label is $1<q$.
Thus the nonzero degree-$r$ kernel line is $\C v_{r,1}$, and the submodule
generated by $v_{r,1}$ is contained in the kernel.  By
\cite[Remark~2.2]{McRaeSopin},
\[
 K_r=K_{r,1}\cong
 \mathcal V(c_{p,q},h_r)/\langle v_{r,1}\rangle.
\]
The Verma-module surjection therefore factors through a grading-preserving
surjection
\[
 K_r\twoheadrightarrow H(V_r).
\]
Equations~\eqref{eq:H-character} and \eqref{eq:Kac-character} show that the
kernel of this factor map has zero graded character.  Its graded pieces are
finite dimensional, so every graded piece of the kernel vanishes.  Thus
$H(V_r)\cong K_r$.
\end{proof}

\begin{rmk}\label{rmk:Weyl-reduction-scope}
The proof uses only the all-level Weyl-module vanishing and character formula
of \cite{ArakawaFrenkel}, together with the rank-one Verma-quotient statement
of their Section~4.4.  The grading comparison is proved directly from
Lemma~\ref{lem:BRST-Hamiltonian}.  In particular, neither Theorem~2.2
nor Theorem~5.2 of \cite{ArakawaFrenkel} is used at rational
$\kappa_{\mathrm{AF}}$.
\end{rmk}

\subsection{The exponential comparison}

We finally isolate the convention-sensitive compatibility that will later
rigidify objectwise projective isomorphisms.  At this stage $H$ has not
been equipped with a tensor structure, so the following proposition is stated at the level of the underlying
linear functor, as an equality of exponentials rather than as a
ribbon-functor assertion.

\begin{lem}[Exponential and cohomology]\label{lem:exponential-cohomology}
Let $(C^\bullet,d)$ be a complex and let $T$ be a locally finite chain
endomorphism.  Then the induced endomorphism $H^\bullet(T)$ is locally
finite.  The algebraic exponentials are therefore well-defined and
\[
 H^\bullet(e^T)=e^{H^\bullet(T)}.
\]
\end{lem}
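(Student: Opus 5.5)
The plan is to work degreewise and reduce everything to finite-dimensional linear algebra, where the algebraic exponential is a polynomial in the operator. Write $T^j:C^j\to C^j$ for the components of $T$. Local finiteness means that every $x\in C^j$ lies in a finite-dimensional $T$-stable subspace; equivalently, $W_x:=\C[T]x$ is finite dimensional. On such a subspace the exponential $e^{T}x$ is defined as $\sum_{m\ge0}T^mx/m!$. This series is evaluated inside the finite-dimensional space $W_x$ and agrees with $f_x(T)x$ for a polynomial $f_x$ that depends only on the minimal polynomial of $T|_{W_x}$. I will make this explicit first. One can take $f_x$ to be the Hermite interpolation polynomial of $e^z$ at the roots of that minimal polynomial, with multiplicities. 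Then $e^T$ is a well-defined chain endomorphism. It commutes with $d$ because $d$ commutes with every polynomial in $T$, and for each $x$ the element $e^Tx$ is computed by a polynomial in $T$ applied to $x$.

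Second, I pass to cohomology. For a cocycle $z$, the space $\C[T]z$ consists of cocycles, since $T$ is a chain map. Its image in $H^j$ is $\C[H(T)][z]$, which is therefore finite dimensional. This gives local finiteness of $H^\bullet(T)$, and $\exp H(T)$ is then well defined. To compare the two exponentials, I choose a polynomial $f$ that computes the exponential simultaneously for $T$ on $W_z$ and for $H(T)$ on $\C[H(T)][z]$. This works because the minimal polynomial $\mu$ of $H(T)$ on the image divides the minimal polynomial of $T|_{W_z}$: a polynomial annihilating $T|_{W_z}$ annihilates the quotient action. Consequently a Hermite interpolant of $e^z$ adapted to the larger minimal polynomial also interpolates correctly at the roots of $\mu$. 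Then
\[
 H(e^T)[z]=[e^Tz]=[f(T)z]=f(H(T))[z]=e^{H(T)}[z].
\]

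The only point needing care is this compatibility of interpolating polynomials. Equivalently, one needs that the algebraic exponential on a finite-dimensional space is independent of the chosen $T$-stable subspace and is compatible with passage to $T$-equivariant quotients. I will settle it by observing that on a finite-dimensional space $U$ the series $\sum T^m/m!$ converges in $\End(U)$ to $g(T)$ for any $g$ that agrees with $e^z$ to the orders of the minimal polynomial. This property is inherited by $T$-stable subspaces and quotients, since their minimal polynomials divide that of $U$. Everything else is formal.
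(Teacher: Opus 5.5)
Your proof is correct and takes essentially the same route as the paper. Both arguments pass to the finite-dimensional orbit space $\mathbb{C}[T]c$, observe that its image in cohomology is the $H^\bullet(T)$-orbit of $[c]$, and use that the algebraic exponential is given there by a polynomial in the operator. Your Hermite-interpolation argument, via divisibility of minimal polynomials on $T$-stable subspaces and $T$-equivariant quotients, spells out the independence and compatibility of that polynomial, which the paper asserts in a single line.
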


\begin{proof}
For $c\in C^\bullet$, the orbit space
\[
 U_c:=\operatorname{span}_{\C}\{T^nc:n\ge0\}
\]
is finite dimensional and $T$-stable.  Define $e^Tc$ by evaluating the
ordinary matrix exponential on $U_c$; this is independent of the choice
of a finite-dimensional $T$-stable space containing $c$.  Since $T$
commutes with $d$, so does $e^T$, and $e^T$ is a chain automorphism with
inverse $e^{-T}$.

If $c$ is a cocycle, then
\[
 \operatorname{span}_{\C}\{H^\bullet(T)^n[c]:n\ge0\}
\]
is the image in cohomology of the finite-dimensional space $U_c$.
Thus $H^\bullet(T)$ is locally finite.  On this finite-dimensional orbit
space the exponential is a polynomial in the relevant endomorphism, so
\[
 H^\bullet(e^T)[c]=[e^Tc]=e^{H^\bullet(T)}[c].
\]
\end{proof}

\begin{prop}[Exponential compatibility]\label{prop:H-twist}
If $M\in\KL^k_{\bar\epsilon}(\mathfrak{sl}_2)$, then
\begin{equation}\label{eq:H-exponential-compatibility}
 H(\theta^-_M)=e^{2\pi iL^{DS}_0}\big|_{H(M)}.
\end{equation}
Here $\theta^-_M=(-1)^\epsilon e^{2\pi iL^{\mathrm{aff}}_0}$ is the
non-standard affine twist from \eqref{eq:minus-twist}.
\end{prop}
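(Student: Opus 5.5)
The plan is to lift both sides of \eqref{eq:H-exponential-compatibility} to the BRST complex $C_+(M)=M\otimes\mathcal F_{\mathrm{gh}}$, compare them there as chain endomorphisms, and then descend to cohomology with Lemma~\ref{lem:exponential-cohomology}. By functoriality of the BRST construction, $H(\theta^-_M)$ is the map induced in degree zero by the chain map $\theta^-_M\otimes\id_{\mathcal F_{\mathrm{gh}}}$. By Lemma~\ref{lem:BRST-Hamiltonian}, $L^{C(M)}_0$ is a chain endomorphism inducing $L^{DS}_0$. So, provided $L^{C(M)}_0$ is locally finite, Lemma~\ref{lem:exponential-cohomology} gives $H^0(e^{2\pi iL^{C(M)}_0})=e^{2\pi iL^{DS}_0}$. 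It therefore suffices to prove the chain-level identity
\[
 e^{2\pi iL^{C(M)}_0}=(-1)^\epsilon e^{2\pi iL^{\mathrm{aff}}_0}\otimes\id_{\mathcal F_{\mathrm{gh}}}.
\]

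To prove it, I will use \eqref{eq:BRST-Hamiltonian}, which splits $L^{C(M)}_0$ as the sum of three pairwise commuting operators: $L^{\mathrm{aff}}_0\otimes\id$, $-\tfrac12h_0\otimes\id$, and $\id\otimes(L^{\mathrm f}_0-J^{\mathrm{gh}}_0)$. The first two commute because the Sugawara $L^{\mathrm{aff}}_0$ commutes with the horizontal zero modes. The third acts on the other tensor factor. I will check that each is locally finite.
\begin{itemize}
\item $L^{\mathrm{aff}}_0$ is locally finite because $M$ is grading restricted.
\item $h_0$ is locally finite and semisimple on $M$. Each generalized $L^{\mathrm{aff}}_0$-eigenspace is a finite-dimensional $\mathfrak{sl}_2$-module, hence semisimple by Weyl's theorem, with $h_0$-eigenvalues in $\epsilon+2\Z$ by the $\Z/2\Z$-grading convention.
\item $L^{\mathrm f}_0-J^{\mathrm{gh}}_0$ is diagonalizable on $\mathcal F_{\mathrm{gh}}$ with integer eigenvalues. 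It kills $\one_{\mathrm{gh}}$ by the normal-ordering convention, and by the commutator formulas in the proof of Proposition~\ref{prop:H-Weyl} every ghost mode shifts it by an integer ($-n\mp1$). Hence on PBW monomials it acts by an integer, and each eigenspace is finite dimensional in a fixed charge sector.
\end{itemize}
For commuting locally finite operators the algebraic exponential is multiplicative; this is checked on a finite-dimensional jointly stable subspace, where it is the matrix identity. Hence $e^{2\pi iL^{C(M)}_0}$ factors as the product of the three exponentials. The middle factor is $e^{-\pi ih_0}=(-1)^{\epsilon}$, since the eigenvalues lie in $\epsilon+2\Z$ and $(-1)^{-\epsilon}=(-1)^\epsilon$. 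The ghost factor is the identity by integrality. This gives the chain-level identity.

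The main obstacle I expect is local finiteness of $L^{C(M)}_0$ on the whole complex, since $\mathcal F_{\mathrm{gh}}$ is infinite dimensional and Lemma~\ref{lem:exponential-cohomology} needs a locally finite $T$. I would argue vector by vector. A pure tensor $m\otimes g$, with $g$ a ghost PBW monomial, generates under the three commuting operators a space contained in $U(L^{\mathrm{aff}}_0,h_0)m\otimes\C g$. This is finite dimensional because $g$ is an eigenvector of the ghost operator and $m$ lies in a finite-dimensional space stable under $L^{\mathrm{aff}}_0$ and $h_0$. Local finiteness then extends to finite sums. The remaining step is the bookkeeping identifying the cohomology map induced by $\theta^-_M\otimes\id$ with $H(\theta^-_M)$, which is immediate from functoriality of $C_+(-)$.
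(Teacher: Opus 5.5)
Your proposal is correct and follows essentially the same route as the paper. You split $L^{C(M)}_0$ into the three commuting pieces, verify local finiteness on each $m\otimes g$, and obtain the chain-level identity $e^{2\pi iL^{C(M)}_0}=\theta^-_M\otimes\id$ from $e^{-\pi ih_0}=(-1)^\epsilon$ and integrality of $L^{\mathrm f}_0-J^{\mathrm{gh}}_0$, then descend via Lemma~\ref{lem:exponential-cohomology}. The differences are only in how the two inputs are justified: you get semisimplicity of $h_0$ from Weyl's theorem on the generalized $L^{\mathrm{aff}}_0$-eigenspaces and ghost integrality from the mode commutators, whereas the paper cites local finiteness of horizontal $\mathfrak{sl}_2$ and the Clifford--Fock monomial basis.
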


\begin{proof}
The four operators
$L^{\mathrm{aff}}_0$, $h_0$, $L^{\mathrm f}_0$, and
$J^{\mathrm{gh}}_0$ commute pairwise.  In particular
$[L^{\mathrm{aff}}_0,h_0]=0$, the affine and ghost operators act on
separate tensor factors, and on the ghost factor $J^{\mathrm{gh}}_0$ is
the zero mode of a weight-one field for the charged-fermion conformal
structure, so
\[
 [L^{\mathrm f}_0,J^{\mathrm{gh}}_0]=0.
\]  By the explicit Clifford--Fock realization
\cite[(154)--(159)]{ArakawaW}, the ghost space
$\mathcal F_{\mathrm{gh}}$ has a basis of standard ghost monomials.  With
the convention fixed above,
\[
 J^{\mathrm{gh}}_0\one_{\mathrm{gh}}=0,
 \qquad
 [J^{\mathrm{gh}}_0,\psi_\alpha(n)]=\psi_\alpha(n),
 \qquad
 [J^{\mathrm{gh}}_0,\psi_{-\alpha}(n)]=-\psi_{-\alpha}(n).
\]
Thus $J^{\mathrm{gh}}_0$ has integral spectrum on the ghost Fock space.
The ghost conformal grading is integral as well, and the standard ghost
monomials are simultaneous eigenvectors for $L^{\mathrm f}_0$ and
$J^{\mathrm{gh}}_0$.  Hence
\[
 \operatorname{Spec}(L^{\mathrm f}_0-J^{\mathrm{gh}}_0)\subset\Z,
\]
and therefore
\begin{equation}\label{eq:ghost-exponential}
 e^{2\pi i(L^{\mathrm f}_0-J^{\mathrm{gh}}_0)}
 =\id_{\mathcal F_{\mathrm{gh}}}.
\end{equation}
Horizontal $\mathfrak{sl}_2$ acts locally finitely on objects of
$\KL^k(\mathfrak{sl}_2)$ by \cite[Section~2]{McRaeYang}.  Hence every
vector lies in a finite-dimensional horizontal $\mathfrak{sl}_2$-module,
and $h_0$ is semisimple.  For
$M\in\KL^k_{\bar\epsilon}$,
\[
 \operatorname{Spec}(h_0|_M)\subset\epsilon+2\Z,
 \qquad
 e^{-\pi ih_0}=(-1)^\epsilon\id_M.
\]
Because $M$ is grading restricted, every generalized
$L^{\mathrm{aff}}_0$-eigenspace is finite dimensional.  Since
$[L^{\mathrm{aff}}_0,h_0]=0$, each such generalized eigenspace is
$h_0$-stable.  Every vector of $M$ has only finitely many generalized
$L^{\mathrm{aff}}_0$-components, so it lies in a finite-dimensional
subspace stable under both operators.  Every ghost vector is a finite sum
of simultaneous eigenvectors
for $L^{\mathrm f}_0$ and $J^{\mathrm{gh}}_0$.  Consequently every vector
of $C_+(M)$ is contained in a finite-dimensional subspace invariant under
the four mutually commuting operators
\[
 L^{\mathrm{aff}}_0,\qquad h_0,\qquad
 L^{\mathrm f}_0,\qquad J^{\mathrm{gh}}_0.
\]
All exponentials below are taken on such finite-dimensional joint invariant
subspaces.  Moreover,
\[
 [L^{\mathrm{aff}}_0,x(n)]=-n\,x(n)
 \qquad(x\in\mathfrak{sl}_2,\ n\in\Z),
\]
so
\[
 e^{2\pi iL^{\mathrm{aff}}_0}x(n)e^{-2\pi iL^{\mathrm{aff}}_0}=x(n).
\]
Thus $\theta^-_M=(-1)^\epsilon e^{2\pi iL^{\mathrm{aff}}_0}$ is an
affine-module automorphism.  The usual exponential identity for
commuting operators is therefore purely algebraic and gives the
chain-level identity
\begin{align}
 e^{2\pi iL^{C(M)}_0}
 &=e^{2\pi iL^{\mathrm{aff}}_0}
   e^{-\pi ih_0}
   e^{2\pi i(L^{\mathrm f}_0-J^{\mathrm{gh}}_0)}\notag\\
 &=(-1)^\epsilon e^{2\pi iL^{\mathrm{aff}}_0}
   \otimes\id_{\mathcal F_{\mathrm{gh}}}\notag\\
 &=\theta^-_M\otimes\id_{\mathcal F_{\mathrm{gh}}}.
 \label{eq:complex-exponential}
\end{align}
On each of the finite-dimensional joint invariant subspaces just
described, the exponential is the scalar exponential times a finite
polynomial in the nilpotent parts; no analytic convergence is involved.
The left-hand side of \eqref{eq:complex-exponential} commutes with the
BRST differential by Lemma~\ref{lem:BRST-Hamiltonian}.  The right-hand
side is precisely the BRST chain map induced functorially by the
affine-module automorphism $\theta^-_M$, and therefore also commutes
with the differential.  Taking degree-zero cohomology and using
Lemmas~\ref{lem:BRST-Hamiltonian} and
\ref{lem:exponential-cohomology} therefore gives
\eqref{eq:H-exponential-compatibility}.  The chain-level identity is
functorial in $M$: an affine-module morphism commutes with
$L^{\mathrm{aff}}_0$ and $h_0$ and acts trivially on the ghost factor.
Thus \eqref{eq:H-exponential-compatibility} is an equality of natural
automorphisms on each parity subcategory.  Since
$\KL^k=\KL^k_{\bar0}\oplus\KL^k_{\bar1}$ and both sides are additive on
finite direct sums, the same identity holds on every object of $\KL^k$.

As a normalization check, on the lowest class of $V_r$ the reduced
exponential has eigenvalue $e^{2\pi ih_r}$, while the affine expression
has eigenvalue $(-1)^{r-1}e^{2\pi i\Delta^{\mathrm{aff}}_r}$, where
\[
 \Delta^{\mathrm{aff}}_r=\frac{r^2-1}{4(k+2)}.
\]
They agree because the parity of $V_r$ satisfies
$\epsilon\equiv r-1\pmod2$ and
$h_r=\Delta^{\mathrm{aff}}_r-(r-1)/2$.
\end{proof}

\begin{thm}[Drinfeld--Sokolov package]\label{thm:DS-package}
Principal $+$ reduction restricts to a well-defined $\C$-linear functor
\[
 H:\KL^k(\mathfrak{sl}_2)\longrightarrow\Oc_{c_{p,q}}.
\]
On $\KL^k(\mathfrak{sl}_2)$ one has
\[
 H^i_{DS,+}(M)=0\qquad(i\ne0,\,M\in\KL^k),
\]
and principal $+$ reduction is exact and faithful, preserves
Jordan--H\"older length, and satisfies
\[
 H(V_r)\cong K_r,\qquad H(L_r)\cong S_r\qquad(r\ge1).
\]
On the BRST complex the exponential identity
\eqref{eq:complex-exponential} holds, and on cohomology it gives
\eqref{eq:H-exponential-compatibility}.  The last identity is an identity
of natural automorphisms of the underlying linear functor; no monoidal
structure on $H$ is asserted here.
\end{thm}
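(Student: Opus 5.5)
The statement is a summary of Section~\ref{sec:DS}, so the plan is to assemble it from the results proved there, taking each assertion in turn.

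\emph{Well-definedness, linearity and vanishing.} I would start from the conformal identification $W^k(\mathfrak{sl}_2)\cong V^{\mathrm{Vir}}_{c(k)}$ established at the beginning of the section, together with $c(k)=c_{p,q}$ from \eqref{eq:DS-central-charge}. These make every $H^0_{DS,+}(M)$ a module for the universal Virasoro vertex algebra at the correct central charge. Proposition~\ref{prop:H-exact} then supplies the rest of this step:
\begin{itemize}
\item the vanishing $H^i_{DS,+}(M)=0$ for $i\neq0$, which comes from the d\'evissage Lemma~\ref{lem:BRST-devissage}, using smoothness (Lemma~\ref{lem:KL-smooth}) and the simple case (Proposition~\ref{prop:H-simple});
\item exactness of $H$;
\item $\C$-linearity of $H$;
\item the length identity \eqref{eq:H-length};
\item membership $H(M)\in\Oc_{c_{p,q}}$.
\end{itemize}
Membership holds because a composition series of $M$ maps to a composition series of $H(M)$ whose factors are the allowed simples $S_{r_i}$. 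Grading restriction then follows by repeated use of Lemma~\ref{lem:gr-extension}.

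\emph{Faithfulness and the simple and Weyl images.} Faithfulness is Corollary~\ref{cor:H-faithful}: exactness gives $\operatorname{Im}H(f)\cong H(\operatorname{Im}f)$, and length preservation shows this is nonzero whenever $f\neq0$. The identification $H(L_r)\cong S_r$ is Proposition~\ref{prop:H-simple}, which rests on Arakawa's nonintegrality criterion and the weight dictionary \eqref{eq:DS-weight-dictionary}. The identification $H(V_r)\cong K_r$ is Proposition~\ref{prop:H-Weyl}. I expect this step to be the main obstacle, because it needs three ingredients:
\begin{itemize}
\item the Arakawa--Frenkel all-level vanishing, transported through Lemma~\ref{lem:BRST-convention-comparison};
\item cyclicity from the surjection of the Virasoro Verma module onto $H(V_r)$;
\item the character comparison, after checking that the Arakawa--Frenkel auxiliary grading equals $L^{DS}_0-h_r$.
\end{itemize}
Equality of characters then forces the factored surjection $K_r\twoheadrightarrow H(V_r)$ to be an isomorphism.

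\emph{Exponential identity.} The chain-level identity \eqref{eq:complex-exponential} comes from the Hamiltonian formula \eqref{eq:BRST-Hamiltonian}, combined with three facts:
\begin{itemize}
\item the four zero modes $L^{\mathrm{aff}}_0$, $h_0$, $L^{\mathrm f}_0$ and $J^{\mathrm{gh}}_0$ commute;
\item the ghost combination $L^{\mathrm f}_0-J^{\mathrm{gh}}_0$ has integral spectrum, so its exponential is the identity;
\item $e^{-\pi i h_0}=(-1)^\epsilon$ on the parity-$\bar\epsilon$ subcategory.
\end{itemize}
Lemma~\ref{lem:exponential-cohomology} passes this identity to cohomology, giving \eqref{eq:H-exponential-compatibility}, which is Proposition~\ref{prop:H-twist}. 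Naturality follows because both sides are induced functorially by affine-module morphisms acting trivially on the ghost factor. The identity then extends to all of $\KL^k$ by additivity over the two parity summands. This is only an equality of natural automorphisms of the underlying linear functor; no monoidal structure on $H$ is claimed.
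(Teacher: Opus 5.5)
Your proposal is correct and matches the paper's proof: the theorem is assembled from Propositions~\ref{prop:H-simple}, \ref{prop:H-exact}, \ref{prop:H-Weyl}, \ref{prop:H-twist} and Corollary~\ref{cor:H-faithful}, with the same ingredients you list. These are d\'evissage from the simple case, faithfulness via length preservation, the Arakawa--Frenkel vanishing plus character comparison for Weyl modules, and the chain-level exponential identity passed to cohomology and extended over the two parity summands.
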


\begin{proof}
The simple images and higher-cohomology vanishing are
Proposition~\ref{prop:H-simple}; exactness and length preservation are
Proposition~\ref{prop:H-exact}; Weyl images are
Proposition~\ref{prop:H-Weyl}; faithfulness is
Corollary~\ref{cor:H-faithful}; and exponential compatibility is
Proposition~\ref{prop:H-twist}.  These conclusions are obtained entirely
from the BRST analysis of this section.  In particular,
Section~\ref{sec:DS} is logically independent of the projective
reconstruction and of the Virasoro extension theory of
Section~\ref{sec:prelim}.
\end{proof}

\begin{lem}[Normalization of the Drinfeld--Sokolov branch]
\label{lem:FKW-normalization}
With the root-vector normalization fixed in
Lemma~\ref{lem:BRST-convention-comparison}, the functor
$H^0_{DS,+}$ is the rank-one principal quantized
Drinfeld--Sokolov reduction with character
\[
 \Psi(e_\alpha t^n)=\delta_{n,-1}.
\]
For the ordered pair $(p,q)$ it is the branch for which
\[
 H^0_{DS,+}(V_2)\cong K_{2,1}.
\]
Consequently it is the quantum Drinfeld--Sokolov reduction paired with
$F_{p,q}$ in Conjecture~7.16 of \cite{McRaeYang}.  After interchanging
$p$ and $q$, the same normalization gives
\[
 K^{(q,p)}_{2,1}\cong K^{(p,q)}_{1,2},
\]
which is the branch paired there with $F_{q,p}$.
\end{lem}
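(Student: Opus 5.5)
The statement is a normalization lemma, so the plan is to assemble facts already in hand rather than prove anything new. There are three things to check: the character of the reduction, which reduction branch $H^0_{DS,+}$ is for the ordered pair $(p,q)$, and the image of the generating object $V_2$.

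\emph{The character.} Lemma~\ref{lem:BRST-convention-comparison} identifies the $+$ complex with the $\check\mu=0$ Arakawa--Frenkel complex, once the positive-root vector is chosen so that both principal characters take the value $1$ on it. In that proof the only character contribution to the differential is the ghost mode $\psi_{-\alpha}(1)$, which is dual to $e_\alpha t^{-1}$. Hence the character is $\Psi(e_\alpha t^n)=\delta_{n,-1}$, and $H^0_{DS,+}$ is the standard rank-one principal quantized reduction.

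\emph{The branch and the image of $V_2$.} For the ordered pair $(p,q)$ we have $k+2=p/q$. Equation \eqref{eq:DS-central-charge} gives central charge $c_{p,q}$, and \eqref{eq:DS-weight-dictionary} sends the Weyl label $r$ to the weight $h_{r,1}$ rather than $h_{1,r}$. Proposition~\ref{prop:H-Weyl} with $r=2$ then gives
\[
 H^0_{DS,+}(V_2)\cong K_2=\Kac_{2,1}.
\]
The $-$ branch, or equivalently the Feigin--Frenkel dual level, would instead send $V_2$ to the $(1,2)$ Kac module, so this image singles out the branch.

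\emph{Matching with McRae--Yang.} By \eqref{eq:F-intro}, $F_{p,q}(V_2)\cong\Kac_{2,1}$, and this is the normalization singled out in \cite[Theorem~7.15]{McRaeYang}. So the reduction paired with $F_{p,q}$ in Conjecture~7.16 is exactly the branch just identified. Here I will only cite the conjecture's description of the reduction through its image of $V_2$; I will not claim any further uniqueness.

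\emph{Interchanging $p$ and $q$.} Now $k'+2=q/p$, and the central charge is unchanged by \eqref{eq:central-charge-symmetry}. The same computation gives $H^0_{DS,+}(V_2)\cong K^{(q,p)}_{2,1}$. The Kac weight identity $h^{(q,p)}_{2,1}=h^{(p,q)}_{1,2}$ holds because both numerators equal $(2p-q)^2$. Both modules are Verma quotients by the grade-$2$ singular vector, so they agree as Virasoro modules: $K^{(q,p)}_{2,1}\cong K^{(p,q)}_{1,2}$. This matches the second normalization recalled in Section~\ref{sec:prelim}, so the transposed reduction is the branch paired with $F_{q,p}$.

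The main obstacle is convention bookkeeping. I must confirm that the sign of $\tfrac12 h_0$ in \eqref{eq:BRST-Hamiltonian} yields $h_{r,1}$ and not $h_{1,r}$; for $r=2$ these two weights differ because $p\neq q$. This is already settled by the weight dictionary and the normalization check at the end of Proposition~\ref{prop:H-twist}, so I will only cite them.
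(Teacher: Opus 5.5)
Your proposal follows the paper's proof essentially step for step: you read the character off from Lemma~\ref{lem:BRST-convention-comparison}, use Proposition~\ref{prop:H-Weyl} to get $H^0_{DS,+}(V_2)\cong\Kac_{2,1}$, match this with the $\Kac_{2,1}$/$\Kac_{1,2}$ normalization of \cite[Theorem~7.15]{McRaeYang}, and obtain the swap from $c_{p,q}=c_{q,p}$ together with the Kac weight/quotient formulas (your explicit $(2p-q)^2$ check is a nice addition). Two side remarks are unsupported but unused, so drop them: the claim that a ``$-$'' reduction is equivalent to passing to the Feigin--Frenkel dual level, and the suggestion that the sign of $\tfrac12h_0$ is what separates $h_{r,1}$ from $h_{1,r}$ (that distinction actually comes from $k+2=p/q$ rather than $q/p$).
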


\begin{proof}
Arakawa's $+$ cohomology is one of the standard quantized principal
Drinfeld--Sokolov reductions originating in
\cite{FKW}; see also \cite{ArakawaVanishing,ArakawaW}.  The explicit
comparison in Lemma~\ref{lem:BRST-convention-comparison} fixes the
principal character to be $\Psi(e_\alpha t^n)=\delta_{n,-1}$, removing
any ambiguity from root-vector or ghost conventions.  Proposition~\ref{prop:H-Weyl}
then gives $H^0_{DS,+}(V_2)\cong K_{2,1}$.  McRae--Yang's
Conjecture~7.16 refers to the two standard quantum Drinfeld--Sokolov
restrictions without imposing a $+$/$-$ notation; their Theorem~7.15,
however, distinguishes the ordered functors by the rigid Kac objects
$K_{2,1}$ and $K_{1,2}$.  Thus the displayed Weyl image fixes which
standard reduction is associated to the ordered pair $(p,q)$.  The
identity $c_{p,q}=c_{q,p}$ and the Kac weight/quotient formulas give
$K^{(q,p)}_{2,1}\cong K^{(p,q)}_{1,2}$ after swapping the parameters,
which gives the second assertion.
\end{proof}

\section{Images of projective modules}\label{sec:images}

From now on write $F=F_{p,q}$ and
\[
 Q_j:=F(P_j).
\]
For $h\in\C$ we abbreviate $M^{[h]}:=M^{[h+\Z]}$.  The following
proof protocol will be used throughout this section.  The functor $F$ is
applied only to morphisms, isomorphisms, and finite biproduct
decompositions in the projective subcategory, together with its strong
monoidal structure.  Since $F$ is $\C$-linear, the images of biproduct
inclusions and projections still satisfy the biproduct identities.
Every short exact sequence involving an object $Q_j=F(P_j)$ is
constructed entirely in $\Oc_{c_{p,q}}$, either from the Virasoro
$K_2$-fusion sequence or from a previously constructed target-side exact
sequence by the exact functors $K_2\boxtimes-$ and $(-)^{[\xi]}$.
In particular, no monomorphism in a nonsplit affine short exact sequence
is asserted to remain monic under $F$.  Twist preservation is used only
after projective faithfulness has been established.  Non-splitting of the
reconstructed target sequences is likewise proved only at that later
stage.

\subsection{Small Weyl modules}

\begin{lem}\label{lem:F-small-Weyl}
For $1\leq r\leq p$,
\[
 F(V_r)\cong K_r.
\]
\end{lem}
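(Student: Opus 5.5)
We prove the statement by induction on $r$, using only the monoidal structure of $F$, the source fusion rules for small Weyl modules, and the target fusion sequence \eqref{eq:K2-fusion}. For $r\le p-1$ the objects $V_r=P_r$ are projective, and for $r=p$ one has $V_p=P_p=L_p$. So every object in play lies in the projective subcategory, where the protocol of this section allows us to apply $F$ to biproduct decompositions.

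The two base cases are immediate. For $r=1$, $F(V_1)\cong\one\cong K_1$, since $F$ is a tensor functor and $K_1$ is the tensor unit of $\Oc_{c_{p,q}}$. For $r=2$, the isomorphism $F(V_2)\cong K_2$ is the normalization \eqref{eq:F-intro}.

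For the inductive step, take $2\le r\le p-1$ and assume $F(V_{r-1})\cong K_{r-1}$ and $F(V_r)\cong K_r$.
\begin{enumerate}
\item The projective recursion of \cite[Theorem~4.8]{McRaeYang} in this range gives a biproduct decomposition
\[
 V_2\boxtimes V_r\cong V_{r-1}\oplus V_{r+1}.
\]
Here $P_{r+1}=V_{r+1}$ when $r+1\le p$. For $p=2$ no inductive step arises, and we use \cite[Theorem~1.4(1)]{McRaeYang} only for its statement about $V_2\boxtimes V_1$.
\item Applying $F$ and using strong monoidality gives
\[
 K_2\boxtimes K_r\cong K_{r-1}\oplus F(V_{r+1}).
\]
\item On the target side, \eqref{eq:K2-fusion} with $j=r$ gives an exact sequence $0\to K_{r-1}\to K_2\boxtimes K_r\to K_{r+1}\to0$. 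It splits because $p\nmid r$ for $1\le r\le p-1$. Hence
\[
 K_2\boxtimes K_r\cong K_{r-1}\oplus K_{r+1}.
\]
\item Comparing the two decompositions in the Krull--Schmidt category $\Oc_{c_{p,q}}$, we cancel the common summand $K_{r-1}$ and conclude $F(V_{r+1})\cong K_{r+1}$.
\end{enumerate}

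The cancellation in step 4 needs three facts.
\begin{itemize}
\item \emph{$K_{r-1}$ has local endomorphisms.} It is a cyclic highest-weight module, so it is indecomposable and $\End(K_{r-1})=\C$.
\item \emph{$K_{r+1}$ is indecomposable.} It is generated by its lowest-weight vector, by Lemma~\ref{lem:Kac-residue-support}.
\item \emph{The multiplicity of $K_{r-1}$ does not interfere.} For $r\ge2$ the labels $r-1$ and $r+1$ differ, so $K_{r-1}\not\cong K_{r+1}$: by \eqref{eq:weight-equality} their lowest weights differ unless $q(2r)=2p$, and that would force $q\mid p$. Even without this, Krull--Schmidt uniqueness of decompositions into indecomposables cancels the common summand directly.
\end{itemize}
As a consistency check, Lemma~\ref{lem:length-cancellation} together with Jordan--H\"older multiplicities confirms that $F(V_{r+1})$ is nonzero with the correct composition factors.

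The main point to verify is step 1 at its boundary, $r=p-1$, where the recursion produces $V_p$. We need the source decomposition $V_2\boxtimes V_{p-1}\cong V_{p-2}\oplus V_p$ in that exact form, which is the first nontrivial case of the McRae--Yang recursion with $P_p=V_p$. We also need the target splitting at $j=p-1$, which holds because $p\nmid p-1$. Once this case is confirmed, the induction covers every $1\le r\le p$.
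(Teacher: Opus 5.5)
Your proposal is correct and follows the paper's proof essentially verbatim. It uses the same base cases $r=1,2$, the same biproduct decomposition $V_2\boxtimes V_r\cong V_{r-1}\oplus V_{r+1}$ transported by the additive strong monoidal functor $F$, the split case of \eqref{eq:K2-fusion}, and Krull--Schmidt cancellation of $K_{r-1}$. The only difference is the source citation: the paper takes the decomposition directly from \cite[Theorem~2.23]{McRaeYang}, which covers every $r$ with $p\nmid r$ (including the boundary case $r=p-1$ you flag), and your aside about $V_2\boxtimes V_1$ for $p=2$ is unnecessary since only base cases occur there.
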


\begin{proof}
Recall from Section~\ref{sec:prelim} that $P_r=V_r$ for $1\le r<p$,
while $P_p=V_p=L_p$ is wall simple-projective.  The case $r=1$ follows from the unit constraint
$F(V_1)\cong\mathbf1_{\Oc}=K_1$, while $r=2$ is the defining normalization
$F(V_2)\cong K_2$.  If $p=2$, these two cases exhaust the assertion.
Assume that $F(V_j)\cong K_j$ for every $1\le j\le r$, where
$2\le r\le p-1$.  Since $2\le r\le p-1$, one has $p\nmid r$, and
\cite[Theorem~2.23]{McRaeYang} gives directly the biproduct
decomposition
\[
 V_2\boxt V_r\cong V_{r-1}\oplus V_{r+1}.
\]
Applying the additive strong monoidal functor $F$ gives
\[
\begin{aligned}
 K_{r-1}\oplus F(V_{r+1})
 &\cong F(V_{r-1}\oplus V_{r+1})\\
 &\cong F(V_2\boxt V_r)\\
 &\cong F(V_2)\boxt F(V_r)\\
 &\cong K_2\boxt K_r
 \cong K_{r-1}\oplus K_{r+1}.
\end{aligned}
\]
Here the last isomorphism is the split case of
\eqref{eq:K2-fusion}.  All objects involved have finite length, and
$\Oc_{c_{p,q}}$ is Krull--Schmidt by Section~\ref{sec:prelim}.  Hence
Krull--Schmidt cancellation of the common summand $K_{r-1}$ gives
$F(V_{r+1})\cong K_{r+1}$.
\end{proof}

\subsection{Residue bookkeeping and transported recursions}

\begin{lem}[Residue-sector peeling]\label{lem:residue-sector-peeling}
Let all modules below have finite length.  Suppose $\xi_-\ne\xi_+$ and
\[
 A_-=A_-^{[\xi_-]},\quad B_-=B_-^{[\xi_-]},\qquad
 A_+=A_+^{[\xi_+]},\quad B_+=B_+^{[\xi_+]},
\]
and suppose
\[
 0\longrightarrow A_-\oplus A_+\longrightarrow X
 \longrightarrow B_-\oplus B_+\longrightarrow0
\]
is exact.  Then
\[
 X=X^{[\xi_-]}\oplus X^{[\xi_+]}
\]
and exact residue projection gives
\[
 0\to A_\pm\to X^{[\xi_\pm]}\to B_\pm\to0.
\]
For one fixed $\xi\in\{\xi_-,\xi_+\}$, suppose that
$X=D\oplus D'$ with $D=D^{[\xi]}$ and that $D$ has the same
Jordan--H\"older multiplicities as $X^{[\xi]}$.  Then
$D'^{[\xi]}=0$, and the inclusion $D\hookrightarrow X$ has image exactly
$X^{[\xi]}$.  Finally, if $X$ is supported in a finite subset
$\Sigma\subset\C/\Z$, then every direct summand of $X$ is supported
in $\Sigma$.
\end{lem}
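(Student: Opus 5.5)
The plan rests on Lemma~\ref{lem:block-projection}: every finite-length object is the direct sum of its conformal-residue sectors, and projection to a sector is exact.

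\emph{Step 1: the sector decomposition of $X$.} The support of $X$ is contained in the union of the supports of its two ends. This is the support-containment induction from the proof of Lemma~\ref{lem:block-projection}, applied to $0\to A_-\oplus A_+\to X\to B_-\oplus B_+\to0$. That union lies in $\{\xi_-,\xi_+\}$, so $X=X^{[\xi_-]}\oplus X^{[\xi_+]}$. Now apply the exact functor $(-)^{[\xi_\pm]}$ to the sequence. Since $(A_-\oplus A_+)^{[\xi_-]}=A_-$ (because $A_+^{[\xi_-]}=0$ when $\xi_-\ne\xi_+$), and likewise for the $B$'s and for $\xi_+$, this yields the two short exact sequences $0\to A_\pm\to X^{[\xi_\pm]}\to B_\pm\to0$.

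\emph{Step 2: recognizing $D$.} Suppose $X=D\oplus D'$. Residue projection commutes with finite direct sums, because each summand is $L_0$-stable. Hence $X^{[\xi]}=D^{[\xi]}\oplus D'^{[\xi]}=D\oplus D'^{[\xi]}$, using $D=D^{[\xi]}$. By hypothesis $D$ and $X^{[\xi]}$ have the same Jordan--H\"older multiplicities, so Lemma~\ref{lem:length-cancellation} gives $D'^{[\xi]}=0$. Therefore $X^{[\xi]}=D$ as subspaces of $X$, which says exactly that the inclusion $D\hookrightarrow X$ has image $X^{[\xi]}$.

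\emph{Step 3: support of summands.} If $Y$ is a direct summand of $X$, then $Y^{[\eta]}$ is a summand of $X^{[\eta]}$ for every $\eta$. So $Y^{[\eta]}=0$ whenever $\eta\notin\Sigma$, and $Y$ is supported in $\Sigma$.

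The only delicate point is in Step~2. One must check that the equality of subspaces $D=D^{[\xi]}\subseteq X^{[\xi]}$ is compatible with the internal direct sum. This holds because the sector of a direct sum is computed componentwise on generalized eigenvectors. Everything else is routine bookkeeping with exactness of residue projection.
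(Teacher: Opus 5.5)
Your proposal is correct and matches the paper's proof step for step: the two sector sequences come from exact residue projection, $X^{[\xi]}=D\oplus D'^{[\xi]}$ comes from additivity of residue projection over the biproduct, Lemma~\ref{lem:length-cancellation} then gives $D'^{[\xi]}=0$, and the support claim for summands is again additivity. The only cosmetic difference is in Step~1. You cite the support-containment argument, whereas the paper applies exact residue projection at each $\eta\notin\{\xi_-,\xi_+\}$ to obtain $0\to0\to X^{[\eta]}\to0$. These express the same fact.
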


\begin{proof}
Lemma~\ref{lem:block-projection} gives the two displayed exact
sequences.  If $\eta\notin\{\xi_-,\xi_+\}$, exact residue projection gives
$0\to0\to X^{[\eta]}\to0$, hence $X^{[\eta]}=0$.  This proves the asserted
two-sector decomposition.  For the second assertion, the biproduct inclusions and projections are
Virasoro-module maps, hence commute with $L_0$ and preserve generalized
conformal-weight residue sectors.  Exact residue projection of
$X=D\oplus D'$ therefore gives
\[
 X^{[\xi]}=D\oplus D'^{[\xi]}.
\]
Here and below, Jordan--H\"older multiplicities are indexed by
isomorphism classes of simple objects, not by their numerical labels.  In
particular, the possible collision $S_j\cong S_{p-j}$ for $q=2$ does not
affect the argument.  If $D$ and $X^{[\xi]}$ have the same such
multiplicities, Lemma~\ref{lem:length-cancellation} gives
$D'^{[\xi]}=0$.  Hence the
inclusion $D\hookrightarrow X$ has image exactly the residue sector
$X^{[\xi]}$.  More generally, residue projection is additive: if
$X=D_1\oplus D_2$, then
$X^{[\eta]}=D_1^{[\eta]}\oplus D_2^{[\eta]}$ for every $\eta$.  Thus every
direct summand of an object supported in a finite set $\Sigma\subset\C/\Z$
is supported in $\Sigma$.
\end{proof}

\begin{cor}[Residue-sector cancellation]
\label{cor:residue-sector-cancellation}
If $X^{[\xi]}=A\oplus B$ for finite-length modules and
$[X^{[\xi]}]=[A]$ in the Grothendieck group, then $B=0$.
\end{cor}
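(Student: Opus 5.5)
This follows from the finite-length cancellation principle, Lemma~\ref{lem:length-cancellation}, applied inside the residue sector. Since $X$ is a finite-length module, Lemma~\ref{lem:block-projection} shows that $X^{[\xi]}$ is a Virasoro submodule of $X$. It is therefore again of finite length. Consequently the decomposition $X^{[\xi]}=A\oplus B$ is a direct-sum decomposition of a finite-length object, and Jordan--H\"older multiplicities are additive on it:
\[
 [X^{[\xi]}]=[A]+[B]
\]
in the Grothendieck group.

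The hypothesis $[X^{[\xi]}]=[A]$ then gives $[B]=0$. Equivalently, $B$ and $X^{[\xi]}$ differ from $A$ by zero multiplicity in every simple isomorphism class. Lemma~\ref{lem:length-cancellation}, applied to $M=X^{[\xi]}$ with the summand $A$, yields $B=0$. Put differently, $\ell(B)=\ell(X^{[\xi]})-\ell(A)=0$.

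As in Lemma~\ref{lem:residue-sector-peeling}, multiplicities are indexed by isomorphism classes of simples. Any label collision $S_j\cong S_{p-j}$ when $q=2$ is therefore irrelevant. The only point to check is that the Grothendieck-group equality means equality of all multiplicities. This holds because the classes of simple objects form a basis of the Grothendieck group of a finite-length abelian category, so there is no real obstacle.
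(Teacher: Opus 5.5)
Your proposal is correct and is essentially the paper's argument: additivity gives $[B]=0$, and freeness of the Grothendieck group of a length category on simple isomorphism classes forces every Jordan--H\"older multiplicity of $B$ to vanish, so $B=0$. Routing the last step through Lemma~\ref{lem:length-cancellation} is the same reasoning in different words.
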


\begin{proof}
The equality gives $[B]=0$.  Since the Grothendieck group of a length
category is free on simple isomorphism classes, all Jordan--H"older
multiplicities of $B$ vanish and hence $B=0$.
\end{proof}

\begin{lem}[Grothendieck classes in a length category]
\label{lem:length-K0}
For finite-length objects in $\Oc_{c_{p,q}}$, equality in the
Grothendieck group is equality of Jordan--H\"older multiplicities:
\[
 [M]=\sum_{[S]}[M:S][S].
\]
In particular, if $[M]=[S]$ for a simple object $S$, then $M\cong S$.
\end{lem}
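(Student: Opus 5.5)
We argue directly from the Jordan--H\"older theorem and the definition of the Grothendieck group of the finite-length abelian category $\Oc_{c_{p,q}}$.

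The first step is to show that the assignment $M\mapsto\sum_{[S]}[M:S][S]$ into the free abelian group on isomorphism classes of simple objects is well defined and additive. By Jordan--H\"older, the multiplicities $[M:S]$ do not depend on the composition series. For a short exact sequence $0\to M'\to M\to M''\to0$, splicing a composition series of $M'$ with the preimage of one of $M''$ gives a composition series of $M$. Hence the multiplicities add.

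The universal property of $K_0$ then yields a homomorphism $K_0(\Oc_{c_{p,q}})\to\bigoplus_{[S]}\Z[S]$ sending $[M]\mapsto\sum_{[S]}[M:S][S]$. Conversely, induction on length along a composition series shows that $[M]=\sum_{[S]}[M:S][S]$ holds already in $K_0$. So the classes of simples generate $K_0$, and the map above sends $[S]$ to the corresponding basis vector. It follows that $K_0$ is free on the simple isomorphism classes. Equality of classes in $K_0$ is therefore exactly equality of all Jordan--H\"older multiplicities, with multiplicities indexed by isomorphism classes of simples and not by numerical labels (so the collision of Corollary~\ref{cor:positive-label-collisions} causes no ambiguity).

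For the final assertion, suppose $[M]=[S]$. Then $M$ has exactly one composition factor, namely a copy of $S$, so $\ell(M)=1$. Thus $M$ is simple. Its unique composition series $0\subset M$ has quotient $M$ itself, and the multiplicity equality forces $M\cong S$.

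No step here is a real obstacle. The only point needing care is that the subobjects used in the splicing stay in $\Oc_{c_{p,q}}$. This holds because $\Oc_{c_{p,q}}$ is an abelian length category closed under subquotients, as recorded in Section~\ref{sec:prelim}.
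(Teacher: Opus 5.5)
Your proof is correct and takes the same approach as the paper. The paper simply cites the standard fact that the Grothendieck group of a length category is free on the simple isomorphism classes, with coefficients the Jordan--H\"older multiplicities, and then deduces $\ell(M)=1$ and $M\cong S$ from $[M]=[S]$; you prove that standard fact in detail via additivity of multiplicities and the universal property of $K_0$.
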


\begin{proof}
The Grothendieck group of a length category is free on the isomorphism
classes of simple objects, with coefficients given by Jordan--H\"older
multiplicities.  Thus $[M]=[S]$ forces $\ell(M)=1$ and its unique simple
factor to be $S$.
\end{proof}

\begin{lem}[Residue pairing for a strip]\label{lem:strip-residue-pairing}
Let $a=np-r$ and $b=np+r$, where $n\ge1$ and $1\le r\le p-1$.  Then
\[
 [h_a]=[h_b],
 \qquad h_b-h_a=r(nq-1)\in\Z.
\]
If $1\le r\le p-2$, then $a-1,a+1,b-1,b+1$ are all positive and
\[
 [h_{a+1}]=[h_{b-1}],\qquad
 [h_{a-1}]=[h_{b+1}]
 \quad\text{in }\C/\Z,
\]
and these two residue classes are distinct.  At the far wall put
\[
 \xi_0:=[h_{(n-1)p}]=[h_{(n+1)p}],\qquad
 \xi_1:=[h_{(n-1)p+2}]=[h_{(n+1)p-2}].
\]
Then $\xi_0\ne\xi_1$ for every $n\ge1$.  When $n=1$, the symbol $h_0$
is used only in the arithmetic sense fixed in Section~\ref{sec:prelim};
there is no module $K_0$, and any corresponding module term is omitted.
For $p=2$ the same convention applies to the absent label $0$, and one
has explicitly
\[
 [h_{2n}]\ne[h_{2n-2}]=[h_{2n+2}].
\]
\end{lem}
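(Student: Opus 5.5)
This is a direct consequence of the weight arithmetic in Lemma~\ref{lem:weight-arithmetic} and Corollary~\ref{cor:residue-separation}; the plan is to read off each assertion from the explicit differences there.

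First, the strip pairing $[h_a]=[h_b]$ is exactly \eqref{eq:strip-central-difference}. It gives $h_b-h_a=r(nq-1)$, an integer because $n,q,r\in\Z$, and positive since $nq\ge2$.

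Next, for $1\le r\le p-2$ I check positivity of the four labels. We have $a-1=np-r-1\ge p-(p-2)-1=1$, and the other three labels are larger, so all four are positive. The two congruences $[h_{a+1}]=[h_{b-1}]$ and $[h_{a-1}]=[h_{b+1}]$ are \eqref{eq:parallel-one} and \eqref{eq:parallel-two}. Distinctness of the two classes follows from the crossed difference \eqref{eq:cross-one}. Its fractional part is $-rq/p$, which is nonintegral because $1\le r\le p-1$ and $\gcd(p,q)=1$ mean $p\nmid rq$. This is just Corollary~\ref{cor:residue-separation} restated.

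At the far wall, $\xi_0$ and $\xi_1$ are well defined by \eqref{eq:wall-parallel-one} and \eqref{eq:wall-parallel-two}. Since $h_j$ is defined for all integers via \eqref{eq:weights}, these identities hold also for $n=1$, where the label $0$ is purely arithmetic. The inequality $\xi_0\ne\xi_1$ comes from \eqref{eq:wall-cross-one}. For $p\ge3$ its fractional part $-(p-1)q/p$ is nonintegral, because $p\nmid(p-1)q$.

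The only point needing care is $p=2$. There $C=(n-1)p+2=2n=(n+1)p-2=D$, so $\xi_1=[h_{2n}]$, while $\xi_0=[h_{2n-2}]=[h_{2n+2}]$. The displayed inequality follows from \eqref{eq:weight-difference}:
\[
 h_{2n}-h_{2n-2}=\frac{2\,(q(4n-2)-4)}{8}=\frac{(2n-1)q}{2}-1 .
\]
This is a half-integer because $q$ is odd. That is precisely the statement in Lemma~\ref{lem:weight-arithmetic} that the fractional parts are $\pm q/2$.

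I expect no real obstacle. The one subtlety is keeping label-$0$ terms purely arithmetic, so that no module $K_0$ is ever invoked.
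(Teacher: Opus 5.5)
Your proposal is correct and follows essentially the same route as the paper: each congruence and non-congruence is read off from the explicit differences in Lemma~\ref{lem:weight-arithmetic} and Corollary~\ref{cor:residue-separation}, with $h_0$ treated purely arithmetically. The $p=2$ case is handled by the same half-integer computation $h_{2n}-h_{2n-2}=\frac{(2n-1)q}{2}-1$ that the paper uses.
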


\begin{proof}
The first congruence is the specialization of
\eqref{eq:weight-difference} giving $h_b-h_a=r(nq-1)$.  The neighbouring and wall congruences are the corresponding formulas in
Corollary~\ref{cor:residue-separation}.  The crossed wall differences there
show $\xi_0\ne\xi_1$ also for $n=1$; the value $h_0$ is only the arithmetic
extension of the Kac weight formula.  For $p=2$, coprimality forces $q$ to
be odd and
\[
 h_{2n}-h_{2n-2}=nq-1-\frac q2\notin\Z,
\]
while $h_{2n+2}-h_{2n-2}\in\Z$.  This gives the final displayed residue
relation.  The convention about the label $0$ is bookkeeping and is never
used as a statement about a Kac module.
\end{proof}

\begin{lem}[Transported projective recursions]\label{lem:transported-recursions}
Applying the additive strong monoidal functor $F$ to the projective
tensor-product decomposition isomorphisms of McRae--Yang gives the
following identities.  If
$p\ge3$,
\begin{align}
 K_2\boxt Q_{np}&\cong Q_{np+1},\label{eq:transport-wall-first}\\
 K_2\boxt Q_{np+1}&\cong2Q_{np}\oplus Q_{np+2},\label{eq:transport-first}\\
 K_2\boxt Q_{np+r}&\cong Q_{np+r-1}\oplus Q_{np+r+1},
   &&2\le r\le p-2,\label{eq:transport-interior}\\
 K_2\boxt Q_{(n+1)p-1}&\cong
 Q_{(n-1)p}\oplus Q_{(n+1)p-2}\oplus Q_{(n+1)p}.
 \label{eq:transport-far-wall}
\end{align}
Here $n\ge1$ and the term $Q_0$ is absent.  If $p=2$, then
\begin{align}
 K_2\boxt Q_{2n}&\cong Q_{2n+1},\label{eq:transport-p2-even}\\
 K_2\boxt Q_{2n+1}&\cong
 Q_{2n-2}\oplus2Q_{2n}\oplus Q_{2n+2},
 \label{eq:transport-p2-odd}
\end{align}
again omitting $Q_0$ when $n=1$.
\end{lem}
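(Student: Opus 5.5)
The plan is to take the source-side biproduct decompositions of $V_2\boxtimes P_j$ from \cite[Theorem~4.8]{McRaeYang} (and, for $p=2$, the two exceptional recursions collected in \cite[Theorem~1.4(1)]{McRaeYang}) and transport them through $F$ using only additivity and strong monoidality, in line with the proof protocol of this section. Concretely, for each listed recursion there is an isomorphism in the projective subcategory of the form
\[
 V_2\boxtimes P_j\;\cong\;\bigoplus_i P_{j_i},
\]
given by explicit biproduct inclusions and projections. Since $F$ is $\C$-linear, it sends these to morphisms satisfying the same biproduct identities, so $F\bigl(\bigoplus_iP_{j_i}\bigr)\cong\bigoplus_iQ_{j_i}$. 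The strong monoidal structure supplies an isomorphism $F(V_2\boxtimes P_j)\cong F(V_2)\boxtimes F(P_j)$, and the defining normalization $F(V_2)\cong K_2$, together with functoriality of $\boxtimes$ in the first variable, gives $K_2\boxtimes Q_j\cong F(V_2\boxtimes P_j)$. Composing these isomorphisms yields each displayed identity. No exactness of $F$ is used: only isomorphisms and split biproducts are transported.

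I will go through the cases in the order stated. For $p\ge3$: the wall case $V_2\boxtimes P_{np}\cong P_{np+1}$, the first step $V_2\boxtimes P_{np+1}\cong 2P_{np}\oplus P_{np+2}$, the interior steps for $2\le r\le p-2$, and the far-wall step $V_2\boxtimes P_{(n+1)p-1}\cong P_{(n-1)p}\oplus P_{(n+1)p-2}\oplus P_{(n+1)p}$. For $n=1$ the summand $P_0$ does not occur on the source side, which accounts for the convention that $Q_0$ is absent. For $p=2$, the two exceptional recursions give \eqref{eq:transport-p2-even} and \eqref{eq:transport-p2-odd} in the same way. I would also check that multiplicities such as $2P_{np}$ become $2Q_{np}$, which follows because $F$ preserves finite biproducts with multiplicity.

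The main obstacle is bookkeeping rather than mathematics. The label conventions of \cite[Theorem~4.8]{McRaeYang} have to be matched exactly with ours: indices of the form $np+r$, the placement of the wall summands, and the range of $n$. In particular, the $n=1$ far-wall term has to be checked against their statement so that no summand is dropped or duplicated. I would do this first by comparing Jordan--H\"older classes on both sides using \eqref{eq:source-composition-factors} and $V_2\boxtimes V_j$ characters as a sanity check, and then simply cite their decomposition isomorphisms.
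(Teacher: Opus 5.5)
Your proposal is correct and follows essentially the same route as the paper. Both arguments take the McRae--Yang biproduct isomorphisms $V_2\boxtimes P_j\cong\bigoplus_i P_{j_i}$ and transport them through $F$ using only additivity, strong monoidality and $F(V_2)\cong K_2$. The sources are Theorem~4.8 in the cases $r=p$ with strip index $n-1$, $r=1$, $2\le r\le p-2$ and $r=p-1$, and for $p=2$ the identities of Theorem~1.4(1). Your Jordan--H\"older sanity check is harmless but not needed for the proof.
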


\begin{proof}
Each source recursion used here is already an isomorphism in the additive
projective subcategory, not a nonsplit short exact sequence.  For example,
\[
 V_2\boxtimes P_{np}\cong P_{np+1}
 \quad\Longrightarrow\quad
 K_2\boxtimes Q_{np}\cong Q_{np+1};
\]
only preservation of this isomorphism is used.  For \eqref{eq:transport-wall-first}, apply the $r=p$ wall case of
\cite[Theorem~4.8]{McRaeYang} with the theorem's strip index equal to
$n-1$:
\[
 V_2\boxtimes P_{(n-1)p+p}\cong P_{np+1},
\]
that is,
\[
 V_2\boxtimes P_{np}\cong P_{np+1}
 \quad\Longrightarrow\quad
 K_2\boxtimes Q_{np}\cong Q_{np+1}.
\]
Only preservation of this isomorphism is used.  For completeness, the
four cases for $p\ge3$ are obtained from \cite[Theorem~4.8]{McRaeYang}
as follows.  The case $r=p$ with strip index $n-1$ gives
\eqref{eq:transport-wall-first}; the case $r=1$ with strip index $n$
gives \eqref{eq:transport-first}; the cases $2\le r\le p-2$ give
\eqref{eq:transport-interior} without reindexing; and the case $r=p-1$
gives \eqref{eq:transport-far-wall}.  Thus every displayed formula is
the image under $F$ of a biproduct isomorphism in the projective
subcategory.  For $p=2$, both \eqref{eq:transport-p2-even} and
\eqref{eq:transport-p2-odd} are recorded in
\cite[Theorem~1.4(1)]{McRaeYang}; equivalently, the first is the defining
identity $P_{2n+1}=V_2\boxtimes P_{2n}$ and the second is
\cite[Theorem~4.3]{McRaeYang}.  Applying the additive functor $F$ therefore preserves the biproduct decomposition,
and strong monoidality identifies
$F(V_2\boxtimes P)\cong K_2\boxtimes F(P)$.  This gives the displayed
recursions without applying $F$ to any nonsplit exact sequence.
\end{proof}

\subsection{Projective reconstruction}

\begin{prop}[Reconstruction of images of projectives]\label{prop:projective-images}
For $n\geq1$ and $1\leq r\leq p-1$,
\begin{equation}\label{eq:F-projective-filtration}
 0\longrightarrow K_{np-r}\longrightarrow Q_{np+r}
 \longrightarrow K_{np+r}\longrightarrow0.
\end{equation}
Moreover, with
\[
 \xi_{n,r}:=[h_{np-r}]=[h_{np+r}],
\]
one has the single-residue support
\begin{equation}\label{eq:F-projective-residue}
 Q_{np+r}=Q_{np+r}^{[\xi_{n,r}]}.
\end{equation}
Finally,
\begin{equation}\label{eq:F-wall-projective}
 Q_{np}\cong K_{np}\qquad(n\geq1).
\end{equation}
\end{prop}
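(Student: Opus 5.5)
I will argue by induction along each strip $np<j<(n+1)p$, working entirely with the transported recursions of Lemma~\ref{lem:transported-recursions}, the exact functor $K_2\boxtimes-$, the fusion sequences \eqref{eq:K2-fusion}, and exact residue projection (Lemma~\ref{lem:block-projection}). The wall statement \eqref{eq:F-wall-projective} is proved first, and the strip filtrations are then propagated from the wall inward.

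\emph{The wall base case.} For $n=1$, Lemma~\ref{lem:F-small-Weyl} gives $Q_p=F(V_p)\cong K_p$. For larger $n$, the wall $Q_{(n+1)p}$ is obtained as a residue summand of $K_2\boxtimes Q_{(n+1)p-1}$ via \eqref{eq:transport-far-wall} (or \eqref{eq:transport-p2-odd} when $p=2$), once the filtration of $Q_{(n+1)p-1}$ is known. So the wall and the strip statements are proved in a single induction on $n$.

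\emph{The first strip label.} Assume $Q_{np}\cong K_{np}$. Then \eqref{eq:transport-wall-first} and the nonsplit case $p\mid np$ of \eqref{eq:K2-fusion} give an exact sequence
\[
 0\to K_{np-1}\to Q_{np+1}\to K_{np+1}\to0 .
\]
This is \eqref{eq:F-projective-filtration} for $r=1$. Its single-residue support follows from Lemma~\ref{lem:Kac-residue-support} and $[h_{np-1}]=[h_{np+1}]$.

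\emph{The inductive step in $r$.} Suppose \eqref{eq:F-projective-filtration} holds for $r$, with $1\le r\le p-2$. Apply the exact functor $K_2\boxtimes-$ and the split fusion sequences, which apply since $p\nmid np\pm r$. This yields
\[
 0\to K_{a-1}\oplus K_{a+1}\to K_2\boxtimes Q_{np+r}\to K_{b-1}\oplus K_{b+1}\to0,
\]
where $a=np-r$ and $b=np+r$. Lemma~\ref{lem:strip-residue-pairing} pairs $[h_{a+1}]=[h_{b-1}]$ and $[h_{a-1}]=[h_{b+1}]$ into two distinct residue classes. Residue-sector peeling (Lemma~\ref{lem:residue-sector-peeling}) therefore splits $X=K_2\boxtimes Q_{np+r}$ into two sectors, each carrying a two-step Kac filtration.

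On the other side, \eqref{eq:transport-interior} (or \eqref{eq:transport-first} when $r=1$) gives $X\cong Q_{np+r-1}\oplus Q_{np+r+1}$, where $Q_{np+r-1}$ is already known: it is $K_{np}$ when $r=1$, and otherwise it carries the filtration for $r-1$. For $r=1$ there are two copies of $K_{np}$, which is simple with $[h_{np}]=[h_{np-2}]$... I must check that this coincides with the $\xi$-sector of $[h_{a-1}]$, i.e.\ $[h_{np}]$ against $[h_{np-2}],[h_{np+2}]$; I expect the argument to adjust accordingly. Comparing Jordan--H\"older multiplicities, Lemma~\ref{lem:residue-sector-peeling} identifies the known summand with the full residue sector containing it. Hence $Q_{np+r+1}$ is the complementary sector $X^{[h_{b+1}]}$, and it inherits
\[
 0\to K_{a-1}\to Q_{np+r+1}\to K_{b+1}\to0 ,
\]
which is the statement for $r+1$, together with its single-residue support.

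\emph{Closing the strip and the main obstacle.} At $r=p-1$, apply the same construction to $K_2\boxtimes Q_{(n+1)p-1}$ using \eqref{eq:transport-far-wall}. Its sub-side is $K_{(n-1)p}\oplus K_{(n-1)p+2}$ (with $K_0$ absent) and its quotient side is $K_{(n+1)p-2}\oplus K_{(n+1)p}$. The residue classes $\xi_0\ne\xi_1$ of Lemma~\ref{lem:strip-residue-pairing} separate these. The $\xi_1$-sector is accounted for by the known $Q_{(n+1)p-2}$. The $\xi_0$-sector is $Q_{(n-1)p}\oplus Q_{(n+1)p}$, and it is filtered by $K_{(n-1)p}$ and $K_{(n+1)p}$. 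Since $Q_{(n-1)p}\cong K_{(n-1)p}$ by induction, Krull--Schmidt and Lemma~\ref{lem:length-K0} (or Lemma~\ref{lem:length-cancellation}) force $Q_{(n+1)p}$ to have the class of the simple $K_{(n+1)p}$, so $Q_{(n+1)p}\cong K_{(n+1)p}$. When $n=1$ this sector reduces to $Q_{2p}$ alone.

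The case $p=2$ runs the same scheme with \eqref{eq:transport-p2-even} and \eqref{eq:transport-p2-odd}, using the explicit residue relation $[h_{2n}]\ne[h_{2n-2}]=[h_{2n+2}]$.

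I expect the main obstacle to be the bookkeeping of which residue sector each known summand occupies, especially the sector containing $2Q_{np}$ at $r=1$ and the possible label collisions when $q=2$. These must be handled by comparing multiplicities indexed by isomorphism classes of simples, never by numerical labels. I also need to make sure that no monomorphism is ever obtained by applying $F$ to a nonsplit source sequence.
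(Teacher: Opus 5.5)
Your plan is the paper's own proof, step for step: an outer induction on strips and an inner induction along each strip. The first vertex comes from the nonsplit wall fusion sequence transported across $K_2\boxtimes Q_{np}\cong Q_{np+1}$. Each further vertex is the complementary residue sector of $K_2\boxtimes Q_{np+r}$, once the already-known summand has been peeled off by a Jordan--H\"older comparison. The next wall is read off from the $\xi_0$-sector of $K_2\boxtimes Q_{(n+1)p-1}$ by a Grothendieck-class count.

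The one point to correct is the $r=1$ bookkeeping you left open. Your tentative equality $[h_{np}]=[h_{np-2}]$ is false: by \eqref{eq:cross-one}, $h_{np}-h_{np-2}=nq-1-q/p\notin\mathbb{Z}$. So $2K_{np}$ cannot be matched against the $[h_{a-1}]$-sector, whose class is $[K_{np-2}]+[K_{np+2}]$; the peeling would fail there.

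The correct placement is as follows. For $r=1$ one has $a+1=b-1=np$, so $2K_{np}$ lies in $\xi_-=[h_{a+1}]=[h_{b-1}]$. That sector fits into $0\to K_{np}\to X^{[\xi_-]}\to K_{np}\to0$, so its class is exactly $2[K_{np}]$. Peeling then gives $Q_{np+2}^{[\xi_-]}=0$, hence $Q_{np+2}\cong X^{[\xi_+]}$ with $0\to K_{np-2}\to Q_{np+2}\to K_{np+2}\to0$.

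This is the same rule as for $r\ge2$: the known summand always lies in the sector $[h_{b-1}]$. With this fix your argument is complete.
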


\begin{proof}
At object level every occurrence of $K_0$ or $Q_0$ below denotes an absent
summand.  The symbol $h_0$ is retained only in congruence calculations and
never denotes the lowest weight of an object.

We first assume $p\ge3$.  We prove simultaneously, by induction over
successive strips, the Kac filtrations, the wall identifications, and the
single-residue support.  More precisely, let $\mathcal J_n$ be the conjunction of the wall
identifications
\[
 Q_{mp}\cong K_{mp}\qquad(1\le m\le n)
\]
and, for every completed strip $1\le m\le n-1$ and $1\le r<p$, the
statements
\[
 0\to K_{mp-r}\to Q_{mp+r}\to K_{mp+r}\to0,
 \qquad
 Q_{mp+r}=Q_{mp+r}^{[h_{mp+r}]}.
\]
Lemma~\ref{lem:F-small-Weyl} gives $Q_j\cong K_j$ for $1\le j\le p$, hence
the base case $\mathcal J_1$.  Suppose $\mathcal J_n$ holds.  The proof of
$\mathcal J_n\Rightarrow\mathcal J_{n+1}$ is a second, inner induction
along the $n$-th strip.  We first construct $Q_{np+1}$ and then successively
construct
\[
 Q_{np+2},\ldots,Q_{(n+1)p-1}.
\]
Thus, at the step from horizontal index $r$ to $r+1$, every vertex strictly
to the left in the same strip has already been reconstructed.  The final
far-wall recursion then determines $Q_{(n+1)p}$.

\smallskip
\noindent\emph{First vertex.}
Using the fixed isomorphism $Q_{np}\cong K_{np}$, form the target-side
isomorphism
\[
 \alpha_n:
 K_2\boxtimes K_{np}
 \xrightarrow{\ \sim\ }
 K_2\boxtimes Q_{np}
 \xrightarrow{\ \sim\ }
 Q_{np+1},
\]
where the second arrow is \eqref{eq:transport-wall-first}.  Transporting
the known Virasoro wall-fusion sequence
\[
 0\longrightarrow K_{np-1}\longrightarrow
 K_2\boxtimes K_{np}\longrightarrow K_{np+1}\longrightarrow0
\]
across $\alpha_n$ gives
\[
 0\longrightarrow K_{np-1}\longrightarrow Q_{np+1}
 \longrightarrow K_{np+1}\longrightarrow0.
\]
No source short exact sequence is used in this step.
Since $h_{np+1}-h_{np-1}=nq-1\in\Z$, both end terms are supported in the
same residue class.  For every $\eta\ne[h_{np+1}]$, exact residue
projection gives
\[
 0\longrightarrow0\longrightarrow Q_{np+1}^{[\eta]}\longrightarrow0,
\]
so $Q_{np+1}^{[\eta]}=0$.  Hence
\[
 Q_{np+1}=Q_{np+1}^{[h_{np+1}]},
\]
which establishes both parts of the induction invariant at the first
vertex.

\smallskip
\noindent\emph{Propagation through the strip.}
Assume the sequence is known for $Q_b$, where
$b=np+r$, $1\le r\le p-2$, and put $a=np-r$.  Thus the inner induction
hypothesis provides the already established target-side exact sequence
\[
 0\longrightarrow K_a\longrightarrow Q_b\longrightarrow K_b
 \longrightarrow0.
\]
Since neither $a$ nor $b$ is divisible by $p$, both relevant $K_2$-fusion
sequences are in the split non-wall case.  More explicitly,
\[
 K_2\boxtimes K_a\cong K_{a-1}\oplus K_{a+1},\qquad
 K_2\boxtimes K_b\cong K_{b-1}\oplus K_{b+1}.
\]
Exactness of the target functor $K_2\boxt-$ therefore gives
\begin{equation}\label{eq:tensored-Q-sequence}
 0\longrightarrow K_{a-1}\oplus K_{a+1}
 \longrightarrow K_2\boxt Q_b
 \longrightarrow K_{b-1}\oplus K_{b+1}\longrightarrow0.
\end{equation}
By Lemmas~\ref{lem:Kac-residue-support} and
\ref{lem:strip-residue-pairing}, put
\[
 \xi_-=[h_{a+1}]=[h_{b-1}],\qquad
 \xi_+=[h_{a-1}]=[h_{b+1}],\qquad \xi_-\ne\xi_+.
\]
The two residue summands are
\begin{align}
 0&\to K_{a+1}\to (K_2\boxt Q_b)^{[\xi_-]}
       \to K_{b-1}\to0,\label{eq:interior-block-minus}\\
 0&\to K_{a-1}\to (K_2\boxt Q_b)^{[\xi_+]}
       \to K_{b+1}\to0,\label{eq:interior-block-plus}
\end{align}
with $\xi_-\ne\xi_+$.  This is a target-side exact sequence obtained by
exact tensoring and residue projection; $F$ is not being applied to the
sequence defining $Q_b$.

For $r=1$, the transported recursion is
\[
 K_2\boxt Q_{np+1}\cong2K_{np}\oplus Q_{np+2}.
\]
Since $K_{np}=K_{np}^{[\xi_-]}$ and $\xi_-\ne\xi_+$, projection of the
transported direct-sum decomposition gives
\[
 (K_2\boxt Q_{np+1})^{[\xi_-]}
 \cong2K_{np}\oplus Q_{np+2}^{[\xi_-]}.
\]
On the other hand, \eqref{eq:interior-block-minus} is
\[
 0\to K_{np}\to (K_2\boxt Q_{np+1})^{[\xi_-]}
   \to K_{np}\to0,
\]
so its Jordan--H\"older class is $2[K_{np}]$.  The summand $2K_{np}$ has exactly the Jordan--H\"older multiplicities
of the $\xi_-$ sector, so Lemma~\ref{lem:residue-sector-peeling}, applied
to the transported biproduct decomposition, gives
$Q_{np+2}^{[\xi_-]}=0$.  Since the whole tensor product is supported in
$\{\xi_-,\xi_+\}$ and $Q_{np+2}$ is a direct summand, the final assertion
of Lemma~\ref{lem:residue-sector-peeling} excludes every other residue.
Consequently
\[
 Q_{np+2}=Q_{np+2}^{[\xi_+]}.
\]
Projecting the transported biproduct decomposition to $\xi_+$ therefore
gives
\[
 (K_2\boxt Q_{np+1})^{[\xi_+]}
 \cong Q_{np+2},
\]
because the summand $2K_{np}$ is supported entirely in $\xi_-$.  Under
this identification, \eqref{eq:interior-block-plus} becomes
\[
 0\to K_{np-2}\to Q_{np+2}\to K_{np+2}\to0.
\]
Thus the new summand $Q_{np+2}$ has no $\xi_-$-component.

Now let $2\le r\le p-2$.  Here
\[
 b-1=np+(r-1),\qquad a+1=np-(r-1),
\]
so the inner induction hypothesis says that the already reconstructed
summand $Q_{b-1}$ is
supported in $\xi_-$ and fits into exactly
\[
 0\to K_{a+1}\to Q_{b-1}\to K_{b-1}\to0.
\]
Thus it has exactly the Jordan--H\"older class of
\eqref{eq:interior-block-minus}.  Projecting
\eqref{eq:transport-interior} gives
\[
 (K_2\boxt Q_b)^{[\xi_-]}
 \cong Q_{b-1}\oplus Q_{b+1}^{[\xi_-]}.
\]
The $\xi_-$-sector and the direct summand $Q_{b-1}$ have the same
Grothendieck class.  Since
\[
 (K_2\boxt Q_b)^{[\xi_-]}
 \cong Q_{b-1}\oplus Q_{b+1}^{[\xi_-]},
\]
Corollary~\ref{cor:residue-sector-cancellation} gives
$Q_{b+1}^{[\xi_-]}=0$.  As $Q_{b+1}$ is a direct summand of a tensor
product supported in $\{\xi_-,\xi_+\}$, it has no other residue support.
Thus $Q_{b+1}=Q_{b+1}^{[\xi_+]}$.  Moreover, the inner induction gives
$Q_{b-1}=Q_{b-1}^{[\xi_-]}$, so $Q_{b-1}^{[\xi_+]}=0$.  Projecting
\eqref{eq:transport-interior} to $\xi_+$ therefore yields
\[
 (K_2\boxt Q_b)^{[\xi_+]}
 \cong Q_{b+1}^{[\xi_+]}
 =Q_{b+1}.
\]
Under this identification, \eqref{eq:interior-block-plus} becomes
\[
 0\to K_{a-1}\to Q_{b+1}\to K_{b+1}\to0.
\]
This propagates both the filtration and the single-residue support through
the strip.  When $p=3$ the range
$2\le r\le p-2$ is empty, so the preceding $r=1$ step already reaches the
far edge: in that case $Q_{np+2}=Q_{(n+1)p-1}$.

\smallskip
\noindent\emph{Far edge and next wall.}
After $Q_{(n+1)p-1}$ has been reconstructed, the completed inner induction
provides the target-side exact sequence
\[
 0\longrightarrow K_{(n-1)p+1}
 \longrightarrow Q_{(n+1)p-1}
 \longrightarrow K_{(n+1)p-1}
 \longrightarrow0.
\]
Tensoring this already established target sequence exactly with $K_2$ and
using the split non-wall $K_2$-fusion decompositions on its end terms, set
\[
 M:=K_2\boxt Q_{(n+1)p-1}.
\]
Put
\[
 \xi_0=[h_{(n-1)p}]=[h_{(n+1)p}],\qquad
 \xi_1=[h_{(n-1)p+2}]=[h_{(n+1)p-2}].
\]
Lemma~\ref{lem:strip-residue-pairing} gives $\xi_0\ne\xi_1$ for every
$n\ge1$.  Exact tensoring and residue projection give, for $n>1$,
\begin{align}
 0&\to K_{(n-1)p}\to M^{[\xi_0]}
      \to K_{(n+1)p}\to0,\label{eq:far-wall-sector-zero}\\
 0&\to K_{(n-1)p+2}\to M^{[\xi_1]}
      \to K_{(n+1)p-2}\to0.\label{eq:far-wall-sector-one}
\end{align}
For $n=1$ the $\xi_0$ sector is instead the isomorphism
\begin{equation}\label{eq:far-wall-sector-zero-base}
 M^{[\xi_0]}\cong K_{2p},
\end{equation}
while \eqref{eq:far-wall-sector-one} remains valid.  In every case
$M^{[\eta]}=0$ for $\eta\notin\{\xi_0,\xi_1\}$.  On the other hand,
\eqref{eq:transport-far-wall} gives
\[
 M\cong Q_{(n-1)p}\oplus Q_{(n+1)p-2}\oplus Q_{(n+1)p},
\]
with $Q_0$ omitted.  By the outer induction hypothesis,
$Q_{(n-1)p}=K_{(n-1)p}$ is supported in $\xi_0$ when $n>1$, whereas the
inner induction has already reconstructed $Q_{(n+1)p-2}$ and placed it in
$\xi_1$.  Projection of the direct-sum
recursion to $\xi_1$ therefore gives
\[
 M^{[\xi_1]}
 \cong Q_{(n+1)p-2}\oplus Q_{(n+1)p}^{[\xi_1]}.
\]
Indeed, the inner induction gives
\[
 0\to K_{(n-1)p+2}\to Q_{(n+1)p-2}
 \to K_{(n+1)p-2}\to0,
\]
so this summand has exactly the Jordan--H\"older multiplicities of
\eqref{eq:far-wall-sector-one}.  Lemma~\ref{lem:residue-sector-peeling}, with
$D=Q_{(n+1)p-2}$ in the $\xi_1$ sector, therefore yields
\[
 Q_{(n+1)p}^{[\xi_1]}=0.
\]
Since no other residue occurs in $M$, the new wall summand is supported in
$\xi_0$; explicitly,
\[
 Q_{(n+1)p}=Q_{(n+1)p}^{[\xi_0]}.
\]

If $n>1$, projection to $\xi_0$ gives
\[
 M^{[\xi_0]}\cong K_{(n-1)p}\oplus Q_{(n+1)p}.
\]
In the Grothendieck group, comparison with
\eqref{eq:far-wall-sector-zero} gives
\[
 [M^{[\xi_0]}]=[K_{(n-1)p}]+[K_{(n+1)p}]
 =[K_{(n-1)p}]+[Q_{(n+1)p}],
\]
and therefore $[Q_{(n+1)p}]=[K_{(n+1)p}]$.  Applying the length
homomorphism $\ell:K_0(\Oc_{c_{p,q}})\to\Z$ gives
$\ell(Q_{(n+1)p})=1$, because $K_{(n+1)p}=S_{(n+1)p}$ is simple.  Hence
$Q_{(n+1)p}$ is simple.  Equality of Grothendieck classes identifies its
unique composition factor with $S_{(n+1)p}$, and therefore
\[
 Q_{(n+1)p}\cong K_{(n+1)p}.
\]
When $n=1$, the terms $K_0,Q_0$ do not occur.  Equation
\eqref{eq:far-wall-sector-zero-base} gives $M^{[\xi_0]}\cong K_{2p}$.  On the
other hand, the transported direct-sum decomposition and the fact that
$Q_{2p-2}$ is supported in $\xi_1$ give
$M^{[\xi_0]}\cong Q_{2p}$.  Hence $Q_{2p}\cong K_{2p}$.  Thus
$\mathcal J_{n+1}$ holds.

For $p=2$ we use the following induction invariant $\mathcal I_n$.
For every $1\le m\le n$ one has $Q_{2m}\cong K_{2m}$, and for every
$1\le m\le n-1$ the odd vertex satisfies
\[
 0\to K_{2m-1}\to Q_{2m+1}\to K_{2m+1}\to0,
 \qquad Q_{2m+1}=Q_{2m+1}^{[h_{2m+1}]}.
\]
The base case $\mathcal I_1$ is $Q_2\cong K_2$, supplied by
Lemma~\ref{lem:F-small-Weyl}.  Assume $\mathcal I_n$.  Equation
\eqref{eq:transport-p2-even} identifies
\[
 Q_{2n+1}\cong K_2\boxt K_{2n}.
\]
The wall fusion sequence therefore gives
\[
 0\to K_{2n-1}\to Q_{2n+1}\to K_{2n+1}\to0.
\]
The two end terms have congruent lowest weights.  Exact projection to every
other residue sector is zero, so
\[
 Q_{2n+1}=Q_{2n+1}^{[h_{2n+1}]}.
\]
This establishes the required filtration and single-residue support for the
new odd vertex.

Set $M:=K_2\boxt Q_{2n+1}$.  Exact tensoring of the displayed filtration,
using the split non-wall fusion sequences on both end terms, gives
\[
 0\to K_{2n-2}\oplus K_{2n}\to M
 \to K_{2n}\oplus K_{2n+2}\to0,
\]
with $K_0$ omitted for $n=1$.  Put
\[
 \xi_{\mathrm w}:=[h_{2n}],\qquad
 \xi_{\mathrm o}:=[h_{2n-2}]=[h_{2n+2}].
\]
By Lemma~\ref{lem:strip-residue-pairing},
\[
 h_{2n}-h_{2n-2}=nq-1-\frac q2\notin\Z,
\]
so the residues $\xi_{\mathrm w}$ and $\xi_{\mathrm o}$ are distinct.  Exact residue projection
gives
\begin{align*}
 0&\to K_{2n}\to M^{[\xi_{\mathrm w}]}
     \to K_{2n}\to0,\\
 0&\to K_{2n-2}\to M^{[\xi_{\mathrm o}]}
     \to K_{2n+2}\to0,
\end{align*}
with the first term of the second sequence omitted when $n=1$.
Meanwhile \eqref{eq:transport-p2-odd} gives
\[
 M\cong Q_{2n-2}\oplus2Q_{2n}\oplus Q_{2n+2},
\]
where $Q_0$ is absent.  If $n>1$, the induction hypothesis gives
\[
 Q_{2n-2}=K_{2n-2}=K_{2n-2}^{[\xi_{\mathrm o}]},
\]
whereas for $n=1$ the summand $Q_0$ is absent.  Hence in either case
$Q_{2n-2}^{[\xi_{\mathrm w}]}=0$.  Since
$2Q_{2n}=2K_{2n}$ is supported entirely in $\xi_{\mathrm w}$,
projection of \eqref{eq:transport-p2-odd} gives
\[
 M^{[\xi_{\mathrm w}]}
 \cong2K_{2n}\oplus Q_{2n+2}^{[\xi_{\mathrm w}]}.
\]
The wall-sector exact sequence has Jordan--H\"older class
$2[K_{2n}]$.  Lemma~\ref{lem:residue-sector-peeling}, applied to the
summand $2K_{2n}$, gives $Q_{2n+2}^{[\xi_{\mathrm w}]}=0$.  Hence the new even summand is entirely
supported in $\xi_{\mathrm o}$.  If $n>1$, the already known wall
$Q_{2n-2}=K_{2n-2}$ is also supported in $\xi_{\mathrm o}$, and projection
of \eqref{eq:transport-p2-odd} gives
\[
 M^{[\xi_{\mathrm o}]}
 \cong K_{2n-2}\oplus Q_{2n+2}.
\]
Comparison with the $\xi_{\mathrm o}$ exact sequence therefore gives
$[Q_{2n+2}]=[K_{2n+2}]$.  Since Grothendieck classes record
Jordan--H\"older multiplicities and $K_{2n+2}$ is simple, this equality
forces $Q_{2n+2}$ to have length one; hence
$Q_{2n+2}\cong K_{2n+2}$.  For $n=1$, the $Q_0$ summand is absent, so
\[
 M\cong2K_2\oplus Q_4.
\]
Since $2K_2$ is supported in $\xi_{\mathrm w}$, projection to
$\xi_{\mathrm o}$ gives
\[
 M^{[\xi_{\mathrm o}]}\cong Q_4.
\]
On the other hand, the second residue-sector sequence, with its $K_0$
term omitted, gives
\[
 M^{[\xi_{\mathrm o}]}\cong K_4.
\]
Hence $Q_4\cong K_4$.  Thus the new odd filtration and the wall identification together establish
$\mathcal I_{n+1}$, completing the induction.

\end{proof}

\begin{cor}[Composition factors]\label{cor:Q-composition-factors}
For $n\ge1$ and $1\le r\le p-1$,
\begin{equation}\label{eq:Q-global-JH}
 [Q_{np+r}]
 =[S_{np-r}]+2[S_{np+r}]+[S_{(n+2)p-r}].
\end{equation}
\end{cor}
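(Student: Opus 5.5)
The plan is to read off the Grothendieck class directly from the Kac filtration \eqref{eq:F-projective-filtration} established in Proposition~\ref{prop:projective-images}. By additivity of Jordan--H\"older multiplicities along short exact sequences (Lemma~\ref{lem:length-K0}), that filtration gives
\[
 [Q_{np+r}]=[K_{np-r}]+[K_{np+r}]
\]
in $K_0(\Oc_{c_{p,q}})$. It then suffices to compute the classes of the two one-row Kac modules.

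For the top term we use the length-two Kac sequence \eqref{eq:Kac-length-two} with the given $n$ and $r$. This gives $[K_{np+r}]=[S_{np+r}]+[S_{(n+2)p-r}]$.

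For the bottom term we use the reflected specialization \eqref{eq:Kac-reflected-length-two}, valid for $n\ge1$, obtained by writing $np-r=(n-1)p+(p-r)$. This gives $[K_{np-r}]=[S_{np-r}]+[S_{np+r}]$. In the vacuum-edge case $np-r=1$, this is the $n=0$, $r\mapsto p-1$ instance of \eqref{eq:Kac-length-two}. There the submodule is $S_{2p-1}=S_{np+r}$, so the formula is consistent and no separate treatment is needed.

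Adding the two classes yields \eqref{eq:Q-global-JH}. The formula is an identity in the Grothendieck group, so it is independent of whether the labels $S_j$ are pairwise nonisomorphic. The only possible collision is the one in Corollary~\ref{cor:positive-label-collisions}, and it merely merges coefficients on both sides consistently. The case $p=2$ needs no separate argument: \eqref{eq:F-projective-filtration} is stated for all $p$ in Proposition~\ref{prop:projective-images}, and the two Kac sequences hold for all $p$. There is no real obstacle. The only care needed is in checking the orientation and indexing of the reflected Kac sequence, so that the middle factor $S_{np+r}$ appears exactly twice.
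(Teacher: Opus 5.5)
Your proposal is correct and is exactly the paper's argument: add the classes of the two Kac end terms of \eqref{eq:F-projective-filtration}, using \eqref{eq:Kac-length-two} for $K_{np+r}$ and the reflected sequence \eqref{eq:Kac-reflected-length-two} for $K_{np-r}=K_{(n-1)p+(p-r)}$. Your vacuum-edge aside is unnecessary, since \eqref{eq:Kac-reflected-length-two} already covers $(n,r)=(1,p-1)$. Also, the corresponding unreflected instance of \eqref{eq:Kac-length-two} there is $(n,r)=(0,1)$, not $(0,p-1)$.
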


\begin{proof}
Add the Jordan--H\"older classes of the two Kac modules in
\eqref{eq:F-projective-filtration}.  The Kac sequence for
$K_{np-r}=K_{(n-1)p+(p-r)}$ has simple factors
$S_{np-r}$ and $S_{np+r}$, while that for $K_{np+r}$ has factors
$S_{np+r}$ and $S_{(n+2)p-r}$.
\end{proof}

\begin{rmk}\label{rmk:no-left-exactness}
Until projective faithfulness is established, the reconstruction uses only
the additive strong monoidal structure of $F$: $F$ is applied to
isomorphisms, finite biproduct decompositions, and morphisms.  The target
short exact sequences are either the known $K_2$-fusion sequences or are
obtained from previously established target sequences by exact tensoring
with $K_2$ and exact conformal-residue projection.  No nonsplit short exact
sequence in the source is transported through $F$.
\end{rmk}

\subsection{Faithfulness on projectives}

\begin{lem}[Temperley--Lieb ideal detection]\label{lem:TL-ideal-detection}
Let $u\in\C^\times$ satisfy $\operatorname{ord}(u^2)=p\ge3$ and set
$\delta=u+u^{-1}$.  With
\[
 [m]_u:=\frac{u^m-u^{-m}}{u-u^{-1}},\qquad
 \ell:=\min\{m\ge1:[m]_u=0\},
\]
one has $\ell=p$.  Every nonzero proper tensor ideal of $\TL(\delta)$
contains the first-critical one-row path idempotent
$p_{[p-1]}(u)$ of Goodman--Wenzl; this idempotent is evaluable and is
the $(p-1)$-strand Jones--Wenzl projector $JW_{p-1}(u)$.
\end{lem}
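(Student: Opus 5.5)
The statement has three parts: the identification $\ell=p$, evaluability of the first-critical Jones--Wenzl projector, and the tensor-ideal detection. The first two are elementary computations with quantum integers. The third we take from the Goodman--Wenzl classification of tensor ideals in $\TL(\delta)$ at a root of unity, matching their parameters to ours.

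\textbf{Step 1: $\ell=p$.} Since $\operatorname{ord}(u^2)=p\ge3$, we have $u^2\neq1$, so $u-u^{-1}\ne0$ and $[m]_u$ is well defined. Moreover
\[
 [m]_u=0\iff u^{2m}=1\iff p\mid m .
\]
Hence the least $m\ge1$ with $[m]_u=0$ is $m=p$.

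\textbf{Step 2: evaluability of $JW_{p-1}(u)$.} Use the Wenzl recursion
\[
 JW_{m+1}=JW_m\otimes\id-\frac{[m]_u}{[m+1]_u}\,(JW_m\otimes\id)\,e_m\,(JW_m\otimes\id).
\]
This recursion requires $[1]_u,\dots,[m+1]_u\ne0$ to build $JW_{m+1}$. By Step 1 these are nonzero for $m+1\le p-1$. Thus $JW_{p-1}(u)$ exists, is idempotent, and is killed by all cups and caps, by the usual characterization. The coefficients are regular at the point $u$. So Goodman--Wenzl's generic one-row path idempotent $p_{[p-1]}$ specializes at $u$ to an evaluable element satisfying the same defining properties, and uniqueness of the Jones--Wenzl projector identifies it with $JW_{p-1}(u)$.

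\textbf{Step 3: detection.} We would invoke Goodman--Wenzl's theorem that, when $[\ell]_u=0$ with $\ell$ minimal and $\ell\ge2$ (the nonsemisimple case), the nonzero proper tensor ideals of $\TL(\delta)$ form a chain. The largest such ideal is generated by $p_{[\ell-1]}$, and this generator lies in every nonzero proper ideal. With $\ell=p$ from Step 1, this is exactly the assertion.

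The main obstacle is the convention matching in Step 3: translating Goodman--Wenzl's parameter, their $\ell$, and their path-idempotent labelling into $u$, $\delta=u+u^{-1}$, and the one-row label $[p-1]$. If the cited theorem instead says that every nonzero ideal contains the ideal generated by $JW_{\ell-1}$, we would verify the equivalence directly.
\begin{itemize}
\item Any nonzero proper ideal $I$ contains some nonzero morphism.
\item Composing with cups and caps and using the nondegenerate trace pairing on the semisimple part below level $p-1$ shows that $I$ contains a negligible element.
\item $JW_{p-1}$ generates the negligible ideal, since it has quantum dimension $[p]_u=0$ and is minimal with this property.
\end{itemize}
We would then check that minimality forces $JW_{p-1}\in I$.
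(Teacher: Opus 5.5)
Your Steps 1 and 2 match the paper. The gap is in Step 3, which carries all of the content.

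\textbf{What Step 3 must cite.} The needed input, and the one the paper uses, is a \emph{uniqueness} theorem. Over $\C$ at this root of unity, the negligible ideal is the only nonzero proper tensor ideal of $\TL(\delta)$ (Goodman--Wenzl, Theorem~3.3). It is generated by $p_{[p-1]}(u)$ (their Proposition~2.1).

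The statement you attribute to them is inconsistent as written. Suppose the nonzero proper ideals formed a chain with more than one member, the largest generated by $p_{[\ell-1]}$. Then $p_{[\ell-1]}$ could lie in no smaller member, since that member would then contain the whole ideal generated by $p_{[\ell-1]}$. Your three clauses are compatible only when the chain has a single member, i.e.\ only under uniqueness.

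This matters. In positive or mixed characteristic there really are nonzero tensor ideals strictly inside the negligible one: the thick tensor ideals of $\mathrm{SL}_2$-tilting modules form a strictly descending chain there. Those smaller ideals do not contain $JW_{\ell-1}$, so the lemma fails. The conclusion therefore rests on a characteristic-zero theorem, and you must cite it in its uniqueness form.

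\textbf{Why the fallback does not close the gap.} It is immediate that a proper ideal $I$ consists of negligible morphisms. Closing a non-negligible element of $I$ against a suitable morphism with cups and caps puts a nonzero multiple of $\id_{\mathbf 1}$ in $I$, which forces $I$ to be everything. So your second bullet only recovers $I\subseteq\mathcal N$.

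The lemma asserts the reverse containment: every nonzero negligible morphism generates $JW_{p-1}$ under composition and tensoring. That $JW_{p-1}$ is the first Jones--Wenzl projector with quantum dimension $[p]_u=0$ does not imply this. Your final sentence, that minimality forces $JW_{p-1}\in I$, just restates what has to be proved.

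Any proof of this containment must use that the ground field has characteristic zero, since the statement is false otherwise. That is exactly the substance of Goodman--Wenzl's Theorem~3.3.
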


\begin{proof}
Since $p\ge3$, one has $u\ne\pm1$, so the denominator in $[m]_u$ is
nonzero.  Thus
\[
 [m]_u=0\quad\Longleftrightarrow\quad u^{2m}=1,
\]
and $\operatorname{ord}(u^2)=p$ gives
\[
 \ell=p,\qquad [p]_u=0,\qquad [j]_u\ne0\quad(1\le j<p).
\]
Since $u^2$ is a primitive $p$-th root of unity, this is exactly the
root-of-unity specialization of \cite[Section~2.4]{GoodmanWenzl}, with
their critical parameter $\ell=p$.  Proposition~2.1 of
\cite{GoodmanWenzl} identifies the negligible ideal with the tensor
ideal generated by the first critical path idempotent
$p_{[p-1]}(u)$, and Theorem~3.3 identifies it as the unique nonzero
proper tensor ideal in this specialization.

The sign convention used for quantum $\mathfrak{sl}_2$ is obtained by
putting
\[
 \mathbb v:=-u.
\]
Then
\[
 u+u^{-1}=-\mathbb v-\mathbb v^{-1}=-[2]_{\mathbb v},
 \qquad
 [m]_{\mathbb v}=(-1)^{m-1}[m]_u.
\]
Hence the Goodman--Wenzl category and the Temperley--Lieb category in
the tilting convention have exactly the same loop parameter and the
same first critical index.  Because $[j]_u\ne0$ for $1\le j<p$, the
one-row path reaches the first critical line only at its endpoint, and
the corresponding path idempotent is evaluable.  The usual recursive
Jones--Wenzl formula has no vanishing denominator before that endpoint,
so
\[
 p_{[p-1]}(u)=JW_{p-1}(u).
\]
Equivalently, both are the unique evaluable idempotent with coefficient
one on the identity diagram and satisfying the Jones--Wenzl absorption
relations.  Therefore every nonzero proper tensor ideal contains
$JW_{p-1}(u)$.
\end{proof}

\begin{lem}[First-critical projector under the tilting realization]
\label{lem:first-critical-tilting-projector}
Let
\[
 \zeta=e^{\pi iq/p},\qquad u=-\zeta,\qquad
 \delta=u+u^{-1}=-\zeta-\zeta^{-1},
\]
with $p\ge3$, and let
\[
 \iota:\TL(\delta)\xrightarrow{\sim}\TL_{\C,\zeta}
\]
be the identity-on-diagrams monoidal identification.  Then
\[
 \iota\bigl(JW_{p-1}(u)\bigr)=E_{p-1}^{(\infty,p)},
\]
where $E_{p-1}^{(\infty,p)}$ is the mixed Jones--Wenzl idempotent of
\cite{SuttonTubbenhauerWedrichZhu}.  Under the
Temperley--Lieb/tilting equivalence its retract is the indecomposable
tilting module $T(p-1)$.
\end{lem}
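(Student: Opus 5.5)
The plan is to compare the two idempotents through their characterizing properties rather than through explicit coefficients.

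\emph{Step 1: matching parameters.} With $u=-\zeta$ and $\zeta=e^{\pi iq/p}$ we have $u^2=\zeta^2=e^{2\pi iq/p}$. Since $\gcd(p,q)=1$, this is a primitive $p$-th root of unity, so $\operatorname{ord}(u^2)=p\ge3$. Hence Lemma~\ref{lem:TL-ideal-detection} applies and gives:
\begin{itemize}
\item $[j]_u\ne0$ for $1\le j<p$;
\item $JW_{p-1}(u)$ is well defined by the usual recursion;
\item $JW_{p-1}(u)$ is the unique idempotent in $\End_{\TL(\delta)}(p-1)$ with identity coefficient one that is killed by every cap and cup.
\end{itemize}
The loop parameter $\delta=-\zeta-\zeta^{-1}=-[2]_\zeta$ is the one used in the tilting convention of \cite{SuttonTubbenhauerWedrichZhu}. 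So the identity-on-diagrams assignment $\iota$ respects the loop relation and is a well-defined monoidal isomorphism. Because $\iota$ is the identity on diagrams, it carries idempotents to idempotents, preserves the identity-diagram coefficient, and preserves the absorption relations $e_iJW=JW e_i=0$.

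\emph{Step 2: identifying $\iota(JW_{p-1}(u))$ with the mixed idempotent.} In \cite{SuttonTubbenhauerWedrichZhu}, the mixed idempotent $E^{(\infty,p)}_{v}$ is defined for $v=p-1$, which lies below the first critical index. In this range the $p$-adic (mixed) expansion of $v$ has a single digit and no higher generations. The paper's construction then reduces to the ordinary Jones--Wenzl projector, and its defining recursion has nonvanishing quantum-number denominators $[j]_\zeta$ with $j<p$.

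To conclude, I would compare the two projectors through the absorption characterization from Step 1. $E^{(\infty,p)}_{p-1}$ is an idempotent with identity coefficient one that is annihilated by all cups and caps. By the uniqueness statement, it therefore equals $\iota(JW_{p-1}(u))$.

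The one point needing care is signs. Under $\mathbb v=-u$ we have $[m]_{\mathbb v}=(-1)^{m-1}[m]_u$. Whether the recursion is written with $[m]_u$ or with $[m]_\zeta$, the coefficients of the projector could a priori differ by signs. The uniqueness argument avoids computing these coefficients altogether, which is why I would use it in place of a term-by-term comparison.

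\emph{Step 3: the retract.} Under the Temperley--Lieb/tilting equivalence of \cite{SuttonTubbenhauerWedrichZhu}, the object $p-1$ is sent to $V^{\otimes(p-1)}$. Their classification of indecomposable tilting modules says that the image of $E^{(\infty,p)}_v$ is $T(v)$. Alternatively, for $v<p$ one can argue directly:
\begin{itemize}
\item the weight $p-1$ is in the fundamental alcove, so $T(p-1)=\Delta(p-1)$ is the unique summand of $V^{\otimes(p-1)}$ containing the highest weight $p-1$;
\item $JW_{p-1}$ is a primitive idempotent whose image contains that highest-weight vector, since its identity coefficient is one;
\item primitivity of $JW_{p-1}$ follows because $\End(T(p-1))=\C$.
\end{itemize}
Together these give that the retract is $T(p-1)$.

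\emph{Main obstacle.} I expect the main difficulty to be Step 2: pinning down the precise normalization and sign conventions of $E^{(\infty,p)}$ in \cite{SuttonTubbenhauerWedrichZhu}, so as to confirm that $v=p-1$ is a single-generation case with ordinary Jones--Wenzl recursion. The uniqueness-by-absorption argument is the tool I would rely on to sidestep that bookkeeping.
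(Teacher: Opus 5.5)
Your proof is correct. It differs from the paper's mainly in how Step~2 is carried out.

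The paper proceeds by citation. It notes that $p=[1,0]_{\infty,p}$ is an eve, so $E^{(\infty,p)}_{p-1}$ is the ordinary one-row Jones--Wenzl projector. It then appeals to the convention comparison in Section~1.3(b) of \cite{SuttonTubbenhauerWedrichZhu}, which matches these projectors with Goodman--Wenzl's once the circle values agree.

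You instead identify the two elements intrinsically. Since $\iota$ is an algebra isomorphism preserving the circle value, $\iota(JW_{p-1}(u))$ and $E^{(\infty,p)}_{p-1}$ both lie in $\End_{\TL_{\C,\zeta}}(p-1)$, both have identity coefficient one, and both are killed by every $e_i$. Such an element $f$ is unique: every non-identity diagram is a word in the $e_i$, so $f=f\,JW_{p-1}=JW_{p-1}$. This makes the sign discrepancy $[m]_\zeta=(-1)^{m-1}[m]_u$ between the two recursions irrelevant, rather than resting on the exact wording of a convention remark. The paper's route is shorter; yours is more self-contained on signs. Both still need the external fact that $E^{(\infty,p)}_{p-1}$ is an ordinary Jones--Wenzl projector, and both can cite \cite[Theorem~3.18]{SuttonTubbenhauerWedrichZhu} for the retract.

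There are three small bookkeeping slips.
\begin{enumerate}
\item In \cite{SuttonTubbenhauerWedrichZhu} the projector $E_{v-1}$ is governed by the mixed expansion of $v$, not of $v-1$. Here $v=p=[1,0]_{\infty,p}$ is an eve of generation one, not a single-digit number. It is the eve property that yields the reduction to $JW_{p-1}$.
\item In your alternative Step~3, $p-1$ is the Steinberg weight on the upper wall of the fundamental alcove, not inside it. This does not affect the conclusion, since $T(p-1)=\Delta(p-1)$ is still simple.
\item Primitivity of $JW_{p-1}$ should be deduced from $JW_{p-1}\,\End_{\TL_{\C,\zeta}}(p-1)\,JW_{p-1}=\C\,JW_{p-1}$ together with fullness of the Temperley--Lieb functor. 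Deducing it from $\End(T(p-1))=\C$ presupposes the identification of the retract that you are trying to prove.
\end{enumerate}
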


\begin{proof}
Since $u^2=\zeta^2=e^{2\pi iq/p}$ is a primitive $p$-th root of unity,
\[
 [p]_\zeta=0,\qquad [j]_\zeta\ne0\quad(1\le j<p),
\]
so the complex specialization has mixed characteristic
$(\infty,p)$.  The integer $p=[1,0]_{\infty,p}$ is an eve in the sense
of \cite[Definition~2.5]{SuttonTubbenhauerWedrichZhu}.  For an eve the
mixed projector at the first critical position specializes to the
ordinary one-row Jones--Wenzl projector.  The comparison of diagrammatic
conventions in \cite[Section~1.3(b)]{SuttonTubbenhauerWedrichZhu}
states that, over $\C$ at a root of unity, these mixed projectors agree
with the Goodman--Wenzl projectors after matching circle values.  Our
choice $u=-\zeta$ gives
\[
 u+u^{-1}=-\zeta-\zeta^{-1},
\]
so the identification $\iota$ matches the two conventions and hence
\[
 \iota\bigl(JW_{p-1}(u)\bigr)=E_{p-1}^{(\infty,p)}.
\]
Finally, \cite[Theorem~3.18]{SuttonTubbenhauerWedrichZhu} identifies the
retract defined by the mixed projector $E_{v-1}$ with $T(v-1)$.
Taking $v=p$ gives the claimed retract $T(p-1)$.
\end{proof}

\begin{prop}\label{prop:F-faithful-projectives-large}
If $p\geq3$, the restriction of $F$ to the projective subcategory is
faithful.
\end{prop}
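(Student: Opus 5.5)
The idea is to show that the class of projective morphisms killed by $F$ is a tensor ideal, and then to use Lemma~\ref{lem:TL-ideal-detection} to rule it out. For projectives $P,P'$ in $\mathcal P^k$, put
\[
 \mathcal I(P,P'):=\ker\bigl(F:\Hom(P,P')\to\Hom(F(P),F(P'))\bigr).
\]
Because $F$ is $\C$-linear and strong monoidal, $\mathcal I$ is closed under composition with arbitrary projective morphisms on either side. It is also closed under $\id_X\boxtimes-$ and $-\boxtimes\id_X$ for projective $X$, since $F(\id_X\boxtimes f)$ is conjugate to $\id_{F(X)}\boxtimes F(f)$ via the monoidal structure isomorphisms. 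Hence $\mathcal I$ is a tensor ideal of the additive monoidal category $\mathcal P^k$. Transport it along the McRae--Yang equivalence of $\mathcal P^k$ with root-of-unity tilting modules, and then along the Temperley--Lieb/tilting equivalence, where the generator $V_2$ corresponds to the one-strand object. This produces a tensor ideal of $\TL(\delta)$ for the parameter $u=-\zeta$, $\zeta=e^{\pi iq/p}$ of Lemma~\ref{lem:first-critical-tilting-projector}. The ideal is proper, because $F(\id_{V_1})=\id_{K_1}\neq0$.

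Suppose, for a contradiction, that $\mathcal I\neq0$. Then Lemma~\ref{lem:TL-ideal-detection} says the Temperley--Lieb ideal contains $JW_{p-1}(u)$. By Lemma~\ref{lem:first-critical-tilting-projector}, $JW_{p-1}(u)$ is the idempotent whose retract is $T(p-1)$. Since ideals are closed under composition, the identity of that retract also lies in the ideal. Under the tilting dictionary, $T(p-1)$ corresponds to the affine projective $P_{p-1}=V_{p-1}$. We would need to check that the dictionary sends $T(j)$ to $P_{j+1}$ on the first strip, and this is consistent with the recursion $V_2\boxtimes V_r\cong V_{r-1}\oplus V_{r+1}$. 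It would follow that $F(\id_{V_{p-1}})=0$. But Lemma~\ref{lem:F-small-Weyl} gives $F(V_{p-1})\cong K_{p-1}\neq0$, so $F(\id_{V_{p-1}})=\id_{K_{p-1}}\neq0$. This contradiction shows $\mathcal I=0$, which is faithfulness on $\mathcal P^k$.

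The main obstacle is making the transport of the ideal precise. The tilting equivalence of \cite{McRaeYang} needs to be monoidal, or at least to send $V_2\boxtimes-$ to the one-strand tensoring. We also need the ideal, which lives in the Karoubian additive category, to restrict to a tensor ideal in the diagrammatic $\TL(\delta)$, meaning Hom spaces between tensor powers of the generator. For the restriction, observe that every projective is a retract of some $V_2^{\boxtimes m}$. Indeed, the recursions of Lemma~\ref{lem:transported-recursions} at the source level realize each $P_j$ as a summand of $V_2^{\boxtimes(j-1)}$. Consequently $\mathcal I$ is determined by its restriction to $\End(V_2^{\boxtimes m})$, and this restriction is a nonzero tensor ideal of $\TL(\delta)$ whenever $\mathcal I\neq0$. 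I would also verify that the circle value matches: $\delta=-\zeta-\zeta^{-1}$ is the dimension of $V_2$ in the McRae--Yang normalization, up to the sign conventions already reconciled in Lemma~\ref{lem:TL-ideal-detection}.
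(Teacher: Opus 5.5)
Your argument is essentially the paper's: the kernel of $F$ (composed with the monoidal tilting equivalence $\Phi$ and the full Temperley--Lieb functor $\Psi$) is a nonzero proper tensor ideal of $\TL(\delta)$, so by Goodman--Wenzl it contains $JW_{p-1}(u)$, and then the retract $T(p-1)$ is killed, contradicting Lemma~\ref{lem:F-small-Weyl}. There is one labelling slip. Under $\Phi(T_\lambda)=P_{\lambda+1}$, which is the rule you state yourself, $T(p-1)$ corresponds to the first wall projective $P_p=V_p=L_p$, not to $P_{p-1}$. The contradiction should therefore read $F(P_p)\cong K_p\neq0$, which Lemma~\ref{lem:F-small-Weyl} still supplies because it covers all $1\le r\le p$.
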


\begin{proof}
Put $\zeta=e^{\pi iq/p}$.  By \cite[Proposition~6.4 and Theorem~6.6]{McRaeYang} there is a
monoidal equivalence
\begin{equation}\label{eq:tilting-projective-equivalence}
 \Phi:\mathcal T_\zeta\xrightarrow{\sim}\mathcal P^k,
 \qquad \Phi(X)=V_2,\qquad \Phi(T_\lambda)=P_{\lambda+1}.
\end{equation}
Let $G:=F\circ\Phi$.  Suppose that $G$ is not faithful.  Choose
$0\ne f_0:A\to B$ with $G(f_0)=0$.  After decomposing $A$ and $B$ into indecomposable tiltings, choose
summands $T$ of $A$ and $U$ of $B$, and write
\[
 \iota_T^A:T\longrightarrow A,
 \qquad
 \pi_U^B:B\longrightarrow U
\]
for the corresponding inclusion and projection.  Choose $T,U$ so that
\[
 f:=\pi_U^B\circ f_0\circ\iota_T^A:T\longrightarrow U
\]
is nonzero.  Functoriality gives $G(f)=0$.  By \cite[Remark~2.11]{SuttonTubbenhauerWedrichZhu}, every
indecomposable tilting is a direct summand of a tensor power of the
fundamental tilting module $X=T(1)$.
Choose retract data
\[
 T\xrightarrow{j_T}X^{\otimes m}\xrightarrow{r_T}T,
 \qquad
 U\xrightarrow{j_U}X^{\otimes n}\xrightarrow{r_U}U,
 \qquad r_Tj_T=\id_T,\quad r_Uj_U=\id_U.
\]
Then
\[
 r_U\circ(j_U\circ f\circ r_T)\circ j_T=f\ne0,
\]
so $j_U\circ f\circ r_T\ne0$.  Functoriality also gives
$G(j_U\circ f\circ r_T)=0$.  Set
\[
 u:=-\zeta=-e^{\pi iq/p},\qquad
 \delta:=u+u^{-1}=-\zeta-\zeta^{-1},
 \qquad \TL_\delta:=\TL(\delta).
\]
Since $\gcd(p,q)=1$,
\[
 \operatorname{ord}(u^2)=p,\qquad
 [p]_u=0,\qquad [j]_u\ne0\quad(1\le j<p).
\]
Thus the first critical index is $p$, and $JW_{p-1}(u)$ is evaluable
because its recursion involves only the nonzero denominators
$[1]_u,\ldots,[p-1]_u$.  Goodman--Wenzl use the root parameter $t=u$.  In the convention of
\cite{SuttonTubbenhauerWedrichZhu}, the specialized Temperley--Lieb
category $\TL_{\C,\zeta}$ has circle value
\[
 -[2]_\zeta=-\zeta-\zeta^{-1}=\delta.
\]
Thus the identity on Temperley--Lieb diagrams gives a strict
$\C$-linear monoidal identification
\[
 \iota:\TL_\delta\xrightarrow{\sim}\TL_{\C,\zeta}.
\]
Moreover
\[
 [m]_{-\zeta}=(-1)^{m-1}[m]_\zeta,
 \qquad
 [m]_u=0\iff u^{2m}=1.
\]
Since $\operatorname{ord}(u^2)=\operatorname{ord}(\zeta^2)=p$, the
Goodman--Wenzl critical index and the quantum characteristic of the
complex specialization in \cite{SuttonTubbenhauerWedrichZhu} are both
$\ell=p$, with characteristic parameter $\mathsf p=\infty$.  Composing
$\iota$ with the functor of
\cite[Proposition~2.13]{SuttonTubbenhauerWedrichZhu} gives
\begin{equation}\label{eq:TL-fullness}
 \Psi:\TL_\delta\longrightarrow\mathcal T_\zeta,
 \qquad1\longmapsto X,
\end{equation}
which is full on Hom spaces between tensor powers and induces an
equivalence after additive idempotent completion; see also
\cite[Remark~2.15]{SuttonTubbenhauerWedrichZhu}.  For every $m,n\ge0$, set
\[
 J(m,n):=\ker\!\left(
 \Hom_{\TL_\delta}(m,n)\xrightarrow{\,G\circ\Psi\,}
 \Hom(G\Psi(m),G\Psi(n))\right).
\]
Because $G\circ\Psi$ is $\C$-linear and strong monoidal, the spaces
$J(m,n)$ form a $\C$-linear family stable under arbitrary pre- and
postcomposition and under tensoring on either side; hence $J$ is a tensor
ideal.  By fullness choose $\widetilde f$
with $\Psi(\widetilde f)=j_U\circ f\circ r_T$.  Since the latter morphism
is nonzero, necessarily $\widetilde f\ne0$; thus $\widetilde f\in J(m,n)$
and $J\ne0$.
The ideal is proper because
\[
 (G\circ\Psi)(\id_{\mathbf1})
 =\id_{(G\circ\Psi)(\mathbf1)}\ne0;
\]
in fact $(G\circ\Psi)(\mathbf1)\cong K_1$.

Since $\operatorname{ord}(u^2)=p$ by $\gcd(p,q)=1$,
Lemma~\ref{lem:TL-ideal-detection} applies.  Equivalently, by
\cite[Theorem~3.3]{GoodmanWenzl}, the nonzero proper tensor ideal $J$
is the negligible ideal, and \cite[Proposition~2.1]{GoodmanWenzl}
therefore gives
\[
 JW_{p-1}(u)=p_{[p-1]}(u)\in J.
\]
By Lemma~\ref{lem:first-critical-tilting-projector}, the retract of
$\Psi(JW_{p-1}(u))$ is precisely the indecomposable tilting module
\[
 T(p-1)=T_{p-1}.
\]
Thus, setting
\[
 e:=\Psi(JW_{p-1}(u))\in
 \End_{\mathcal T_\zeta}(X^{\otimes(p-1)}),
\]
the retract defined by $e$ is $T_{p-1}$.  Choose maps
\[
 T_{p-1}\xrightarrow{i}X^{\otimes(p-1)}
 \xrightarrow{\pi}T_{p-1},
 \qquad \pi i=\id_{T_{p-1}},\qquad i\pi=e.
\]
Since $JW_{p-1}(u)\in J$, one has
$(G\circ\Psi)(JW_{p-1}(u))=G(e)=0$.
Because $e=i\pi$, one has $e\circ i=i$, and hence
\[
 G(i)=G(e)\circ G(i)=0.
\]
Therefore
\[
 \id_{G(T_{p-1})}=G(\pi)\circ G(i)=0.
\]
Thus $G(T_{p-1})$ is a zero object.  Under \eqref{eq:tilting-projective-equivalence},
$T_{p-1}$ corresponds to $P_p$, so this says $F(P_p)=0$, contradicting
Lemma~\ref{lem:F-small-Weyl}, which gives $F(P_p)=K_p\ne0$.
\end{proof}

\begin{lem}\label{lem:p-two-faithful}
If $p=2$, then $F|_{\mathcal P^k}$ is faithful.
\end{lem}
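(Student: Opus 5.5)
We prove faithfulness of $F|_{\mathcal P^k}$ for $p=2$ directly from the Hom package of Proposition~\ref{prop:source-Hom-package} and the reconstructed images of Proposition~\ref{prop:projective-images}. Since $F$ is additive and $\mathcal P^k$ is Krull--Schmidt, it suffices to show that $F$ is injective on $\Hom(P_i,P_j)$ for indecomposable projectives $P_i,P_j$. By Proposition~\ref{prop:source-Hom-package}, the only nonzero such spaces are:
\begin{itemize}
\item[(i)] $\End(P_{2m})=\C$ at the walls;
\item[(ii)] $\End(P_{s_i})$ along the unique reflection chain $s_{2m}=4m+1$, $s_{2m+1}=4m+3$ (all odd labels, with initial label $r=1$);
\item[(iii)] the one-dimensional adjacent spaces spanned by $x_i$ and $y_i$.
\end{itemize}
Case (i) is immediate, since $F(P_{2m})\cong K_{2m}\neq0$ by \eqref{eq:F-wall-projective}, so $F(\id)\neq0$.

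For case (ii), write $\End(P_{s_{i+1}})=\C\id\oplus\C N$ with $N=N_{s_{i+1}}$. Since $F(\id)=\id\neq0$ on the nonzero object $Q_{s_{i+1}}$, it is enough to show $F(N)\neq0$ and $F(N)\notin\C\id$. The second point holds because $F(N)$ is nilpotent and $Q\neq0$. For $F(N)\neq0$ I use the twist. By Lemma~\ref{lem:source-L0-radical}, $\theta^-_P=\lambda_P(\id+\gamma_PN)$ with $\gamma_P\neq0$. McRae--Yang's $F$ preserves the minus twist \cite[Theorem~7.15]{McRaeYang}, so
\[
F(\theta^-_P)=\theta_{Q}=e^{2\pi iL_0}\big|_{Q}.
\]
If $F(N)=0$, then $e^{2\pi iL_0}=\lambda_P\id$ on $Q$, so it is diagonalizable, and Lemma~\ref{lem:exponential-logarithmic} would make $Q$ an ordinary ($L_0$-semisimple) module. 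The Kac filtration $0\to K_a\to Q\to K_b\to0$ of \eqref{eq:F-projective-filtration} has simple-socle ends. I would then show $Q$ is logarithmic by a non-splitness argument, and this is the main obstacle.

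To prove $Q$ is logarithmic, I would use a self-extension count. Corollary~\ref{cor:Q-composition-factors} gives $[Q]=[S_a]+2[S_b]+[S_c]$. Since $F$ preserves the projective recursions, $Q$ is indecomposable in the target whenever $P$ is, provided we know $\End(Q)$ is local. I would first try to get locality differently: if $L_0$ were semisimple on $Q$, then $Q$ would lie in classical category $\mathcal O$, where Lemma~\ref{lem:local-simple-Ext} and the BNW highest-weight structure make $S_b$ appear at most once among highest weights of an indecomposable module with these factors. Alternatively, avoid indecomposability entirely via Hom dimensions. Apply $\Hom(Q,-)$ and use $Q=K_2\boxtimes Q_{s_i'}$ with adjunction and exactness of $K_2\boxtimes-$ to push the problem down the recursion \eqref{eq:transport-p2-odd}, $K_2\boxtimes Q_{2n+1}\cong Q_{2n-2}\oplus2Q_{2n}\oplus Q_{2n+2}$. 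The wall objects $Q_{2n}=K_{2n}$ are simple-projective images, and the adjoint of the identity of $Q_{2n}$ gives explicit nonzero maps $Q_{2n\pm1}\to K_{2n}\to\dots$. Concretely, I would realize $x_i,y_i$ (up to scalar) as composites
\[
P_{s_i}\to V_2\boxtimes P_{\mathrm{wall}}\to P_{s_{i+1}}
\]
of biproduct inclusions and projections tensored with (co)evaluations of $V_2$. The images under $F$ are composites of biproduct maps and $K_2$ evaluations, whose nonvanishing can be checked via residue projection onto the wall sector $\xi_{\mathrm w}$: there $(K_2\boxtimes Q_{2n\pm1})^{[\xi_{\mathrm w}]}$ is a known length-two module, and the relevant maps are identities on $K_{2n}$ summands.

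This handles case (iii): $F(x_i),F(y_i)\neq0$. Then $F(x_iy_i)=F(N)$ is nonzero provided the two-step composite survives, which again reduces to the wall-sector computation. That gives a second route to (ii), independent of the twist, and I would present whichever closes cleanly.
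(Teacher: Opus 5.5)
\textbf{There is a genuine gap.} You correctly reduce the lemma to the single claim $F(N_{2m+1})\neq0$ for $m\ge1$, but neither of your routes proves it.

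\emph{The twist route.} You rightly note that $F(N)=0$ would make $e^{2\pi iL_0}|_{Q_{2m+1}}$ scalar, hence $Q_{2m+1}$ ordinary. You then leave logarithmicity of $Q_{2m+1}$ open, and the substitutes you suggest do not work as stated:
\begin{itemize}
\item Locality of $\End(Q)$ is not available at this stage; in the paper, Corollary~\ref{cor:target-local-endomorphisms} is deduced \emph{from} projective faithfulness, so using it here is circular.
\item Nothing you cite bounds the multiplicity of $S_b$ in an indecomposable ordinary module.
\item The nonsplitting in \eqref{eq:K2-fusion} does not by itself rule out a nonsplit \emph{ordinary} extension of $K_{2m+1}$ by $K_{2m-1}$.
\end{itemize}

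\emph{The composite route.} Knowing $F(x_i)\neq0$ and $F(y_i)\neq0$ separately says nothing about $F(x_i)\circ F(y_i)$. This composite factors through the non-wall object $Q_{s_i}$, and its nonvanishing is exactly the content of the lemma. So ``reduces to the wall-sector computation'' is an assertion, not an argument.

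\textbf{How the paper closes it.} You already had every ingredient. By \eqref{eq:transport-p2-even} and \eqref{eq:F-wall-projective}, $Q_{2m+1}\cong K_2\boxtimes K_{2m}$ is a \emph{wall} fusion product, which is logarithmic by \cite[Theorem~6.5 and Remark~6.11]{McRaeSopin}. Lemma~\ref{lem:exponential-logarithmic}, twist preservation and \eqref{eq:source-twist-nilpotent} then give $F(N_{2m+1})\neq0$. Since $\nu_{i+1}=x_i\circ y_i$ spans the radical, $F(x_i)F(y_i)\neq0$ gives nonvanishing of both adjacent arrows at once, so your separate case (iii) is unnecessary.

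\textbf{An alternative that rescues your adjunction idea.} Apply the adjunction to the endomorphism space itself rather than to the two factors.
\begin{itemize}
\item $F$ is strong monoidal and $V_2$ is rigid and self-dual, so $F$ intertwines the adjunction bijections up to the tensor-structure isomorphisms.
\item Using $P_{2n+1}\cong V_2\boxtimes P_{2n}$, $F$ is therefore injective on $\End(P_{2n+1})\cong\Hom(P_{2n+1},V_2\boxtimes P_{2n})$ if and only if it is injective on $\Hom(V_2\boxtimes P_{2n+1},P_{2n})$.
\item By the source recursion $V_2\boxtimes P_{2n+1}\cong P_{2n-2}\oplus2P_{2n}\oplus P_{2n+2}$ (whose image is \eqref{eq:transport-p2-odd}) and wall isolation, the latter space is spanned by the two biproduct projections onto the summands $2P_{2n}$.
\item Their images are linearly independent because $F(\pi_j)F(\iota_k)=\delta_{jk}\id_{K_{2n}}$ with $K_{2n}\neq0$.
\end{itemize}
The same device handles the one-dimensional adjacent spaces. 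This route is more elementary than the paper's, since it needs neither logarithmicity of wall fusion nor twist preservation. It does, however, have to be carried out, and your proposal stops short of it.
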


\begin{proof}
The wall projectives $P_{2n}$ are isolated and their identities have
nonzero images by Proposition~\ref{prop:projective-images}.  For $p=2$ the non-wall reflection chain is $s_i=2i+1$, so the
corresponding projectives are
\[
 P_1,P_3,P_5,\ldots.
\]
For $m\ge1$, \eqref{eq:transport-p2-even} and
Proposition~\ref{prop:projective-images} give
\[
 Q_{2m+1}\cong K_2\boxt Q_{2m}\cong K_2\boxt K_{2m}.
\]
Since $2\mid2m$, the wall-fusion result
\cite[Theorem~6.5 and Remark~6.11]{McRaeSopin} says that this Virasoro
module is logarithmic.  This is an external target-side fusion input and
does not use Proposition~\ref{prop:Q-logarithmic}; hence there is no
circularity.  Therefore $e^{2\pi iL_0}|_{Q_{2m+1}}$ is not diagonalizable
by Lemma~\ref{lem:exponential-logarithmic}.  By twist preservation,
\[
 F(\theta^-_{P_{2m+1}})=\theta_{Q_{2m+1}}
 =e^{2\pi iL_0}|_{Q_{2m+1}}.
\]
Together with \eqref{eq:source-twist-nilpotent}, this gives
\[
 e^{2\pi iL_0}|_{Q_{2m+1}}
 =\lambda_{2m+1}
  \bigl(\id+\gamma_{2m+1}F(N_{2m+1})\bigr),
 \qquad \gamma_{2m+1}\ne0.
\]
Thus $F(N_{2m+1})\ne0$.

Choose adjacent arrows on the source reflection chain as in
Proposition~\ref{prop:source-Hom-package}, with
$\nu_{i+1}=x_i\circ y_i$ a nonzero generator of the radical of the positive
vertex.  Since $\nu_{i+1}$ is a nonzero scalar multiple of the chosen
$N_{s_{i+1}}$,
\[
 0\ne F(\nu_{i+1})=F(x_i)F(y_i).
\]
Consequently both adjacent arrows have nonzero image.  The source Hom
package now shows injectivity on every one-dimensional adjacent Hom space.
On a positive vertex, $\id$ and $F(N)$ are linearly independent: if
$F(N)=c\id$, then $0=F(N)^2=c^2\id$, so $c=0$, contradicting
$F(N)\ne0$.  At the initial vertex the endomorphism space is already
one-dimensional.  These are all nonzero Hom spaces between indecomposable projectives when
$p=2$, by Proposition~\ref{prop:source-Hom-package}.  Thus $F$ is injective on every indecomposable projective Hom space.  A
morphism between finite direct sums is a matrix of component morphisms, so
componentwise injectivity implies faithfulness on the whole projective
subcategory.
\end{proof}

\begin{cor}[Projective faithfulness]\label{cor:F-faithful-projectives}
For every $p\geq2$, the restriction $F|_{\mathcal P^k}$ is faithful.
\end{cor}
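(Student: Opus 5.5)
The corollary is a case split on $p$, and each case has already been proved above. For $p\ge3$ I would cite Proposition~\ref{prop:F-faithful-projectives-large}. That argument transports the question to the tilting category via the equivalence $\Phi$ of \eqref{eq:tilting-projective-equivalence}. It then shows that a nonzero kernel of $F\circ\Phi\circ\Psi$ would be a nonzero proper tensor ideal of $\TL(\delta)$. Any such ideal contains $JW_{p-1}(u)$ by Lemma~\ref{lem:TL-ideal-detection}, so it would kill $T_{p-1}$, i.e.\ $F(P_p)=0$. This contradicts $F(P_p)\cong K_p\ne0$ from Lemma~\ref{lem:F-small-Weyl}. For $p=2$ I would cite Lemma~\ref{lem:p-two-faithful}. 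There, faithfulness comes from logarithmicity of $Q_{2m+1}\cong K_2\boxtimes K_{2m}$ together with twist preservation and \eqref{eq:source-twist-nilpotent}. This forces $F(N_{2m+1})\ne0$, hence nonvanishing of both adjacent arrows, hence injectivity on every indecomposable projective Hom space.

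Since the two ranges $p\ge3$ and $p=2$ exhaust $p\ge2$, the corollary follows immediately. I would add one sentence checking that both results are stated for the whole projective subcategory $\mathcal P^k$, not just its indecomposables. In the $p\ge3$ case the argument handles arbitrary morphisms $f_0:A\to B$ between tilting objects directly. In the $p=2$ case the passage from indecomposables to finite direct sums is the matrix argument at the end of Lemma~\ref{lem:p-two-faithful}. So no further reduction is needed. There is no real obstacle here; the only point worth noting is that the two inputs use logically independent mechanisms: the Temperley--Lieb ideal for $p\ge3$, and the twist and logarithmic structure for $p=2$. Neither relies on exactness of $F$.
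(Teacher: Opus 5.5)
Your proposal is correct and matches the paper's proof exactly: the corollary is obtained by combining Proposition~\ref{prop:F-faithful-projectives-large} for $p\ge3$ with Lemma~\ref{lem:p-two-faithful} for $p=2$. Your summaries of the two inputs are accurate, including the point that each already covers arbitrary morphisms between finite direct sums of indecomposable projectives.
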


\begin{proof}
Combine Proposition~\ref{prop:F-faithful-projectives-large} and
Lemma~\ref{lem:p-two-faithful}.
\end{proof}

\begin{prop}[Logarithmicity of non-wall images]\label{prop:Q-logarithmic}
For $n\ge1$ and $1\le r\le p-1$, the module $Q_{np+r}$ is logarithmic.
Consequently the sequence \eqref{eq:F-projective-filtration} is non-split.
\end{prop}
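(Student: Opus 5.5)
The plan is to use twist preservation together with projective faithfulness. Projective faithfulness is Corollary~\ref{cor:F-faithful-projectives}, which the paper has already established and which is exactly the point after which the protocol of Section~\ref{sec:images} allows twist preservation to be used.

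Fix $n\ge1$, $1\le r\le p-1$, and put $P=P_{np+r}$ with its radical generator $N_P=N_{np+r}$. Lemma~\ref{lem:source-L0-radical} gives
\[
 \theta^-_P=\lambda_P(\id+\gamma_PN_P),\qquad \lambda_P,\gamma_P\ne0.
\]
By \cite[Theorem~7.15]{McRaeYang}, $F$ carries $\theta^-$ to the Virasoro twist $e^{2\pi iL_0}$. Applying $F$ therefore yields
\[
 e^{2\pi iL_0}|_{Q_{np+r}}=\lambda_P\bigl(\id+\gamma_PF(N_P)\bigr).
\]
Since $N_P\ne0$ lies in the projective subcategory, projective faithfulness gives $F(N_P)\ne0$. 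We also have $F(N_P)^2=F(N_P^2)=0$. So the right-hand side is a nonzero scalar times a unipotent operator that is not the identity, and such an operator is not diagonalizable. Lemma~\ref{lem:exponential-logarithmic} then shows that $L_0$ is not semisimple on $Q_{np+r}$, which is to say $Q_{np+r}$ is logarithmic.

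Non-splitting of \eqref{eq:F-projective-filtration} follows next. Both end terms $K_{np-r}$ and $K_{np+r}$ are one-row Kac modules, hence $L_0$-semisimple. If the sequence split, $Q_{np+r}\cong K_{np-r}\oplus K_{np+r}$ would have semisimple $L_0$, contradicting logarithmicity.

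The main obstacle is the identification of $F(\theta^-_P)$ with $e^{2\pi iL_0}$ and the accompanying scalar normalization. Concretely, one must check that the Virasoro twist on $\Oc_{c_{p,q}}$ is $e^{2\pi iL_0}$, understood algebraically on generalized eigenspaces. Any overall sign or scalar discrepancy is harmless, because the argument only needs the image to be a nonzero scalar times $\id$ plus a nonzero nilpotent. The remaining subtle point is that $F(N_P)$ is nilpotent and nonzero, and this uses nothing beyond faithfulness and $N_P^2=0$.
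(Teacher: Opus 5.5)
Your proposal is correct and follows the paper's own proof essentially step for step. Projective faithfulness gives $F(N_{np+r})\neq0$, and twist preservation with Lemma~\ref{lem:source-L0-radical} writes $e^{2\pi iL_0}|_{Q_{np+r}}$ as a nonzero scalar times $\id+\gamma F(N_{np+r})$ with $F(N_{np+r})^2=0$; this operator is not diagonalizable, so Lemma~\ref{lem:exponential-logarithmic} gives logarithmicity, and the ordinary Kac end terms then rule out splitting.
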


\begin{proof}
Projective faithfulness gives $F(N_{np+r})\ne0$.  Twist preservation and
\eqref{eq:source-twist-nilpotent} give
\[
 e^{2\pi iL_0}|_{Q_{np+r}}
 =\lambda_{np+r}
 \bigl(\id+\gamma_{np+r}F(N_{np+r})\bigr),
 \qquad \gamma_{np+r}\ne0.
\]
Moreover
\[
 F(N_{np+r})^2=0,\qquad F(N_{np+r})\ne0.
\]
Since $\gamma_{np+r}\ne0$, the minimal polynomial of
$\id+\gamma_{np+r}F(N_{np+r})$ contains $(X-1)^2$.  Hence this operator,
and therefore the exponential above, is not diagonalizable.  Lemma~\ref{lem:exponential-logarithmic} implies that
$L_0$ is not semisimple.  Both end terms of
\eqref{eq:F-projective-filtration} are ordinary; hence a split extension
would be ordinary, a contradiction.
\end{proof}

\begin{thm}[Projective reconstruction and faithfulness]\label{thm:projective-control}
For every $n\ge1$,
\[
 F(P_{np})\cong K_{np}.
\]
For every $n\ge1$ and $1\le r\le p-1$,
\[
 0\to K_{np-r}\to F(P_{np+r})\to K_{np+r}\to0
\]
is non-split, its middle term is logarithmic, and it satisfies
\[
 F(P_{np+r})=F(P_{np+r})^{[h_{np+r}]},
\qquad
 [F(P_{np+r})]
 =[S_{np-r}]+2[S_{np+r}]+[S_{(n+2)p-r}].
\]
Moreover $F|_{\mathcal P^k}$ is faithful.  No preservation by $F$ of a
nonsplit short exact sequence is used in these conclusions, nor do they
invoke Nakano's extension classification, the Hom calculations of
Section~\ref{sec:fullness}, or Drinfeld--Sokolov reduction.
\end{thm}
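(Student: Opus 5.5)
This theorem is a summary of results already established in this section, so the plan is to assemble them in order and to track the logical dependencies explicitly. The wall identification $F(P_{np})\cong K_{np}$ is \eqref{eq:F-wall-projective} of Proposition~\ref{prop:projective-images}. That same proposition gives the exact sequence $0\to K_{np-r}\to F(P_{np+r})\to K_{np+r}\to0$ in \eqref{eq:F-projective-filtration} and the single-residue support in \eqref{eq:F-projective-residue}. Since $\xi_{n,r}=[h_{np+r}]$, the support statement is exactly $F(P_{np+r})=F(P_{np+r})^{[h_{np+r}]}$. The Grothendieck class is Corollary~\ref{cor:Q-composition-factors}, obtained by adding the classes of the two length-two Kac modules from \eqref{eq:Kac-length-two} and \eqref{eq:Kac-reflected-length-two}.

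Next, faithfulness of $F|_{\mathcal P^k}$ is Corollary~\ref{cor:F-faithful-projectives}. It combines the Temperley--Lieb tensor-ideal argument of Proposition~\ref{prop:F-faithful-projectives-large} for $p\ge3$ with Lemma~\ref{lem:p-two-faithful} for $p=2$. Logarithmicity and non-splitting then follow from Proposition~\ref{prop:Q-logarithmic}, whose argument runs as follows:
\begin{enumerate}
\item faithfulness gives $F(N_{np+r})\ne0$;
\item twist preservation and \eqref{eq:source-twist-nilpotent} express $e^{2\pi iL_0}$ on $Q_{np+r}$ as $\lambda(\id+\gamma F(N))$ with $\gamma\ne0$ and $F(N)^2=0$, so this operator is not diagonalizable;
\item Lemma~\ref{lem:exponential-logarithmic} then gives non-semisimple $L_0$;
\item a split extension of ordinary Kac modules would be ordinary, so the sequence cannot split.
\end{enumerate}

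The only real care lies in the closing independence assertions. I would verify these by tracing the dependencies of each cited result:
\begin{itemize}
\item Proposition~\ref{prop:projective-images} uses only transported biproduct isomorphisms (Lemma~\ref{lem:transported-recursions}), the target-side $K_2$-fusion sequences, exact $K_2\boxtimes-$, and exact residue projection. This is recorded in Remark~\ref{rmk:no-left-exactness}, so no nonsplit source sequence is pushed through $F$.
\item The faithfulness proofs use the tilting/Temperley--Lieb model together with $F(P_p)=K_p\ne0$. For $p=2$ they use the external wall-fusion logarithmicity of McRae--Sopin, not Proposition~\ref{prop:Q-logarithmic}, which avoids circularity.
\item None of these steps invokes Nakano's theorem, Section~\ref{sec:DS}, or the later Hom computations.
\end{itemize}

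The main obstacle is therefore bookkeeping rather than mathematics: ensuring that the twist-preservation step is used only after faithfulness, and that the $p=2$ faithfulness argument does not secretly rely on the conclusion being proved.
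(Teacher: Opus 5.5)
Your proposal is correct and is essentially the paper's own proof. The paper proves the theorem simply by combining Proposition~\ref{prop:projective-images}, Corollary~\ref{cor:Q-composition-factors}, Corollary~\ref{cor:F-faithful-projectives}, and Proposition~\ref{prop:Q-logarithmic}, and your dependency tracing matches Remark~\ref{rmk:no-left-exactness} and the non-circularity note in Lemma~\ref{lem:p-two-faithful}.

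One small precision: for $p=2$, twist preservation is used inside the faithfulness proof itself, to obtain $F(N_{2m+1})\ne0$ from the McRae--Sopin logarithmicity. This is harmless, because twist preservation is an external property of $F$. The only ordering that actually matters is that Proposition~\ref{prop:Q-logarithmic} comes after Corollary~\ref{cor:F-faithful-projectives}.
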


\begin{proof}
Combine Proposition~\ref{prop:projective-images},
Corollary~\ref{cor:Q-composition-factors},
Corollary~\ref{cor:F-faithful-projectives}, and
Proposition~\ref{prop:Q-logarithmic}.
\end{proof}

\section{Logarithmic images of affine projectives and full faithfulness}\label{sec:fullness}

The preceding section gives the Kac filtrations and projective faithfulness
without assuming exactness of $F$.  Here $P(\tau)$ denotes the Virasoro module introduced above as the middle
term of a nonzero class in
\[
 \Ext^1_{\mathscr C}\bigl(K(\tau),L(h_{\alpha_2})\bigr);
\]
Theorem~\ref{thm:Nakano-input}, applied through the one-row interface
of Corollary~\ref{cor:Nakano-one-row}, identifies this isomorphism class
with the module denoted $P(\tau)$ in
\cite[Definition~5.13]{Nakano}.  The notation does not assert
projectivity in $\Oc_{c_{p,q}}$.  No Ext group below is obtained by a
blanket identification of $\mathscr C$ with the ambient category of
\cite[Definition~3.14]{Nakano}; the only staggered input used here is the
one-row result Proposition~\ref{prop:one-row-staggered-quotient} and its
Yoneda consequence Corollary~\ref{cor:Nakano-one-row}.  This section proves the
following structural statement.

\begin{thm}[Projective images and full faithfulness]\label{thm:projective-classification}
For every indecomposable projective $P_j$,
\[
 F(P_j)\cong
 \begin{cases}
  K_j,&1\le j\le p-1,\\
  K_j=S_j,&p\mid j,\\
  P(\tau_{n,r}),&j=np+r,\ n\ge1,\ 1\le r\le p-1.
 \end{cases}
\]
Moreover, as $\C$-algebras,
\[
 \End_{\Oc_{c_{p,q}}} F(P_j)\cong
 \begin{cases}
  \C,&1\le j\le p-1\text{ or }p\mid j,\\
  \C[\varepsilon]/(\varepsilon^2),
      &j=np+r,\ n\ge1,\ 1\le r\le p-1.
 \end{cases}
\]
The modules in the third case are logarithmic and indecomposable, and
$F|_{\mathcal P^k}$ is fully faithful.
\end{thm}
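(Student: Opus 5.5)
The first two cases of the object identification are already available. For $1\le j\le p-1$, Lemma~\ref{lem:F-small-Weyl} gives $F(P_j)=F(V_j)\cong K_j$. For wall labels, \eqref{eq:F-wall-projective} gives $F(P_{np})\cong K_{np}$, and $K_{np}$ is simple. In both cases the endomorphism algebra is $\C$. For the small labels I would argue as follows: $K_j$ is generated by its one-dimensional lowest-weight space, and by Lemma~\ref{lem:Kac-residue-support} it has no lower weights. Hence every endomorphism is a scalar on the lowest-weight line and therefore a scalar.

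The main work is the non-wall case $j=np+r$. Put $Q=Q_{np+r}$ and $a=np-r$, $b=np+r$, $c=(n+2)p-r$. By Theorem~\ref{thm:projective-control}, $Q$ is a logarithmic, nonsplit extension $0\to K_a\to Q\to K_b\to0$ in $\mathscr C$ (use Lemma~\ref{lem:Nakano-membership}). Proposition~\ref{prop:Felder-labels} identifies $K_a\cong A(\tau_{n,r})$ and $K_b\cong B(\tau_{n,r})$.

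Corollary~\ref{cor:Nakano-one-row} (equivalently Proposition~\ref{prop:one-row-staggered-quotient}) shows that the distinguished quotient sequence $0\to S_a\to Q/S_b\to K_b\to0$ is nonsplit. Since $\Ext^1_{\mathscr C}(K_b,S_a)\cong\C$ by \eqref{eq:Nakano-first-Ext}, Lemma~\ref{lem:one-dimensional-ext} gives $Q/S_b\cong K(\tau_{n,r})$. Thus $0\to S_b\to Q\to K(\tau_{n,r})\to0$ is exact.

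This sequence is nonsplit. A splitting would exhibit $K(\tau)$ as a direct summand of $Q$. But $K(\tau)$ is ordinary: its $L_0$-nilpotent part would be a morphism $K(\tau)\to K(\tau)$ factoring through $S_a$ into the socle, and so it vanishes as in Lemma~\ref{lem:local-category-O}. Then $L_0$ would be semisimple on $Q$, contradicting logarithmicity. Now $\Ext^1_{\mathscr C}(K(\tau),S_b)\cong\C$ by Corollary~\ref{cor:Nakano-one-row}, so Lemma~\ref{lem:one-dimensional-ext} yields $Q\cong P(\tau_{n,r})$. Logarithmicity is Proposition~\ref{prop:Q-logarithmic}. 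Indecomposability follows from $\End(Q)$ being local, which is established next.

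For the endomorphism algebras and fullness, the strategy is to get upper bounds from the target filtrations and lower bounds from projective faithfulness (Corollary~\ref{cor:F-faithful-projectives}).

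\emph{Endomorphism upper bound.} I would apply $\Hom(-,Q)$ and $\Hom(Q,-)$ to the filtrations $0\to K_a\to Q\to K_b\to0$ and $0\to S_b\to Q\to K(\tau)\to0$, and aim for $\dim\End(Q)\le2$. Since $Q$ has head $S_b$ (being a quotient of $K_b$'s cover structure) and $[Q:S_b]=2$, any endomorphism is determined by where a lowest-weight lift of $h_b$ goes, up to compatibility with $K_a$. Concretely, $\Hom(K_b,Q)$ is at most one-dimensional: its image must be generated in weight $h_b$, and the only candidate has image the socle-type copy reached through $S_b\subset K_a$. Likewise $\Hom(Q,K_b)$ is one-dimensional. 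This gives the bound.

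\emph{Endomorphism lower bound.} By faithfulness, $\id$ and $F(N_{np+r})$ are linearly independent, as in Lemma~\ref{lem:p-two-faithful}, and $F(N)^2=0$. Hence $\End(Q)\cong\C[\varepsilon]/(\varepsilon^2)$.

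\emph{Adjacent Homs.} For adjacent chain vertices $Q_{s_i}$ and $Q_{s_{i+1}}$, I would bound $\dim\Hom\le1$ in each direction. This uses composition factors (Corollary~\ref{cor:Q-composition-factors}): a nonzero map must send the head $S_{s_i}$ onto a composition factor of the target, and the relevant factor occurs with multiplicity one, in the $K$-subquotient $K_{s_i}$. For non-adjacent or cross-chain pairs, I would show vanishing by comparing residues and heads: $Q_j$ has simple head $S_j$, so a nonzero map $Q_j\to Q_{j'}$ requires $[Q_{j'}:S_j]\ne0$. Only the collision of Corollary~\ref{cor:positive-label-collisions} ($q=2$) needs separate attention, and I would dispose of it by residue support together with a lowest-weight argument.

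Together with faithfulness on the source Hom package of Proposition~\ref{prop:source-Hom-package}, the dimensions match, so $F$ is bijective on Hom spaces between indecomposable projectives. Additivity then extends this to all of $\mathcal P^k$.

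The main obstacle is the upper bound $\dim\Hom(Q_{s_i},Q_{s_{i+1}})\le1$, and likewise $\dim\End\le2$. It requires knowing that $Q$ has simple head $S_b$ and controlling how a lift of the head vector can land. For the simple head, I would first try to derive it from the nonsplit sequences just established: $Q/S_b\cong K(\tau)$ has simple head $S_b$, since $K(\tau)\to B(\tau)=K_b$ is nonsplit with kernel $S_a$.
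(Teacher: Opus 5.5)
Your architecture matches the paper's. You recognize $Q=Q_{np+r}$ as $P(\tau_{n,r})$ from the logarithmic Kac filtration of Theorem~\ref{thm:projective-control}, the distinguished-quotient nonsplitting, and the two one-dimensional Ext groups; this is exactly Lemma~\ref{lem:Nakano-recognition}. You then match target upper bounds with lower bounds from projective faithfulness. Your proof that $0\to S_b\to Q\to K(\tau_{n,r})\to0$ is nonsplit uses ordinariness of $K(\tau_{n,r})$: its nilpotent part factors through $\Hom(K_b,S_a)=0$. That is a fine substitute for the paper's restriction-of-a-retraction argument.

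The upper bounds are packaged differently. The paper's Lemma~\ref{lem:Q-simple-Hom} is precisely your simple-head statement, obtained from the same nonsplit distinguished quotient by a pushout. The paper then passes through $\Hom(\mathsf Q_i,\mathsf K_j)$ and the Kac filtration of the target. Your route instead needs
\[
 \dim\Hom(M,N)\le[N:S]\qquad\text{for finite-length $M$ with simple head $S$.}
\]
This follows by induction on the length of $N$, from left exactness of $\Hom(M,-)$ and $\dim\Hom(M,T)=\delta_{T\cong S}$ for simple $T$. With Lemma~\ref{lem:chain-simple-distinct} it gives every chain bound at once, including $\dim\End(Q)\le[Q:S_b]=2$, and is slightly cleaner than the paper's two-step computation. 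You must state it, though. You must also record that $S_b\subset\rad Q$: a simple submodule outside the radical would split off, which your nonsplitting excludes. That is what transfers the head of $Q/S_b$ to $Q$.

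Three passages should be replaced rather than repaired.
\begin{itemize}
\item In your endomorphism bound, ``the only candidate has image the socle-type copy'' uses the socle of $P(\tau)$, which is not available (Remark~\ref{rmk:no-socle}).
\item ``Determined by where a lift of weight $h_b$ goes'' does not by itself bound anything, since $Q^{\mathrm{gen}}_{h_b}$ contains the whole degree-$(h_b-h_a)$ piece of $K_a$.
\item $\Hom(K_b,Q)$ and $\Hom(Q,K_b)$ do not sit in a common left-exact sequence bounding $\End(Q)$.
\end{itemize}
The multiplicity bound makes all of this unnecessary, including the ``controlling how a lift lands'' you flag as the obstacle.

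For the $q=2$ collision, residue support cannot help. Because $h_r=h_{p-r}$ exactly, all four boundary objects $Q^r_0,Q^r_1,Q^{p-r}_0,Q^{p-r}_1$ lie in the single class $[h_r]$. Your lowest-weight argument is what works:
\begin{itemize}
\item A nonzero $K_r\to K_{p-r}$ sends the generating vector onto the one-dimensional lowest-weight line, hence is surjective. That is impossible, since $S_{p+r}$ is not a factor of $K_r$.
\item The $h_r$-weight space of $Q_{p+r}$ lies in $K_{p-r}$, which reduces $\Hom(K_r,Q_{p+r})$ to the previous case.
\item Maps out of $Q^r_i$ with $i\ge1$ vanish by the head bound.
\end{itemize}
The same holds with $r$ and $p-r$ exchanged. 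The paper handles these cases by explicit head-map and splitting arguments in Lemma~\ref{lem:cross-chain}.
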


The proof occupies the remainder of the section.  The dependence is
asymmetric and will be kept explicit.  Section~\ref{sec:images} proves
projective faithfulness without using any target Hom-space dimension.  That
faithfulness is then used there to establish logarithmicity of the non-wall
objects $Q_j$, which is an input to the recognition argument below.  Once
the logarithmic extensions and their distinguished nonsplit quotients have
been identified, the target Hom upper bounds follow from these established
Virasoro short exact sequences.  Projective faithfulness is used once more
only after those upper bounds are known, to provide the matching lower
bounds.  Thus no target Hom-space dimension, and no target-side fullness
statement, enters the proof of projective faithfulness.

\subsection{Recognition of logarithmic extensions}

Throughout this section, Hom and unqualified Yoneda Ext spaces are taken
in the abelian category $\Oc_{c_{p,q}}$.  When all endpoints lie in
$\mathscr C$, Lemma~\ref{lem:Nakano-exact-compatibility} identifies the
same extension space with $\Ext^1_{\mathscr C}$.  All pushouts and
pullbacks below are therefore ordinary Virasoro-module constructions in
$\Oc_{c_{p,q}}$.

We shall repeatedly use the six-term Hom--Yoneda exact sequences of
Lemma~\ref{lem:prelim-Hom-Yoneda}; their connecting maps are the usual
pushout and pullback maps in the ambient abelian category.

\begin{lem}[Unique simple submodule of a nonsplit length-two extension]
\label{lem:length-two-socle}
Let
\[
 0\longrightarrow T'\longrightarrow A\longrightarrow T\longrightarrow0
\]
be a nonsplit extension of nonisomorphic simple objects in a finite-length
abelian category.  Then $T'$ is the unique simple subobject of $A$; in
particular $\Soc(A)=T'$.
\end{lem}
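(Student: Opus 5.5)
The plan is to show that any simple subobject $U\subset A$ must coincide with $T'$ (the image of the given monomorphism). Let $\iota:T'\hookrightarrow A$ and $\pi:A\twoheadrightarrow T$ be the structure maps, and consider the composite $\pi|_U:U\to T$. Since $U$ and $T$ are simple, this map is either zero or an isomorphism. We split into these two cases.

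In the first case $\pi|_U=0$, so $U\subset\ker\pi=\iota(T'$). Because $T'$ is simple and $U\neq0$, we get $U=\iota(T')$.

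In the second case $\pi|_U:U\to T$ is an isomorphism. Then $(\pi|_U)^{-1}$, composed with the inclusion $U\hookrightarrow A$, is a section of $\pi$, so the extension splits. This contradicts the nonsplitting hypothesis, so this case cannot occur.

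Hence $\iota(T')$ is the only simple subobject of $A$. Since $A$ has finite length, its socle is the sum of its simple subobjects, so $\Soc(A)=T'$. The hypothesis $T'\not\cong T$ is not actually needed in this argument: the splitting contradiction handles everything. At most it rules out an apparent ambiguity in the first case, so the proof goes through without it.

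I expect no real obstacle. The only point requiring care is the claim that in a general abelian category a nonzero map between simple objects is an isomorphism, and this is immediate from simplicity of both kernel and image.
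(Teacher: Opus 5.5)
Your proposal is correct and follows the paper's argument: a simple subobject not contained in $T'$ maps isomorphically onto $T$ and produces a section of $A\twoheadrightarrow T$, contradicting nonsplitness, so every simple subobject lies in, and hence equals, the simple $T'$. You are also right that the hypothesis $T'\not\cong T$ is never used; the paper's proof does not use it either.
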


\begin{proof}
If a simple subobject $U\subset A$ is not contained in $T'$, its composite
with $A\twoheadrightarrow T$ is nonzero and hence an isomorphism.  This
would give a section of $A\twoheadrightarrow T$, contradicting
nonsplitting.  Thus every simple subobject lies in $T'$, and simplicity of
$T'$ gives the claim.  In particular, every isomorphism out of $A$ carries
this distinguished subobject onto the socle of the target.
\end{proof}

The following lemma will also be used in Section~\ref{sec:comparison}.

\begin{lem}[Recognition lemma]\label{lem:Nakano-recognition}
Let $n\ge1$, $1\le r\le p-1$, and put
\[
 a=np-r,\qquad b=np+r.
\]
Suppose that $E\in\Oc_{c_{p,q}}$ is logarithmic and fits into a short
exact sequence
\begin{equation}\label{eq:recognition-extension}
 0\longrightarrow K_a\longrightarrow E\longrightarrow K_b
 \longrightarrow0.
\end{equation}
Then, for the distinguished submodule $S_b\hookrightarrow K_a$ in
$0\to S_b\to K_a\to S_a\to0$, viewed inside $E$, the induced sequence
\[
 0\longrightarrow S_a\longrightarrow E/S_b\longrightarrow K_b
 \longrightarrow0
\]
is nonsplit, and
\[
 E/S_b\cong K(\tau_{n,r}),\qquad
 E\cong P(\tau_{n,r}).
\]
\end{lem}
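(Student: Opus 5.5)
The plan is to assemble the lemma from the one-row extension package of Section~\ref{sec:prelim}, in three steps.

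\emph{Step 1: membership in $\mathscr C$ and the nonsplit quotient.} By Lemma~\ref{lem:Nakano-membership}, $E$, the quotient $E/S_b$, and all terms of the induced sequences lie in $\mathscr C$. Here $S_b\subset E$ means the image of the distinguished composite $S_b\hookrightarrow K_a\hookrightarrow E$. By Lemma~\ref{lem:length-two-socle} applied to the nonsplit Kac sequence \eqref{eq:Kac-reflected-length-two}, this $S_b$ is $\Soc(K_a)$, so the choice is canonical. Quotienting \eqref{eq:recognition-extension} by $S_b$ gives
\[
0\to K_a/S_b\cong S_a\to E/S_b\to K_b\to0.
\]
Since $E$ is logarithmic, Proposition~\ref{prop:one-row-staggered-quotient} applies directly and shows that this sequence is nonsplit. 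For $a\ge2$ this goes through Nakano's theorem via Lemma~\ref{lem:one-row-Nakano-scope}; for $a=1$ it goes through Lemma~\ref{lem:vacuum-edge-staggered}.

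\emph{Step 2: identifying $E/S_b$.} Fix endpoint identifications $K_a\cong A(\tau_{n,r})$ and $K_b\cong B(\tau_{n,r})$ from Proposition~\ref{prop:Felder-labels}. The nonsplit sequence from Step~1 then represents a nonzero class in
\[
\Ext^1_{\mathscr C}(B(\tau_{n,r}),S_a)\cong\C,
\]
using \eqref{eq:Nakano-first-Ext} and Lemma~\ref{lem:Nakano-exact-compatibility}. By definition, $K(\tau_{n,r})$ is the middle term of a nonzero class in this space. Lemma~\ref{lem:one-dimensional-ext} therefore gives $E/S_b\cong K(\tau_{n,r})$.

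\emph{Step 3: identifying $E$.} The sequence $0\to S_b\to E\to E/S_b\to0$ becomes, after composing with the isomorphism of Step~2,
\[
0\to S_b\to E\to K(\tau_{n,r})\to0.
\]
This defines a class in $\Ext^1_{\mathscr C}(K(\tau_{n,r}),S_b)$, which is one-dimensional by Corollary~\ref{cor:Nakano-one-row}. The main point is to show that this class is nonzero.

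Suppose instead that it splits. Then $E\cong S_b\oplus K(\tau_{n,r})$. Both $S_b$ and $K(\tau_{n,r})$ are ordinary:
\begin{itemize}
\item $K(\tau_{n,r})$ is an extension of $B(\tau_{n,r})$ by $S_a$ with nonisomorphic composition factors.
\item The argument of Lemma~\ref{lem:local-category-O} (the nilpotent part of $L_0$ factors through a Hom between distinct simples) extends to finite length. Concretely, $N_E$ on $K(\tau)$ induces maps $B(\tau)\to S_a$, and this Hom space is zero because $S_a$ is not a composition factor of $B(\tau)$.
\end{itemize}
Hence $L_0$ would be semisimple on $E$, contradicting logarithmicity.

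If that direct check becomes awkward, a simpler route is available. The residue argument is not needed: $N_E$ is a Virasoro endomorphism, and a split $E$ would have $N_E$ restricting to zero on each summand, provided each summand has semisimple $L_0$. For $K(\tau)$ this follows from $N$ mapping $K(\tau)\to K(\tau)$ with $N(S_a)=0$ and $\Hom(B(\tau),K(\tau))$ landing in $S_a$ only if $\Hom(B(\tau),S_a)=0$.

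With the class nonzero, Lemma~\ref{lem:one-dimensional-ext} together with the definition in Theorem~\ref{thm:Nakano-input} yields $E\cong P(\tau_{n,r})$.

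The expected obstacle is confirming the nondegeneracy in Step~3, namely that splitting would force $L_0$ to be semisimple. Steps~1 and~2 are direct citations.
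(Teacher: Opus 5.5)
Your proposal is correct and follows the paper's proof step for step: Proposition~\ref{prop:one-row-staggered-quotient} gives that the distinguished quotient is nonsplit, the one-dimensional space \eqref{eq:Nakano-first-Ext} then gives $E/S_b\cong K(\tau_{n,r})$, and Corollary~\ref{cor:Nakano-one-row} gives $E\cong P(\tau_{n,r})$. The only divergence is how you show $0\to S_b\to E\to K(\tau_{n,r})\to0$ is nonsplit: your logarithmicity argument is valid, since the nilpotent part of $L_0$ on $K(\tau_{n,r})$ factors through $\Hom(K_b,S_a)=0$ and a splitting would therefore make $E$ ordinary. The paper's route is shorter and does not use logarithmicity: a retraction $E\to S_b$ would restrict along $K_a\hookrightarrow E$ to a retraction of $S_b\hookrightarrow K_a$, contradicting the nonsplit Kac sequence. (Your muddled ``simpler route'' paragraph adds nothing and can be dropped.)
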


\begin{proof}
Proposition~\ref{prop:Felder-labels} gives
\[
 \tau_{n,r}\in\mathcal T_{p_+,p_-}\setminus\mathcal T^0_{p_+,p_-},
 \qquad h_{\alpha_1}=h_a<h_b=h_{\alpha_2}<h_{(n+2)p-r}=h_{\alpha_3},
\]
together with
\[
 K_a\cong A(\tau_{n,r})=L(h_{\alpha_1},h_{\alpha_2}),\qquad
 K_b\cong B(\tau_{n,r})=L(h_{\alpha_2},h_{\alpha_3}).
\]
Fix these endpoint identifications.  The nonsplit Kac sequence
\[
 0\to S_b\to K_a\to S_a\to0
\]
and Lemma~\ref{lem:length-two-socle} show that $S_b=\Soc(K_a)$.
The same lemma applied to \eqref{eq:Nakano-tower-A} shows that the chosen
isomorphism $K_a\cong A(\tau_{n,r})$ carries $S_b$ onto the distinguished
$L(h_{\alpha_2})$.

By Lemma~\ref{lem:Nakano-membership} and the Serre property,
$K_a,K_b,E\in\mathscr C$.  Under the endpoint identifications,
\eqref{eq:recognition-extension} is literally the Virasoro-module
configuration
\[
 0\longrightarrow L(h_{\alpha_1},h_{\alpha_2})
 \longrightarrow E\longrightarrow
 L(h_{\alpha_2},h_{\alpha_3})\longrightarrow0
\]
used in Corollary~\ref{cor:Nakano-one-row}; there is no reversal of the
Yoneda variables.  Since $K_a$ and $K_b$ are ordinary, a split sequence
would make $E\cong K_a\oplus K_b$ ordinary.  Thus logarithmicity implies
that the extension is nonsplit.  Proposition~\ref{prop:one-row-staggered-quotient}
gives directly that
\begin{equation}\label{eq:recognition-first-quotient}
 0\longrightarrow S_a\longrightarrow E/S_b
 \longrightarrow K_b\longrightarrow0
\end{equation}
is nonsplit.  Its class is therefore
a nonzero element of
\[
 \Ext^1_{\mathscr C}(K_b,S_a)
 \cong
 \Ext^1_{\mathscr C}\bigl(B(\tau_{n,r}),L(h_{\alpha_1})\bigr)
 \cong\C
\]
by \eqref{eq:Nakano-first-Ext}.  Lemma~\ref{lem:one-dimensional-ext}
gives an isomorphism
\[
 \phi:E/S_b\xrightarrow{\sim}K(\tau_{n,r}).
\]
Fix such a $\phi$ and transport the quotient map to obtain
\begin{equation}\label{eq:recognition-second-extension}
 0\longrightarrow S_b\longrightarrow E
 \xrightarrow{\ \phi\circ\pi\ }K(\tau_{n,r})\longrightarrow0.
\end{equation}

This second sequence is also nonsplit.  Indeed, a retraction
$E\to S_b$ would restrict along $K_a\hookrightarrow E$ to a retraction
of $S_b\hookrightarrow K_a$, contradicting the nonsplit one-row Kac
sequence.  Hence \eqref{eq:recognition-second-extension} represents a
nonzero class in
\[
 \Ext^1_{\mathscr C}(K(\tau_{n,r}),S_b)
 =
 \Ext^1_{\mathscr C}
 \bigl(K(\tau_{n,r}),L(h_{\alpha_2})\bigr)
 \cong\C
\]
by Corollary~\ref{cor:Nakano-one-row}.  By the definition of $P(\tau)$
after Theorem~\ref{thm:Nakano-input} and
Lemma~\ref{lem:one-dimensional-ext}, every nonzero class
has an isomorphic middle term.  Therefore
$E\cong P(\tau_{n,r})$.
\end{proof}

\begin{cor}\label{cor:Q-Ptau}
For $n\geq1$ and $1\leq r\leq p-1$,
\[
 Q_{np+r}\cong P(\tau_{n,r}),
\]
and, with $b=np+r$, the quotient by the distinguished $S_b$ satisfies
\[
 Q_b/S_b\cong K(\tau_{n,r}),
\]
while the induced sequence
\[
 0\longrightarrow S_{np-r}\longrightarrow Q_b/S_b
 \longrightarrow K_b\longrightarrow0
\]
is nonsplit.
\end{cor}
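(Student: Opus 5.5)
The corollary follows by feeding the object $Q_{np+r}=F(P_{np+r})$ into the Recognition Lemma~\ref{lem:Nakano-recognition}. So the plan is to check that lemma's two hypotheses for $E=Q_b$, with $a=np-r$ and $b=np+r$, and then read off its conclusions.

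First, Proposition~\ref{prop:projective-images} supplies the target-side short exact sequence
\[
 0\longrightarrow K_a\longrightarrow Q_b\longrightarrow K_b\longrightarrow0
\]
in $\Oc_{c_{p,q}}$. This is precisely the configuration \eqref{eq:recognition-extension}, with no reversal of the end terms. The sequence was built entirely in the target category, from $K_2$-fusion, exact tensoring with $K_2$, and residue projection. So no left exactness of $F$ enters at this point.

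Second, logarithmicity of $Q_b$ is Proposition~\ref{prop:Q-logarithmic}. That result uses projective faithfulness (Corollary~\ref{cor:F-faithful-projectives}), twist preservation, and the source relation \eqref{eq:source-twist-nilpotent}. Together these show that $e^{2\pi iL_0}$ is not diagonalizable on $Q_b$, and Lemma~\ref{lem:exponential-logarithmic} then shows that $L_0$ is not semisimple.

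With both hypotheses verified, Lemma~\ref{lem:Nakano-recognition} gives three conclusions at once:
\[
 Q_b\cong P(\tau_{n,r}),\qquad Q_b/S_b\cong K(\tau_{n,r}),
\]
and the sequence $0\to S_a\to Q_b/S_b\to K_b\to0$ is nonsplit. The only point to check is that "the distinguished $S_b$" in the corollary is the same subobject as in the lemma. In the lemma, $S_b$ is the image of $S_b=\Soc(K_a)\hookrightarrow K_a\hookrightarrow Q_b$, and Lemma~\ref{lem:length-two-socle} characterizes it intrinsically as the socle of $K_a$. I would therefore fix the monomorphism $K_a\hookrightarrow Q_b$ from Proposition~\ref{prop:projective-images} and define $S_b$ through it, matching the lemma's convention.

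There is no real obstacle here, since all the substantive work is already in Proposition~\ref{prop:one-row-staggered-quotient} and Corollary~\ref{cor:Nakano-one-row}. The only delicate point is this bookkeeping: the corollary's statement should be read relative to that chosen filtration.
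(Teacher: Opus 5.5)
Your proposal is correct and matches the paper's proof: it applies the Recognition Lemma~\ref{lem:Nakano-recognition} to the Kac filtration \eqref{eq:F-projective-filtration} and to the logarithmicity from Proposition~\ref{prop:Q-logarithmic}, which is exactly the paper's argument. Your remark that the distinguished $S_b$ is the socle of the chosen $K_a\subset Q_b$ is also the convention the lemma uses.
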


\begin{proof}
Apply Lemma~\ref{lem:Nakano-recognition} to
Proposition~\ref{prop:Q-logarithmic} and
\eqref{eq:F-projective-filtration}.
\end{proof}

\subsection{Hom spaces along a reflection chain}

Fix a non-wall reflection chain and write
\[
 \mathsf S_i=S_{s_i},\qquad
 \mathsf K_i=K_{s_i},\qquad
 \mathsf Q_i=Q_{s_i}.
\]

\begin{lem}\label{lem:chain-simple-distinct}
The simple modules $\mathsf S_i$ on a fixed reflection chain are pairwise
nonisomorphic.
\end{lem}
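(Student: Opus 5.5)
The statement asks that the simples $\mathsf S_i=S_{s_i}$ on a fixed reflection chain be pairwise nonisomorphic. I reduce this to a comparison of labels using Corollary~\ref{cor:positive-label-collisions}. For positive labels, $S_j\cong S_\ell$ holds exactly when $j=\ell$, or when $q=2$ and $j+\ell=p$. So it suffices to show two things: the labels $s_i$ on the chain are pairwise distinct, and no two of them sum to $p$.

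For distinctness, I read it off the explicit formulas \eqref{eq:reflection-chain}. We have $s_{2m}=2mp+r$ and $s_{2m+1}=2(m+1)p-r$ with $1\le r\le p-1$. Each lies in the open interval $(ip,(i+1)p)$, namely $s_{2m}\in(2mp,(2m+1)p)$ and $s_{2m+1}\in((2m+1)p,(2m+2)p)$. Hence $i\mapsto s_i$ is strictly increasing, and in particular injective. Equivalently, one can invoke the uniqueness part of Lemma~\ref{lem:reflection-partition}: the Euclidean quotient of $s_i$ by $p$ is exactly $i$.

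For the collision case, I use that every $s_i\ge s_0=r\ge1$, and that $s_i\ge p+1$ when $i\ge1$. A sum $s_i+s_j$ with $i\ne j$ involves at least one index $\ge1$, so
\[
 s_i+s_j\ge 1+(p+1)>p .
\]
Thus $s_i+s_j=p$ never occurs for distinct indices, and the exceptional $q=2$ collision cannot happen on a single chain. Corollary~\ref{cor:positive-label-collisions} then gives $\mathsf S_i\not\cong\mathsf S_j$ for $i\ne j$.

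There is no real obstacle here. The only point needing care is not to treat label distinctness as isomorphism distinctness, which is why I route the argument through the collision corollary. As a cross-check, one could also compare lowest weights directly via \eqref{eq:weight-equality}: $q(j+\ell)=2p$ would force $j+\ell<p$, which the bound above excludes.
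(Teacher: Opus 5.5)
Your argument is correct and is essentially the paper's: both reduce to Corollary~\ref{cor:positive-label-collisions}, use $s_i>p$ for $i\ge1$ to rule out the $q=2$ complementary collision, and get label distinctness from the chain formulas (the paper cites Lemma~\ref{lem:reflection-partition}). One small slip in your optional cross-check: $q(j+\ell)=2p$ gives $j+\ell=2p/q\le p$, with equality exactly when $q=2$, not $j+\ell<p$; your bound $s_i+s_j\ge p+2$ excludes this case anyway.
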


\begin{proof}
If two distinct labels $s_i,s_j$ on the chain gave isomorphic simples,
then
Corollary~\ref{cor:positive-label-collisions} would give $q=2$ and their
sum equal to $p$.  But every $s_i$ with $i\ge1$ is greater than $p$, so
this cannot occur; the labels themselves are distinct by
Lemma~\ref{lem:reflection-partition}.
\end{proof}

\begin{lem}\label{lem:disjoint-constituents}
If two finite-length modules have no common simple Jordan--H\"older
constituent, then there is no nonzero morphism between them in either
direction.
\end{lem}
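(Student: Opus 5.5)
The plan is to reduce to simple modules through a Jordan--H\"older filtration of the source, and then use simplicity of the factors.

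Let $M$ and $N$ be finite-length with no common simple constituent, and let $f:M\to N$ be a morphism. The image $\operatorname{Im}f$ is a quotient of $M$ and a subobject of $N$. Hence every simple constituent of $\operatorname{Im}f$ is a constituent of both $M$ and $N$. This uses the Serre-type observation in the proof of Lemma~\ref{lem:Nakano-Serre}: subobjects and quotients of finite-length objects have only Jordan--H\"older factors of the ambient object. By hypothesis no such common simple exists, so $\operatorname{Im}f$ has length zero. Therefore $\operatorname{Im}f=0$ and $f=0$. Exchanging the roles of $M$ and $N$ gives the other direction.

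If one prefers an argument that avoids images, induct on $\ell(M)+\ell(N)$ exactly as in the Hom-finiteness argument of Section~\ref{sec:prelim}. Use the left-exact sequences $0\to\Hom(M,N')\to\Hom(M,N)\to\Hom(M,T)$ and $0\to\Hom(S,N)\to\Hom(M,N)\to\Hom(M',N)$. The base case consists of two nonisomorphic simples, where Schur's lemma gives $\Hom=0$.

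Constituents here are counted up to isomorphism, as in Lemma~\ref{lem:residue-sector-peeling}, so label collisions such as $S_j\cong S_{p-j}$ cause no trouble. There is no real obstacle in this argument.
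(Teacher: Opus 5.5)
Your proposal is correct, and your main argument is the paper's proof: the image of a nonzero morphism is a nonzero finite-length object, and each of its simple constituents is a constituent of both the source and the target. The alternative induction on $\ell(M)+\ell(N)$ is also valid, but it is not needed.
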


\begin{proof}
The image of a nonzero morphism is a nonzero finite-length module and
therefore has a simple constituent common to the source and the target.
\end{proof}

\begin{lem}\label{lem:chain-filtrations}
For every $i\geq0$,
\begin{equation}\label{eq:A-chain}
 0\longrightarrow\mathsf S_{i+1}\longrightarrow\mathsf K_i
 \longrightarrow\mathsf S_i\longrightarrow0
\end{equation}
is non-split.  For $i\geq1$,
\begin{equation}\label{eq:Q-chain}
 0\longrightarrow\mathsf K_{i-1}\longrightarrow\mathsf Q_i
 \longrightarrow\mathsf K_i\longrightarrow0,
\end{equation}
and $\mathsf Q_0=\mathsf K_0$.
\end{lem}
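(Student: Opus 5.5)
We will derive both assertions by translating the one-row Kac sequences \eqref{eq:Kac-length-two}, \eqref{eq:Kac-reflected-length-two} and the projective filtration \eqref{eq:F-projective-filtration} into chain indices. The only work is bookkeeping: checking that each label $s_i$ and its successor $s_{i+1}$ occupy the right positions in these sequences, in both parities of $i$.

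For \eqref{eq:A-chain}, recall from \eqref{eq:reflection-chain} that
\[
 s_{2m}=2mp+r,\qquad s_{2m+1}=2(m+1)p-r .
\]
If $i=2m$, then $s_i=np+r$ with $n=2m\ge0$, and \eqref{eq:Kac-length-two} gives a nonsplit sequence $0\to S_{(n+2)p-r}\to K_{np+r}\to S_{np+r}\to0$. Since $(n+2)p-r=2(m+1)p-r=s_{2m+1}$, this is \eqref{eq:A-chain}. If $i=2m+1$, write $s_i=n'p-r$ with $n'=2m+2\ge1$. The reflected sequence \eqref{eq:Kac-reflected-length-two} gives a nonsplit sequence $0\to S_{n'p+r}\to K_{n'p-r}\to S_{n'p-r}\to0$. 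Here $n'p+r=2(m+1)p+r=s_{2m+2}$, so this is again \eqref{eq:A-chain}. Nonsplitness is inherited verbatim from the cited sequences; the underlying fact is \cite{McRaeSopin} via \eqref{eq:Kac-length-two}. No identification of simples is needed, though Lemma~\ref{lem:chain-simple-distinct} confirms that the two factors are distinct.

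For \eqref{eq:Q-chain} with $i\ge1$, use the adjacent-pair description from the proof of Proposition~\ref{prop:source-Hom-package}: there are unique $n\ge1$ and $1\le\rho\le p-1$ with $s_i=np+\rho$ and $s_{i-1}=np-\rho$. Concretely, for $i=2m$ take $n=2m$ and $\rho=r$. For $i=2m+1$ take $n=2m+1$ and $\rho=p-r$, since then $np+\rho=2(m+1)p-r=s_{2m+1}$ and $np-\rho=2mp+r=s_{2m}$. Proposition~\ref{prop:projective-images}, i.e.\ \eqref{eq:F-projective-filtration} with $r$ replaced by $\rho$, then gives $0\to K_{np-\rho}\to Q_{np+\rho}\to K_{np+\rho}\to0$, which is exactly \eqref{eq:Q-chain}. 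Nonsplitness, although not asserted here, is available from Theorem~\ref{thm:projective-control}.

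Finally, $s_0=r$ with $1\le r\le p-1$, so $P_r=V_r$. Lemma~\ref{lem:F-small-Weyl} then gives $\mathsf Q_0=F(V_r)\cong K_r=\mathsf K_0$.

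The only place needing care, and the step most likely to cause error, is the odd-parity case. There the chain parameter $r$ must be replaced by $p-r$, and the reflected Kac sequence must be used rather than the direct one. Mixing the two would invert the orientation of \eqref{eq:A-chain}.
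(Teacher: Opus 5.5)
Your proof is correct and is essentially the paper's argument: \eqref{eq:A-chain} is the one-row Kac sequence, \eqref{eq:Q-chain} is Proposition~\ref{prop:projective-images}, and $\mathsf Q_0=\mathsf K_0$ is Lemma~\ref{lem:F-small-Weyl}, a step the paper leaves implicit. The only difference is cosmetic: the paper avoids your parity split by writing $s_i=n_ip+r_i$ in Euclidean form and noting $s_{i-1}=n_ip-r_i$ and $s_{i+1}=(n_i+2)p-r_i$, so that \eqref{eq:Kac-length-two} applies uniformly, and your reflected sequence for odd $i$ is exactly this rewriting.
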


\begin{proof}
If $s_i=n_ip+r_i$ with $1\leq r_i\leq p-1$, then
$s_{i-1}=n_ip-r_i$ for $i\ge1$ and
$s_{i+1}=(n_i+2)p-r_i$.  Thus \eqref{eq:A-chain} is the Kac sequence and
\eqref{eq:Q-chain} is Proposition~\ref{prop:projective-images}.
\end{proof}

\begin{lem}[Nakano triple at a positive chain vertex]
\label{lem:chain-Nakano-triple}
For $i\ge1$, one has $s_i>p$.  Hence in the Euclidean decomposition
$s_i=n_ip+r_i$ with $1\le r_i\le p-1$ one has $n_i\ge1$.  Put
$\tau_i:=\tau_{n_i,r_i}$.  Then
\[
 (h_{\alpha_1},h_{\alpha_2},h_{\alpha_3})
 =(h_{s_{i-1}},h_{s_i},h_{s_{i+1}}),
\]
\[
 A(\tau_i)\cong K_{s_{i-1}}=\mathsf K_{i-1},\qquad
 B(\tau_i)\cong K_{s_i}=\mathsf K_i.
\]
If
\[
 \sigma_i:\mathsf S_i\hookrightarrow\mathsf K_{i-1}
 \hookrightarrow\mathsf Q_i
\]
is the distinguished simple submodule, then
\begin{equation}\label{eq:distinguished-quotient}
 \mathsf Q_i/\sigma_i(\mathsf S_i)\cong K(\tau_i),
\end{equation}
and the induced sequence
\begin{equation}\label{eq:distinguished-quotient-extension}
 0\longrightarrow\mathsf S_{i-1}\longrightarrow
 \mathsf Q_i/\sigma_i(\mathsf S_i)\longrightarrow
 \mathsf K_i\longrightarrow0
\end{equation}
is non-split.
\end{lem}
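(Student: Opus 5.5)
This lemma is essentially a relabelling of Corollary~\ref{cor:Q-Ptau} and Proposition~\ref{prop:Felder-labels} along the chain, so the plan is to reduce every claim to those results once the indices are matched. First I would check $s_i>p$ for $i\ge1$ from \eqref{eq:reflection-chain}: $s_{2m}(r)=2mp+r>p$ when $m\ge1$, and $s_{2m+1}(r)=2(m+1)p-r\ge 2p-r>p$ since $r\le p-1$. Hence the Euclidean decomposition $s_i=n_ip+r_i$ with $1\le r_i\le p-1$ has $n_i\ge1$. As already recorded in the proof of Lemma~\ref{lem:chain-filtrations}, one then has $s_{i-1}=n_ip-r_i$ and $s_{i+1}=(n_i+2)p-r_i$. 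This is checked by the parity split: for $i=2m$ we get $n_i=2m$, $r_i=r$; for $i=2m+1$ we get $n_i=2m+1$, $r_i=p-r$. In the notation \eqref{eq:abc} with $(n,r)=(n_i,r_i)$, this says exactly $(a,b,c)=(s_{i-1},s_i,s_{i+1})$.

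Next, Proposition~\ref{prop:Felder-labels} applied to $\tau_i=\tau_{n_i,r_i}$ gives \eqref{eq:tau-nr} and \eqref{eq:A-B-tau}. These translate directly into the displayed weight triple and the identifications $A(\tau_i)\cong\mathsf K_{i-1}$ and $B(\tau_i)\cong\mathsf K_i$.

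For the quotient statements, I would apply Corollary~\ref{cor:Q-Ptau} with $b=s_i$, so that $\mathsf Q_i=Q_b$ and $S_b=\mathsf S_i$. That corollary gives $Q_b/S_b\cong K(\tau_i)$ together with nonsplitness of $0\to S_a\to Q_b/S_b\to K_b\to0$, which is \eqref{eq:distinguished-quotient} and \eqref{eq:distinguished-quotient-extension}.

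The only point needing care, which I expect to be the main (mild) obstacle, is that the submodule $\sigma_i(\mathsf S_i)$ used here coincides with the distinguished $S_b$ of Lemma~\ref{lem:Nakano-recognition}. In that lemma, $S_b$ is the image of $\Soc(K_a)$ under $K_a\hookrightarrow E$, using the first arrow of \eqref{eq:F-projective-filtration}. Here $\sigma_i$ is the composite $\mathsf S_i\hookrightarrow\mathsf K_{i-1}\hookrightarrow\mathsf Q_i$, where the second arrow is the first map of \eqref{eq:Q-chain}, which is the same sequence, and the first arrow lands in $\Soc(\mathsf K_{i-1})$. By Lemma~\ref{lem:length-two-socle} applied to the nonsplit sequence \eqref{eq:A-chain}, $\mathsf S_{i}$ is the unique simple subobject of $\mathsf K_{i-1}$. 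Therefore any choice of the embedding $\mathsf S_i\hookrightarrow\mathsf K_{i-1}$, which is only fixed up to scalar, has the same image. The two submodules of $\mathsf Q_i$ therefore agree, and the conclusions transfer verbatim.
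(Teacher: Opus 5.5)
Your proposal is correct and matches the paper's proof: the weight triple and endpoint identifications come from Proposition~\ref{prop:Felder-labels} via the reflection identities of Lemma~\ref{lem:chain-filtrations}, and the quotient and nonsplitting statements are read off from Corollary~\ref{cor:Q-Ptau}. Your extra check is a detail the paper leaves implicit: that $\sigma_i(\mathsf S_i)$ is the same submodule as the distinguished $S_b$, via Lemma~\ref{lem:length-two-socle}, whose hypothesis $\mathsf S_i\not\cong\mathsf S_{i-1}$ is supplied by Lemma~\ref{lem:chain-simple-distinct}.
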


\begin{proof}
The first two assertions are Proposition~\ref{prop:Felder-labels} together
with the reflection identities in Lemma~\ref{lem:chain-filtrations}.
Both the quotient assertion and the nonsplitting of
\eqref{eq:distinguished-quotient-extension} are exactly the corresponding
statements of Corollary~\ref{cor:Q-Ptau}.
\end{proof}

We keep the notation
\begin{equation}\label{eq:distinguished-sigma}
 \sigma_i:\mathsf S_i\hookrightarrow\mathsf K_{i-1}
 \hookrightarrow\mathsf Q_i
\end{equation}
for this distinguished submodule.

In the Grothendieck group these filtrations give
\begin{equation}\label{eq:Q-JH-vector}
 [\mathsf Q_0]=[\mathsf S_0]+[\mathsf S_1],\qquad
 [\mathsf Q_i]=[\mathsf S_{i-1}]+2[\mathsf S_i]+[\mathsf S_{i+1}]
 \quad(i\ge1).
\end{equation}
The possible collision of simple labels when $q=2$ is treated separately
below.

\begin{lem}[Hom spaces for a nonsplit length-two module]
\label{lem:length-two-Hom}
Let
\[
 0\longrightarrow T'\longrightarrow A\longrightarrow T\longrightarrow0
\]
be nonsplit, where $T$ and $T'$ are nonisomorphic simple lowest-weight
Virasoro modules.  Then
\begin{equation}\label{eq:length-two-Hom}
 \End(A)=\C,\qquad \Hom(A,T')=0,\qquad \Hom(A,T)=\C.
\end{equation}
\end{lem}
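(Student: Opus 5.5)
The plan is to derive all three identities in \eqref{eq:length-two-Hom} from left exactness of the two Hom functors, together with Lemma~\ref{lem:length-two-socle} and the fact that nonisomorphic simples have no morphisms between them. Write $\iota:T'\hookrightarrow A$ and $\pi:A\twoheadrightarrow T$. Because the category is finite length and $T\not\cong T'$, Lemma~\ref{lem:length-two-socle} gives $\Soc(A)=T'$. The ingredients are $\End(T)=\End(T')=\C$ and $\Hom(T,T')=\Hom(T',T)=0$, both recorded in Section~\ref{sec:prelim} for simple highest-weight Virasoro modules.

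For $\Hom(A,T')$, I will apply $\Hom(-,T')$ to the sequence. This gives
\[
 0\to\Hom(T,T')\to\Hom(A,T')\to\Hom(T',T').
\]
Since $\Hom(T,T')=0$, the restriction map to $\End(T')=\C$ is injective. A nonzero element of $\Hom(A,T')$ would therefore restrict to a nonzero multiple of $\id_{T'}$, and so would be a retraction of $\iota$. That retraction splits the sequence, contradicting nonsplitness. Hence $\Hom(A,T')=0$.

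For $\Hom(A,T)$, the same functor $\Hom(-,T)$ gives
\[
 0\to\End(T)\to\Hom(A,T)\to\Hom(T',T)=0.
\]
This yields $\Hom(A,T)\cong\End(T)=\C$, spanned by $\pi$.

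For $\End(A)$, I will apply $\Hom(A,-)$ to the sequence:
\[
 0\to\Hom(A,T')\to\End(A)\to\Hom(A,T).
\]
The left term is zero by the previous step and the right term is $\C\pi$, so $\dim\End(A)\le1$. Since $\id_A\neq0$, this forces $\End(A)=\C\,\id_A$.

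There is no real obstacle. The only point needing care is the use of $\dim\End(S)=1$ for simple lowest-weight modules and the vanishing of Hom between nonisomorphic simples, which the Hom-finiteness discussion in Section~\ref{sec:prelim} already provides. The socle lemma is not strictly needed, since the retraction argument is self-contained.
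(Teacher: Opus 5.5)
Your proposal is correct and is essentially the paper's proof: three applications of left exactness of $\Hom$, first to obtain $\Hom(A,T')=0$, then $\Hom(A,T)\cong\End(T)=\C$, then $\dim\End(A)\le 1$. The only difference is cosmetic. For $\Hom(A,T')=0$ you argue that a nonzero map would give a retraction of $T'\hookrightarrow A$. The paper expresses the same fact as injectivity of the connecting map $\End(T')\to\Ext^1(T,T')$, which sends $\id_{T'}$ to the nonzero extension class.
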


\begin{proof}
Since $T$ and $T'$ are nonisomorphic simple objects,
\[
 \End(T)=\End(T')=\C,\qquad
 \Hom(T,T')=\Hom(T',T)=0.
\]
Applying $\Hom(-,T')$ gives
\[
 0\longrightarrow\Hom(A,T')\longrightarrow\End(T')
 \xrightarrow{\partial}\Ext^1(T,T').
\]
The connecting morphism sends $\id_{T'}$ to the class of the given
extension.  Since the extension is nonsplit, this class is nonzero.  As
$\End(T')=\C$, the map $\partial$ is injective, so $\Hom(A,T')=0$.

Applying $\Hom(-,T)$ gives
\[
 0\longrightarrow\End(T)\longrightarrow\Hom(A,T)
 \longrightarrow\Hom(T',T)=0,
\]
hence $\Hom(A,T)\cong\C$.  Finally, applying $\Hom(A,-)$ to the original
sequence yields
\[
 0\longrightarrow\Hom(A,T')\longrightarrow\End(A)
 \longrightarrow\Hom(A,T).
\]
The first term is zero and the last is one-dimensional.  Since
$\id_A\ne0$, it follows that $\End(A)\cong\C$.
\end{proof}

\begin{lem}[Kac--Kac Hom spaces]\label{lem:Kac-Kac-Hom}
For $i,j\ge0$,
\[
 \Hom(\mathsf K_i,\mathsf K_j)=
 \begin{cases}
  \C,&j=i,\\
  \C,&j=i-1\text{ and }i\ge1,\\
  0,&\text{otherwise}.
 \end{cases}
\]
In particular, $\Hom(\mathsf K_0,\mathsf K_j)=0$ for $j\ge1$.
\end{lem}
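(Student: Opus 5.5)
The proof reduces everything to Lemma~\ref{lem:length-two-Hom} and Lemma~\ref{lem:disjoint-constituents}, applied to the nonsplit length-two chain Kac modules of Lemma~\ref{lem:chain-filtrations}. By \eqref{eq:A-chain}, $\mathsf K_i$ is a nonsplit extension
\[
 0\longrightarrow\mathsf S_{i+1}\longrightarrow\mathsf K_i\longrightarrow\mathsf S_i\longrightarrow0,
\]
and Lemma~\ref{lem:chain-simple-distinct} guarantees that $\mathsf S_i\not\cong\mathsf S_{i+1}$. Hence Lemma~\ref{lem:length-two-Hom} gives
\[
 \End(\mathsf K_i)=\C,\qquad \Hom(\mathsf K_i,\mathsf S_{i+1})=0,\qquad \Hom(\mathsf K_i,\mathsf S_i)=\C.
\]
This already settles the diagonal case $j=i$.

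For $j\ne i$, I would compare composition factors. The constituents of $\mathsf K_i$ are $\{\mathsf S_i,\mathsf S_{i+1}\}$ and those of $\mathsf K_j$ are $\{\mathsf S_j,\mathsf S_{j+1}\}$. By Lemma~\ref{lem:chain-simple-distinct} these sets are disjoint unless $|i-j|=1$, so Lemma~\ref{lem:disjoint-constituents} gives vanishing whenever $|i-j|\ge2$.

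\emph{Case $j=i+1$.} The only shared constituent is $\mathsf S_{i+1}$, which is the socle of $\mathsf K_i$ but the head of $\mathsf K_{i+1}$. Any nonzero $f:\mathsf K_i\to\mathsf K_{i+1}$ has image with constituents among the common ones, so the image is isomorphic to $\mathsf S_{i+1}$. The image is then a simple quotient of $\mathsf K_i$. However, $\mathsf K_i$ is generated by its lowest-weight vector, so its unique simple quotient is $\mathsf S_i$, giving $\mathsf S_{i+1}\cong\mathsf S_i$, a contradiction. (Equivalently, $\Hom(\mathsf K_i,\mathsf S_{i+1})=0$ from the diagonal computation rules this out directly.) Hence $\Hom(\mathsf K_i,\mathsf K_{i+1})=0$.

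\emph{Case $j=i-1$ with $i\ge1$.} The shared constituent is $\mathsf S_i$, which is the head of $\mathsf K_i$ and the socle of $\mathsf K_{i-1}$. Any morphism $\mathsf K_i\to\mathsf K_{i-1}$ has image whose constituents are among the common ones, so the image is $0$ or $\mathsf S_i$. If the image is $\mathsf S_i$, it is a simple submodule of $\mathsf K_{i-1}$, hence equal to $\Soc(\mathsf K_{i-1})=\mathsf S_i$ by Lemma~\ref{lem:length-two-socle}. Thus every such morphism factors as $\mathsf K_i\twoheadrightarrow\mathsf S_i\hookrightarrow\mathsf K_{i-1}$, and
\[
 \Hom(\mathsf K_i,\mathsf K_{i-1})\cong\Hom(\mathsf K_i,\mathsf S_i)\cong\C.
\]
The last identification comes from the diagonal computation above; the composite is nonzero, so the space is exactly one-dimensional.

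The final assertion $\Hom(\mathsf K_0,\mathsf K_j)=0$ for $j\ge1$ is the special case $i=0$ of the vanishing cases just treated.

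The only delicate point is the possible label collision when $q=2$. Lemma~\ref{lem:chain-simple-distinct} handles it, since all chain labels with index $\ge1$ exceed $p$. I would still check $i=0$ explicitly: there $s_0=r$ and $s_1=2p-r$, and their sum is not $p$, so no collision occurs. I expect no other obstacles.
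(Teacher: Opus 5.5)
Your proof is correct and follows essentially the paper's route: the same inputs (Lemma~\ref{lem:length-two-Hom}, Lemma~\ref{lem:disjoint-constituents}, Lemma~\ref{lem:chain-simple-distinct}) and the same case split. The only difference is packaging. The paper first records $\Hom(\mathsf K_i,\mathsf S_k)=\delta_{ik}\C$ and then reads off every case from the left-exact segment $0\to\Hom(\mathsf K_i,\mathsf S_{j+1})\to\Hom(\mathsf K_i,\mathsf K_j)\to\Hom(\mathsf K_i,\mathsf S_j)$, which is exactly what your image and socle-factorization arguments in the cases $j=i\pm1$ express elementwise.
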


\begin{proof}
The simples on a fixed chain are pairwise nonisomorphic; this is
Lemma~\ref{lem:chain-simple-distinct}.  The length-two calculation of
Lemma~\ref{lem:length-two-Hom} gives
\[
 \Hom(\mathsf K_i,\mathsf S_i)=\C,
 \qquad
 \Hom(\mathsf K_i,\mathsf S_{i+1})=0.
\]
If $k\notin\{i,i+1\}$, then $\mathsf S_k$ is not a Jordan--H\"older
constituent of $\mathsf K_i$.  Lemma~\ref{lem:disjoint-constituents}
then gives $\Hom(\mathsf K_i,\mathsf S_k)=0$.  Hence
\[
 \Hom(\mathsf K_i,\mathsf S_k)=
 \begin{cases}\C,&k=i,\\0,&k\ne i.\end{cases}
\]
Apply $\Hom(\mathsf K_i,-)$ to \eqref{eq:A-chain} for $\mathsf K_j$.
If $j=i-1$, the nonzero map is the explicit factorization
\[
 \mathsf K_i\twoheadrightarrow\mathsf S_i
 \hookrightarrow\mathsf K_{i-1}.
\]
Equivalently, the relevant left-exact segment is
\[
 0\longrightarrow\Hom(\mathsf K_i,\mathsf S_i)
 \longrightarrow\Hom(\mathsf K_i,\mathsf K_{i-1})
 \longrightarrow\Hom(\mathsf K_i,\mathsf S_{i-1})=0,
\]
so the space is one-dimensional.  If $j=i$, \eqref{eq:length-two-Hom} gives
$\End(\mathsf K_i)=\C$.  For all other $j$, both neighboring simple Hom
terms vanish, and exactness gives zero.
\end{proof}

\begin{lem}[Pushout by a quotient]\label{lem:pushout-quotient}
Let
\[
 0\longrightarrow A\longrightarrow E\longrightarrow B\longrightarrow0
\]
be an extension in an abelian category, and let $q:A\twoheadrightarrow C$
have kernel $K$.  Identify $K$ with its image under the composite
$K\hookrightarrow A\hookrightarrow E$.  The pushout of the extension
along $q$ is then canonically represented by
\[
 0\longrightarrow C\longrightarrow E/K\longrightarrow B\longrightarrow0.
\]
\end{lem}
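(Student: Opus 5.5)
The plan is to verify the pushout by its universal property, working entirely inside the given abelian category. Write $\iota:A\hookrightarrow E$ and $\pi:E\twoheadrightarrow B$ for the structure maps. First I check that the composite $K\hookrightarrow A\xrightarrow{\iota}E$ is a monomorphism, since it is a composite of monomorphisms. Then $E/K$ exists, and I write $\rho:E\twoheadrightarrow E/K$ for the quotient map.

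The next step is to build the maps in the candidate sequence. Since $\rho\circ\iota$ kills $K$, it factors uniquely through the cokernel $q:A\twoheadrightarrow C$ of $K\hookrightarrow A$. This gives $\bar\iota:C\to E/K$ with $\bar\iota\circ q=\rho\circ\iota$. Likewise $\pi$ kills $K\subset A=\ker\pi$, so it factors as $\bar\pi:E/K\to B$ with $\bar\pi\circ\rho=\pi$.

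I then verify exactness of $0\to C\to E/K\to B\to0$.
\begin{itemize}
\item Surjectivity of $\bar\pi$ is immediate from surjectivity of $\pi$.
\item The composite $\bar\pi\bar\iota$ is zero, because $\bar\pi\bar\iota q=\pi\iota=0$ and $q$ is epi.
\item For injectivity of $\bar\iota$ and exactness in the middle, apply the snake lemma (equivalently, the third isomorphism theorem) to the morphism of short exact sequences from $0\to K\to K\to0\to0$ to $0\to A\to E\to B\to0$. This identifies $\ker\bar\pi=\rho(\ker\pi)=\rho\iota(A)\cong A/K\cong C$, with the isomorphism induced by $\bar\iota$.
\end{itemize}
Alternatively, in our module setting one can simply argue elementwise.

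Finally, I would check that the commutative square formed by $\iota$, $q$, $\rho$ and $\bar\iota$ is a pushout. This follows from the standard criterion: a morphism of extensions $(A\to E\to B)\to(C\to E'\to B)$ that is the identity on $B$ exhibits the lower extension as the pushout along the left vertical map. The map $\rho$ together with $q$ and $\mathrm{id}_B$ is such a morphism, by the two identities $\bar\iota q=\rho\iota$ and $\bar\pi\rho=\pi$.

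Canonicity holds because every map was induced by universal properties of cokernels, with no choices made. There is no real obstacle here. The only point needing care is identifying $K$ with its image in $E$, so that the quotient is by the correct copy. This is exactly the bookkeeping used for the distinguished $S_b$ elsewhere in the paper.
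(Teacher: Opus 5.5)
Your proposal is correct and follows the same route as the paper: quotient $E$ by the image of $K$, identify the image of $A$ there with $A/K\cong C$ (with quotient $B$), and verify that the resulting square is a pushout. The only difference is in that last check: the paper reads the pushout property off directly from the cokernel universal property, whereas you use the standard criterion that a morphism of extensions which is the identity on $B$ exhibits the target extension as the pushout along the left vertical map; either argument works.
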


\begin{proof}
The composite $K\hookrightarrow A\hookrightarrow E$ is monic.  Passing to
$E/K$ identifies the image of $A$ with $A/K\cong C$ and leaves quotient
$B$.  The resulting square with $A\to C$ is a pushout by the universal
property of the cokernel of $K\to A$.
\end{proof}

\begin{lem}\label{lem:Q-simple-Hom}
For $i\geq1$,
\[
 \Hom(\mathsf Q_i,\mathsf S_j)=
 \begin{cases}
  \C,&j=i,\\
  0,&j\neq i.
 \end{cases}
\]
\end{lem}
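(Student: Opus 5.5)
The plan is to compute $\Hom(\mathsf Q_i,\mathsf S_j)$ from the filtration \eqref{eq:Q-chain},
\[
 0\to\mathsf K_{i-1}\to\mathsf Q_i\to\mathsf K_i\to0,
\]
using the left-exact segment of $\Hom(-,\mathsf S_j)$ and the one-row nonsplitting input of Lemma~\ref{lem:chain-Nakano-triple}. Applying $\Hom(-,\mathsf S_j)$ gives
\[
 0\to\Hom(\mathsf K_i,\mathsf S_j)\to\Hom(\mathsf Q_i,\mathsf S_j)\to\Hom(\mathsf K_{i-1},\mathsf S_j).
\]
The proof of Lemma~\ref{lem:Kac-Kac-Hom} shows that $\Hom(\mathsf K_m,\mathsf S_k)$ is $\C$ for $k=m$ and $0$ otherwise. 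For $j\notin\{i-1,i\}$ both outer terms vanish, so $\Hom(\mathsf Q_i,\mathsf S_j)=0$. Lemma~\ref{lem:chain-simple-distinct} guarantees that the labels on the chain index pairwise nonisomorphic simples, so the case distinction by index is legitimate.

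For $j=i$, the third term $\Hom(\mathsf K_{i-1},\mathsf S_i)$ vanishes, because $\mathsf S_i$ is the socle and not the head of $\mathsf K_{i-1}$ (Lemma~\ref{lem:length-two-Hom}). Hence $\Hom(\mathsf Q_i,\mathsf S_i)\cong\Hom(\mathsf K_i,\mathsf S_i)\cong\C$.

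The real case is $j=i-1$, where the segment only gives $\dim\le2$ with subspace $\Hom(\mathsf K_i,\mathsf S_{i-1})=0$. So $\Hom(\mathsf Q_i,\mathsf S_{i-1})$ injects into $\Hom(\mathsf K_{i-1},\mathsf S_{i-1})\cong\C$ by restriction, and I must show that the generator, namely the head map $\mathsf K_{i-1}\twoheadrightarrow\mathsf S_{i-1}$, does not extend to $\mathsf Q_i$.

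Suppose $g:\mathsf Q_i\to\mathsf S_{i-1}$ restricts to a nonzero multiple of that head map. Since the head map kills $\sigma_i(\mathsf S_i)$, the map $g$ factors through $\mathsf Q_i/\sigma_i(\mathsf S_i)$. By Lemma~\ref{lem:pushout-quotient}, this quotient is the pushout along $\mathsf K_{i-1}\twoheadrightarrow\mathsf S_{i-1}$, which is the sequence \eqref{eq:distinguished-quotient-extension}. The induced map is a retraction of the inclusion $\mathsf S_{i-1}\hookrightarrow\mathsf Q_i/\sigma_i(\mathsf S_i)$, after rescaling by the nonzero restriction scalar. That would split \eqref{eq:distinguished-quotient-extension}, contradicting Lemma~\ref{lem:chain-Nakano-triple}, so $\Hom(\mathsf Q_i,\mathsf S_{i-1})=0$.

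The main obstacle is this nonextension step, but it reduces entirely to the nonsplitting already established in Corollary~\ref{cor:Q-Ptau}. The only care needed is that the $\mathsf S_i$ being quotiented is exactly the distinguished socle copy inside $\mathsf K_{i-1}$, so that the pushout really is the displayed quotient sequence.
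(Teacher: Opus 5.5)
Your proposal is correct and follows the paper's proof. Both apply $\Hom(-,\mathsf S_j)$ to the filtration of $\mathsf Q_i$ and reduce the case $j=i-1$ to the nonsplitting of the distinguished quotient sequence of Lemma~\ref{lem:chain-Nakano-triple}. The only difference is presentational: the paper writes that step as injectivity of the connecting map, $\partial_{i-1}(\pi_{i-1})=(\pi_{i-1})_*\xi_i\neq0$, while you unwind the same fact as a would-be retraction, which is equivalent.
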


\begin{proof}
By Lemmas~\ref{lem:chain-simple-distinct},
\ref{lem:disjoint-constituents}, and \ref{lem:length-two-Hom},
\[
 \Hom(\mathsf K_i,\mathsf S_j)=\delta_{ij}\C,\qquad
 \Hom(\mathsf K_{i-1},\mathsf S_j)=\delta_{i-1,j}\C.
\]
Applying $\Hom(-,\mathsf S_j)$ to \eqref{eq:Q-chain} therefore gives the
explicit exact sequence
\begin{equation}\label{eq:Q-simple-long-exact}
 0\to \delta_{ij}\C
 \to\Hom(\mathsf Q_i,\mathsf S_j)
 \to\delta_{i-1,j}\C
 \xrightarrow{\partial_j}\Ext^1_{\Oc}(\mathsf K_i,\mathsf S_j),
\end{equation}
where $\delta_{uv}$ is the Kronecker symbol.  Thus the Hom space is zero
unless $j=i-1$ or $i$.  For $j=i$, the canonical composite
$\mathsf Q_i\twoheadrightarrow\mathsf K_i\twoheadrightarrow\mathsf S_i$
is an epimorphism, hence nonzero, and
\eqref{eq:Q-simple-long-exact} makes the space one-dimensional.

For $j=i-1$, let
$\pi_{i-1}:\mathsf K_{i-1}\twoheadrightarrow\mathsf S_{i-1}$ be the
head projection, and let
\[
 \xi_i\in\Ext^1_{\Oc}(\mathsf K_i,\mathsf K_{i-1})
\]
be the Yoneda class of \eqref{eq:Q-chain}.  By
Lemma~\ref{lem:length-two-Hom},
\[
 \Hom(\mathsf K_{i-1},\mathsf S_{i-1})=\C\pi_{i-1}.
\]
The connecting morphism is covariant in the second Yoneda variable and is
therefore given by pushout in $\Oc_{c_{p,q}}$:
\[
 \partial_{i-1}(\pi_{i-1})=(\pi_{i-1})_*\xi_i
 \in\Ext^1_{\Oc}(\mathsf K_i,\mathsf S_{i-1}).
\]
Since $\ker\pi_{i-1}=\mathsf S_i$, embedded in $\mathsf Q_i$ as
$\sigma_i(\mathsf S_i)$, Lemma~\ref{lem:pushout-quotient} identifies this
pushout with
\[
 0\longrightarrow\mathsf S_{i-1}\longrightarrow
 \mathsf Q_i/\sigma_i(\mathsf S_i)\longrightarrow\mathsf K_i
 \longrightarrow0.
\]
By Lemma~\ref{lem:chain-Nakano-triple}, this sequence is nonsplit in
$\Oc_{c_{p,q}}$.  Its Yoneda class in
$\Ext^1_{\Oc}(\mathsf K_i,\mathsf S_{i-1})$ is therefore nonzero, so
$\partial_{i-1}$ is nonzero.  Its domain is
one-dimensional, hence $\partial_{i-1}$ is injective.  Exactness of \eqref{eq:Q-simple-long-exact} gives
$\Hom(\mathsf Q_i,\mathsf S_{i-1})=0$.
\end{proof}

\begin{lem}\label{lem:Q-Kac-Hom}
For $i\geq1$,
\[
 \Hom(\mathsf Q_i,\mathsf K_j)=
 \begin{cases}
  \C,&j=i-1\text{ or }j=i,\\
  0,&\text{otherwise}.
 \end{cases}
\]
\end{lem}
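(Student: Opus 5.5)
The plan is to apply $\Hom(\mathsf Q_i,-)$ to the nonsplit Kac sequence \eqref{eq:A-chain} for $\mathsf K_j$ and combine it with Lemma~\ref{lem:Q-simple-Hom}. Left exactness gives
\[
 0\longrightarrow\Hom(\mathsf Q_i,\mathsf S_{j+1})\longrightarrow
 \Hom(\mathsf Q_i,\mathsf K_j)\longrightarrow
 \Hom(\mathsf Q_i,\mathsf S_j).
\]
By Lemma~\ref{lem:Q-simple-Hom}, the outer terms are $\delta_{i,j+1}\C$ and $\delta_{ij}\C$. For $j\notin\{i-1,i\}$ both vanish, which gives $\Hom(\mathsf Q_i,\mathsf K_j)=0$. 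For $j=i-1$ and $j=i$ exactly one outer term is one-dimensional and the other is zero, so in each case $\dim\Hom(\mathsf Q_i,\mathsf K_j)\le1$.

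For the lower bounds I will exhibit nonzero maps. When $j=i$, the quotient map $\mathsf Q_i\twoheadrightarrow\mathsf K_i$ of \eqref{eq:Q-chain} is an epimorphism onto a nonzero object, so it is nonzero. When $j=i-1$, I take the composite
\[
 \mathsf Q_i\twoheadrightarrow\mathsf K_i\twoheadrightarrow\mathsf S_i
 \xrightarrow{\ \sigma\ }\mathsf K_{i-1},
\]
where the last arrow is the socle inclusion coming from \eqref{eq:A-chain} with index $i-1$. This composite is nonzero because its image is a copy of $\mathsf S_i$. It is consistent with the left-exact segment above, since it lands in the subobject $\mathsf S_i=\mathsf S_{(i-1)+1}\subset\mathsf K_{i-1}$.

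I expect no serious obstacle here, because all the substantive nonsplitting input is already contained in Lemma~\ref{lem:Q-simple-Hom}, specifically the vanishing $\Hom(\mathsf Q_i,\mathsf S_{i-1})=0$. The one point needing care is that Lemma~\ref{lem:Q-simple-Hom} covers every $j\ge0$, including $\mathsf S_0$ when $i=1$. It also relies on pairwise distinctness of the chain simples (Lemma~\ref{lem:chain-simple-distinct}), which makes the Kronecker bookkeeping legitimate even in the collision case $q=2$.
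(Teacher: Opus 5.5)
Your proposal is correct and follows essentially the same route as the paper: apply $\Hom(\mathsf Q_i,-)$ to \eqref{eq:A-chain}, read off the outer terms from Lemma~\ref{lem:Q-simple-Hom}, and use the quotient map $\mathsf Q_i\twoheadrightarrow\mathsf K_i$ for the lower bound at $j=i$. The only cosmetic difference is at $j=i-1$: the paper reads the isomorphism $\Hom(\mathsf Q_i,\mathsf K_{i-1})\cong\Hom(\mathsf Q_i,\mathsf S_i)\cong\C$ directly off the exact segment (because $\Hom(\mathsf Q_i,\mathsf S_{i-1})=0$), whereas you exhibit the nonzero composite explicitly, and that composite is exactly the image of the generator under the same injection.
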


\begin{proof}
Apply $\Hom(\mathsf Q_i,-)$ to \eqref{eq:A-chain}.  Its beginning is
\[
 0\to\Hom(\mathsf Q_i,\mathsf S_{j+1})
 \to\Hom(\mathsf Q_i,\mathsf K_j)
 \to\Hom(\mathsf Q_i,\mathsf S_j)
 \to\Ext^1_{\Oc}(\mathsf Q_i,\mathsf S_{j+1}).
\]
Lemma~\ref{lem:Q-simple-Hom} gives the vanishing outside $j=i-1,i$.
If $j=i-1$, the relevant segment is
\[
 0\to\Hom(\mathsf Q_i,\mathsf S_i)
 \to\Hom(\mathsf Q_i,\mathsf K_{i-1})
 \to\Hom(\mathsf Q_i,\mathsf S_{i-1})=0,
\]
so $\Hom(\mathsf Q_i,\mathsf K_{i-1})\cong\C$.  If $j=i$, the relevant
segment begins
\[
 0=\Hom(\mathsf Q_i,\mathsf S_{i+1})
 \to\Hom(\mathsf Q_i,\mathsf K_i)
 \to\Hom(\mathsf Q_i,\mathsf S_i),
\]
so $\Hom(\mathsf Q_i,\mathsf K_i)$ injects into a one-dimensional space.  The canonical composite
\[
 \mathsf Q_i\twoheadrightarrow\mathsf K_i
 \twoheadrightarrow\mathsf S_i
\]
is an epimorphism, hence nonzero.  Therefore
$\mathsf Q_i\twoheadrightarrow\mathsf K_i$ is nonzero, and this Hom
space is also one-dimensional.
\end{proof}

\begin{prop}[Target upper bounds]\label{prop:target-Hom-upper}
Within a fixed reflection chain,
\[
 \dim\Hom(\mathsf Q_i,\mathsf Q_j)\le
 \begin{cases}
  2,&i=j\geq1,\\
  1,&i=j=0,\\
  1,&|i-j|=1,\\
  0,&|i-j|\geq2.
 \end{cases}
\]
\end{prop}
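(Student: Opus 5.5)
We bound $\Hom(\mathsf Q_i,\mathsf Q_j)$ by applying the left-exact functor $\Hom(\mathsf Q_i,-)$ to the Kac filtration \eqref{eq:Q-chain} of the target. For $j\ge1$ this gives
\[
 0\longrightarrow\Hom(\mathsf Q_i,\mathsf K_{j-1})\longrightarrow
 \Hom(\mathsf Q_i,\mathsf Q_j)\longrightarrow\Hom(\mathsf Q_i,\mathsf K_j),
\]
so that $\dim\Hom(\mathsf Q_i,\mathsf Q_j)\le\dim\Hom(\mathsf Q_i,\mathsf K_{j-1})+\dim\Hom(\mathsf Q_i,\mathsf K_j)$. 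For $j=0$ we have $\mathsf Q_0=\mathsf K_0$ by Lemma~\ref{lem:chain-filtrations}, and the bound is read off directly. When $i\ge1$, Lemma~\ref{lem:Q-Kac-Hom} evaluates the two outer terms: $\Hom(\mathsf Q_i,\mathsf K_k)$ is one-dimensional for $k\in\{i-1,i\}$ and zero otherwise. When $i=0$, Lemma~\ref{lem:Kac-Kac-Hom} gives $\Hom(\mathsf K_0,\mathsf K_k)=\delta_{k0}\C$.

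The case bookkeeping then runs as follows. For $i=j\ge1$ the indices $\{j-1,j\}=\{i-1,i\}$ contribute $1+1=2$. For $j=i+1$ only $k=j-1=i$ contributes, giving $1$. For $j=i-1\ge1$ only $k=j=i-1$ contributes, giving $1$. For $j=0$ and $i=1$ we get $\Hom(\mathsf Q_1,\mathsf K_0)\cong\C$. For $|i-j|\ge2$ neither $j-1$ nor $j$ lies in $\{i-1,i\}$, so the space vanishes; when $j=0$ this uses $i\ge2$. For $i=0$: if $j=0$ the space is $\End(\mathsf K_0)=\C$; if $j=1$ the terms give $\Hom(\mathsf K_0,\mathsf K_0)+\Hom(\mathsf K_0,\mathsf K_1)=1+0$; if $j\ge2$ both terms vanish. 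These are exactly the stated upper bounds.

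Most of the content sits in Lemma~\ref{lem:Q-simple-Hom}, specifically the vanishing of $\Hom(\mathsf Q_i,\mathsf S_{i-1})$, which rests on the nonsplit distinguished quotient from Nakano and Kyt\"ol\"a--Ridout. Given that input, the present step is routine. The points to check are that the indices stay in range (for example $j-1\ge0$) and that Lemma~\ref{lem:chain-simple-distinct} guarantees no label collisions along the chain, including the $q=2$ case.
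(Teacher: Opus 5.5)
Your proposal is correct and is essentially the paper's proof. Both apply the left-exact functor $\Hom(\mathsf Q_i,-)$ to the Kac filtration of $\mathsf Q_j$ and read off the bound from Lemma~\ref{lem:Q-Kac-Hom}, treating the boundary vertex $\mathsf Q_0=\mathsf K_0$ separately via Lemma~\ref{lem:Kac-Kac-Hom}.
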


\begin{proof}
For $i,j\ge1$, apply $\Hom(\mathsf Q_i,-)$ to
$0\to\mathsf K_{j-1}\to\mathsf Q_j\to\mathsf K_j\to0$.  The
relevant left-exact segment is
\[
 0\longrightarrow\Hom(\mathsf Q_i,\mathsf K_{j-1})
 \longrightarrow\Hom(\mathsf Q_i,\mathsf Q_j)
 \longrightarrow\Hom(\mathsf Q_i,\mathsf K_j).
\]
Consequently
\[
 \dim\Hom(\mathsf Q_i,\mathsf Q_j)
 \le
 \dim\Hom(\mathsf Q_i,\mathsf K_{j-1})
 +\dim\Hom(\mathsf Q_i,\mathsf K_j).
\]
Lemma~\ref{lem:Q-Kac-Hom} says
\[
 \dim\Hom(\mathsf Q_i,\mathsf K_t)
 =\mathbf1_{t=i-1}+\mathbf1_{t=i},
\]
so the right-hand side is $2$ for $j=i$, $1$ for $|i-j|=1$, and $0$
for $|i-j|\ge2$.

At the left boundary, $\mathsf Q_0=\mathsf K_0$ and
Lemma~\ref{lem:Kac-Kac-Hom} gives $\End(\mathsf Q_0)=\C$.  For $j\ge2$,
applying $\Hom(\mathsf K_0,-)$ to \eqref{eq:Q-chain} gives
$\Hom(\mathsf Q_0,\mathsf Q_j)=0$, while the reverse space vanishes by
Lemma~\ref{lem:Q-Kac-Hom}.  Moreover,
\[
 \Hom(\mathsf Q_0,\mathsf Q_1)\cong\C,
 \qquad
 \Hom(\mathsf Q_1,\mathsf Q_0)\cong\C.
\]
Indeed, $\mathsf Q_0=\mathsf K_0$, and applying
$\Hom(\mathsf K_0,-)$ to
$0\to\mathsf K_0\to\mathsf Q_1\to\mathsf K_1\to0$, together with
Lemma~\ref{lem:Kac-Kac-Hom}, gives the first equality; the second is
Lemma~\ref{lem:Q-Kac-Hom}.
\end{proof}

\begin{rmk}[Logical order of the Hom calculation]\label{rmk:Hom-no-circularity}
The dependence is
\[
\begin{aligned}
 \text{projective faithfulness}
 &\Longrightarrow \text{logarithmicity}
 \Longrightarrow \text{recognition and distinguished nonsplitting},\\
 &\Longrightarrow \text{target Hom upper bounds}.
\end{aligned}
\]
Projective faithfulness itself was proved in Section~\ref{sec:images}
without using any target Hom-space dimension.  After the upper bounds are
known, the same faithfulness supplies lower bounds from the source Hom
spaces, yielding the exact target dimensions.  Full faithfulness is then a
consequence of those dimensions, not an input to them.
\end{rmk}

\begin{prop}\label{prop:target-Hom-table}
Within a fixed reflection chain,
\begin{equation}\label{eq:target-Hom-table}
 \dim\Hom(\mathsf Q_i,\mathsf Q_j)=
 \begin{cases}
  2,&i=j\geq1,\\
  1,&i=j=0,\\
  1,&|i-j|=1,\\
  0,&|i-j|\geq2.
 \end{cases}
\end{equation}
\end{prop}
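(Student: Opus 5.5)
The plan is to squeeze the dimensions between two bounds that are already available. Proposition~\ref{prop:target-Hom-upper} gives the upper bound in every case of \eqref{eq:target-Hom-table}. For the lower bound, note that $\mathsf Q_i=F(P_{s_i})$. Corollary~\ref{cor:F-faithful-projectives} says $F|_{\mathcal P^k}$ is faithful, so for all $i,j$ the linear map
\[
 F:\Hom(P_{s_i},P_{s_j})\longrightarrow\Hom(\mathsf Q_i,\mathsf Q_j)
\]
is injective. Hence $\dim\Hom(\mathsf Q_i,\mathsf Q_j)\ge\dim\Hom(P_{s_i},P_{s_j})$.

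Proposition~\ref{prop:source-Hom-package} computes the source dimensions exactly: $2$ for $i=j\ge1$, $1$ for $i=j=0$, $1$ for $|i-j|=1$, and $0$ for $|i-j|\ge2$. These agree case by case with the upper bounds of Proposition~\ref{prop:target-Hom-upper}, so equality holds throughout and \eqref{eq:target-Hom-table} follows.

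The only real obstacle is checking that the two comparisons use the same objects and the same chain. The target-side chain objects $\mathsf Q_i$ are literally the images $F(P_{s_i})$ for the same reflection chain $s_i=s_i(r)$ that indexes the source package; this is built into the notation $\mathsf Q_i=Q_{s_i}$. In the boundary case $i=0$, we have $P_{s_0}=P_r=V_r$ with $1\le r\le p-1$, and Lemma~\ref{lem:F-small-Weyl} gives $\mathsf Q_0\cong K_r=\mathsf K_0$, which is the identification Proposition~\ref{prop:target-Hom-upper} used. The edges involving vertex $0$ therefore also match.

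Finally, one should confirm that faithfulness was proved without any target Hom dimensions, so that the argument is not circular. This is the logical order recorded in Remark~\ref{rmk:Hom-no-circularity}.
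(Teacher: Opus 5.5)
Your proposal is correct and matches the paper's proof: the paper likewise sandwiches $\dim\Hom(\mathsf Q_i,\mathsf Q_j)$ between the source dimensions from Proposition~\ref{prop:source-Hom-package}, via injectivity supplied by projective faithfulness (Corollary~\ref{cor:F-faithful-projectives}), and the upper bounds of Proposition~\ref{prop:target-Hom-upper}. The boundary identification $\mathsf Q_0=\mathsf K_0$ you check is built into Lemma~\ref{lem:chain-filtrations}, and the non-circularity point is recorded in Remark~\ref{rmk:Hom-no-circularity}.
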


\begin{proof}
Let $d_{ij}$ denote the right-hand side of
\eqref{eq:target-Hom-table}.  Proposition~\ref{prop:source-Hom-package},
projective faithfulness, and Proposition~\ref{prop:target-Hom-upper} give
\[
 d_{ij}
 =\dim\Hom_{\KL^k}(P_{s_i},P_{s_j})
 \le \dim\Hom_{\Oc}(\mathsf Q_i,\mathsf Q_j)
 \le d_{ij}.
\]
Hence equality holds throughout.
\end{proof}

\begin{cor}[Local endomorphism algebras]\label{cor:target-local-endomorphisms}
For $n\ge1$ and $1\le r\le p-1$,
\[
 \End(Q_{np+r})
 =\C\id\oplus\C F(N_{np+r})
 \cong\C[\varepsilon]/(\varepsilon^2).
\]
In particular, $Q_{np+r}$ is indecomposable.
\end{cor}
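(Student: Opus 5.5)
The plan is to combine the exact dimension count of Proposition~\ref{prop:target-Hom-table} with projective faithfulness. Fix $b=np+r$ with $n\ge1$, $1\le r\le p-1$. By Lemma~\ref{lem:reflection-partition}, $b=s_i$ for a unique reflection chain and a unique index $i\ge1$; note $i\ge1$ because $b>p$. Proposition~\ref{prop:target-Hom-table} with $i=j\ge1$ then gives
\[
 \dim_\C\End(Q_b)=2.
\]

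Next I will exhibit a basis. By Proposition~\ref{prop:source-Hom-package}, $\End(P_b)=\C\id\oplus\C N_b$ with $N_b^2=0$. The restriction $F|_{\mathcal P^k}$ is faithful (Corollary~\ref{cor:F-faithful-projectives}), so $F$ maps this two-dimensional algebra injectively into $\End(Q_b)$. Comparing dimensions, the induced map is an isomorphism of $\C$-algebras, and therefore
\[
 \End(Q_b)=\C\id\oplus\C F(N_b),\qquad F(N_b)\neq0,\qquad F(N_b)^2=F(N_b^2)=0.
\]
Sending $\varepsilon\mapsto F(N_b)$ then gives the algebra isomorphism with $\C[\varepsilon]/(\varepsilon^2)$.

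Indecomposability follows because $\C[\varepsilon]/(\varepsilon^2)$ is local, so its only idempotents are $0$ and $1$. Equivalently, a nontrivial idempotent $e=\alpha\id+\beta F(N_b)$ would force $\alpha^2=\alpha$ and $2\alpha\beta=\beta$, hence $e\in\{0,\id\}$.

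No real obstacle remains: all the substance sits in Proposition~\ref{prop:target-Hom-table}, and the only point to check is that the chain index is positive, so that the two-dimensional case of that table applies.
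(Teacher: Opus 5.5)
Your proposal is correct and follows essentially the same route as the paper. Both arguments use $\dim\End(Q_{np+r})=2$ from Proposition~\ref{prop:target-Hom-table} at a positive chain index, then projective faithfulness together with $F(N_{np+r})^2=0$ to get the dual-number basis, and finally locality for indecomposability. The only cosmetic difference is in the basis step: you note that the faithful algebra map $\End(P_b)\to\End(Q_b)$ is bijective by the dimension count, whereas the paper checks directly that a nonzero square-zero endomorphism is linearly independent of $\id$.
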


\begin{proof}
The label $np+r$ occurs at a positive index on its reflection chain by
Lemma~\ref{lem:reflection-partition}.  Equation~\eqref{eq:target-Hom-table}
gives $\dim\End(Q_{np+r})=2$.  Projective faithfulness gives
$F(N_{np+r})\neq0$, while
$F(N_{np+r})^2=F(N_{np+r}^2)=0$.  A nonzero square-zero
endomorphism cannot be a scalar multiple of the identity, so $\id$ and
$F(N_{np+r})$ are linearly independent.  Since the endomorphism space has
dimension two, they form a basis.  The map
\[
 \C[\varepsilon]/(\varepsilon^2)\longrightarrow\End(Q_{np+r}),
 \qquad \varepsilon\longmapsto F(N_{np+r}),
\]
is therefore an algebra isomorphism.  In particular the endomorphism
algebra is local, so $Q_{np+r}$ is indecomposable.
\end{proof}

The following consequence is not used in the proof of full faithfulness;
we record it here for the comparison argument in Section~\ref{sec:comparison}.

The full-faithfulness argument below is already complete at the level of
Hom-space dimensions.  We record next a canonical radical element only
for the comparison with Drinfeld--Sokolov reduction in
Section~\ref{sec:comparison}; it is not used to prove fullness.

\begin{cor}[Canonical conformal radical]\label{cor:target-conformal-radical}
Let $P=P_{np+r}$ and $Q=F(P)$, with $n\ge1$ and $1\le r\le p-1$.
Let $D_P$ be the canonical affine conformal nilpotent of
Lemma~\ref{lem:source-L0-radical}.  For every actual generalized
$L_0$-eigenvalue $\lambda$ of $Q$, let $Q^{\mathrm{gen}}_\lambda$ be the
corresponding generalized eigenspace and define
\[
 D_Q|_{Q^{\mathrm{gen}}_\lambda}
 :=(L_0-\lambda\id)|_{Q^{\mathrm{gen}}_\lambda}.
\]
Then these operators assemble to a Virasoro-module endomorphism and
\begin{equation}\label{eq:canonical-radical-matching}
 F(D_P)=D_Q,
 \qquad
 \rad\End(Q)=\C D_Q=\C F(N_P).
\end{equation}
\end{cor}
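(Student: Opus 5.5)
The plan is to treat $D_Q$ on the target side exactly as $D_P$ was treated on the source side in Lemma~\ref{lem:source-L0-radical}, and then to compare the two through twist preservation.

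\emph{Step 1: $D_Q$ is a module endomorphism.} First I verify that the operators $(L_0-\lambda\id)|_{Q^{\mathrm{gen}}_\lambda}$ assemble to a Virasoro-module endomorphism $D_Q$. The commutator $[L_0,L_m]=-mL_m$ gives
\[
 \bigl(L_0-(\lambda-m)\bigr)^NL_m=L_m(L_0-\lambda)^N
\]
on $Q^{\mathrm{gen}}_\lambda$. Hence $D_{\lambda-m}L_m=L_mD_\lambda$, exactly as in the affine computation. By construction $D_Q$ is locally nilpotent. By Proposition~\ref{prop:Q-logarithmic}, $L_0$ is not semisimple on $Q$, so $D_Q\ne0$.

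\emph{Step 2: $D_Q$ spans the radical.} Corollary~\ref{cor:target-local-endomorphisms} gives
\[
 \End(Q)=\C\id\oplus\C F(N_P)\cong\C[\varepsilon]/(\varepsilon^2).
\]
Write $D_Q=a\id+bF(N_P)$. If $a\ne0$, then $D_Q$ is invertible because $F(N_P)^2=0$, which contradicts local nilpotence. So $a=0$, and $b\ne0$ because $D_Q\ne0$. Thus
\[
 \rad\End(Q)=\C F(N_P)=\C D_Q,
\]
and in particular $D_Q^2=0$.

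\emph{Step 3: scalar matching.} By Theorem~\ref{thm:projective-control}, $Q$ is supported in the single residue class $\xi_Q=[h_{np+r}]$. Put $S_Q:=L_0-D_Q$. The same algebraic argument as in Lemma~\ref{lem:source-L0-radical} gives $e^{2\pi iS_Q}=e^{2\pi i\xi_Q}\id$. Since $D_Q^2=0$, this yields
\[
 e^{2\pi iL_0}|_Q=\mu(\id+2\pi iD_Q),\qquad \mu=e^{2\pi i\xi_Q}.
\]
On the source side, \eqref{eq:source-twist-nilpotent} gives $\theta^-_P=\lambda_P(\id+2\pi iD_P)$. Twist preservation \cite[Theorem~7.15]{McRaeYang} says $F(\theta^-_P)=e^{2\pi iL_0}|_Q$, as already used in Proposition~\ref{prop:Q-logarithmic}. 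By $\C$-linearity of $F$,
\[
 \lambda_P\bigl(\id+2\pi iF(D_P)\bigr)=\mu\bigl(\id+2\pi iD_Q\bigr).
\]
Both $F(D_P)=\delta_PF(N_P)$ and $D_Q$ lie in $\C F(N_P)$, and $\id$ and $F(N_P)$ are linearly independent. Comparing the $\id$-components gives $\lambda_P=\mu$. Comparing the radical components then gives $2\pi i\lambda_PF(D_P)=2\pi i\mu D_Q$, hence $F(D_P)=D_Q$.

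The main obstacle is Step 3. It depends on the exact normalization of twist preservation: $\theta^-$ must map to the plain $e^{2\pi iL_0}$, with no extra sign or scalar. I will take this from the cited normalization of \cite[Theorem~5.4 and Theorem~7.15]{McRaeYang}. Even if an extra nonzero scalar appeared, it would be absorbed when comparing the $\id$-components, so the conclusion would survive. The remaining steps are formal consequences of Corollary~\ref{cor:target-local-endomorphisms} and Proposition~\ref{prop:Q-logarithmic}.
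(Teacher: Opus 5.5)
Your proposal is correct and follows essentially the same route as the paper: you assemble $D_Q$ from the commutator $[L_0,L_m]=-mL_m$, place it in the one-dimensional radical of the dual-number algebra of Corollary~\ref{cor:target-local-endomorphisms}, and compare the two expressions for $e^{2\pi iL_0}|_Q$ coming from single-residue support and from twist preservation. The only cosmetic differences are that you read $\rad\End(Q)=\C F(N_P)$ directly off that corollary and compare coefficients in the basis $\{\id,F(N_P)\}$, whereas the paper passes to $\End(Q)/\rad\End(Q)\cong\C$ and obtains $\C F(N_P)$ at the end from $\C D_P=\C N_P$ and faithfulness.
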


\begin{proof}
By Proposition~\ref{prop:projective-images}, $Q$ is supported in the
single residue sector $\xi_{n,r}$.  The Virasoro commutator
$[L_0,L_m]=-mL_m$ gives
\[
 D_{\lambda-m}L_m=L_mD_\lambda,
\]
so the nilpotent parts on the actual generalized $L_0$-eigenspaces
assemble to a Virasoro-module endomorphism $D_Q$.  It is nonzero because $Q$ is logarithmic.  It is globally locally
nilpotent in the algebraic sense: every vector is a finite sum of
generalized $L_0$-eigenvectors, and on each summand the corresponding
nilpotent part is nilpotent, so one common power annihilates that vector.
Hence $D_Q$ cannot be invertible: for any nonzero $v$ there is an $N$
with $D_Q^Nv=0$, whereas invertibility of $D_Q$ would make $D_Q^N$
invertible.
Corollary~\ref{cor:target-local-endomorphisms} identifies $\End(Q)$ with a
local dual-number algebra, so every nonunit belongs to its Jacobson
radical.  Hence
\[
 0\ne D_Q\in\rad\End(Q).
\]
Thus $D_Q$ is intrinsic to the Virasoro module $Q$ and does not depend on
a chosen generator of $\rad\End(Q)$.  The radical is one-dimensional,
so $D_Q$ spans it and $D_Q^2=0$.

On the source,
\[
 \theta^-_P=\lambda_P(\id+2\pi iD_P).
\]
Applying $F$ and using preservation of the minus twist gives
\[
 e^{2\pi iL_0}|_Q
 =\lambda_P\bigl(\id+2\pi iF(D_P)\bigr).
\]
Because $D_P$ is a nonzero scalar multiple of $N_P$ and $F$ is faithful on
projectives, $F(D_P)\ne0$; moreover $F(D_P)^2=0$.  Thus $F(D_P)$ is a
nonunit of the same local algebra and lies in its radical.

Every actual generalized eigenvalue $\lambda$ of $Q$ lies in the single
residue class $\xi_{n,r}$, so the scalar $e^{2\pi i\lambda}$ is independent
of $\lambda$; denote it by $\mu_Q$.  On each finite-dimensional generalized
eigenspace one has $L_0=\lambda\id+D_Q$ and $D_Q^2=0$, hence
\[
 e^{2\pi iL_0}=\mu_Q(\id+2\pi iD_Q).
\]
These identities are compatible with the algebraic direct-sum
decomposition into generalized eigenspaces and therefore assemble to the
global identity on $Q$.  Moreover, for every Virasoro mode $L_m$,
\[
 e^{2\pi iL_0}L_me^{-2\pi iL_0}=e^{-2\pi im}L_m=L_m,
\]
so $e^{2\pi iL_0}|_Q$ is a Virasoro-module automorphism and hence an
element of $\End(Q)$.  Since
\[
 \End(Q)/\rad\End(Q)\cong\C,
\]
the images of the two exponential identities in this quotient give
$\mu_Q=\lambda_P$.  Subtracting their scalar
parts then yields
\[
 2\pi i\lambda_PF(D_P)=2\pi i\lambda_PD_Q,
\]
and hence $F(D_P)=D_Q$.  Finally,
Lemma~\ref{lem:source-L0-radical} gives $\C D_P=\C N_P$, so projective
faithfulness yields the remaining equality in
\eqref{eq:canonical-radical-matching}.
\end{proof}

\subsection{Separation of reflection chains and walls}

For a finite-length module $M$, let $\operatorname{JH}(M)$ denote the set
of isomorphism classes of its simple Jordan--H\"older constituents, with
multiplicities forgotten.

\begin{lem}[Composition-factor collisions]\label{lem:composition-collisions}
For a chain beginning at $u$, write $Q_i^u:=Q_{s_i(u)}$.  If $q>2$,
distinct reflection chains have disjoint simple
composition-factor sets.  If $q=2$, then $p$ is odd and the only chains
which can share a simple are those beginning at $r$ and $p-r$.  Their only
common simple is $S_r\cong S_{p-r}$, and
\[
 \operatorname{JH}(Q_i^r)\cap
 \operatorname{JH}(Q_j^{p-r})=\varnothing
 \qquad\text{unless }i,j\in\{0,1\}.
\]
Thus the common isomorphism class occurs only in objects of chain index
$0$ or $1$.
\end{lem}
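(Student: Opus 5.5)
The plan is to reduce everything to Corollary~\ref{cor:positive-label-collisions}, which says $S_j\cong S_\ell$ for positive labels $j\ne\ell$ exactly when $q=2$ and $j+\ell=p$. The first step records the composition-factor sets. By \eqref{eq:Q-JH-vector}, $\operatorname{JH}(Q_0^u)=\{S_{s_0(u)},S_{s_1(u)}\}$ and $\operatorname{JH}(Q_i^u)=\{S_{s_{i-1}(u)},S_{s_i(u)},S_{s_{i+1}(u)}\}$ for $i\ge1$. Hence every simple constituent of an object on the chain beginning at $u$ is of the form $S_{s_k(u)}$ for some $k\ge0$.

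The second step treats $q>2$. Suppose two distinct chains $u\ne u'$ share a simple, so $S_{s_k(u)}\cong S_{s_m(u')}$. Lemma~\ref{lem:reflection-partition} shows the labels $s_k(u)$ and $s_m(u')$ are distinct, since distinct chains do not meet. Corollary~\ref{cor:positive-label-collisions} then requires $q=2$, which is a contradiction. So for $q>2$ the composition-factor sets of distinct chains are disjoint.

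The third step treats $q=2$. The corollary shows that $p$ is odd and that a collision needs $s_k(u)+s_m(u')=p$. Every label $s_k$ with $k\ge1$ exceeds $p$, so this forces $k=m=0$ and $u+u'=p$. Thus the two chains are those beginning at $r$ and $p-r$, and the only shared class is $S_r\cong S_{p-r}$. This class has chain index $0$ on each chain, so it lies in $\operatorname{JH}(Q_i^u)$ only when $s_0$ is among $s_{i-1},s_i,s_{i+1}$, that is, only when $i\in\{0,1\}$. Consequently, if $i\ge2$ or $j\ge2$, one of the two sets omits the shared class and the intersection is empty.

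The main obstacle is to make sure the distinctness of labels across chains really comes from Lemma~\ref{lem:reflection-partition}, and that the bound $s_k>p$ for $k\ge1$ holds. The latter follows from $s_{2m+1}=2(m+1)p-r\ge2p-r>p$ and $s_{2m}=2mp+r>p$ for $m\ge1$. Everything else is bookkeeping.
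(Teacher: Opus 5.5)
Your proposal is correct and follows essentially the same route as the paper. Both arguments reduce to Corollary~\ref{cor:positive-label-collisions}, use $s_i>p$ for $i\ge1$ to force any collision onto the two initial labels $r$ and $p-r$, and read off where the shared simple occurs from the chain filtrations. You merely spell out the bookkeeping steps (label distinctness from Lemma~\ref{lem:reflection-partition} and the explicit bound on $s_k$) that the paper leaves implicit.
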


\begin{proof}
Corollary~\ref{cor:positive-label-collisions} shows that a collision of
distinct positive labels forces $q=2$ and complementary labels summing to
$p$.  Since every chain label $s_i$ with $i\ge1$ is larger than $p$, only
the initial labels $r,p-r$ can collide.  Equations
\eqref{eq:A-chain}--\eqref{eq:Q-chain} show where the corresponding simple
occurs.
\end{proof}

\begin{lem}\label{lem:cross-chain}
The Hom spaces between images of projectives belonging to distinct reflection
chains are zero.
\end{lem}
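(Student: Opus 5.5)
The plan is to separate the two chains by composition factors wherever possible, and to kill the remaining collision cases with the Hom computations already available. Let $Q_i^u$ and $Q_j^{u'}$ be images of projectives on two distinct reflection chains beginning at $u\ne u'$. By Lemma~\ref{lem:disjoint-constituents}, any pair with $\operatorname{JH}(Q_i^u)\cap\operatorname{JH}(Q_j^{u'})=\varnothing$ has zero Hom spaces in both directions. By Lemma~\ref{lem:composition-collisions}, this already settles every case when $q>2$. When $q=2$, it settles every case except those with $\{u,u'\}=\{r,p-r\}$ and $i,j\in\{0,1\}$. So the proof reduces to a finite check of four pairs: $(Q_0^r,Q_0^{p-r})$, $(Q_0^r,Q_1^{p-r})$, $(Q_1^r,Q_0^{p-r})$ and $(Q_1^r,Q_1^{p-r})$, each in both directions.

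For these pairs I would use the explicit structure. Here $Q_0^r=K_r$ has socle $S_{2p-r}$ and head $S_r$, while $Q_0^{p-r}=K_{p-r}$ has socle $S_{p+r}$ and head $S_{p-r}\cong S_r$. The common simple $S:=S_r\cong S_{p-r}$ occurs in each of these objects only as a head constituent.

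\begin{itemize}
\item \textbf{Kac--Kac pair.} For $\Hom(K_r,K_{p-r})$, the image of a nonzero map would be a quotient of $K_r$ that is also a submodule of $K_{p-r}$. Since the only common constituent is $S$, the image would have to be $S$, sitting as a submodule of $K_{p-r}$. That is impossible, because $\Soc(K_{p-r})=S_{p+r}\not\cong S$ by Lemma~\ref{lem:length-two-socle}.
\item \textbf{Pairs involving $Q_1$.} The same image argument applies. The image is a subquotient whose constituents are all $\cong S$, which is a quotient of the source and a submodule of the target. Lemma~\ref{lem:Q-simple-Hom} shows that $Q_1^u$ has a unique simple quotient, namely $\mathsf S_1\ne S$. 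Hence any map out of $Q_1^u$ whose image has only $S$-constituents vanishes, since such an image has $S$ as a simple quotient.
\item \textbf{Maps into $Q_1^{u'}$.} For maps with target $Q_1^{u'}$, I would show that $\Hom(S,Q_1^{u'})=0$. Use the filtration $0\to K_{u'}\to Q_1^{u'}\to K_{s_1}\to0$. A copy of $S$ in the socle would either lie in $K_{u'}$, which is impossible since its socle is $S_{s_1}$, or map isomorphically into $K_{s_1}$, whose constituents $S_{s_1},S_{s_2}$ are both $\not\cong S$ because their labels exceed $p$.
\end{itemize}

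A further point needs care. The image of a map $K_u\to Q_1^{u'}$ could a priori be all of $K_u$ rather than just $S$, but every constituent of the image must be shared by both objects. Since the only shared class is $S$, the image is $S$, and the socle argument above kills it. Similarly, $S$ has multiplicity one in each of these objects, so no image of length greater than one can occur.

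The main obstacle I expect is bookkeeping in the collision case. One must verify the multiplicity-one claim and that no second coincidence sneaks in, for example between $S_{2p-r}$ and $S_{p+r}$, or between the upper constituents. Lemma~\ref{lem:composition-collisions} covers this, since all labels exceeding $p$ are collision-free. With that in hand, every case reduces to the head/socle mismatch and the Hom spaces vanish.
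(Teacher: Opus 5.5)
Your proof is correct. The reduction is the same as the paper's: Lemmas~\ref{lem:composition-collisions} and \ref{lem:disjoint-constituents} dispose of $q>2$ and of every $q=2$ pair outside chain indices $\{0,1\}$. You differ in how the four boundary pairs are handled.

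The paper treats them case by case:
\begin{itemize}
\item for $K_r$ versus $K_{p-r}$, it factors a map through the head projections and produces a splitting of a Kac sequence;
\item for $Q_1^r\to K_{p-r}$, it invokes Lemma~\ref{lem:Q-simple-Hom};
\item for $K_{p-r}\to Q_1^r$, it factors through $K_r\subset Q_1^r$ and reuses the index-zero case;
\item for $Q_1^r\to Q_1^{p-r}$, it applies the left-exact Hom sequence of the Kac filtration.
\end{itemize}

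You make one uniform observation instead. Since $S$ is the only shared class and has multiplicity one in all four objects, the image of any nonzero cross-chain map is isomorphic to $S$. After that only head/socle checks remain. This is cleaner, and your own computations actually prove more than you use. You show
\[
 \Hom(S,K_r)=\Hom(S,K_{p-r})=\Hom(S,Q_1^{r})=\Hom(S,Q_1^{p-r})=0,
\]
and these vanishings alone kill all eight directed cases, because every image would have to embed $S$ into the target.

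So your appeal to Lemma~\ref{lem:Q-simple-Hom} for maps out of $Q_1^u$ is redundant. Dropping it makes this lemma depend only on two inputs:
\begin{itemize}
\item the nonsplit one-row Kac sequences, via Lemma~\ref{lem:length-two-socle};
\item the Kac filtrations of Proposition~\ref{prop:projective-images}.
\end{itemize}
It no longer relies on the staggered distinguished-quotient input behind the $j=i-1$ case of Lemma~\ref{lem:Q-simple-Hom}. The paper's steps for maps out of $Q_1$ do use that input, although the same socle observation would remove it there as well.
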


\begin{proof}
For $q>2$, use Lemmas~\ref{lem:composition-collisions} and
\ref{lem:disjoint-constituents}.  Suppose $q=2$ and compare the chains beginning at $r$ and $p-r$, with
$1\le r<p-r$.  Fix once and for all an isomorphism
\[
 \iota:S_{p-r}\xrightarrow{\sim}S_r,
\]
and write $S:=S_r$, using $\iota$ to identify the two simple heads below.
Put
\[
 A=K_r,\qquad B=K_{p-r}.
\]
Their Kac sequences, after using $\iota$ on the head of $B$, are
\[
 0\to S_{2p-r}\to A\to S\to0,
 \qquad
 0\to S_{p+r}\to B\to S\to0,
\]
with nonisomorphic submodules.  Thus
\[
 \operatorname{JH}(Q_0^r)=\{S,S_{2p-r}\},\qquad
 \operatorname{JH}(Q_0^{p-r})=\{S,S_{p+r}\},
\]
while \eqref{eq:Q-chain} gives
\[
 \operatorname{JH}(Q_1^r)=\{S,S_{2p-r},S_{2p+r}\},\qquad
 \operatorname{JH}(Q_1^{p-r})=\{S,S_{p+r},S_{3p-r}\},
\]
with multiplicities suppressed.  Let $\pi_A:A\twoheadrightarrow S$ be the
natural head map and let $\pi_B:B\twoheadrightarrow S$ be the natural
head map to $S_{p-r}$ followed by $\iota$.  Lemma~\ref{lem:length-two-Hom} gives
\[
 \Hom(A,S)=\C\pi_A,\qquad \Hom(B,S)=\C\pi_B.
\]
Thus, if $f:A\to B$, then $\pi_Bf=c\pi_A$ for some $c\in\C$.
If $c=0$, the image lies in the simple submodule $S_{p+r}\subset B$; since
$S_{p+r}$ is not a Jordan--H\"older constituent of $A$,
Lemma~\ref{lem:disjoint-constituents} gives $f=0$.  If $c\neq0$, the
restriction of $f$ to $S_{2p-r}\subset A$ is zero because
$\Hom(S_{2p-r},B)=0$ by disjoint constituents.  Hence $f$ factors through
$\pi_A$ as $f=\bar f\pi_A$.  The equality
$\pi_B\bar f=c\id_S$ makes $c^{-1}\bar f$ a section of $\pi_B$, contrary
to nonsplitting.  Thus $\Hom(A,B)=0$, and the same argument with $A$ and
$B$ interchanged gives $\Hom(B,A)=0$.

By Lemma~\ref{lem:composition-collisions}, all cross-chain pairs are
already disjoint except the boundary indices $i,j\in\{0,1\}$.  Thus it
remains, in both morphism directions, to check exactly the four index
pairs $(0,0),(1,0),(0,1),(1,1)$.  The index-zero pair has just been
settled:
\[
 \Hom(Q_0^r,Q_0^{p-r})=
 \Hom(Q_0^{p-r},Q_0^r)=0.
\]
For $f:Q^r_1\to Q^{p-r}_0=K_{p-r}$, the composite with
$K_{p-r}\twoheadrightarrow S$ is zero by
Lemma~\ref{lem:Q-simple-Hom}; hence the image lies in $S_{p+r}$, which is
not a constituent of $Q^r_1$, so $f=0$.  Conversely, if
$g:K_{p-r}\to Q^r_1$, its composite with
$Q^r_1\twoheadrightarrow K_{2p-r}$ vanishes by disjoint constituents;
therefore $g$ factors through $K_r\subset Q^r_1$, and the index-zero
calculation gives $g=0$.  Thus
\[
 \Hom(Q_1^r,Q_0^{p-r})=
 \Hom(Q_0^{p-r},Q_1^r)=0.
\]
Interchanging $r$ and $p-r$ gives, by the same two explicit
factorizations,
\[
 \Hom(Q_0^r,Q_1^{p-r})=
 \Hom(Q_1^{p-r},Q_0^r)=0.
\]
Finally apply $\Hom(Q_1^r,-)$ to
\[
 0\to Q_0^{p-r}\to Q_1^{p-r}\to K_{p+r}\to0.
\]
The Hom spaces to both end terms vanish: the first by the preceding
boundary calculation and the second by disjoint Jordan--H\"older
constituents.  Hence $\Hom(Q_1^r,Q_1^{p-r})=0$.  Reversing $r$ and
$p-r$ gives $\Hom(Q_1^{p-r},Q_1^r)=0$.  Thus the four boundary cases can
be summarized as
\[
\begin{array}{c|cc}
 &Q_0^{p-r}&Q_1^{p-r}\\ \hline
 Q_0^r&0&0\\
 Q_1^r&0&0
\end{array}
\]
for Hom spaces from the $r$-chain to the $(p-r)$-chain, and the reverse
matrix is likewise zero.
\end{proof}

\begin{lem}[Wall objects]\label{lem:wall-Hom}
For $m,n\geq1$,
\[
 \Hom(Q_{np},Q_{mp})=
 \begin{cases}\C,&m=n,\\0,&m\neq n,\end{cases}
\]
and every Hom space between a wall object and the image of a non-wall
projective is zero in either direction.
\end{lem}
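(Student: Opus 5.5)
The plan rests on the explicit structure of the wall objects and then separates them by composition factors. By Proposition~\ref{prop:projective-images}, $Q_{np}\cong K_{np}$. The one-row structure results quoted in Section~\ref{sec:prelim} say that $K_{np}=S_{np}$ is simple. So every wall object is simple with lowest weight $h_{np}$, and the wall-to-wall statement reduces to deciding when $S_{np}\cong S_{mp}$.

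\emph{Wall to wall.} I would use that each simple is a lowest-weight module, so $\End(S_{np})=\C$; this gives the diagonal case. For $m\ne n$, Corollary~\ref{cor:positive-label-collisions} would force $q=2$ and $np+mp=p$. That is impossible because $n,m\ge1$. Hence $S_{np}\not\cong S_{mp}$, and Schur's lemma gives $\Hom(Q_{np},Q_{mp})=0$.

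\emph{Wall to non-wall.} Let $Q_{s_i}$ lie on a reflection chain. By \eqref{eq:A-chain}, \eqref{eq:Q-JH-vector}, and Theorem~\ref{thm:projective-control}, every Jordan--H\"older constituent of $Q_{s_i}$ is some $S_j$ with $j$ a non-wall label: namely $s_{i-1},s_i,s_{i+1}$, or $s_0,s_1$ when $i=0$. I would therefore show that $S_{np}\not\cong S_j$ whenever $p\nmid j$. If the two were isomorphic, Corollary~\ref{cor:positive-label-collisions} would give either $j=np$, which is excluded, or $q=2$ and $j+np=p$, which forces $j\le0$ and is also excluded. The wall simple is thus not a constituent of the non-wall object, and Lemma~\ref{lem:disjoint-constituents} kills Hom spaces in both directions.

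I expect no serious obstacle. The only subtle point is the $q=2$ collision phenomenon, and it is excluded by positivity exactly as in Lemma~\ref{lem:chain-simple-distinct}. It also needs checking that the chain labels $s_{i\pm1}$ are indeed non-wall, which is immediate from \eqref{eq:reflection-chain} since $1\le r\le p-1$.
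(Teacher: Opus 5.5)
Your proposal is correct and follows the paper's proof essentially step for step. You identify $Q_{np}=K_{np}=S_{np}$ as simple, use Corollary~\ref{cor:positive-label-collisions} to rule out both wall--wall and wall--non-wall isomorphisms of simples, and conclude from simplicity together with Lemma~\ref{lem:disjoint-constituents}. Your explicit check that the non-wall images have only non-wall chain labels among their constituents fills in a detail the paper leaves implicit.
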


\begin{proof}
Proposition~\ref{prop:projective-images} gives
$Q_{np}=K_{np}=S_{np}$.  Corollary~\ref{cor:positive-label-collisions}
shows that a wall label cannot collide with a positive non-wall label, nor
can two distinct wall labels collide.  The assertion follows from
simplicity and Lemma~\ref{lem:disjoint-constituents}.
\end{proof}

\subsection{Full faithfulness and morphisms between projectives}

\begin{thm}\label{thm:F-fully-faithful}
The restriction
\[
 F|_{\mathcal P^k}:\mathcal P^k\longrightarrow\Oc_{c_{p,q}}
\]
is fully faithful.
\end{thm}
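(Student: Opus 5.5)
We prove full faithfulness by a dimension count on Hom spaces between indecomposable projectives, and then pass to finite direct sums by additivity. Faithfulness is already available from Corollary~\ref{cor:F-faithful-projectives}. So for each pair of indecomposable projectives $P,P'$, the $\C$-linear map
\[
 F_{P,P'}:\Hom_{\KL^k}(P,P')\longrightarrow\Hom_{\Oc}(F(P),F(P'))
\]
is injective. Since all spaces involved are finite dimensional (the target by the Hom-finiteness of $\Oc_{c_{p,q}}$ proved in Section~\ref{sec:prelim}), it suffices to show that source and target have equal dimension. An injective linear map between spaces of equal finite dimension is then bijective.

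We run through the pairs using the partition of indecomposable projectives into wall objects and reflection chains (Lemma~\ref{lem:reflection-partition}).

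\emph{Same reflection chain.} Proposition~\ref{prop:target-Hom-table} gives exactly the dimensions of Proposition~\ref{prop:source-Hom-package}.

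\emph{Distinct chains.} Proposition~\ref{prop:source-Hom-package} gives vanishing source Hom spaces, and Lemma~\ref{lem:cross-chain} gives vanishing target Hom spaces, including the $q=2$ boundary collision.

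\emph{Wall objects.} Proposition~\ref{prop:source-Hom-package} gives $\End(P_{np})=\C$ with all other Hom spaces involving $P_{np}$ zero. Lemma~\ref{lem:wall-Hom} gives the identical table for $Q_{np}=K_{np}$, in both directions and against non-wall images. Here bijectivity on $\End$ is immediate, since $F(\id)=\id\neq0$.

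This exhausts all pairs of indecomposables, so every $F_{P,P'}$ is an isomorphism.

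For general objects of $\mathcal P^k$, which is Krull--Schmidt with the $P_j$ as indecomposables, write $P=\bigoplus_a P_a$ and $P'=\bigoplus_b P'_b$. Additivity of $F$ identifies $\Hom(P,P')$ with the matrix space $\bigoplus_{a,b}\Hom(P_a,P'_b)$, and likewise identifies the target Hom with $\bigoplus_{a,b}\Hom(F(P_a),F(P'_b))$, compatibly with $F$ (the images of the biproduct inclusions and projections are again biproduct data). So $F_{P,P'}$ is the direct sum of the bijections $F_{P_a,P'_b}$, hence bijective.

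The substantive work here is already contained in Proposition~\ref{prop:target-Hom-table} and Lemma~\ref{lem:cross-chain}. The only point that needs care is that the target Hom dimensions are computed for the actual objects $Q_j$, and that Lemma~\ref{lem:cross-chain} covers both directions at the colliding boundary indices; both are recorded above. I do not expect any further obstacle beyond this bookkeeping.
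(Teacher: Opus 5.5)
Your proposal is correct and follows essentially the same route as the paper. The paper's proof also combines projective faithfulness (injectivity) with the source dimensions of Proposition~\ref{prop:source-Hom-package}, uses Proposition~\ref{prop:target-Hom-table}, Lemma~\ref{lem:cross-chain} and Lemma~\ref{lem:wall-Hom} for matching target dimensions on the three kinds of pairs, and then passes to finite direct sums via the matrix description of Hom between biproducts.
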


\begin{proof}
For indecomposable projectives $P_i,P_j$, projective faithfulness gives
an injection
\[
 \Hom_{\KL^k}(P_i,P_j)
 \hookrightarrow
 \Hom_{\Oc_{c_{p,q}}}(F(P_i),F(P_j)).
\]
The source dimensions are those of
Proposition~\ref{prop:source-Hom-package}.  Within one reflection chain,
Proposition~\ref{prop:target-Hom-table} gives the matching target upper
bounds; between distinct chains, including the $q=2$ boundary collision,
Lemma~\ref{lem:cross-chain} gives the required target vanishing; and wall
objects are covered by Lemma~\ref{lem:wall-Hom}.  Hence the displayed
injection is an isomorphism for every pair of indecomposable projectives.
Every projective object is a finite direct sum of indecomposable
projectives.  Since Hom between finite biproducts is the corresponding
matrix space of component Hom spaces, full faithfulness on all pairs of
indecomposable projectives implies full faithfulness on $\mathcal P^k$.
\end{proof}

\begin{cor}[Projective endomorphism algebras along finite reflection segments]\label{cor:finite-zigzag-algebra}
For a reflection chain and every $N\ge0$, $F$ induces an algebra
isomorphism
\[
 \End_{\KL^k}\!\left(\bigoplus_{i=0}^N P_{s_i}\right)
 \xrightarrow{\ \sim\ }
 \End_{\Oc}\!\left(\bigoplus_{i=0}^N Q_{s_i}\right).
\]
In particular, after choosing nonzero adjacent generators, the target
category has
\[
 \dim\Hom(Q_{s_i},Q_{s_j})=
 \begin{cases}
  1,&i=j=0,\\
  2,&i=j\ge1,\\
  1,&|i-j|=1,\\
  0,&|i-j|\ge2,
 \end{cases}
\]
and, for $i\ge1$, if
\[
 \nu_i=x_{i-1}\circ y_{i-1}
 \in\rad\End_{\KL^k}(P_{s_i})
\]
is the source radical loop fixed in
Proposition~\ref{prop:source-Hom-package}, then
\[
 \rho_i:=F(\nu_i)
\]
is nonzero and
\[
 \End(Q_{s_i})=\C\id\oplus\C\rho_i,
 \qquad \rho_i^2=0,
 \qquad \rad\End(Q_{s_i})=\C\rho_i.
\]
For $i\ge1$, every nonzero length-two backtracking endomorphism of
$Q_{s_i}$ is a nonzero scalar multiple of $\rho_i$; at the initial vertex
$Q_{s_0}$ every length-two backtracking endomorphism is zero.  Every
length-two path whose endpoints have distance two is zero.
\end{cor}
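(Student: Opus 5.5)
The corollary follows almost entirely from Theorem~\ref{thm:F-fully-faithful}. The approach is to first establish the algebra isomorphism abstractly. Then every further assertion is read off from the source-side structure in Proposition~\ref{prop:source-Hom-package} and transported by $F$.

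\emph{Step 1: the algebra isomorphism.} The object $\bigoplus_{i=0}^N P_{s_i}$ is projective, and $F$ is additive. Hence $F$ carries the finite biproduct to the biproduct $\bigoplus_i Q_{s_i}$, and the induced map on endomorphism rings is the matrix of the maps
\[
 \Hom(P_{s_i},P_{s_j})\to\Hom(Q_{s_i},Q_{s_j}).
\]
Each of these maps is bijective by Theorem~\ref{thm:F-fully-faithful}. The assembled map is multiplicative because $F$ is a functor, since composition of matrices of morphisms is preserved. So $F$ induces an algebra isomorphism.

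\emph{Step 2: dimensions and radicals.} The dimension table is Proposition~\ref{prop:target-Hom-table}, or equivalently the source table of Proposition~\ref{prop:source-Hom-package} transported through the isomorphism. For $i\ge1$:
\begin{itemize}
\item Proposition~\ref{prop:source-Hom-package} gives $\nu_i=x_{i-1}y_{i-1}\ne0$ spanning $\rad\End(P_{s_i})$, and $\nu_i^2=0$ by the nilpotence argument preceding Lemma~\ref{lem:source-L0-radical}.
\item Faithfulness gives $\rho_i=F(\nu_i)\ne0$ with $\rho_i^2=0$.
\item Since $\rho_i$ is a nonzero square-zero element, it is independent of $\id$. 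Because $\dim\End(Q_{s_i})=2$, this yields $\End(Q_{s_i})=\C\id\oplus\C\rho_i$.
\item The algebra is local, so its radical is the span of the nilpotent $\rho_i$. This agrees with Corollary~\ref{cor:target-local-endomorphisms}.
\end{itemize}

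\emph{Step 3: paths.} I would work on the source side and then transport.
\begin{itemize}
\item Any length-two backtracking endomorphism of $Q_{s_i}$ through $Q_{s_{i\pm1}}$ is, by fullness, $F$ of the corresponding source composite.
\item Such a source composite is a non-invertible endomorphism. The argument in Proposition~\ref{prop:source-Hom-package} shows that a composite factoring through a non-isomorphic indecomposable cannot be invertible, so it lies in the radical. For $i\ge1$ it is therefore a multiple of $\nu_i$, and if nonzero its image is a nonzero multiple of $\rho_i$.
\item At $i=0$ the radical of $\End(P_{s_0})=\C$ is zero, so every backtracking composite vanishes.
\item A length-two path between vertices at distance two lands in a Hom space of dimension $0$ by the table, so it is zero.
\end{itemize}

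The only point needing care is the non-invertibility of backtracking composites through a neighbour at the initial vertex and in general. This is handled by the split-monomorphism argument already given in Proposition~\ref{prop:source-Hom-package}, applied to the relevant adjacent pair, together with the distinct simple heads. Otherwise the argument is bookkeeping.
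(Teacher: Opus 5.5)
Your proposal is correct and follows essentially the same route as the paper. Full faithfulness assembles the matrix entries into an algebra isomorphism, the dimensions and the dual-number structure come from Proposition~\ref{prop:target-Hom-table} together with the square-zero radical loop, and backtracking composites are shown to be nonunits by the split-monomorphism argument. The only difference is cosmetic: you run that splitting argument on the source projectives and transport it via fullness, whereas the paper runs it directly on the target objects, using $Q_{s_{i\pm1}}\not\cong Q_{s_i}$.
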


\begin{proof}
Full faithfulness identifies every matrix entry and respects composition.
The Hom dimensions are Proposition~\ref{prop:target-Hom-table}, and the
endomorphism assertion is Corollary~\ref{cor:target-local-endomorphisms}.
For $i\ge1$, a nonzero backtracking composite factors through an adjacent
indecomposable $Q_{s_{i\pm1}}$, which is not isomorphic to $Q_{s_i}$.  If
the composite were invertible, one of the adjacent maps would split and
$Q_{s_i}$ would be a direct summand of that adjacent indecomposable, a
contradiction.  Hence the composite is a nonunit and therefore lies in
\[
 \rad\End(Q_{s_i})=\C\rho_i.
\]
At $Q_{s_0}$ the endomorphism algebra is $\C$; a nonzero backtracking
composite would therefore be invertible and is excluded by the same
splitting argument.  Finally, a length-two path joining vertices at
distance two vanishes because the corresponding Hom space is zero.
\end{proof}

\begin{proof}[Proof of Theorem~\ref{thm:projective-classification}]
For $1\le j\le p-1$, $P_j=V_j$ and
Lemma~\ref{lem:F-small-Weyl} gives $F(P_j)=K_j$.  Here
\[
 0\longrightarrow S_{2p-j}\longrightarrow K_j
 \longrightarrow S_j\longrightarrow0
\]
is nonsplit, and its two simple factors are nonisomorphic by
Corollary~\ref{cor:positive-label-collisions}; and Lemma~\ref{lem:length-two-Hom} gives
$\End(K_j)=\C$.  If
$p\mid j$, Proposition~\ref{prop:projective-images} gives
$F(P_j)=K_j=S_j$, again with scalar endomorphisms.  For
$j=np+r$, $n\ge1$, logarithmicity is Proposition~\ref{prop:Q-logarithmic},
the identification with $P(\tau_{n,r})$ is Corollary~\ref{cor:Q-Ptau}, and
the local endomorphism algebra and indecomposability are
Corollary~\ref{cor:target-local-endomorphisms}.  Full faithfulness is
Theorem~\ref{thm:F-fully-faithful}.
\end{proof}

\begin{cor}[Reflection-chain realization]\label{cor:blockwise-projective-realization}
For each non-wall reflection chain $\{s_i\}_{i\ge0}$,
\[
 F:\operatorname{add}\{P_{s_i}:i\ge0\}
 \xrightarrow{\ \sim\ }
 \operatorname{add}\{Q_{s_i}:i\ge0\}
\]
is an equivalence onto a full additive Virasoro subcategory.  The initial
object is the ordinary Kac module $Q_{s_0}=K_{s_0}$, while for every
$i\ge1$ the object $Q_{s_i}$ is the logarithmic module $P(\tau_i)$ from
Nakano's extension theory.  Wall projectives are sent to the isolated
simple modules $K_{np}$.
\end{cor}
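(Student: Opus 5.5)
The corollary follows from the full faithfulness of $F|_{\mathcal P^k}$ (Theorem~\ref{thm:F-fully-faithful}) together with the object identifications already proved. Fix a non-wall reflection chain $\{s_i\}_{i\ge0}$. I will restrict $F$ to the additive hull $\operatorname{add}\{P_{s_i}:i\ge0\}$, that is, to finite direct sums of the $P_{s_i}$. Define the target $\operatorname{add}\{Q_{s_i}:i\ge0\}$ as the full subcategory of $\Oc_{c_{p,q}}$ on finite direct sums of the $Q_{s_i}$. Since $F$ is additive, it sends the first subcategory into the second and is essentially surjective onto it: every object $\bigoplus_i Q_{s_i}^{m_i}$ is isomorphic to $F\bigl(\bigoplus_i P_{s_i}^{m_i}\bigr)$.

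Full faithfulness on this subcategory is inherited from Theorem~\ref{thm:F-fully-faithful}. Morphisms between finite biproducts are matrices of component Hom spaces, and $F$ is bijective on each entry $\Hom(P_{s_i},P_{s_j})\to\Hom(Q_{s_i},Q_{s_j})$. A fully faithful, essentially surjective additive functor is an equivalence, which gives the first assertion. The target is full by construction; it is additive because it is closed under finite biproducts.

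The object descriptions come from earlier results.
\begin{enumerate}
\item For $i=0$ we have $s_0=r<p$. Since $P_r=V_r$, Lemma~\ref{lem:F-small-Weyl} gives $Q_{s_0}\cong K_{s_0}$, an ordinary Kac module.
\item For $i\ge1$, Lemma~\ref{lem:chain-Nakano-triple} writes $s_i=n_ip+r_i$ with $n_i\ge1$. Corollary~\ref{cor:Q-Ptau} then gives $Q_{s_i}\cong P(\tau_{n_i,r_i})=P(\tau_i)$, and Proposition~\ref{prop:Q-logarithmic} shows it is logarithmic.
\item For the wall projectives, Proposition~\ref{prop:projective-images} gives $F(P_{np})\cong K_{np}=S_{np}$. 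Lemma~\ref{lem:wall-Hom} shows these objects are isolated: there are no nonzero Hom spaces to or from other wall objects or any non-wall image.
\end{enumerate}

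I do not expect a genuine obstacle, since all the content is in the cited results. The one point requiring care is to state the target as the additive closure, rather than its idempotent completion, so that essential surjectivity is immediate. No Karoubi-closure argument is then needed.
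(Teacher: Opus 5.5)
Your argument is correct for the reading you adopt, and it matches the first sentence of the paper's proof: full faithfulness on projectives (Theorem~\ref{thm:F-fully-faithful}) restricts to the chain, and the object identifications come from Lemma~\ref{lem:F-small-Weyl}, Corollary~\ref{cor:Q-Ptau}, Proposition~\ref{prop:Q-logarithmic} and Proposition~\ref{prop:projective-images}.

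The one weak point is how you get essential surjectivity. You read $\operatorname{add}$ as finite direct sums only. The usual meaning of $\operatorname{add}$, which the paper also treats explicitly, is closure under finite direct sums \emph{and} direct summands. So your remark that no Karoubi-closure argument is needed holds only because you have narrowed the target.

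The paper covers the summand-closed reading as follows:
\begin{enumerate}
\item Lift an idempotent $e$ of $F(P)$ through full faithfulness to an idempotent $\widetilde e$ on the source.
\item Split $\widetilde e$ in the abelian category $\KL^k(\mathfrak{sl}_2)$.
\item Use Krull--Schmidt on the source to see that the resulting projective summand is again a sum of the $P_{s_i}$.
\end{enumerate}

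You could instead close the gap on the target side with one line. $\Oc_{c_{p,q}}$ is Krull--Schmidt, and each $Q_{s_i}$ is indecomposable: $\End(K_{s_0})=\C$ by Lemma~\ref{lem:length-two-Hom}, and $\End(Q_{s_i})$ is local for $i\ge1$ by Corollary~\ref{cor:target-local-endomorphisms}. By uniqueness of Krull--Schmidt decompositions, any direct summand of $\bigoplus_i Q_{s_i}^{m_i}$ is then isomorphic to a finite direct sum of the $Q_{s_i}$. With that addition your proof covers the statement as intended.
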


\begin{proof}
Full faithfulness, Proposition~\ref{prop:projective-images}, and
Corollary~\ref{cor:Q-Ptau} give the asserted equivalence on finite direct
sums of the listed indecomposables.  If $\operatorname{add}$ is understood
to include direct summands, let $e\in\End_{\Oc}(F(P))$ be an idempotent,
where $P$ is such a finite direct sum in the fixed chain.  Full
faithfulness gives a unique idempotent
$\widetilde e\in\End_{\KL^k}(P)$ with $F(\widetilde e)=e$.  Since
$\KL^k(\mathfrak{sl}_2)$ is abelian, $\widetilde e$ splits; its image is a
direct summand of the projective object $P$ and is therefore projective.
By Krull--Schmidt, every indecomposable summand of this image is
isomorphic to one of the $P_{s_i}$ already occurring in the chosen
finite direct sum $P$.  Hence the lifted summand remains in
$\operatorname{add}\{P_{s_i}:i\ge0\}$.  Applying $F$ identifies it with
the target retract defined by $e$.  Thus the equivalence is essentially
surjective onto the full additive, idempotent-complete closure appearing
in the statement.
\end{proof}

\begin{cor}[Virasoro realization of the affine projective tensor subcategory]
\label{cor:projective-essential-image}
Let $\mathcal Q_{p,q}$ denote the smallest replete full subcategory of
$\Oc_{c_{p,q}}$ that is closed under finite direct sums and direct summands
and contains
\[
 \{K_r:1\le r<p\}
 \cup\{K_{np}:n\ge1\}
 \cup\{P(\tau_{n,r}):n\ge1,\ 1\le r<p\}.
\]
Then
\[
 F|_{\mathcal P^k}:\mathcal P^k\xrightarrow{\ \sim\ }\mathcal Q_{p,q}
\]
is a $\C$-linear equivalence.  The subcategory $\mathcal Q_{p,q}$ is
closed under the fusion product of its objects, and with the inherited
constraints this equivalence is braided strong monoidal.  Thus the
root-of-unity projective/tilting tensor category is realized as a full
replete idempotent-complete additive monoidal subcategory of Virasoro
representation theory; the subcategory contains both ordinary and
logarithmic objects.  No claim is made that $\mathcal Q_{p,q}$ is closed
under kernels, cokernels, or extensions in $\Oc_{c_{p,q}}$.
\end{cor}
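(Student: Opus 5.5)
The plan is to define $\mathcal Q_{p,q}$ as the replete additive idempotent closure of the listed objects and reduce everything to results already in hand. By Theorem~\ref{thm:projective-classification} (together with Lemma~\ref{lem:F-small-Weyl} and Proposition~\ref{prop:projective-images}), $F$ sends the indecomposable projectives $P_r$ ($1\le r<p$), $P_{np}$ and $P_{np+r}$ to $K_r$, $K_{np}$ and $P(\tau_{n,r})$ respectively. Hence the essential image of $F|_{\mathcal P^k}$ on finite direct sums consists of finite direct sums of the generators. Full faithfulness is Theorem~\ref{thm:F-fully-faithful}, so it remains to prove essential surjectivity onto $\mathcal Q_{p,q}$, closure under fusion, and the monoidal and braided structure.

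For essential surjectivity I would repeat the idempotent-lifting argument of Corollary~\ref{cor:blockwise-projective-realization}, but without restricting to one chain. Let $Y$ be a direct summand of $F(P)$ with $P$ projective, cut out by an idempotent $e$. Full faithfulness lifts $e$ uniquely to an idempotent $\widetilde e$ on $P$. Since $\KL^k$ is abelian, $\widetilde e$ splits, and its image is a projective summand $P'$ with $F(P')\cong Y$. This uses only that $F$ is additive and therefore preserves the splitting data $\pi i=\id$ and $i\pi=\widetilde e$. The images of finite direct sums are therefore already closed under summands and isomorphisms, so they coincide with $\mathcal Q_{p,q}$. Krull--Schmidt in $\Oc_{c_{p,q}}$ guarantees there is no ambiguity in this description.

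For fusion closure, take $X\cong F(P)$ and $Y\cong F(P')$ in $\mathcal Q_{p,q}$. The strong monoidal structure gives $X\boxtimes Y\cong F(P\boxtimes P')$. Since $P\boxtimes P'$ is projective in $\KL^k$ (McRae--Yang: projectives form a tensor ideal, the tilting equivalence being monoidal), $X\boxtimes Y$ lies in the essential image. The unit $K_1=F(V_1)$ also lies in $\mathcal Q_{p,q}$. Restricting the monoidal constraints and braiding of $F$ to $\mathcal P^k$ makes $F|_{\mathcal P^k}$ braided strong monoidal with values in $\mathcal Q_{p,q}$ equipped with the inherited associativity, unit and braiding of $\Oc_{c_{p,q}}$. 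Being fully faithful and essentially surjective, it is therefore a braided monoidal equivalence. Composing with $\Phi$ from \eqref{eq:tilting-projective-equivalence} then realizes the tilting category.

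The only step needing care is the closure of $\mathcal P^k$ under $\boxtimes$. I would cite McRae--Yang's identification of $\mathcal P^k$ with the monoidal tilting category (Theorem~6.6 there), or argue via rigidity: $\Hom(P\boxtimes P',-)\cong\Hom(P,P'^\vee\boxtimes-)$ is exact. The remaining claims, that $\mathcal Q_{p,q}$ contains both ordinary and logarithmic objects, follow from Proposition~\ref{prop:Q-logarithmic} and the ordinariness of Kac modules.
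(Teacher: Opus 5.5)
Your proposal is correct and follows essentially the paper's argument: the replete essential image is identified via Theorem~\ref{thm:projective-classification} and full faithfulness, closure under summands comes from lifting idempotents through $\End_{\KL^k}(P)\cong\End_{\Oc}(F(P))$ and splitting them in $\KL^k$, and fusion closure plus the braided equivalence come from the monoidality of $\mathcal P^k$ (McRae--Yang, Theorem~6.6) and the braided tensor structure of $F$. Your alternative rigidity argument for $P\boxtimes P'$ being projective is a valid substitute for citing that theorem.
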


\begin{proof}
Let $\operatorname{Im}^{\mathrm{repl}}(F|_{\mathcal P^k})$ denote the
replete full essential image of the projective restriction.  By
Theorem~\ref{thm:F-fully-faithful},
\[
 F|_{\mathcal P^k}:\mathcal P^k
 \longrightarrow \operatorname{Im}^{\mathrm{repl}}(F|_{\mathcal P^k})
\]
is a $\C$-linear equivalence, and Theorem~\ref{thm:projective-classification}
identifies the indecomposable isomorphism classes occurring in this image
with the displayed list.  It remains to verify closure under direct
summands.  Indeed, full faithfulness gives an algebra isomorphism
\[
 \End_{\KL^k}(P)\xrightarrow{\sim}\End_{\Oc}(F(P)),
\]
so every idempotent $e$ of $F(P)$ has a unique idempotent lift
$\widetilde e$ in $\End_{\KL^k}(P)$.  Since $\KL^k(\mathfrak{sl}_2)$ is
abelian and direct summands of projective objects are projective,
$\widetilde e$ splits inside $\mathcal P^k$; applying $F$ realizes the
retract defined by $e$ as the image of a projective object.  Hence the
replete full essential image is precisely $\mathcal Q_{p,q}$.

By \cite[Theorem~6.6]{McRaeYang}, $\mathcal P^k$ is monoidal, and $F$ is
a braided tensor functor by \cite[Theorem~7.15]{McRaeYang}.  Its tensor
unit belongs to the image:
$\mathbf1=K_1\cong F(P_1)$.  Thus, for any $X,Y\in\mathcal Q_{p,q}$, there exist projectives $P,Q$
with $X\cong F(P)$ and $Y\cong F(Q)$, and
\[
 X\boxtimes Y\cong F(P)\boxtimes F(Q)
 \cong F(P\boxtimes Q)\in\mathcal Q_{p,q}.
\]
The associativity, unit, and braiding constraints are those inherited
from $\Oc_{c_{p,q}}$ and restrict to $\mathcal Q_{p,q}$ because the
subcategory is full and fusion closed.  Since $F|_{\mathcal P^k}$ is fully faithful and essentially surjective
onto $\mathcal Q_{p,q}$, while its tensor and braiding constraints are the
restrictions of those of $F$, it is a braided strong monoidal equivalence
$\mathcal P^k\xrightarrow{\sim}\mathcal Q_{p,q}$.
\end{proof}

\section{Comparison with Drinfeld--Sokolov reduction}\label{sec:comparison}

We now compare $F$ with the homological functor $H$.  The comparison on
projectives uses projective faithfulness from Section~\ref{sec:images} and
the projective Hom-space calculation of Section~\ref{sec:fullness},
together with the exactness and faithfulness of $H$.  For each non-wall
indecomposable projective
\[
 P=P_{np+r},\qquad n\ge1,\quad1\le r\le p-1,
\]
Corollary~\ref{cor:target-conformal-radical} identifies the image of the
canonical affine conformal nilpotent with the intrinsic nilpotent part of
the Virasoro Hamiltonian:
\[
 F(D_P)=D_{F(P)}.
\]
The corresponding equality $H(D_P)=D_{H(P)}$ will be proved below from
Proposition~\ref{prop:H-twist}.  The exponential identities identify the
canonical conformal nilpotent exactly under both functors: they fix a
distinguished nonzero element of the one-dimensional radical, rather than
merely its line.  No normalization of the auxiliary generator $N_P$ is
involved.  This exact identification removes the scalar ambiguity in the
reverse adjacent arrows.  The right exactness of $F$ from
\cite[Theorem~7.15]{McRaeYang} enters only in
Proposition~\ref{prop:right-exact-extension}, after naturality on
projectives has been established.

\subsection{Projective objects under reduction}

Since $H$ is exact, apply it to \eqref{eq:source-projective-filtration}.
Here exactness of $H$, proved independently in Section~\ref{sec:DS}, is
essential; no exactness of $F$ is being used.
After choosing the endpoint identifications
$H(V_{np-r})\cong K_{np-r}$ and $H(V_{np+r})\cong K_{np+r}$ from
Proposition~\ref{prop:H-Weyl}, we obtain, for $n\ge1$ and
$1\le r\le p-1$,
\begin{equation}\label{eq:H-projective-filtration}
 0\longrightarrow K_{np-r}\longrightarrow H(P_{np+r})
 \longrightarrow K_{np+r}\longrightarrow0.
\end{equation}
Put
\[
 a=np-r,\qquad b=np+r,\qquad c=(n+2)p-r.
\]
By Proposition~\ref{prop:Felder-labels},
\[
 h_a<h_b<h_c.
\]
The one-row Kac sequences are
\[
 0\longrightarrow S_b\longrightarrow K_a\longrightarrow S_a
 \longrightarrow0,
 \qquad
 0\longrightarrow S_c\longrightarrow K_b\longrightarrow S_b
 \longrightarrow0.
\]
Thus the two simple constituents of each endpoint Kac module are
nonisomorphic, and Lemma~\ref{lem:length-two-Hom} gives
\[
 \End(K_a)=\End(K_b)=\C.
\]
Changing either endpoint identification in
\eqref{eq:H-projective-filtration} therefore rescales the associated
Yoneda class by a nonzero scalar.  In particular, its vanishing or
nonvanishing is independent of these choices.

\begin{prop}\label{prop:projective-object-comparison}
For every indecomposable projective $P_j$, there is an isomorphism
\[
 F(P_j)\cong H(P_j).
\]
More precisely,
\[
 F(P_j)\cong H(P_j)\cong
 \begin{cases}
  K_j,&1\le j\le p-1,\\
  K_j=S_j,&p\mid j,\\
  P(\tau_{n,r}),&j=np+r,\ n\ge1,\ 1\le r\le p-1.
 \end{cases}
\]
In the third case both $F(P_j)$ and $H(P_j)$ are logarithmic.
No compatibility of these objectwise isomorphisms with source morphisms is
asserted at this stage.
\end{prop}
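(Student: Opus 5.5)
The $F$-side of the statement is already Theorem~\ref{thm:projective-classification}. So the task is to compute $H(P_j)$ independently and match it case by case. I split into the same three cases.

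\emph{Small and wall labels.} For $1\le j\le p-1$ we have $P_j=V_j$, so Proposition~\ref{prop:H-Weyl} gives $H(P_j)\cong K_j$. For $p\mid j$ we have $P_j=V_j=L_j$, and Propositions~\ref{prop:H-Weyl} and~\ref{prop:H-simple} give $H(P_j)\cong K_j=S_j$. In both cases this agrees with $F(P_j)$ by Lemma~\ref{lem:F-small-Weyl} and Proposition~\ref{prop:projective-images}.

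\emph{Non-wall labels.} Let $j=np+r$ with $n\ge1$. Exactness of $H$ yields \eqref{eq:H-projective-filtration}, so $H(P_j)$ is an extension of $K_b$ by $K_a$. The aim is to feed this into the recognition Lemma~\ref{lem:Nakano-recognition}, which only requires that $H(P_j)$ be logarithmic. For that I use Proposition~\ref{prop:H-twist} together with Lemma~\ref{lem:source-L0-radical}:
\[
 e^{2\pi iL^{DS}_0}\big|_{H(P)}=H(\theta^-_P)=\lambda_P\bigl(\id+\gamma_PH(N_P)\bigr),\qquad \gamma_P\ne0.
\]
Faithfulness of $H$ (Corollary~\ref{cor:H-faithful}) gives $H(N_P)\ne0$, and $H(N_P)^2=H(N_P^2)=0$. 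Hence the right-hand side is not diagonalizable. Lemma~\ref{lem:exponential-logarithmic} then shows that $L^{DS}_0$ is not semisimple on $H(P)$, i.e.\ $H(P)$ is logarithmic. Lemma~\ref{lem:Nakano-recognition} now yields $H(P_j)\cong P(\tau_{n,r})$, and Corollary~\ref{cor:Q-Ptau} gives $F(P_j)\cong P(\tau_{n,r})$ as well. Logarithmicity of $F(P_j)$ is Proposition~\ref{prop:Q-logarithmic}.

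\emph{Main obstacle.} The delicate step is the logarithmicity of $H(P_j)$. One must check that the exponential identity \eqref{eq:H-exponential-compatibility} really applies to $P$ with the same parity representative $\epsilon$ as in \eqref{eq:source-twist-nilpotent}. One must also check that $H(P)$ lies in $\Oc_{c_{p,q}}$, so that Lemma~\ref{lem:exponential-logarithmic} applies; this is provided by Proposition~\ref{prop:H-exact}. Once logarithmicity is established, everything else is formal from earlier results. As the statement says, no naturality is claimed: the isomorphisms are chosen objectwise.
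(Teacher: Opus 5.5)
Your proposal is correct and follows essentially the same route as the paper. The small and wall cases come from the Weyl-module computation for $H$ and the reconstruction for $F$. In the non-wall case, exactness of $H$ gives the Kac filtration, exponential compatibility combined with faithfulness of $H$ gives logarithmicity, and Lemma~\ref{lem:Nakano-recognition} identifies $H(P_j)$ with $P(\tau_{n,r})$. The extra checks you flag, namely that the indecomposable $P$ lies in a single parity and that $H(P)$ belongs to $\Oc_{c_{p,q}}$, are indeed automatic from Propositions~\ref{prop:H-twist} and~\ref{prop:H-exact}.
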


\begin{proof}
If $1\le j\le p-1$, then $P_j=V_j$ and both functors give $K_j$ by
Lemma~\ref{lem:F-small-Weyl} and Proposition~\ref{prop:H-Weyl}.  If
$p\mid j$, then $P_j=V_j=L_j$; hence
\[
 F(P_j)\cong K_j,
 \qquad
 H(P_j)\cong H(V_j)\cong K_j=S_j,
\]
by Proposition~\ref{prop:projective-images} and
Proposition~\ref{prop:H-Weyl}.

Let $j=b=np+r$ with $n\ge1$ and $1\le r\le p-1$, and set
$E=H(P_b)$.  Both end terms in \eqref{eq:H-projective-filtration} belong to
$\mathscr C$ by Lemma~\ref{lem:Nakano-membership}.  Since $\mathscr C$
is Serre, the middle term also satisfies $E\in\mathscr C$.  By Lemma~\ref{lem:source-L0-radical}, write
$D_{P_b}=\delta_{P_b}N_{P_b}$ with $\delta_{P_b}\ne0$.  Proposition~\ref{prop:H-twist}
then gives the precise identity
\begin{equation}\label{eq:H-projective-exponential}
\begin{aligned}
 e^{2\pi iL^{DS}_0}\big|_E
 &=H(\theta^-_{P_b})\\
 &=\lambda_{P_b}\bigl(\id_E+2\pi iH(D_{P_b})\bigr)\\
 &=\lambda_{P_b}\bigl(
    \id_E+2\pi i\delta_{P_b}H(N_{P_b})
   \bigr).
\end{aligned}
\end{equation}
Since $H$ is faithful,
\[
 U:=2\pi i\delta_{P_b}H(N_{P_b})\ne0,\qquad U^2=0.
\]
Thus the operator $T:=\lambda_{P_b}^{-1}e^{2\pi iL^{DS}_0}|_E=
\id+U$ satisfies
\[
 (T-\id)^2=0,\qquad T\ne\id.
\]
Its minimal polynomial therefore contains $(X-1)^2$, so
$e^{2\pi iL^{DS}_0}|_E$ is not diagonalizable.
Lemma~\ref{lem:exponential-logarithmic} implies that $E$ is logarithmic.
Since both $K_a$ and $K_b$ are ordinary, the exact sequence
\eqref{eq:H-projective-filtration} cannot split: otherwise
$E\cong K_a\oplus K_b$ would have semisimple $L_0$.  Thus $E$
represents a nonzero logarithmic extension class.  We now verify
explicitly that Lemma~\ref{lem:Nakano-recognition} applies.  The crucial input, Proposition~\ref{prop:one-row-staggered-quotient},
is a statement about an \emph{arbitrary} logarithmic middle term with these
one-row endpoints.  For $a\ge2$, its proof first verifies every condition
of \cite[Definitions~3.14 and~5.1]{Nakano} for that very middle term and
then invokes the universal quantifier in \cite[Theorem~5.11]{Nakano},
which is stated for an arbitrary logarithmic representative of the
relevant Ext class.  The exceptional case $a=1$ is proved separately in
Lemma~\ref{lem:vacuum-edge-staggered}, with the two prime branches of the
right braid-type Verma module treated according to their actual grades.
Thus no distinguished deformation is being substituted for the module
$E$ constructed here, and no blanket identification of ambient categories
is used.  By exactness of $H$,
\[
 0\longrightarrow K_a\longrightarrow E\longrightarrow K_b
 \longrightarrow0,
 \qquad a=np-r,\quad b=np+r,
\]
is exact with precisely the submodule--quotient orientation required
there.  By Proposition~\ref{prop:Felder-labels},
\[
 \tau_{n,r}\in
 \mathcal T_{p_+,p_-}\setminus\mathcal T^0_{p_+,p_-},
\]
and Lemma~\ref{lem:Nakano-membership}, together with the Serre property,
places all three terms in $\mathscr C$.  Since $E$ is logarithmic, every
hypothesis of Lemma~\ref{lem:Nakano-recognition} is satisfied.  Hence
\[
 H(P_b)=E\cong P(\tau_{n,r}).
\]
Corollary~\ref{cor:Q-Ptau} gives the same identification for $F(P_b)$.
\end{proof}

\begin{cor}[Canonical conformal radical under reduction]
\label{cor:H-projective-radical}
Let $P=P_{np+r}$ with $n\ge1$ and $1\le r\le p-1$, and set $E=H(P)$.
Let $D_P$ be the canonical affine conformal nilpotent of
Lemma~\ref{lem:source-L0-radical}.  For each actual generalized
$L^{DS}_0$-eigenvalue $\lambda$ of $E$, define
\[
 D_E|_{E^{\mathrm{gen}}_\lambda}
 :=(L^{DS}_0-\lambda\id)|_{E^{\mathrm{gen}}_\lambda}.
\]
Then the operators $D_E$ assemble to a Virasoro-module endomorphism and
\begin{equation}\label{eq:H-canonical-radical}
 H(D_P)=D_E,
 \qquad
 \rad\End H(P)=\C D_E=\C H(N_P).
\end{equation}
In particular,
\[
 \End H(P)=\C\id\oplus\C D_E
 \cong\C[\varepsilon]/(\varepsilon^2).
\]
\end{cor}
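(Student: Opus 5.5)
The plan mirrors the proof of Corollary~\ref{cor:target-conformal-radical}, with faithfulness and exactness of $H$ replacing projective faithfulness of $F$. First, $D_E$ is a Virasoro-module endomorphism. By Proposition~\ref{prop:projective-object-comparison}, $E\cong P(\tau_{n,r})$ is logarithmic. By \eqref{eq:H-projective-filtration}, $E$ is an extension of $K_b$ by $K_a$ with $[h_a]=[h_b]$. Exact residue projection (Lemma~\ref{lem:block-projection}) together with Lemma~\ref{lem:Kac-residue-support} then shows that $E$ is supported in the single residue class $\xi_{n,r}$. The commutator $[L^{DS}_0,L_m]=-mL_m$ gives $D_{\lambda-m}L_m=L_mD_\lambda$, so the nilpotent parts assemble to an endomorphism $D_E$. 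It is nonzero by logarithmicity and locally nilpotent, hence not invertible.

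Next I determine $\End(E)$. Since $E\cong Q_{np+r}$ and $Q_{np+r}\cong F(P)$, Corollary~\ref{cor:target-local-endomorphisms} gives $\End(E)\cong\C[\varepsilon]/(\varepsilon^2)$. This is a transport along an abstract isomorphism, which is harmless because only the algebra structure is used. The algebra is local with one-dimensional radical. Faithfulness of $H$ (Corollary~\ref{cor:H-faithful}) gives $H(N_P)\ne0$, and $H(N_P)^2=H(N_P^2)=0$. So $H(N_P)$ is a nonzero nonunit, and the radical is $\C H(N_P)$. The element $D_E$ is also a nonzero nonunit, so it spans the same radical. This proves $\rad\End H(P)=\C D_E=\C H(N_P)$ and the decomposition $\End H(P)=\C\id\oplus\C D_E$.

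The main step is the exact equality $H(D_P)=D_E$, as opposed to mere proportionality. Proposition~\ref{prop:H-twist}, combined with \eqref{eq:source-twist-nilpotent}, gives
\[
 e^{2\pi iL^{DS}_0}\big|_E=\lambda_P\bigl(\id+2\pi iH(D_P)\bigr).
\]
Single-residue support and $D_E^2=0$ give the intrinsic formula
\[
 e^{2\pi iL^{DS}_0}\big|_E=\mu_E\bigl(\id+2\pi iD_E\bigr),
\]
where $\mu_E=e^{2\pi i\lambda}$ for any generalized eigenvalue $\lambda$. Both right-hand sides lie in the local algebra $\End(E)$. Reducing modulo the radical gives $\mu_E=\lambda_P$. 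Comparing radical parts then gives $2\pi i\lambda_P H(D_P)=2\pi i\lambda_P D_E$, and since $\lambda_P\ne0$ this yields $H(D_P)=D_E$.

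The only delicate point is the residue-class argument that makes $e^{2\pi iL^{DS}_0}$ equal to a single scalar times $(\id+2\pi iD_E)$. I would handle it by the same algebraic, eigenspace-by-eigenspace computation used in Lemma~\ref{lem:source-L0-radical}.
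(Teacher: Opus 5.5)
Your proposal is correct and follows the paper's proof essentially step by step. The shared steps are: single-residue support from \eqref{eq:H-projective-filtration} via exact residue projection; transport of the local dual-number structure of $\End F(P)$ along an arbitrary objectwise isomorphism; faithfulness of $H$ to place $H(D_P)$ (equivalently $H(N_P)$) in the one-dimensional radical; and comparison of the two exponential expressions modulo the radical, giving $\mu_E=\lambda_P$ and then $H(D_P)=D_E$. The only difference is ordering: you get $\rad\End(E)=\C H(N_P)$ first and read off $D_E^2=0$ from it, whereas the paper first shows $\rad\End(E)=\C D_E$ and derives $D_E^2=0$ from nilpotence of the radical.
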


\begin{proof}
Put $\xi_E=[h_{np-r}]=[h_{np+r}]$.  Both end terms in
\eqref{eq:H-projective-filtration} are supported in the residue sector
$\xi_E$.  For every $\eta\ne\xi_E$, exact residue projection gives
\[
 0\longrightarrow0\longrightarrow E^{[\eta]}\longrightarrow0,
\]
so $E=E^{[\xi_E]}$.  On actual generalized $L^{DS}_0$-eigenspaces, the
Virasoro commutator $[L^{DS}_0,L_m]=-mL_m$ gives
\[
 D_{\lambda-m}L_m=L_mD_\lambda,
\]
so the operators above assemble to a Virasoro-module endomorphism $D_E$;
indeed, $L_m$ maps the generalized eigenspace of eigenvalue $\lambda$
into that of eigenvalue $\lambda-m$.  The module $E$ is logarithmic by
Proposition~\ref{prop:projective-object-comparison}, so $D_E\ne0$.

At this point Proposition~\ref{prop:projective-object-comparison} is used
only as an objectwise identification.  The following argument imports
only the already established target endomorphism algebra from
Corollary~\ref{cor:target-local-endomorphisms}; it neither uses nor
presupposes a natural comparison between $F$ and $H$.  No choice made here
enters the naturality construction below.  Choose an arbitrary
Virasoro-module isomorphism $E\xrightarrow{\sim}F(P)$.  At this point it
is used only to transport the dimension and locality of the endomorphism
algebra; no radical generator and no naturality datum is transported
through it.  Conjugation and
Corollary~\ref{cor:target-local-endomorphisms} give
\[
 \dim\End(E)=2,\qquad \End(E)\text{ is local}.
\]
Hence $\dim\rad\End(E)=1$.  Every vector of $E$ is a finite sum of generalized
$L^{DS}_0$-eigenvectors, and $D_E$ is nilpotent on each such generalized
eigenspace.  Thus $D_E$ is locally nilpotent and nonzero, hence is not
invertible.  Since $\End(E)$ is a finite-dimensional local algebra, its
nonunits are precisely its Jacobson radical.  Therefore
\[
 0\ne D_E\in\rad\End(E),
\]
and the one-dimensionality of the radical gives
$\rad\End(E)=\C D_E$.  Write $D_E^2=aD_E$.  The Jacobson radical of the
finite-dimensional algebra $\End(E)$ is nilpotent; if $a\ne0$, then
$D_E^m=a^{m-1}D_E\ne0$ for every $m\ge1$, a contradiction.  Hence
$a=0$ and
\[
 D_E^2=0.
\]

On the source,
\[
 \theta^-_P=\lambda_P(\id+2\pi iD_P).
\]
Applying $H$ and using Proposition~\ref{prop:H-twist} gives
\begin{equation}\label{eq:H-DP-exponential}
 e^{2\pi iL^{DS}_0}|_E
 =\lambda_P\bigl(\id+2\pi iH(D_P)\bigr).
\end{equation}
Faithfulness of $H$ gives $H(D_P)\ne0$, while $H(D_P)^2=0$.  Hence
$H(D_P)$ is a nonunit of the same local algebra and therefore belongs to
its radical.  On the other hand, every actual generalized eigenvalue $\lambda$ of
$E$ lies in the single residue class $\xi_E$.  Choose any representative
$\lambda_0$ of this class and put $\mu_E:=e^{2\pi i\lambda_0}$; this is
independent of the representative.  Since $D_E^2=0$, the Jordan decomposition on
each generalized eigenspace gives the global identity
\[
 e^{2\pi iL^{DS}_0}|_E=\mu_E(\id+2\pi iD_E).
\]
Because Virasoro modes shift conformal weights by integers, this
exponential commutes with every Virasoro mode and hence belongs to
$\End(E)$.  Thus the same endomorphism $e^{2\pi iL^{DS}_0}|_E$ admits the
two expressions
\[
 \lambda_P\bigl(\id+2\pi iH(D_P)\bigr)
 \qquad\text{and}\qquad
 \mu_E\bigl(\id+2\pi iD_E\bigr).
\]
Since both $H(D_P)$ and $D_E$ belong to $\rad\End(E)$, subtracting
the two exponential expressions shows
\[
 (\lambda_P-\mu_E)\id_E\in\rad\End(E).
\]
But $\id_E\notin\rad\End(E)$, so $\lambda_P=\mu_E$.  This scalar
is nonzero; substituting back gives
\[
 H(D_P)=D_E.
\]
Finally Lemma~\ref{lem:source-L0-radical} gives
$D_P=\delta_PN_P$ for some $\delta_P\in\C^\times$.  Since $H$ is
faithful,
\[
 \C H(N_P)=\C H(D_P)=\C D_E=\rad\End H(P),
\]
which is the remaining equality in \eqref{eq:H-canonical-radical}.
\end{proof}

\begin{rmk}\label{rmk:objectwise-not-natural}
Proposition~\ref{prop:projective-object-comparison} is only an objectwise
statement.  No canonical choice of the middle isomorphisms, and no
compatibility with projective morphisms, is asserted at this point.  The
next subsection supplies precisely this compatibility.
\end{rmk}

\subsection{Canonical conformal radicals and naturality}

The exponential will not make the individual objectwise isomorphisms
canonical.  Its role is narrower and decisive: it removes the scalar
obstruction to satisfying naturality simultaneously for the two
orientations of every adjacent edge.

\begin{lem}[Intrinsic conformal-radical compatibility]
\label{lem:twist-locking}
Let $P=P_{np+r}$ with $n\ge1$ and $1\le r\le p-1$.  Every
Virasoro-module isomorphism
\[
 \eta_P:F(P)\xrightarrow{\sim}H(P)
\]
satisfies
\begin{equation}\label{eq:twist-locking}
 \eta_PF(u)=H(u)\eta_P
 \qquad\text{for every }u\in\rad\End(P).
\end{equation}
\end{lem}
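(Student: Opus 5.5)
The plan is to reduce \eqref{eq:twist-locking} to the single canonical element $D_P$ and then use that both functors carry $D_P$ to the intrinsic nilpotent part of the Virasoro Hamiltonian. Intrinsic operators are preserved by every module isomorphism.

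First, since $\rad\End(P)=\C N_P$ by Proposition~\ref{prop:source-Hom-package}, and $D_P=\delta_PN_P$ with $\delta_P\ne0$ by Lemma~\ref{lem:source-L0-radical}, we have $\rad\End(P)=\C D_P$. Both sides of \eqref{eq:twist-locking} are $\C$-linear in $u$, because $F$ and $H$ are $\C$-linear functors. It therefore suffices to prove the identity for $u=D_P$.

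Second, Corollary~\ref{cor:target-conformal-radical} gives $F(D_P)=D_{F(P)}$, and Corollary~\ref{cor:H-projective-radical} gives $H(D_P)=D_{H(P)}$. Here $D_M$ denotes, on each generalized $L_0$-eigenspace $M^{\mathrm{gen}}_\lambda$, the operator $L_0-\lambda\id$. For $H(P)$ the relevant $L_0$ is $L^{DS}_0$, which is the Virasoro $L_0$ under the identification $W^k(\mathfrak{sl}_2)\cong V^{\mathrm{Vir}}_{c_{p,q}}$. So both sides are defined by the same recipe in $\Oc_{c_{p,q}}$.

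Third, we check that any Virasoro isomorphism $\eta:M\to M'$ satisfies $\eta D_M=D_{M'}\eta$. Since $\eta$ commutes with $L_0$, it carries $M^{\mathrm{gen}}_\lambda$ into $M'^{\mathrm{gen}}_\lambda$ for each $\lambda$, and it commutes with $L_0-\lambda\id$ there. Because $M$ is the algebraic direct sum of its generalized eigenspaces, the identity holds globally. Applying this to $\eta_P$ gives
\[
\eta_PF(D_P)=\eta_PD_{F(P)}=D_{H(P)}\eta_P=H(D_P)\eta_P .
\]

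The substantive input is the exact, not merely projective, matching $F(D_P)=D_{F(P)}$ and $H(D_P)=D_{H(P)}$. Both facts are already established: they come from twist preservation for $F$, and from Proposition~\ref{prop:H-twist} together with the locality of the endomorphism algebras for $H$. I expect no further obstacle. The only point needing care is that the Virasoro structure on $H(P)$ is the one whose $L_0$ is $L^{DS}_0$, which was fixed in Section~\ref{sec:DS}.
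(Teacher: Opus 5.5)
Your proposal is correct and follows the paper's proof essentially step for step. You reduce to $u=D_P$ via $\rad\End(P)=\C D_P$, and you take the exact identities $F(D_P)=D_{F(P)}$ and $H(D_P)=D_{H(P)}$ from Corollaries~\ref{cor:target-conformal-radical} and~\ref{cor:H-projective-radical}. You then use that any Virasoro-module isomorphism commutes with $L_0$, so it preserves generalized eigenspaces and intertwines their nilpotent parts.
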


\begin{proof}
The exact normalization is essential here: knowing only that the two
radicals are one-dimensional would determine the desired identity only up
to a nonzero scalar, which would not suffice to fix the reverse-edge
obstruction below.  Let $D_P$ be the canonical affine conformal nilpotent.
By Corollary~\ref{cor:target-conformal-radical}, $F(D_P)$ is exactly the
nilpotent part of the target Virasoro Hamiltonian on $F(P)$, while
Corollary~\ref{cor:H-projective-radical} identifies $H(D_P)$ with the
corresponding nilpotent part on $H(P)$.  A Virasoro-module isomorphism commutes with $L_0$, so for every actual
generalized eigenvalue $\lambda$ one has
\[
 \eta_P\bigl(F(P)^{\mathrm{gen}}_\lambda\bigr)
 =H(P)^{\mathrm{gen}}_\lambda,
 \qquad
 \eta_P(L_0-\lambda)=(L_0-\lambda)\eta_P
\]
on that generalized eigenspace.  Hence it intertwines the nilpotent parts
and
\[
 \eta_PF(D_P)=H(D_P)\eta_P.
\]
Since $\rad\End(P)=\C D_P$, every $u\in\rad\End(P)$ has the form
$u=aD_P$ for some $a\in\C$, and \eqref{eq:twist-locking} follows by
linearity.  No monoidal structure on $H$ is used.
\end{proof}

Thus radical-loop naturality is automatic for every objectwise
Virasoro-module isomorphism.  In constructing a natural comparison on a
reflection chain, only one orientation of each adjacent edge needs to be
normalized; the reverse orientation will then be forced.

\begin{lem}[Mixed projective Hom dimensions]
\label{lem:mixed-projective-Hom}
Fix a non-wall reflection chain $\{s_i\}_{i\ge0}$ and put
$\Pi_i=P_{s_i}$.  For any $\mathscr G,\mathscr G'\in\{F,H\}$,
\[
 \dim\Hom(\mathscr G(\Pi_i),\mathscr G'(\Pi_j))
 =
 \begin{cases}
  2,&i=j\ge1,\\
  1,&i=j=0,\\
  1,&|i-j|=1,\\
  0,&|i-j|\ge2.
 \end{cases}
\]
\end{lem}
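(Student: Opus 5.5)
The approach is to reduce every mixed Hom space to the already computed target table of Proposition~\ref{prop:target-Hom-table} by transport along objectwise isomorphisms. By Proposition~\ref{prop:projective-object-comparison}, for each $i$ there is a Virasoro-module isomorphism $\eta_i:F(\Pi_i)\xrightarrow{\sim}H(\Pi_i)$. The isomorphism is immediate when $i=0$: both objects are $K_{s_0}$. For $i\ge1$ both objects are isomorphic to $P(\tau_i)$. No naturality of the $\eta_i$ is needed, because a Hom space is determined up to linear isomorphism by the isomorphism classes of its endpoints.

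Fix any choices $\eta_i$ and $\eta_j$. For $\mathscr G,\mathscr G'\in\{F,H\}$, I would use the linear isomorphism
\[
 \Hom(F(\Pi_i),F(\Pi_j))\xrightarrow{\ \sim\ }\Hom(\mathscr G(\Pi_i),\mathscr G'(\Pi_j)),
\]
given by $f\mapsto \beta\circ f\circ\alpha^{-1}$. Here $\alpha$ is $\id$ or $\eta_i$ according as $\mathscr G=F$ or $H$, and $\beta$ is $\id$ or $\eta_j$ according as $\mathscr G'=F$ or $H$. The inverse map is $g\mapsto\beta^{-1}\circ g\circ\alpha$, so this is indeed bijective. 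All four mixed Hom spaces therefore have the dimension of $\Hom(\mathsf Q_i,\mathsf Q_j)$. Proposition~\ref{prop:target-Hom-table} then gives exactly the asserted values.

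There is essentially no obstacle; the only point to check is that the objectwise isomorphisms exist for every index on the chain, including the initial vertex. That case is covered by Lemma~\ref{lem:F-small-Weyl} together with Proposition~\ref{prop:H-Weyl} when $s_0<p$. Since $s_0=r<p$ always holds, this settles it.

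As a cross-check independent of $F$, one could rederive the $H$--$H$ case directly: faithfulness of $H$ gives the lower bounds, and the upper bounds follow from the exact filtrations \eqref{eq:H-projective-filtration} by the arguments of Lemmas~\ref{lem:Q-simple-Hom}--\ref{lem:Q-Kac-Hom}. This is not needed for the transport proof above.
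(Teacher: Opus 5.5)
Your proposal is correct and is essentially the paper's proof: the paper also chooses objectwise isomorphisms from Proposition~\ref{prop:projective-object-comparison} and conjugates the Hom spaces by them, using no naturality. It then reads off all four mixed dimensions from Proposition~\ref{prop:target-Hom-table}.
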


\begin{proof}
Choose objectwise isomorphisms
$\phi_j:H(\Pi_j)\xrightarrow{\sim}F(\Pi_j)$ from
Proposition~\ref{prop:projective-object-comparison}.  For each choice $\mathscr G,\mathscr G'\in\{F,H\}$,
pre- and postcomposition with the appropriate $\phi_i^{\pm1}$ and
$\phi_j^{\pm1}$ gives a vector-space isomorphism
\[
 \Hom(\mathscr G(\Pi_i),\mathscr G'(\Pi_j))
 \cong \Hom(F(\Pi_i),F(\Pi_j)).
\]
The latter dimension is given by Proposition~\ref{prop:target-Hom-table}.
No naturality of the auxiliary isomorphisms $\phi_j$ is used here; they
serve only to compare the underlying vector spaces.
\end{proof}

\begin{lem}[Projective exhaustion]\label{lem:projective-exhaustion}
Every indecomposable projective object of $\KL^k(\mathfrak{sl}_2)$ is
isomorphic to exactly one $P_j$.  Consequently
\[
 \mathcal P^k=\operatorname{add}\{P_j:j\ge1\}.
\]
\end{lem}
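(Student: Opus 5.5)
We need to prove that every indecomposable projective of $\KL^k(\mathfrak{sl}_2)$ is isomorphic to exactly one $P_j$, so that $\mathcal P^k=\operatorname{add}\{P_j\}$. The plan is the standard projective-cover argument in a finite-length category with enough projectives.

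\emph{Step 1: identify any indecomposable projective with a projective cover.} Let $P$ be an indecomposable projective object. It has finite length and is nonzero, so it has a simple quotient $P\twoheadrightarrow L$. Since the simple objects of $\KL^k(\mathfrak{sl}_2)$ are exactly the $L_j$ with $j\ge1$ (the fact cited from McRae--Yang in Proposition~\ref{prop:H-exact}), we have $L\cong L_j$ for some $j$. Projectivity of $P$ lifts $P\twoheadrightarrow L_j$ along the cover $P_j\twoheadrightarrow L_j$ to a map $g:P\to P_j$. Projectivity of $P_j$ likewise gives $f:P_j\to P$ lifting the other way. The composite $g\circ f\in\End(P_j)$ induces the identity on the head $L_j$, so it is not nilpotent. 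Since $P_j$ is indecomposable of finite length, Fitting's lemma makes $\End(P_j)$ local, and hence $g\circ f$ is an automorphism. Therefore $f$ is a split monomorphism, and $P_j$ is a direct summand of $P$. Indecomposability of $P$ then forces $P\cong P_j$.

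\emph{Step 2: uniqueness of $j$.} The head $P_j/\rad P_j\cong L_j$ is an isomorphism invariant. The main point to check is that the labels $r\mapsto L_r$ are injective, which is the step I expect to need care. The $L_r$ are pairwise non-isomorphic affine simples, since they have distinct horizontal highest weights $(r-1)\omega$. Alternatively, their lowest $L^{\mathrm{aff}}_0$-weights $(r^2-1)/(4(k+2))$ are pairwise distinct for $r\ge1$. Unlike the Virasoro collision of Corollary~\ref{cor:positive-label-collisions}, no coincidence can occur on the affine side. So $P_j\cong P_{j'}$ forces $j=j'$.

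\emph{Step 3: passage to $\operatorname{add}$.} Every projective object has finite length. By induction on length it is a finite direct sum of indecomposables, and each such summand is projective as a direct summand of a projective. By Steps~1 and~2 each summand is some $P_j$. Conversely, each $P_j$ is projective by \cite[Theorem~4.8]{McRaeYang}, and $\operatorname{add}$ of projectives consists of projectives. This gives $\mathcal P^k=\operatorname{add}\{P_j:j\ge1\}$.
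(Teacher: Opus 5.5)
Your proof is correct and follows the paper's argument. You take a simple quotient $L_j$ of the indecomposable projective, lift in both directions between it and $P_j$, and show the composite on $P_j$ is an automorphism so that $P_j$ splits off; you then separate labels by the simple heads and finish by induction on length. The one difference is how you get invertibility of the composite: you use non-nilpotence plus Fitting's lemma for the indecomposable $P_j$, whereas the paper uses essentiality of $P_j\twoheadrightarrow L_j$ to see that the composite is epic and then appeals to finite length.
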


\begin{proof}
Since $R$ has finite length, a nonzero indecomposable projective $R$
has a maximal proper subobject; choose a simple quotient
$q:R\twoheadrightarrow L_j$.  Let
$p_j:P_j\twoheadrightarrow L_j$ be the projective cover.  Projectivity
gives morphisms $a:P_j\to R$ and $b:R\to P_j$ satisfying
$q a=p_j$ and $p_j b=q$.  Hence $p_jba=p_j$.  Since $p_j$ is essential,
$ba$ is an epimorphism; as $P_j$ has finite length, $ba$ is an
automorphism.  Thus $a$ is a split monomorphism, so $P_j$ is a nonzero
direct summand of $R$.  Indecomposability gives $R\cong P_j$.  The label
is unique because the simple heads are pairwise nonisomorphic: the lowest
affine conformal-degree subspace of $L_j$ is the $j$-dimensional
irreducible horizontal $\mathfrak{sl}_2$-module.  Hence
$L_j\cong L_\ell$ forces $j=\ell$.  Finally, induction on length decomposes
every projective into a finite direct sum of indecomposable projectives,
proving the last assertion.
\end{proof}

\begin{prop}\label{prop:natural-on-projectives}
There is a natural isomorphism of $\C$-linear functors
\[
 \eta:F|_{\mathcal P^k}\xRightarrow{\ \sim\ }H|_{\mathcal P^k}.
\]
On each non-wall reflection chain the components can be chosen
inductively.  Exponential compatibility forces the scalar obstruction for
the reverse adjacent morphisms to be trivial; no uniqueness of the
individual components is asserted.
\end{prop}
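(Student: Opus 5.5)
We construct $\eta$ separately on the three kinds of indecomposable projectives. Lemma~\ref{lem:projective-exhaustion} and the Hom package of Proposition~\ref{prop:source-Hom-package} say that nonzero morphisms between indecomposable projectives occur only inside one reflection chain or as endomorphisms of walls, so it suffices to treat each chain and each wall. Once a compatible family of components is fixed on the indecomposables, it extends additively to $\mathcal P^k=\operatorname{add}\{P_j\}$: morphisms between finite biproducts are matrices of component morphisms, and $F,H$ are additive.

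\emph{Walls.} For $P_{np}$ we take any isomorphism $\eta_{np}:F(P_{np})\cong K_{np}\cong H(P_{np})$, which exists by Proposition~\ref{prop:projective-object-comparison}. Since $\End(P_{np})=\C$ and all Hom spaces between distinct walls, or between a wall and a non-wall projective, vanish, naturality at walls holds trivially.

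\emph{Chains.} Fix a chain $\Pi_i=P_{s_i}$ with adjacent arrows $x_i:\Pi_i\to\Pi_{i+1}$ and $y_i:\Pi_{i+1}\to\Pi_i$ as in Proposition~\ref{prop:source-Hom-package}. We construct $\eta_i:F(\Pi_i)\xrightarrow{\sim}H(\Pi_i)$ inductively. Start with any isomorphism $\eta_0$. Given $\eta_i$, choose any isomorphism $\eta'_{i+1}$ and compare $H(x_i)\eta_i$ with $\eta'_{i+1}F(x_i)$. Both lie in $\Hom(F(\Pi_i),H(\Pi_{i+1}))$, which is one-dimensional by Lemma~\ref{lem:mixed-projective-Hom}. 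Both are nonzero, because $F$ is faithful on projectives (Corollary~\ref{cor:F-faithful-projectives}) and $H$ is faithful (Corollary~\ref{cor:H-faithful}). So $H(x_i)\eta_i=c\,\eta'_{i+1}F(x_i)$ for some $c\in\C^\times$, and setting $\eta_{i+1}:=c\,\eta'_{i+1}$ makes the forward square commute.

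It remains to check the reverse edge $y_i$, which is the main obstacle. Again $H(y_i)\eta_{i+1}=d\,\eta_iF(y_i)$ for some $d\in\C^\times$, by the same one-dimensionality and faithfulness argument, and we must show $d=1$. Postcompose with $H(x_i)$ and use the forward relation to get
\[
 H(x_iy_i)\eta_{i+1}=d\,H(x_i)\eta_iF(y_i)=d\,\eta_{i+1}F(x_iy_i).
\]
Now $x_iy_i=\nu_{i+1}\in\rad\End(\Pi_{i+1})$, so Lemma~\ref{lem:twist-locking} gives $H(\nu_{i+1})\eta_{i+1}=\eta_{i+1}F(\nu_{i+1})$. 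Comparing the two expressions yields $(d-1)\eta_{i+1}F(\nu_{i+1})=0$. Since $F(\nu_{i+1})\neq0$ by projective faithfulness and $\eta_{i+1}$ is invertible, we conclude $d=1$. Here the exact identification $F(D_P)=D_{F(P)}$ and $H(D_P)=D_{H(P)}$, rather than mere proportionality of the radicals, is essential.

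\emph{Naturality on the whole chain.} Every endomorphism of a positive vertex $\Pi_i$ has the form $a\,\id+u$ with $u\in\rad\End(\Pi_i)$, so Lemma~\ref{lem:twist-locking} gives naturality for endomorphisms. At $i=0$ only scalars occur. The adjacent Hom spaces are spanned by $x_i$ and $y_i$, and all other Hom spaces within the chain vanish. Hence naturality holds on every Hom space between indecomposables in the chain, and by linearity and matrix calculus on all of $\mathcal P^k$.
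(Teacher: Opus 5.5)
Your proposal is correct and follows the paper's proof essentially step for step. You normalize inductively along the forward arrows $x_i$ in the one-dimensional mixed Hom spaces, use $x_i\circ y_i=\nu_{i+1}$ together with Lemma~\ref{lem:twist-locking} to force the reverse scalar to equal $1$, handle walls trivially, and extend to all of $\mathcal P^k$ by matrix calculus. The differences are minor: you conclude $d=1$ from $F(\nu_{i+1})\neq0$ (projective faithfulness of $F$) where the paper uses $H(\nu_{i+1})\neq0$ (faithfulness of $H$), and the paper makes the additive extension explicit by fixing isomorphisms $\alpha_R:R\xrightarrow{\sim}\bigoplus_jP_j^{\oplus m_j(R)}$ for projectives that are only isomorphic to such direct sums.
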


\begin{proof}
Fix a non-wall reflection chain $\{s_i\}_{i\ge0}$, put
$\Pi_i=P_{s_i}$, and choose adjacent generators
\[
 \Pi_i\underset{y_i}{\stackrel{x_i}{\rightleftarrows}}\Pi_{i+1}
\]
as nonzero generators of the two one-dimensional adjacent Hom spaces in
Proposition~\ref{prop:source-Hom-package}, normalized only so that
\begin{equation}\label{eq:zigzag-loop-section6}
 \nu_{i+1}:=x_i\circ y_i\ne0
\end{equation}
spans $\rad\End(\Pi_{i+1})$.

Choose an isomorphism $\eta_0:F(\Pi_0)\to H(\Pi_0)$.  Suppose $\eta_i$
has been chosen and take any isomorphism
$\eta'_{i+1}:F(\Pi_{i+1})\to H(\Pi_{i+1})$.  By
Lemma~\ref{lem:mixed-projective-Hom}, both
$\eta'_{i+1}\circ F(x_i)$ and $H(x_i)\circ\eta_i$ lie in the same one-dimensional
mixed Hom space.  They are nonzero: $F(x_i)\ne0$ by projective faithfulness of $F$,
$H(x_i)\ne0$ by faithfulness of $H$, and both $\eta_i$ and
$\eta'_{i+1}$ are isomorphisms.  Hence there is a unique
$a_i\in\C^\times$ such that
\[
 \eta'_{i+1}\circ F(x_i)=a_i H(x_i)\circ\eta_i.
\]
Set $\eta_{i+1}:=a_i^{-1}\eta'_{i+1}$.  Then
\begin{equation}\label{eq:forward-naturality}
 \eta_{i+1}\circ F(x_i)=H(x_i)\circ\eta_i.
\end{equation}
The reverse mixed Hom space is also one-dimensional.  Both
$\eta_i\circ F(y_i)$ and $H(y_i)\circ\eta_{i+1}$ are nonzero, by
faithfulness of $F$ on projectives, faithfulness of $H$, and invertibility
of the chosen components.  Hence
\begin{equation}\label{eq:reverse-scalar}
 \eta_i\circ F(y_i)=c_iH(y_i)\circ\eta_{i+1}
\end{equation}
for a unique $c_i\in\C^\times$.  Composing with the forward relation gives
\begin{align*}
 \eta_{i+1}\circ F(\nu_{i+1})
 &=\eta_{i+1}\circ F(x_i)\circ F(y_i)\\
 &=H(x_i)\circ\eta_i\circ F(y_i)\\
 &=c_iH(x_i)\circ H(y_i)\circ\eta_{i+1}\\
 &=c_iH(\nu_{i+1})\circ\eta_{i+1}.
\end{align*}
Lemma~\ref{lem:twist-locking} gives
$\eta_{i+1}\circ F(\nu_{i+1})=
H(\nu_{i+1})\circ\eta_{i+1}$.  Subtracting from the
preceding equality gives
\[
 (c_i-1)H(\nu_{i+1})\circ\eta_{i+1}=0.
\]
Since $\nu_{i+1}\ne0$, faithfulness of $H$ gives
$H(\nu_{i+1})\ne0$, and hence
$H(\nu_{i+1})\circ\eta_{i+1}\ne0$.  Therefore $c_i=1$.  Thus the normalization along $x_i$ automatically
enforces naturality along the reverse arrow $y_i$ as well.  This argument
uses only the fixed nonzero loop $\nu_{i+1}=x_i y_i$; no assertion about
the opposite composite $y_i x_i$ is required.

By Proposition~\ref{prop:source-Hom-package}, the displayed elements form
bases of every nonzero Hom space between indecomposable projectives on the
fixed reflection chain:
\[
 \C\id_{\Pi_0},\qquad \C x_i,\qquad \C y_i,
 \qquad \C\id_{\Pi_i}\oplus\C\nu_i\ (i\ge1),
\]
and all remaining Hom spaces vanish.  Naturality has been checked on these
bases: for $x_i,y_i$ above, for $\nu_i$ by
Lemma~\ref{lem:twist-locking}, and for identities automatically.  Linearity
therefore gives naturality for every morphism on the chain.  Hom spaces between distinct reflection chains vanish by
Proposition~\ref{prop:source-Hom-package}.

For every wall projective $P_{np}$, choose any isomorphism
\[
 \eta_{np}:F(P_{np})\xrightarrow{\sim}H(P_{np}),
\]
using the identifications
$F(P_{np})\cong K_{np}\cong H(P_{np})$.  Wall projectives are isolated and
have scalar endomorphism algebras, so naturality there is automatic.
By Lemma~\ref{lem:projective-exhaustion}, these components cover all
indecomposable projective objects.

For $R=0$ take the unique isomorphism $0\to0$.  For each
indecomposable $P_j$ set $\alpha_{P_j}=\id_{P_j}$.  For every other
nonzero projective $R$, fix once and for all an isomorphism
\[
 \alpha_R:R\xrightarrow{\sim}
 \bigoplus_j P_j^{\oplus m_j(R)}
\]
with only finitely many nonzero multiplicities.  No independence of this
choice is asserted or needed.  Since $F$ and $H$ are additive, we use their canonical finite-biproduct identifications
\[
 F\!\left(\bigoplus_jP_j^{\oplus m_j(R)}\right)
 \cong\bigoplus_jF(P_j)^{\oplus m_j(R)},
 \qquad
 H\!\left(\bigoplus_jP_j^{\oplus m_j(R)}\right)
 \cong\bigoplus_jH(P_j)^{\oplus m_j(R)}.
\]
With these identifications understood, define
\[
 \eta_R:=
 H(\alpha_R)^{-1}
 \left(\bigoplus_j\eta_{P_j}^{\oplus m_j(R)}\right)
 F(\alpha_R).
\]
No canonicity or independence of the chosen Krull--Schmidt isomorphisms is
asserted.  Fix these choices.  We now verify that the resulting family is
natural.  If $f:R\to R'$ is any projective morphism, the matrix of
$\alpha_{R'}f\alpha_R^{-1}$ has entries between indecomposable
projectives, on which naturality has already been proved.  Matrix
multiplication therefore gives
$\eta_{R'}F(f)=H(f)\eta_R$.
\end{proof}

\begin{lem}[Radical-unipotent ambiguity]\label{lem:radical-annihilates-arrows}
With the notation of Lemma~\ref{lem:mixed-projective-Hom}, let
$\nu_i$ span the radical of $\End(\Pi_i)$ for $i\ge1$.  For every
adjacent generator $x_i:\Pi_i\to\Pi_{i+1}$ and
$y_i:\Pi_{i+1}\to\Pi_i$ one has, whenever the indicated radical is
defined,
\[
 H(\nu_{i+1})H(x_i)=0,\qquad H(x_i)H(\nu_i)=0,
\]
\[
 H(\nu_i)H(y_i)=0,\qquad H(y_i)H(\nu_{i+1})=0.
\]
Consequently, suppose isomorphisms
$\eta_j:F(\Pi_j)\xrightarrow{\sim}H(\Pi_j)$ have been chosen and satisfy
some collection of adjacent naturality relations.  At any positive vertex
$i$, replacing $\eta_i$ by
$(\id+aH(\nu_i))\circ\eta_i$ preserves every adjacent naturality equation
already satisfied.  It also preserves naturality for the radical loop
$\nu_i$, because $H(\nu_i)^2=0$.
\end{lem}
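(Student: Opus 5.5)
The plan is to prove the four vanishing identities in the source category first and then apply $H$. Functoriality gives $H(\nu_{i+1})H(x_i)=H(\nu_{i+1}x_i)$, and similarly for the other three, so it suffices to show
\[
 \nu_{i+1}x_i=0,\qquad x_i\nu_i=0,\qquad \nu_iy_i=0,\qquad y_i\nu_{i+1}=0
\]
in $\KL^k(\mathfrak{sl}_2)$.

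Each composite is a morphism between adjacent indecomposable projectives. For instance, $\nu_{i+1}x_i\in\Hom(\Pi_i,\Pi_{i+1})=\C x_i$ by Proposition~\ref{prop:source-Hom-package}, so we may write $\nu_{i+1}x_i=\beta x_i$. I will show $\beta=0$ by nilpotence: $\nu_{i+1}^2=0$, as shown after Proposition~\ref{prop:source-Hom-package}. Hence
\[
 0=\nu_{i+1}^2x_i=\beta\,\nu_{i+1}x_i=\beta^2x_i,
\]
and since $x_i\ne0$ this forces $\beta=0$. The same argument handles the other three composites. We use $\nu_i^2=0$ at the positive vertex $i$, which is where "whenever the indicated radical is defined" enters (so $i\ge1$). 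We also use that each adjacent Hom space is one-dimensional and spanned by $x_i$ or $y_i$. This argument does not require any normalization of the $\nu$'s: each $\nu$ is just some spanning element of a square-zero radical. I regard this as the only substantive step. A radical-multiplication argument, that $\nu x$ lies in $\rad\cdot\Hom$, would also work, but the square-zero trick is cleaner and uses only facts already established.

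For the consequence, set $\eta_i'=(\id+aH(\nu_i))\eta_i$. Take an adjacent relation already satisfied, say $\eta_{i+1}F(x_i)=H(x_i)\eta_i$. Then
\[
 H(x_i)\eta_i'=H(x_i)\eta_i+aH(x_i)H(\nu_i)\eta_i=H(x_i)\eta_i,
\]
so the relation still holds. For the arrow $y_i$ into vertex $i$ we get $\eta_i'F(y_i)=\eta_iF(y_i)+aH(\nu_i)H(y_i)\eta_{i+1}=\eta_iF(y_i)$, and likewise for the two arrows at vertex $i-1$ with $H(\nu_i)H(y_{i-1})=0$ and $H(x_{i-1})$ replaced appropriately.

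The map $\id+aH(\nu_i)$ is invertible because $H(\nu_i)^2=H(\nu_i^2)=0$, so $\eta_i'$ is still an isomorphism. For the loop, $\id+aH(\nu_i)$ commutes with $H(\nu_i)$, so $\eta_i'F(\nu_i)=(\id+aH(\nu_i))H(\nu_i)\eta_i=H(\nu_i)\eta_i'$, using the loop relation for $\eta_i$. Thus the loop relation is preserved as well.
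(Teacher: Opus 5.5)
Your proof is correct and uses the same device as the paper: a square-zero operator acting on a one-dimensional Hom space must vanish. The difference is where you apply it, and that changes the dependencies.

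\begin{itemize}
\item The paper lets postcomposition $T(f)=H(\nu_{i+1})f$ act on the target space $\Hom(H(\Pi_i),H(\Pi_{i+1}))$. It gets one-dimensionality from Lemma~\ref{lem:mixed-projective-Hom}. That lemma rests on the target Hom table of Proposition~\ref{prop:target-Hom-table}, transported through objectwise isomorphisms $F(\Pi_j)\cong H(\Pi_j)$. It therefore depends on the one-row Nakano and Kyt\"ol\"a--Ridout input.
\item You show $\nu_{i+1}x_i=x_i\nu_i=\nu_iy_i=y_i\nu_{i+1}=0$ already in $\KL^k(\mathfrak{sl}_2)$. You need only Proposition~\ref{prop:source-Hom-package} and the square-zero property of the radical generator, and then you apply $H$.
\end{itemize}

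Your route is more economical. It also makes clear that the four identities are purely source-side facts, independent of any Virasoro Hom computation. The paper's version shows that $H(\nu_{i+1})$ annihilates the entire target Hom space, but only its value on $H(x_i)$ is ever used.

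There is one small slip in your consequence step. For the edge between $i-1$ and $i$, the identities you need are $H(\nu_i)H(x_{i-1})=0$ and $H(y_{i-1})H(\nu_i)=0$, which are the first and fourth identities at index $i-1$. The composite $H(\nu_i)H(y_{i-1})$ you wrote is not composable. Your argument for the loop, via the commutation of $\id+aH(\nu_i)$ with $H(\nu_i)$, is fine.
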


\begin{proof}
Postcomposition defines a linear endomorphism
\[
 T:\Hom(H(\Pi_i),H(\Pi_{i+1}))\longrightarrow
   \Hom(H(\Pi_i),H(\Pi_{i+1})),
 \qquad T(f)=H(\nu_{i+1})f.
\]
By Proposition~\ref{prop:source-Hom-package} and
Lemma~\ref{lem:source-L0-radical},
\[
 \rad\End(\Pi_{i+1})=\C N_{s_{i+1}},
 \qquad N_{s_{i+1}}^2=0.
\]
Since $\nu_{i+1}$ is a nonzero element of this radical,
$\nu_{i+1}^2=0$.  Hence $T^2=0$.  The Hom space is one-dimensional by Lemma~\ref{lem:mixed-projective-Hom};
therefore $T$ is scalar, and the only square-zero scalar is zero.  Hence
$T=0$.  The other three
statements follow identically from pre- or postcomposition on the
corresponding one-dimensional adjacent Hom space.  Since
$(\id+aH(\nu_i))^{-1}=\id-aH(\nu_i)$, these are radical-unipotent
automorphisms and the final assertion follows.  More generally, for any
family of scalars $(a_i)_{i\ge1}$, the simultaneous replacements
\[
 \eta_i\longmapsto(\id+a_iH(\nu_i))\circ\eta_i
 \qquad(i\ge1)
\]
preserve all adjacent naturality equations: on each edge the two possible
correction terms vanish by the four identities above.  They also preserve
naturality for every radical loop, since the additional correction is a
multiple of $H(\nu_i)^2=0$.  Consequently, if the original family is natural on the full projective
reflection chain, then simultaneous independent choices of the $a_i$
again give a natural isomorphism on that chain.
\end{proof}

\subsection{Extension from projectives}

The following elementary projective-density statement is the
natural-transformation analogue of the right-exact extension principle
used in \cite[Theorem~6.7]{McRaeYang}.  We include the proof because the
uniqueness of the extended natural transformation will be used explicitly.

\begin{prop}[Projective determination of natural transformations between right exact functors]
\label{prop:right-exact-extension}
Let $\mathcal A$ be a $\C$-linear abelian category with enough projectives,
let $\mathcal P\subset\mathcal A$ be its full projective subcategory, and
let $\mathcal B$ be a $\C$-linear abelian category.  Here ``right exact''
means that an exact sequence $A\to B\to C\to0$ is sent to an exact sequence
$G(A)\to G(B)\to G(C)\to0$.  For right exact additive $\C$-linear functors $G_1,G_2:\mathcal A\to\mathcal B$, restriction
induces a bijection
\[
 \operatorname{Nat}(G_1,G_2)
 \xrightarrow{\ \sim\ }
 \operatorname{Nat}(G_1|_{\mathcal P},G_2|_{\mathcal P}).
\]
Thus a natural transformation between right exact functors is uniquely
determined by, and can be reconstructed from, its projective restriction.
This is the fully faithful form of the right-exact extension principle;
compare \cite[Theorem~6.7]{McRaeYang}.  In particular, every natural
isomorphism on projectives extends uniquely
to a natural isomorphism on $\mathcal A$.
\end{prop}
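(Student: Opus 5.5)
We will prove injectivity and surjectivity of the restriction map separately, using projective presentations. Since $\mathcal A$ has enough projectives, every object $A$ admits a presentation
\[
 P_1\xrightarrow{\ d\ }P_0\xrightarrow{\ \varepsilon\ }A\longrightarrow0
\]
with $P_0,P_1\in\mathcal P$: take an epimorphism $\varepsilon:P_0\twoheadrightarrow A$, then an epimorphism from a projective $P_1$ onto $\ker\varepsilon$. Right exactness of $G_1$ and $G_2$ gives exact sequences $G_i(P_1)\to G_i(P_0)\to G_i(A)\to0$. In particular each $G_i(\varepsilon)$ is an epimorphism and exhibits $G_i(A)$ as the cokernel of $G_i(d)$.

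\emph{Injectivity.} Let $\theta,\theta'\in\operatorname{Nat}(G_1,G_2)$ agree on $\mathcal P$. Naturality at $\varepsilon$ gives
\[
 \theta_A G_1(\varepsilon)=G_2(\varepsilon)\theta_{P_0}=G_2(\varepsilon)\theta'_{P_0}=\theta'_AG_1(\varepsilon).
\]
Since $G_1(\varepsilon)$ is epic, $\theta_A=\theta'_A$.

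\emph{Surjectivity.} Let $\theta$ be natural on $\mathcal P$. We have $G_2(\varepsilon)\theta_{P_0}G_1(d)=G_2(\varepsilon)G_2(d)\theta_{P_1}=0$, so $G_2(\varepsilon)\theta_{P_0}$ factors uniquely through the cokernel $G_1(\varepsilon)$. This defines $\tilde\theta_A$ with $\tilde\theta_AG_1(\varepsilon)=G_2(\varepsilon)\theta_{P_0}$.

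Next we check that $\tilde\theta_A$ is independent of the presentation, that it is natural, and that it restricts to $\theta$ on $\mathcal P$. All three follow from one lifting argument. Given $f:A\to A'$ and presentations of $A$ and $A'$, projectivity of $P_0$ lifts $f\varepsilon$ to some $f_0:P_0\to P_0'$ with $\varepsilon'f_0=f\varepsilon$. Then
\[
 \tilde\theta_{A'}G_1(f)G_1(\varepsilon)=\tilde\theta_{A'}G_1(\varepsilon')G_1(f_0)=G_2(\varepsilon')\theta_{P_0'}G_1(f_0)=G_2(\varepsilon')G_2(f_0)\theta_{P_0}=G_2(f)G_2(\varepsilon)\theta_{P_0}=G_2(f)\tilde\theta_AG_1(\varepsilon),
\]
and cancelling the epimorphism $G_1(\varepsilon)$ gives naturality. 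Only the lift $f_0$ is needed, not $f_1$. Taking $f=\id_A$ with two different presentations shows independence of the presentation. For $A$ projective, the presentation $0\to A\xrightarrow{\id}A$ shows $\tilde\theta_A=\theta_A$.

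$\C$-linearity and additivity are not needed to build the transformation; they only ensure the Nat sets are the expected vector spaces. For isomorphisms, apply the bijection to $\theta^{-1}$ as well. Uniqueness then shows that the extensions of $\theta$ and $\theta^{-1}$ compose to the identity in both orders, so the extension of an isomorphism is an isomorphism.

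The only delicate point is well-definedness, which we handle by cancelling epimorphisms rather than comparing choices directly. No step is a real obstacle.
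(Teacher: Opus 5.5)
Your proposal is correct and follows essentially the same route as the paper: injectivity by cancelling the epimorphism $G_1(\varepsilon)$, construction of the extension from the cokernel property of a projective presentation, independence of choices and naturality via a projective lift, and invertibility by applying uniqueness to the extension of the inverse. The only cosmetic difference is that you obtain independence of the presentation as the special case $f=\id_A$ of your naturality computation, whereas the paper runs the same lifting argument twice.
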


\begin{proof}
Enough projectives is sufficient here; no finite global-dimension
hypothesis is needed.  Restriction is injective: if
$\theta:G_1\Rightarrow G_2$ vanishes on projectives and
$p:P\twoheadrightarrow M$ is a projective epimorphism, then
\[
 \theta_MG_1(p)=G_2(p)\theta_P=0.
\]
Right exactness makes $G_1(p)$ epic, so $\theta_M=0$.

For surjectivity, choose a projective epimorphism
$p:P_0\twoheadrightarrow M$ and then a projective epimorphism
$u:P_1\twoheadrightarrow\ker p$.  Let
$d:P_1\to P_0$ be the composite of $u$ with the inclusion
$\ker p\hookrightarrow P_0$.  Thus
\[
 P_1\xrightarrow d P_0\xrightarrow p M\longrightarrow0
\]
is a projective presentation in the required sense.  The map $d$ need
not be injective; no projective resolution or finite global-dimension
hypothesis is being assumed.

Let $\eta^{\mathcal P}$ be a natural transformation on projectives.
Right exactness gives
\[
 G_1(P_1)\xrightarrow{G_1(d)}G_1(P_0)
 \xrightarrow{G_1(p)}G_1(M)\to0,
\]
and similarly for $G_2$.  In particular, $G_1(p)$ is epic and exhibits
$G_1(M)$ as a cokernel of $G_1(d)$.  Naturality for $d$ gives the
explicit identity
\[
 G_2(p)\eta^{\mathcal P}_{P_0}G_1(d)
 =G_2(p)G_2(d)\eta^{\mathcal P}_{P_1}=0.
\]
The cokernel universal property therefore gives a unique map
\[
 \eta_M:G_1(M)\longrightarrow G_2(M)
\]
such that
\begin{equation}\label{eq:eta-projective-presentation}
 \eta_MG_1(p)=G_2(p)\eta^{\mathcal P}_{P_0}.
\end{equation}
The morphism $\eta_M$ is therefore determined by the projective
epimorphism $p:P_0\twoheadrightarrow M$ alone; the object $P_1$ is used
only to verify that the right-hand side of
\eqref{eq:eta-projective-presentation} annihilates
$\operatorname{im}G_1(d)$.  In particular the construction is independent
of the choice of $P_1\twoheadrightarrow\ker p$.

The map is also independent of the chosen projective epimorphism $p$.  Indeed, let
$p':P'_0\twoheadrightarrow M$ be another projective epimorphism and let
$\eta'_M$ be the map constructed from it.  Projectivity of $P_0$ gives
$a:P_0\to P'_0$ with $p'a=p$.  Then
\begin{align*}
 \eta'_M G_1(p)
 &=\eta'_M G_1(p')G_1(a)\\
 &=G_2(p')\eta^{\mathcal P}_{P'_0}G_1(a)\\
 &=G_2(p')G_2(a)\eta^{\mathcal P}_{P_0}\\
 &=G_2(p)\eta^{\mathcal P}_{P_0}\\
 &=\eta_MG_1(p).
\end{align*}
Since $G_1(p)$ is epic, $\eta'_M=\eta_M$.  A lift in the opposite
direction is unnecessary: equality after precomposition with one
projective epimorphism already determines a morphism out of $G_1(M)$.

For a morphism $f:M\to N$, choose projective epimorphisms
$p:P\twoheadrightarrow M$ and $q:Q\twoheadrightarrow N$.  Projectivity of
$P$ gives $a:P\to Q$ with $qa=fp$.  Using
\eqref{eq:eta-projective-presentation} twice and naturality of
$\eta^{\mathcal P}$ for $a$, we compute
\begin{align*}
 G_2(f)\eta_MG_1(p)
 &=G_2(f)G_2(p)\eta^{\mathcal P}_P\\
 &=G_2(q)G_2(a)\eta^{\mathcal P}_P\\
 &=G_2(q)\eta^{\mathcal P}_QG_1(a)\\
 &=\eta_NG_1(q)G_1(a)\\
 &=\eta_NG_1(f)G_1(p).
\end{align*}
Since $G_1(p)$ is epic, $G_2(f)\eta_M=\eta_NG_1(f)$.  Thus $\eta$ is
natural.  Taking $p=\id_P$ and $P_1=0$ shows that its restriction is
$\eta^{\mathcal P}$, and \eqref{eq:eta-projective-presentation} shows
uniqueness.  If $\eta^{\mathcal P}$ is invertible, apply the same construction to
$(\eta^{\mathcal P})^{-1}$, obtaining $\zeta:G_2\Rightarrow G_1$.
The composites $\zeta\eta$ and $\eta\zeta$ restrict to the identity
natural transformations on projectives.  By the uniqueness just proved,
they are the identity transformations on all of $\mathcal A$.  Hence the
extension of an invertible projective transformation is a natural
isomorphism.
\end{proof}

\subsection{Main theorem and consequences}

\begin{proof}[Proof of Theorem~\ref{thm:main}]
At this point exactness of $F$ has nowhere been used.  The comparison has
three steps:
\[
 F|_{\mathcal P^k}\cong H|_{\mathcal P^k}
 \quad\Longrightarrow\quad
 F\cong H
 \quad\Longrightarrow\quad
 F\text{ is exact and faithful}.
\]
By this stage, the projective comparison has been obtained using the full
faithfulness of $F|_{\mathcal P^k}$, the exactness and faithfulness of
$H$, the objectwise projective comparison, and the two exponential
compatibilities.  No preservation of kernels or monomorphisms by $F$ has
been used.  The first arrow is Proposition~\ref{prop:natural-on-projectives}.
Only now do we invoke right exactness of $F$, supplied by
\cite[Theorem~7.15]{McRaeYang}.  The category
$\KL^k(\mathfrak{sl}_2)$ is a $\C$-linear abelian category with enough
projectives, and $\Oc_{c_{p,q}}$ is a $\C$-linear abelian category.  The
functor $F$ is additive, $\C$-linear, and right exact, while $H$ is
additive and $\C$-linear and is right exact because it is exact.
Therefore every hypothesis of Proposition~\ref{prop:right-exact-extension}
is satisfied.  That proposition gives the unique extension of the chosen
projective comparison,
\begin{equation}\label{eq:main-natural-iso}
 F_{p,q}\xRightarrow{\ \sim\ }
 H^0_{DS,+}\big|_{\KL^k(\mathfrak{sl}_2)}.
\end{equation}
Exactness is invariant under natural isomorphism of additive functors, so
exactness of $H$ implies exactness of $F$.  If $f:M\to N$ and $F(f)=0$,
naturality of \eqref{eq:main-natural-iso} gives
\[
 H(f)=\eta_N\circ F(f)\circ\eta_M^{-1}=0.
\]
Faithfulness of $H$ therefore implies $f=0$, so $F$ is faithful.
Projective full faithfulness is Theorem~\ref{thm:F-fully-faithful}.
No monoidal structure on $H$ is used in this implication.  By
Lemma~\ref{lem:FKW-normalization}, the functor on the right-hand side of
\eqref{eq:main-natural-iso} is the quantized Drinfeld--Sokolov reduction
referred to in \cite[Conjecture~7.16]{McRaeYang}.
\end{proof}

\begin{lem}[Rigid normalization under parameter interchange]
\label{lem:parameter-swap-rigid-normalization}
Put $c=c_{p,q}=c_{q,p}$.  Both notations
$\Oc_{c_{q,p}}$ and $\Oc_{c_{p,q}}$ refer to the vertex tensor category
of finite-length modules for the same universal Virasoro vertex algebra
$V_c^{\mathrm{Vir}}$.  We therefore identify them as the same HLZ
braided tensor category, with the same tensor product, tensor unit,
braiding, and twist.  In this category set
\[
 A:=\mathcal K^{(q,p)}_{2,1},\qquad
 B:=\mathcal K^{(p,q)}_{1,2}.
\]
There is an isomorphism $\phi:A\xrightarrow{\sim}B$ which identifies the
chosen self-duality data of \cite[Theorem~7.15]{McRaeYang}:
\[
 e_B\circ(\phi\boxtimes\phi)=e_A,
 \qquad
 (\phi^{-1}\boxtimes\phi^{-1})\circ i_B=i_A.
\]
\end{lem}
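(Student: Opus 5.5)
First I would identify the two objects.  Both $A=\mathcal K^{(q,p)}_{2,1}$ and $B=\mathcal K^{(p,q)}_{1,2}$ are Virasoro modules at the same central charge $c$.  I would show they are quotients of the same Verma module by the same singular vector.  The Kac weight formula gives $h^{(q,p)}_{2,1}=h^{(p,q)}_{1,2}$, since the formula is symmetric under simultaneously swapping $(p,q)$ and the two Kac labels.  The defining singular vector sits at grade $2$ in both cases.  At a Kac weight $h_{1,2}$ with $q\ge3$ there is a unique singular vector at grade $2$ up to scalar.  If instead $q=2$, both modules are the Verma quotient by the grade-two singular vector in the same Verma module.  Either way, the two Kac modules, which McRae--Sopin define as Verma quotients in the regime $s=1<p$, are isomorphic.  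Fix any isomorphism $\phi_0:A\to B$.

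Next comes the rigidity data.  By \cite[Theorem~5.10]{McRaeSopin}, $A\cong B$ is rigid and self-dual, and $\End(A)=\C$.  For the last claim I would use Lemma~\ref{lem:length-two-Hom} when $A$ has length two with distinct factors, or simplicity of $A$ otherwise.  Consequently $\Hom(A\boxtimes A,\mathbf 1)$ is one-dimensional.  To see this, use the adjunction $\Hom(A\boxtimes A,\mathbf1)\cong\Hom(A,A^\vee)\cong\End(A)$.

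Any nondegenerate evaluation $e_A$ is then unique up to scalar.  Transporting along $\phi_0$ gives $e_B\circ(\phi_0\boxtimes\phi_0)=\lambda e_A$ for some $\lambda\in\C^\times$.  Replace $\phi_0$ by $\phi=\mu\phi_0$ with $\mu^2=\lambda^{-1}$; this gives the first identity.

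For the coevaluation, the McRae--Yang normalization requires $(e,i)$ to satisfy the zigzag identities.  Given $e$, the coevaluation $i$ is uniquely determined by the zigzag relations, because $A$ is rigid and self-dual with the duality witnessed by $e$.  So $(\phi^{-1}\boxtimes\phi^{-1})\circ i_B$ is a coevaluation compatible with $e_A$ and must equal $i_A$.  To make this rigorous, I would check the zigzag identity after transport using naturality of the unitors and associators, which hold because both objects live in the same HLZ category.

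The main obstacle is matching the \emph{chosen} data in \cite[Theorem~7.15]{McRaeYang}.  If those data carry an extra normalization, such as a fixed value of the categorical dimension $e\circ c\circ i$ or a sign fixed by the twist, I must check that $\phi$ respects it.  My plan there is as follows.  The dimension $e\circ c_{A,A}\circ i$ is an isomorphism invariant computed within the common braided category, so it agrees automatically once $e$ and $i$ are matched.  The twist is likewise preserved, since it is natural.  The only remaining freedom is the sign of $\mu$, and both choices work because $e$ is quadratic in $\phi$.  So the scalar $\mu$ should be the only genuine choice, and it is available over $\C$.
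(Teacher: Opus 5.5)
Your proposal is correct and is essentially the paper's proof. You fix an isomorphism $\phi_0$, use $\End(A)=\C$ and rigidity to make $\Hom(A\boxtimes A,\mathbf1)$ one-dimensional, rescale $\phi_0$ by a square root of the resulting scalar, and let the zigzag identities force the coevaluation, which is equivalent to the paper's $ab=1$ step. The differences are cosmetic. You obtain $A\cong B$ directly from the Kac-weight symmetry and uniqueness of the grade-two singular vector rather than citing the McRae--Yang parameter-interchange identification; that uniqueness holds in every Virasoro Verma module, so your $q\ge3$ versus $q=2$ split is unnecessary. You also get $\End(A)=\C$ from the length-two/simple structure, whereas the paper gets it from generation by the lowest-weight line.
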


\begin{proof}
By the parameter-interchange identification used in the proof of
\cite[Theorem~7.15]{McRaeYang}, in the braided tensor category fixed in
the statement the Kac module $\mathcal K^{(q,p)}_{2,1}$ corresponds to
$\mathcal K^{(p,q)}_{1,2}$.  Fix a Virasoro-module isomorphism
\[
 \phi_0:A\xrightarrow{\sim}B.
\]
We first record that
\[
 \End(A)=\End(B)=\C.
\]
Indeed, by \cite[Remark~2.2]{McRaeSopin}, both modules are
highest-weight quotients of Virasoro Verma modules and are generated by
their one-dimensional lowest-weight spaces.  A Virasoro endomorphism
preserves the lowest conformal-weight line, hence acts there by a scalar;
since that line generates the module, the endomorphism is the same scalar
multiple of the identity.  This argument includes the boundary case
$q=2$ and does not use a length-two description.

Both $A$ and $B$ are the rigid self-dual Kac objects used in
\cite[Theorem~7.15]{McRaeYang}.  Rigidity and the preceding endomorphism
calculation give
\[
 \dim\Hom(A\boxtimes A,\mathbf1)
 =\dim\Hom(\mathbf1,A\boxtimes A)=1,
\]
and similarly for $B$.  Hence there exist $a,b\in\C^\times$ such that
\[
 e_B\circ(\phi_0\boxtimes\phi_0)=a e_A,
 \qquad
 (\phi_0^{-1}\boxtimes\phi_0^{-1})\circ i_B=b i_A.
\]
Transporting the duality of $B$ across $\phi_0$ gives evaluation
$a e_A$ and coevaluation $b i_A$ on $A$.  Comparison of the two
zig-zag identities gives $ab=1$.  Choose $t\in\C^\times$ with
$t^2=a^{-1}$ and put $\phi=t\phi_0$.  Then
\[
 e_B\circ(\phi\boxtimes\phi)=e_A,
\]
and
\[
 (\phi^{-1}\boxtimes\phi^{-1})\circ i_B
 =t^{-2}b i_A=ab\,i_A=i_A.
\]
Thus $\phi$ identifies the chosen self-duality data.
\end{proof}

\begin{cor}[Transposed reduction]\label{cor:transposed-reduction}
Let $k'=-2+q/p$.  Since $c_{q,p}=c_{p,q}=c$, the two target
notations denote the same HLZ braided tensor category of finite-length
$V_c^{\mathrm{Vir}}$-modules, as fixed in
Lemma~\ref{lem:parameter-swap-rigid-normalization}.  The second
McRae--Yang functor satisfies
\[
 F_{q,p}:\KL^{k'}(\mathfrak{sl}_2)\longrightarrow\Oc_{c_{p,q}},
 \qquad
 F_{q,p}\cong H^0_{DS,+}\big|_{\KL^{k'}(\mathfrak{sl}_2)}
\]
as $\C$-linear functors.  In particular, $F_{q,p}$ is exact and faithful.
\end{cor}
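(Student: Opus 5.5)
The plan is to rerun the whole argument of Sections~\ref{sec:DS}--\ref{sec:comparison} with the ordered pair $(q,p)$ in place of $(p,q)$, and then use Lemma~\ref{lem:parameter-swap-rigid-normalization} to check that the functor produced this way is the one McRae--Yang call $F_{q,p}$.

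\emph{Step 1: the swapped argument goes through unchanged.} All inputs used so far depend only on the ordered pair being coprime with both entries at least $2$, and this is symmetric. The relevant inputs are:
\begin{itemize}
\item the McRae--Yang projective data;
\item the Felder dictionary of Proposition~\ref{prop:Felder-labels}, which already treats both $p<q$ and $p>q$;
\item the vacuum-edge Lemma~\ref{lem:vacuum-edge-staggered}, covering both prime branches;
\item the Temperley--Lieb detection argument, or Lemma~\ref{lem:p-two-faithful} when the new first parameter is $2$.
\end{itemize}
So Theorem~\ref{thm:main} holds verbatim for the ordered pair $(q,p)$. It produces a natural isomorphism $F^{(q,p)}\cong H^0_{DS,+}|_{\KL^{k'}}$, where $F^{(q,p)}$ is the universal-property functor normalized by $\mathcal K^{(q,p)}_{2,1}$ with its chosen self-duality data. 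The target here is $\Oc_{c_{q,p}}$.

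\emph{Step 2: identify the target categories.} By Lemma~\ref{lem:parameter-swap-rigid-normalization}, $\Oc_{c_{q,p}}$ and $\Oc_{c_{p,q}}$ are the same HLZ braided tensor category of $V_c^{\mathrm{Vir}}$-modules. In that category there is an isomorphism $\phi:\mathcal K^{(q,p)}_{2,1}\to\mathcal K^{(p,q)}_{1,2}$ that matches the evaluation and coevaluation maps.

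\emph{Step 3: identify $F^{(q,p)}$ with $F_{q,p}$.} The McRae--Yang universal property in \cite[Theorem~7.15]{McRaeYang} determines the functor, up to monoidal natural isomorphism, by the rigid self-dual object together with its duality data. Transporting along $\phi$ therefore gives a natural isomorphism $F^{(q,p)}\cong F_{q,p}$. Composing this with Step~1 yields $F_{q,p}\cong H^0_{DS,+}|_{\KL^{k'}}$. Exactness and faithfulness then follow from those of $H$, exactly as in the proof of Theorem~\ref{thm:main}. Lemma~\ref{lem:FKW-normalization} confirms that this Drinfeld--Sokolov branch is the one that sends $V_2$ to $\mathcal K^{(p,q)}_{1,2}$.

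\emph{Main obstacle.} The hardest point is Step~3: justifying that matching the object and its duality data up to isomorphism is enough to identify the two universal functors. I would first try to read this off directly from the uniqueness clause of the McRae--Yang universal property, which is stated in terms of a self-dual object with fixed evaluation and coevaluation. If the twist is also part of that data, its compatibility is automatic, because $\phi$ is a morphism in a ribbon category and so commutes with the twist.
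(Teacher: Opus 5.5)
Your proposal is correct and follows the paper's proof. You instantiate Theorem~\ref{thm:main} at the ordered pair $(q,p)$, which needs no rerunning because that theorem is already stated for every ordered coprime pair; you then use the rigidly normalized isomorphism $\phi$ of Lemma~\ref{lem:parameter-swap-rigid-normalization} to match the generating datum $(\mathcal K^{(q,p)}_{2,1},e,i)$ with $(\mathcal K^{(p,q)}_{1,2},e,i)$, and conclude $F^{(q,p)}\cong F_{q,p}$ by uniqueness in the McRae--Yang universal property. The only difference is the citation: the paper takes that uniqueness clause from \cite[Theorem~6.8]{McRaeYang}, and uses \cite[Theorem~7.15]{McRaeYang} only for the braided structures and the chosen evaluation and coevaluation maps.
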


\begin{proof}
Apply Theorem~\ref{thm:main} to the ordered pair $(q,p)$.  This gives
\[
 F^{(q,p)}\cong
 H^0_{DS,+}\big|_{\KL^{k'}(\mathfrak{sl}_2)}
\]
as $\C$-linear functors.  Lemma~\ref{lem:parameter-swap-rigid-normalization} identifies the
distinguished object for the ordered pair $(q,p)$ with
$\mathcal K^{(p,q)}_{1,2}$ together with the evaluation and coevaluation
maps fixed in \cite[Theorem~7.15]{McRaeYang}.  Via the isomorphism $\phi$
of that lemma, regard the distinguished generator identification of
$F^{(q,p)}$ as an identification with the rigid datum
$(\mathcal K^{(p,q)}_{1,2},e_{\mathcal K_{1,2}},i_{\mathcal K_{1,2}})$.
The two identities in Lemma~\ref{lem:parameter-swap-rigid-normalization}
say precisely that this identification preserves the evaluation and
coevaluation data in the universal property of
\cite[Theorem~6.8]{McRaeYang}.  That theorem therefore yields a tensor
natural isomorphism
\[
 F^{(q,p)}\xRightarrow{\ \sim\ }F_{q,p}.
\]
Both functors carry the braided tensor structures specified in
\cite[Theorem~7.15]{McRaeYang}; hence this is an isomorphism of braided
tensor functors.  Therefore
\[
 F_{q,p}\cong
 H^0_{DS,+}\big|_{\KL^{k'}(\mathfrak{sl}_2)}
\]
as $\C$-linear functors.  Exactness and faithfulness follow from
Theorem~\ref{thm:main} applied to $(q,p)$.
\end{proof}

\begin{cor}[Concrete consequences]\label{cor:main-consequences}
For every $r\ge1$,
\[
 F_{p,q}(V_r)\cong K_r,\qquad
 F_{p,q}(L_r)\cong S_r.
\]
The functor $F_{p,q}$ is faithful, and its restriction to projectives is
fully faithful.
\end{cor}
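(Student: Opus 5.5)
The plan is to read Corollary~\ref{cor:main-consequences} off the natural isomorphism $F_{p,q}\cong H$ of Theorem~\ref{thm:main}. Since $H=H^0_{DS,+}|_{\KL^k(\mathfrak{sl}_2)}$ has known values on Weyl and simple modules, the object assertions will follow by evaluating the natural isomorphism at $V_r$ and $L_r$. The functorial assertions will be quoted from results already proved.

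\emph{Object images.} Fix $r\ge1$ and write $\eta:F_{p,q}\Rightarrow H$ for the natural isomorphism of \eqref{eq:main-natural-iso}. Its component $\eta_{V_r}$ is an isomorphism $F_{p,q}(V_r)\xrightarrow{\sim}H(V_r)$, and Proposition~\ref{prop:H-Weyl} gives $H(V_r)\cong K_r$; composing yields $F_{p,q}(V_r)\cong K_r$. In the same way, $\eta_{L_r}$ combined with Proposition~\ref{prop:H-simple} yields $F_{p,q}(L_r)\cong S_r$. This argument covers every $r$ uniformly, including the range $r>p$, where Lemma~\ref{lem:F-small-Weyl} and Theorem~\ref{thm:projective-control} only control the Weyl modules that happen to be projective.

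\emph{Functorial assertions.} Faithfulness of $F_{p,q}$ is proved in the proof of Theorem~\ref{thm:main}: it is transported from the faithfulness of $H$ (Corollary~\ref{cor:H-faithful}) along $\eta$. Full faithfulness of the restriction to projectives is Theorem~\ref{thm:F-fully-faithful}.

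There is no real obstacle here. The only points to check are that each $\eta_M$ is a genuine Virasoro-module isomorphism, which holds because the extension in Proposition~\ref{prop:right-exact-extension} produces a natural \emph{isomorphism}. We also need the isomorphism classes of $K_r$ and $S_r$ to be independent of the auxiliary endpoint identifications chosen in Section~\ref{sec:DS}. That independence is automatic, since the conclusion is stated only up to isomorphism.
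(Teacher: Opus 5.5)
Your proposal is correct and follows essentially the same route as the paper. You evaluate the natural isomorphism \eqref{eq:main-natural-iso} at $V_r$ and $L_r$ and combine it with Propositions~\ref{prop:H-Weyl} and~\ref{prop:H-simple}, and you quote faithfulness and projective full faithfulness from Theorem~\ref{thm:main} and Theorem~\ref{thm:F-fully-faithful}, which is exactly what the paper's proof does.
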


\begin{proof}
The object identifications follow from
\eqref{eq:main-natural-iso} and Propositions~\ref{prop:H-Weyl} and
\ref{prop:H-simple}.  Faithfulness and projective full faithfulness are
already part of Theorem~\ref{thm:main}.
\end{proof}

\begin{cor}[Drinfeld--Sokolov reduction on projectives]
\label{cor:DS-projective-fully-faithful}
For projective objects $P,Q$, reduction induces an isomorphism
\[
 \Hom_{\KL^k}(P,Q)\xrightarrow{\ \sim\ }
 \Hom_{\Oc_{c_{p,q}}}(H(P),H(Q)).
\]
For each reflection chain, $H$ identifies the full additive subcategory
generated by its affine projectives with the full additive subcategory
generated by their Virasoro reductions.  On every finite segment it therefore identifies the endomorphism algebra
of the corresponding finite direct sum, and hence all matrix Hom spaces
between the projectives in that segment.
\end{cor}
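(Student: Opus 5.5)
The statement is Corollary~\ref{cor:DS-projective-fully-faithful}. The plan is to transport the full faithfulness of $F|_{\mathcal P^k}$ across the natural isomorphism $\eta:F\Rightarrow H$. One could use the restricted isomorphism of Proposition~\ref{prop:natural-on-projectives} directly, which avoids any appeal to right exactness; equation~\eqref{eq:main-natural-iso} would serve equally well.

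For projectives $P,Q$, naturality gives $H(f)=\eta_Q\circ F(f)\circ\eta_P^{-1}$ for every $f:P\to Q$. Hence the reduction map $\Hom_{\KL^k}(P,Q)\to\Hom_{\Oc}(H(P),H(Q))$ factors as the map $f\mapsto F(f)$, followed by the linear isomorphism
\[
 \Hom_{\Oc}(F(P),F(Q))\xrightarrow{\ \sim\ }\Hom_{\Oc}(H(P),H(Q)),\qquad g\longmapsto\eta_Q\circ g\circ\eta_P^{-1}.
\]
The first map is bijective by Theorem~\ref{thm:F-fully-faithful}, and the second is bijective because its inverse is conjugation by $\eta_Q^{-1}$ and $\eta_P$. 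So the composite is a bijection, which proves the first assertion.

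For the chain statement, fix a reflection chain and take the finite sum $P=\bigoplus_{i=0}^N P_{s_i}$. The bijection on $\End(P)$ is multiplicative because $H$ is a functor, so it is an algebra isomorphism $\End_{\KL^k}(P)\cong\End_{\Oc}(H(P))$. Since $H$ is additive, the matrix entries of this algebra are the Hom spaces $\Hom(P_{s_i},P_{s_j})$, and they are identified termwise with the corresponding Hom spaces between the $H(P_{s_i})$.

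It remains to show that $H$ gives an equivalence of full additive subcategories. Full faithfulness on finite direct sums is the statement already proved. Essential surjectivity onto the additive closure of the $H(P_{s_i})$ is immediate, since $H$ preserves finite biproducts. If the additive closure is meant to include summands, I would copy the idempotent-lifting argument of Corollary~\ref{cor:blockwise-projective-realization}, with $H$ in place of $F$. I expect no real obstacle here. The only point needing care is that the comparison uses a genuinely natural transformation rather than objectwise isomorphisms, and Proposition~\ref{prop:natural-on-projectives} supplies exactly that.
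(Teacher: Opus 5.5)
Your proposal is correct and follows essentially the paper's own argument: the paper also factors the reduction map as $F_*$ followed by conjugation by $\eta$, invokes Theorem~\ref{thm:F-fully-faithful}, and then deduces the equivalence of additive subcategories and the finite-segment algebra identification from full faithfulness and additivity. Your remark that the restricted isomorphism of Proposition~\ref{prop:natural-on-projectives} already suffices, so that right exactness of $F$ is not needed, is a valid minor sharpening; the paper simply uses the global natural isomorphism \eqref{eq:main-natural-iso}.
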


\begin{proof}
For the natural isomorphism $\eta:F\Rightarrow H$, naturality gives the
commutative square
\[
\begin{array}{ccc}
 \Hom_{\KL^k}(P,Q) & \xrightarrow{\ F_*\ } & \Hom_{\Oc_{c_{p,q}}}(F(P),F(Q))\\[3pt]
 \big\downarrow {H_*} &&
 \big\downarrow {\,a\mapsto\eta_Q\circ a\circ\eta_P^{-1}}\\[3pt]
 \Hom_{\Oc_{c_{p,q}}}(H(P),H(Q)) & = & \Hom_{\Oc_{c_{p,q}}}(H(P),H(Q)).
\end{array}
\]
The top arrow is an isomorphism by Theorem~\ref{thm:F-fully-faithful},
and the right vertical map is the vector-space isomorphism
$\operatorname{Ad}_\eta(a)=\eta_Qa\eta_P^{-1}$.  Hence the left arrow is
an isomorphism; equivalently,
$H|_{\mathcal P^k}$ is fully faithful.  Since $H$ is fully faithful on projectives and is, by construction,
essentially surjective onto the additive subcategory generated by the
objects $H(P_{s_i})$, it induces an equivalence of these full additive
subcategories.  On each finite segment, the resulting equivalence
identifies the endomorphism algebra of the corresponding finite direct
sum, as in Corollary~\ref{cor:finite-zigzag-algebra}.
\end{proof}

\begin{rmk}[Non-canonicity of the comparison]
\label{rmk:comparison-noncanonical}
The comparison on projectives is not canonical, and its residual freedom
can be described explicitly.  At the initial vertex of a non-wall chain
every automorphism is scalar, while for $i\ge1$ every automorphism of
$H(\Pi_i)$ has the form
\[
 c_i\bigl(\id+a_iH(\nu_i)\bigr),
 \qquad c_i\in\C^\times,\quad a_i\in\C.
\]
Naturality along a nonzero adjacent arrow forces $c_{i+1}=c_i$.
Lemma~\ref{lem:radical-annihilates-arrows} imposes no condition on the
parameters $a_i$, because the radical correction terms annihilate the
adjacent arrows.  Thus the scalar is constant on each connected non-wall
reflection chain, while the radical-unipotent parameters at positive
vertices are independent.  Wall components may be scaled independently
because they are isolated.  What is rigid is the scalar obstruction
between the two orientations of an adjacent edge, not the individual
component isomorphisms.  Once a projective natural isomorphism has been
fixed, Proposition~\ref{prop:right-exact-extension} determines its
extension to the whole category uniquely.
\end{rmk}

\begin{cor}[Transport of the McRae--Yang braided tensor structure]\label{cor:transported-tensor}
Fix a natural isomorphism of underlying $\C$-linear functors
$\eta:F_{p,q}\xRightarrow{\sim}H$ as in
\eqref{eq:main-natural-iso}.  Transporting the given braided tensor
structure of $F_{p,q}$ along this fixed $\eta$ equips the underlying
$\C$-linear functor $H$ with the unique strong braided monoidal structure,
relative to this fixed $\eta$, for which $\eta$ is a braided monoidal
natural isomorphism.  For this structure $H$ preserves the source minus twist
$\theta^-_M=(-1)^\epsilon e^{2\pi iL^{\mathrm{aff}}_0}$ and the target
Virasoro twist $\theta_N=e^{2\pi iL_0}$.  The construction is made relative
to the chosen natural isomorphism $\eta$;
no independence of this choice is asserted.
\end{cor}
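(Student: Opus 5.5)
The plan is to transport the structure purely formally along the fixed natural isomorphism $\eta:F_{p,q}\Rightarrow H$, and then read off twist preservation from the corresponding property of $F_{p,q}$.

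Write $J^F_{M,N}:F(M)\boxtimes F(N)\xrightarrow{\sim}F(M\boxtimes N)$ and $\varphi^F:\mathbf1\xrightarrow{\sim}F(\mathbf1)$ for the McRae--Yang tensor structure of \cite[Theorem~7.15]{McRaeYang}. On $H$ we define
\[
 J^H_{M,N}:=\eta_{M\boxtimes N}\circ J^F_{M,N}\circ(\eta_M^{-1}\boxtimes\eta_N^{-1}),
 \qquad
 \varphi^H:=\eta_{\mathbf1}\circ\varphi^F .
\]
These are isomorphisms, and they are natural in $(M,N)$ because $\eta$, $J^F$ and the bifunctor $\boxtimes$ are natural.

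Next come the hexagon, unit and braiding axioms. In each axiom for $H$, substitute the definitions. The intermediate factors $\eta\circ\eta^{-1}$ cancel, and naturality of $\eta$ moves the remaining $\eta$'s across the source associator, unitors and braiding (each is a morphism in $\KL^k$). The axiom for $H$ then becomes the conjugate by $\eta$ of the corresponding axiom for $F$. Since the target constraints act on $\eta_M^{\pm1}\boxtimes\eta_N^{\pm1}$ by functoriality of $\boxtimes$, the braiding axiom $H(c_{M,N})J^H=J^H c_{H(M),H(N)}$ likewise reduces to the one for $F$.

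By construction the monoidal condition for $\eta$, namely $\eta_{M\boxtimes N}J^F=J^H(\eta_M\boxtimes\eta_N)$ together with $\eta_{\mathbf1}\varphi^F=\varphi^H$, holds tautologically. Conversely, that condition forces the displayed formulas, so the structure is unique relative to the fixed $\eta$.

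For the twists, we use that $F_{p,q}$ sends $\theta^-$ to the Virasoro twist $\theta=e^{2\pi iL_0}$ \cite[Theorem~7.15]{McRaeYang}. Naturality of $\eta$ applied to the morphism $\theta^-_M$ gives
\[
 H(\theta^-_M)=\eta_M F(\theta^-_M)\eta_M^{-1}=\eta_M\theta_{F(M)}\eta_M^{-1}=\theta_{H(M)}.
\]
The last equality holds because the Virasoro twist is natural with respect to the Virasoro isomorphism $\eta_M$, which commutes with $L_0$. This identity is also independent of the monoidal structure, and it agrees with Proposition~\ref{prop:H-twist}.

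The only delicate point is bookkeeping: checking the associativity hexagon with the correct placement of $\eta$'s, using naturality of the target associator in $\eta_M\boxtimes\eta_N\boxtimes\eta_L$. No genuine obstacle is expected. Finally, the dependence on the choice of $\eta$ is only recorded, not resolved.
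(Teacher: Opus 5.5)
Your proposal is correct and follows the paper's proof: it uses the same transported tensorator $J^H_{M,N}=\eta_{M\boxtimes N}J^F_{M,N}(\eta_M^{-1}\boxtimes\eta_N^{-1})$ and unit $\eta_{\mathbf1}\circ J^F_0$, obtains the coherence and braiding axioms by conjugating those of $F$, and reads off uniqueness from the monoidality condition on the fixed $\eta$. The only minor difference is in the twist step, where you derive $H(\theta^-_M)=\theta_{H(M)}$ by conjugating McRae--Yang twist preservation through $\eta$, whereas the paper takes it directly from the BRST exponential identity of Proposition~\ref{prop:H-twist}; both routes are valid.
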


\begin{proof}
Write the tensorator and unit constraint of $F$ as
\[
 J^F_{X,Y}:F(X)\boxtimes F(Y)\longrightarrow F(X\boxtimes Y),
 \qquad
 J^F_0:\mathbf1\longrightarrow F(\mathbf1).
\]
This is a transport-of-structure construction.  Define
\[
 J^H_{X,Y}
 :=\eta_{X\boxtimes Y}\,J^F_{X,Y}
   (\eta_X^{-1}\boxtimes\eta_Y^{-1}),
 \qquad
 J^H_0:=\eta_{\mathbf1}\circ J^F_0.
\]
Naturality of $J^H$ follows from naturality of $J^F$ and $\eta$.
The pentagon and unit identities are obtained by conjugating the
corresponding diagrams for $F$, and the braided-functor compatibility is
obtained in the same way.  The formulas also prove uniqueness for the fixed $\eta$.  Independently of
any tensorator on $H$, Proposition~\ref{prop:H-twist} already gives the
linear identity
\[
 H(\theta^-_X)=e^{2\pi iL_0}|_{H(X)}.
\]
Theorem~7.15 of \cite{McRaeYang} says that the original braided tensor
structure on $F$ preserves the same source minus twist and the standard
Virasoro twist.  Consequently, after transporting the tensor structure
along $\eta$, the displayed linear identity is precisely the
twist-preservation axiom for $H$.
\end{proof}

\begin{rmk}\label{rmk:linear-vs-monoidal}
Theorem~\ref{thm:main} concerns the underlying $\C$-linear functors.
Corollary~\ref{cor:transported-tensor} gives a braided tensor structure on
the restricted Drinfeld--Sokolov functor by transport.  It is a
transport-of-structure consequence of the natural comparison, not an independent construction of a BRST tensorator.  In particular,
we make no claim that this transported tensorator agrees with a tensorator
obtained directly from BRST complexes, should such a construction be
available.
\end{rmk}

\end{document}